\documentclass{article}

\newcommand{\Db}{D^h_k}
\newcommand{\Dbn}{D^{-h}_k}
\newcommand{\Ltw}{L^2(\reall^2)}
\newcommand{\intt}{\int_{\reall^2}}

\newcommand{\Ve}{V_\eps}

\usepackage{graphicx} 
\usepackage{amsmath,mathtools,amsthm,fancyhdr,amssymb,bbm, enumerate,float, mathrsfs}
\usepackage{tcolorbox}
\usepackage{appendix}
\tcbuselibrary{breakable}
\usepackage{listings}
\usepackage{color, comment}
\usepackage{accents}
\usepackage{hyperref}
\usepackage{indentfirst}
\hypersetup{
colorlinks,
linkcolor=blue,
anchorcolor=blue,
citecolor=blue
}

\mathtoolsset{showonlyrefs,showmanualtags} 

\newcommand{\reall}{\mathbb{R}}
\newcommand{\Sp}{\mathbb{S}}

\newcommand{\Z}{\mathbb{Z}}
\renewcommand{\P}{\mathbb{P}}

\newcommand{\scl}[2]{\left\langle #1\,,\, #2\right\rangle}
\newcommand{\bkt}[1]{\langle #1 \rangle}
\newcommand{\om}{\omega}

\newcommand{\vp}{\varphi}
\newcommand{\dr}[1]{\left(#1\right)}

\newcommand{\dn}[1]{\left\|#1\right\|}
\newcommand{\da}[1]{\left|#1\right|}
\newcommand{\Rmnum}[1]{\uppercase\expandafter{\romannumeral#1}}
\newcommand{\eps}{\varepsilon}
\newcommand{\pa}{\partial}

\renewcommand{\div}{\mathrm{div}\,}
\newcommand{\supp}{\mathrm{supp}}
\newcommand{\PV}{\mathrm{P.V.}}

\newtheorem{proposition}{Proposition}[section]
\newtheorem{lemma}{Lemma}[section]

\newtheorem{theorem}{Theorem}[section]

\numberwithin{equation}{section}

\newcommand{\Ac}{\mathcal{A}}

\newcommand{\Cc}{\mathcal{C}}

\newcommand{\Fc}{\mathcal{F}}

\newcommand{\Hc}{\mathcal{H}}
\newcommand{\Lc}{\mathcal{L}}
\newcommand{\Oc}{\mathcal{O}}

\newcommand{\Tc}{\mathcal{T}}

\newcommand{\Ls}{\mathscr{L}}

\title{Forward Self-Similar Solutions to the 2D Hypodissipative Navier-Stokes Equations}
\author{Thomas Y. Hou\footnote{Applied and Computational Mathematics, Caltech, Pasadena, CA. Email: \href{hou@cms.caltech.edu}{hou@cms.caltech.edu}},\; 
Peicong Song\footnote{Applied and Computational Mathematics, Caltech, Pasadena, CA. Email: \href{psong2@caltech.edu}{psong2@caltech.edu} }}

\date{}

\begin{document}

\maketitle
\begin{abstract}
    We study the forward self-similar solutions to the $2$D hypodissipative Navier-Stokes equation with fractional diffusion $(-\Delta)^\alpha$ for $\frac{1}{2}<\alpha<1$. We first show that for arbitrarily large $(1-2\alpha)$-homogeneous initial data which are locally Lipschitz, there exists at least one weak solution whose profile differs from the self-similar profile of the fractional heat equation by an element of $H^\alpha(\reall^2)$. Moreover, when $\alpha\in(\frac{2}{3},1)$ we show that any such weak solution is actually smooth, hence a strong solution, and satisfies certain far field decay estimates. Finally, we provide numerical evidence for the nonuniqueness of the related $2$D Navier-Stokes equation with time-dependent viscosity.
\end{abstract}

\section{Introduction}
We consider the incompressible fractional Navier-Stokes equations in two dimensions:
\begin{equation}\label{fractional NS}
    \begin{cases}
       \pa_t u + u\cdot\nabla u + \nabla p + \Lambda^{2\alpha}u = 0, \quad (x,t)\in \reall^2\times (0,+\infty)\\
       \div u = 0,
    \end{cases}\tag{NS\textsubscript{$\alpha$}}
\end{equation}
where $\alpha\in(\frac{1}{2},1)$ and $\Lambda^{2\alpha}:= (-\Delta)^{\alpha}$ is the fractional Laplacian operator. Such a system is called hypodissipative for $\alpha<1$, where the diffusion is weaker than the usual Laplacian. We consider the Cauchy problem for \eqref{fractional NS} with some divergence-free initial velocity field:
\begin{equation}
    u(x,0) = u_0(x).
\end{equation}
Existence and uniqueness of solutions to such systems have been established in homogeneous Besov spaces in \cite{Wu2006}.
Similar to the Navier-Stokes equation, the system \eqref{fractional NS} admits the following scaling invariance: if $(u,p)$ is a solution to \eqref{fractional NS} with initial data $u_0$, then for all $\lambda>0$, $(u_\lambda,p_{\lambda})$ is also a solution with initial data $u_{0,\lambda}$, where
\begin{align}\label{eq: scaling invariance}
    u_{\lambda}(x,t) :=\lambda^{2\alpha-1}u(\lambda x,\lambda^{2\alpha}t),\quad p_{\lambda}(x,t):=\lambda^{4\alpha-2}p(\lambda x,\lambda^{2\alpha}t),\quad u_{0,\lambda}(x):=\lambda^{2\alpha-1}u_0(\lambda x). 
\end{align}
A (forward) self-similar solution is defined as a solution $(u,p)$ that is invariant under the scaling transformation above, i.e., $u(x,t)=u_\lambda(x,t)$ and $p(x,t) = p_\lambda(x,t)$ for any $\lambda>0$. Such solutions necessarily take the form:
\begin{equation}\label{self similar change of variables}
    u(x,t) = t^{\frac{1}{2\alpha}-1}U\dr{\frac{x}{t^{\frac{1}{2\alpha}}}},\quad p(x,t)= t^{\frac{2}{2\alpha}-2}P\dr{\frac{x}{t^{\frac{1}{2\alpha}}}},\quad y:=\frac{x}{t^{\frac{1}{2\alpha}}},
\end{equation}
where $U,P$ are called the self-similar profiles and satisfy the following self-similar equation
\begin{align}\label{self similar eq for U}
    \Lambda^{2\alpha}U - \frac{2\alpha-1}{2\alpha}U - \frac{1}{2\alpha}y\cdot\nabla U + U\cdot\nabla U +\nabla P=0\quad \div U = 0.
\end{align} 
Moreover, self-similar solutions arise from
$(1-2\alpha)$-homogeneous initial data, namely
\begin{equation}
    u_0(x) = |x|^{1-2\alpha}\bar u_0\dr{\frac{x}{|x|}},\quad  \div u_0 = 0.
\end{equation}
We note that the initial condition $u_0$ is encoded into the far field behavior of profile $U$:
\begin{align}
    |y|^{2\alpha-1}U(y)= \bar u_0\dr{\frac{y}{|y|}}+o(1)\quad \text{as}\quad y\to +\infty.
\end{align}
Our goal is to establish the existence of self-similar solutions to \eqref{fractional NS} and derive pointwise estimates of the profile $U$. Then, we provide numerical evidence of nonuniqueness for a modified $2$D Navier-Stokes equation \eqref{NS with time dependent visc} which shares the same scaling invariance property as \eqref{fractional NS}.

\subsection{Main result}
In an energy estimate point of view, we expect existence of weak solutions (i.e., the solutions that satisfy \eqref{fractional NS} in the sense of distribution) in the class $H^\alpha(\reall^2)$. 
As for the far field asymptotics, we expect that the leading order of the far field behavior of the profile $U$ is determined by the linear part of the equation, because roughly speaking the nonlinear part decays faster at infinity. Thus, we denote
\begin{equation}
    U_0(x) := e^{-\Lambda^{2\alpha}}u_0, 
\end{equation}
which is the self-similar profile of the fractional heat equation solving
\begin{equation}\label{equation that U_0 solves}
    \Lambda^{2\alpha}U_0 - \frac{2\alpha-1}{2\alpha}U_0 - \frac{1}{2\alpha}y\cdot\nabla U_0 =0.
\end{equation}
Moreover, the regularity of the initial $u_0$ will also affect the decay of $U$, as one can see from the Duhamel expression. Now, let us introduce the main result of this work. 

\begin{theorem}\label{theorem: Main}
    Let $\alpha\in(\frac{1}{2},1)$ and $u_0(x)\in C^{0,1}_{loc}(\reall^2\backslash\{0\})$ be a divergence free, $(1-2\alpha)$-homogeneous data. Then, there exists a self-similar solution $u$ to \eqref{fractional NS} with initial data $u_0$ and profile function $U(x) = u(x,1)$, such that $U-U_0$ belongs to $H^\alpha(\reall^2)$. Moreover, if $\alpha\in(\frac{2}{3},1)$, then any solution profile $U$ such that $U-U_0\in H^\alpha(\reall^2)$ is smooth, and satisfies the following pointwise estimates:\\ 
    (\romannumeral 1)(Low regularity case) When $u_0\in C^{0,1}_{loc}(\reall^2\backslash\{0\})$,
    \begin{align}\label{theorem_main: decay esti of V_low reg 1}
      &|\nabla^k U(x)|\leq C(1+|x|)^{1-2\alpha-\bar k},\quad \bar k = \min(k,1),\\
     &|\nabla^{k}(U(x)-U_0(x))|\leq C(1+|x|)^{1-4\alpha}\log(2+|x|),\quad\forall\; k\in \Z_{\geq 0}.
    \end{align}
    When $u_0\in C^{1,\beta}_{loc}(\reall^2\backslash\{0\})$ for some $\beta\in(0,1]$,
    \begin{align}\label{theorem_main: decay esti of V_low reg 2}
     &|\nabla^k U(x)|\leq C(1+|x|)^{1-2\alpha-\bar k},\quad \bar k = \min(k,1+\beta),\\
     &|\nabla^{k}(U(x)-U_0(x))|\leq C(1+|x|)^{1-4\alpha},\quad\forall\; k\in \Z_{\geq 0}.
    \end{align}
    (\romannumeral 2)(High regularity case)
    When $u_0\in C^{\infty}(\reall^2\backslash\{0\})$,
    \begin{align}\label{theorem_main: decay esti of V_high reg}
     &|\nabla^k U(x)|\leq C(1+|x|)^{1-2\alpha- k},\\
     &|\nabla^{k}(U(x)-U_0(x))|\leq C(1+|x|)^{1-4\alpha-k},\quad\forall\; k\in \Z_{\geq 0}.
    \end{align}
    The constants above depend only on $ u_0,k,\alpha$. 
\end{theorem}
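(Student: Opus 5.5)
We would write $U=U_0+V$ and work entirely with the perturbation $V$. Subtracting \eqref{equation that U_0 solves} from \eqref{self similar eq for U}, $V$ solves
\begin{equation}
\Lambda^{2\alpha}V-\frac{2\alpha-1}{2\alpha}V-\frac{1}{2\alpha}y\cdot\nabla V+(U_0+V)\cdot\nabla(U_0+V)+\nabla P=0,\qquad \div V=0 .
\end{equation}
The first step is to record the properties of $U_0=e^{-\Lambda^{2\alpha}}u_0$. Using the fractional heat kernel, which is smooth with tail decay $|x|^{-2-2\alpha}$, together with the $(1-2\alpha)$-homogeneity and local Lipschitz regularity of $u_0$, we get $|\nabla^kU_0|\lesssim(1+|x|)^{1-2\alpha-\bar k}$. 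Here the cap $\bar k$ comes from the heat-kernel tail convolved against the singular part of $u_0$. Since $\alpha>\frac12$, this gives $U_0\in L^p$ for $p>2/(2\alpha-1)$, and $U_0\cdot\nabla U_0\in H^{-\alpha}$.

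For existence we follow the Jia--Šverák/Korobkov--Tsai strategy. The Leray--Schauder argument is applied on a ball $B_R$, or else we use a Galerkin approximation. Testing the $V$-equation with $V$, the drift term gives a coercive contribution: $-\frac{1}{2\alpha}\int y\cdot\nabla V\cdot V=\frac{1}{2\alpha}\|V\|^2$, which combines with $-\frac{2\alpha-1}{2\alpha}\|V\|^2$ to leave a positive multiple $\frac{1-\alpha}{\alpha}\|V\|_2^2$. The term $\int (V\cdot\nabla U_0)\cdot V$ is not small. To handle it we split $U_0=U_0\chi_{B_{R_0}}+U_0(1-\chi_{B_{R_0}})$ with a smooth modification (a Kato-type cut-off), so that the far part is small in $L^{2/(2\alpha-1)}$ and the near part is smooth and compactly supported. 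The far part is absorbed by the fractional Sobolev embedding. For the near part, either we replace $U_0$ by a corrector $W$ that agrees with $U_0$ at infinity, or we argue by contradiction: if the a priori bound fails, a normalized blow-up sequence converges to a solution of a stationary Euler-type equation with zero dissipation contradicting $\int (V\cdot\nabla U_0)\cdot V\neq0$. The second route is the Korobkov--Tsai argument, and we would use it if the cut-off route is insufficient. The a priori $H^\alpha$ bound then gives a fixed point by Leray--Schauder, and passing $R\to\infty$ via compactness yields a weak $V\in H^\alpha$.

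For $\alpha>\frac23$, the regularity step bootstraps in Sobolev spaces. We have $V\in H^\alpha\subset L^{2/(1-\alpha)}$. The nonlinearity $\div(V\otimes V)$ gains regularity at each iteration provided $2\alpha-1>1-\alpha$ roughly, i.e.\ $\alpha>\frac23$; this is the condition that the quadratic term be subcritical relative to the energy class in 2D with $\Lambda^{2\alpha}$. Difference quotients $D^h_k$ give $V\in H^{s}$ for all $s$, hence smoothness.

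The decay estimates are the main obstacle. We would write $V$ via the self-similar Duhamel formula
\begin{equation}
V(x)=-\int_0^1 e^{-(1-s)\Lambda^{2\alpha}}\mathbb P\div\big(u\otimes u\big)(s)\,ds ,
\end{equation}
and run a weighted bootstrap. First, from smoothness and $H^\alpha$ we obtain a crude decay of $V$. Then $u\otimes u$ decays like $|x|^{2-4\alpha}$ near infinity, and integrating the kernel of $e^{-\tau\Lambda^{2\alpha}}\mathbb P\div$ (decay $|x|^{-3}$, versus $\tau^{-1/(2\alpha)}$ near zero) yields $|x|^{1-4\alpha}$. The logarithm in the Lipschitz case arises because $\nabla(u\otimes u)$ only decays like $|x|^{1-4\alpha}\cdot|x|^{-1}$ in a borderline integrable way. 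With $C^{1,\beta}$ data the extra regularity removes the log. In the $C^\infty$ case, each derivative can be placed on the data, giving the extra $|x|^{-k}$. The delicate point is closing the iteration uniformly near $s=0$, where $u(s)$ is singular at the origin like $|x|^{1-2\alpha}$. We would handle this by splitting the $y$-integral into $|y|<|x|/2$ and its complement, and exploiting the homogeneity.
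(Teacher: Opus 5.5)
Your architecture matches the paper's: $U=U_0+V$, Leray--Schauder with a large-scale corrector, a difference-quotient energy bootstrap for $\alpha>\frac23$, then the Oseen/Duhamel representation for decay. But the decay argument has a genuine gap at its first step.

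You assert that ``from smoothness and $H^\alpha$ we obtain a crude decay of $V$''. You then use $|u\otimes u|\lesssim |x|^{2-4\alpha}$, which already presupposes $|V(x)|\lesssim \langle x\rangle^{1-2\alpha}$. No rate follows from regularity alone. Even $V\in\bigcap_k H^k(\reall^2)$ gives only $V(x)\to 0$, since sparse bumps of slowly decreasing height lie in every $H^k$. Without some polynomial decay of $G=V\otimes U_0+U_0\otimes V+V\otimes V$, the Oseen iteration cannot start.

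In the paper this is exactly the job of Propositions 4.2 and 4.3. They show $\langle y\rangle^{\beta}V\in H^\alpha$ and then $\langle y\rangle^{\beta}V\in H^{1+\alpha}$ for every $\beta<\alpha$. Taking $\beta=2\alpha-1$ and using $H^{1+\alpha}\hookrightarrow L^\infty$ then gives $|V|\lesssim\langle x\rangle^{1-2\alpha}$. This step is not routine:
\begin{itemize}
\item Testing with $w^2V$ leaves the $L^2$-coercive coefficient $\frac{1-\alpha}{\alpha}+\frac{1}{2\alpha}\,y\cdot\nabla\log w$.
\item The weight must be truncated and eventually decay for the pairing with $y\cdot\nabla V$ to be legitimate.
\item Hence it may only decay at a small logarithmic rate. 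A plain cutoff, or a factor $(1+\delta^2|y|^2)^{-q}$, destroys coercivity, which is why the paper builds the three-piece weight $w_{\delta,R}$.
\end{itemize}
As an alternative, you could extract a first rate from the Duhamel formula itself. Set $M(r)=\sup_{|x|\ge r}|V(x)|$ and use only $V\in L^2$, $V\to 0$, splitting at $|x-y|=|x|/2$. This gives
\[
M(r)\lesssim r^{1-4\alpha}\log r+\big(r^{1-2\alpha}+M(r/2)\big)M(r/2)+r^{-2}.
\]
Since the bracket tends to zero, this self-improves to a polynomial rate. Either way, this step has to be supplied; as written it is asserted, not proved.

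There are also two corrections in the existence step. First, the far part $\eta_{R_0}U_0$ with $\eta_{R_0}=1-\chi_{B_{R_0}}$ is not small in $L^{2/(2\alpha-1)}$. It is not even in that space, since $|U_0|^{2/(2\alpha-1)}\sim |y|^{-2}$ at infinity. What is small is $\|\nabla(\eta_{R_0}U_0)\|_{L^\infty}=O(R_0^{-2\alpha})$. It is absorbed by the $L^2$ part $\frac{1-\alpha}{\alpha}\|V\|_{L^2}^2$ of the coercivity, not by a Sobolev embedding.

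Second, your corrector route is the paper's, but the corrector must be divergence free. The paper adds a Bogovski\u{\i} field $\tilde U$ supported in $B_{2R_0}\setminus B_{R_0}$ with $\div\tilde U=-\nabla\eta_{R_0}\cdot U_0$ and $\|\nabla\tilde U\|_{L^\infty}\lesssim R_0^{-2\alpha}$. It sets $U_1=\eta_{R_0}U_0+\tilde U$ and estimates $V'=V+U_0-U_1$. Without this, $V'$ is not solenoidal, is not admissible in the Leray--Schauder space, and the pressure does not drop out of the energy identity.

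Two explanations are also off, though harmlessly:
\begin{itemize}
\item The cap $\bar k$ in the bounds on $U_0$ comes from the finite regularity of $\bar u_0$: the Taylor expansion of $u_0$ about $x$ stops at order $m$. The kernel tail against the singular part contributes only $|x|^{1-4\alpha-k}$.
\item The logarithm comes from the $|z|^{-2}$ decay of $\mathcal{O}$ integrated over $|x-y|\le |x|/2$. The paper removes it for $C^{1,\beta}$ data by moving $\gamma<\beta$ fractional derivatives onto $U_0\cdot\nabla U_0$.
\end{itemize}
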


\noindent\textit{Remarks.} The threshold $\alpha=\frac{1}{2}$ is the value above which the background velocity field $U_0$ is decaying and below which it is growing. In $2$D, existence of self-similar solutions holds all the way to this threshold, while in $3$D the threshold for existence is $\alpha=\frac{5}{8}$, determined by the $L^2$-integrability of the source term $U_0\cdot\nabla U_0$ (see \cite{LAI2019981}). The smoothness threshold $\alpha = \frac{2}{3}$ arises from the competing effect of diffusion and nonlinearity of the system.
Formally, if one starts with $U\in H^\alpha$ for $\alpha\in (\frac{1}{2},1)$, due to the Sobolev embedding of multiplication $H^\alpha(\reall^2)\times H^\alpha(\reall^2)\hookrightarrow H^{2\alpha-1}(\reall^2)$, the nonlinear term $\div(U\otimes U)$ belongs to $H^{2\alpha-2}(\reall^2)$. The elliptic operator $(-\Delta)^{\alpha}$ gains regularity by $2\alpha$, so we deduce that $U\in H^{4\alpha-2}(\reall^2)$ and there is a genuine gain if $4\alpha-2>\alpha$, i.e., $\alpha>\frac{2}{3}$. Indeed, by the previous result \cite[Theorem 2.1]{Dong2009OptimalLS}, together with Sobolev embedding one can show that any $H^{\alpha}(\reall^n)$-solution to \eqref{self similar eq for U} is actually smooth for $\alpha>\frac{n+2}{6}$ in general dimension $n\geq 2$. Theorem \ref{theorem: Main} can be viewed as a refinement of this result in $2$D, in the sense that it also gives pointwise decay estimates of the profile and its derivatives. The decay rate $(1+|x|)^{1-4\alpha}$ in the theorem is sharp in the sense that it is of the same order as the source term $U_0\cdot\nabla U_0$. Finally, we remark that for the critical case $\alpha=\frac{2}{3}$, it might be possible to adopt the techniques in \cite{LAI2024JFA} to establish the smoothness and decay estimates of $U$. We do not pursue further in this direction here.

Once the existence of forward self-similar solutions has been established, the natural followup question is the uniqueness of such solutions. To understand such question, we conduct numerical investigation to the so-called $2$D Navier-Stokes equation with time-dependent viscosity:
\begin{equation}\label{NS with time dependent visc}
    \begin{cases}
       \pa_t u + u\cdot\nabla u + \nabla p + t^{\frac{1}{\alpha}-1}(-\Delta)u = 0, \quad (x,t)\in \reall^2\times (0,+\infty),\quad \alpha\in(\frac{1}{2},1)\\
       \div u = 0.
\end{cases}\tag{NS\textsubscript{t,$\alpha$}}
\end{equation}
The Laplacian operator in \eqref{NS with time dependent visc} is local, hence more numerically feasible compared to its nonlocal counterpart in \eqref{fractional NS}. This is the main reason why we study this modified equation instead of the original one.
On the other hand, we note that for any fixed $\alpha$, \eqref{NS with time dependent visc} shares the same scaling invariance property \eqref{eq: scaling invariance} with \eqref{fractional NS}. Thus, we expect that the (non)uniqueness of \eqref{NS with time dependent visc} may shed light on that of \eqref{fractional NS}. Specifically, we have the following numerical observations for \eqref{NS with time dependent visc}: 
\begin{itemize}
    \item For large initial data, unstable eigenmodes appear in the spectrum of the linearized operator around the corresponding self-similar profile.
    \item For large initial data, bifurcation of self-similar profiles emerges. In particular, if the initial data is $k$-fold symmetric ($k\in \Z_{\geq 1}$), the bifurcation breaks such symmetry.
\end{itemize}
We refer the readers for more details to Section \ref{Sec 6} and Appendix \ref{Appendix B}, which are independent from the rest part of this work.

\subsection{Comparison with existing literature}
\subsubsection{Forward self-similar solutions}
Small forward self-similar solutions are first constructed via perturbation theory \cite{Cannone1993to1994,Cannone1996,Giga1989,barraza1996selfsimilar,KochTataru2001}. Later, Jia and Šverák constructed large self-similar solutions for the $3$D Navier-Stokes equation with $-1$-homogeneous initial data and described their far field asymptotic behaviors via local-in-space regularity estimates near the initial time \cite{JiaSverak_Invention}. Following this groundbreaking result, alternative constructions and extending results of the same equation are available in \cite{Tsai2014CMP,Tsai_halfspace_AnalPDE16,Tsai_discrete_AnnHP17,Tsai_rotationCPDE17,BradshawTsai_Besov2018,AlbrittonARMA2019,CHAE_discrete_2018,BradshawTsai_LocalEnergy_AnalPDE2019}. \\ \indent
In a series of works \cite{LAI2019981,Lai_TAMS21,LAI2024JFA}, the authors constructed forward self-similar solutions to the hypodissipative Navier-Stokes equations in $3$D using the Leray-Schauder fixed point theorem and direct energy estimates, including the critical case. The current work for the $2$D case adopts the same general framework, with some different techniques that yield more refined results. In particular, the pointwise asymptotic estimates hold for \textit{all} solutions, not just for the ones constructed. This information about the self-similar profiles is important, for example if one wants to further investigate the nonuniqueness of solutions.\\ \indent
Recently, the forward self-similar solutions to the $2$D Navier-Stokes equations are constructed in \cite{albritton2026forwardselfsimilarsolutions2d,gui2026forwardselfsimilarsolutionstwodimensional}, with two different approaches. Compared with their results, the nonlocal nature of the fractional Laplacian in our case requires additional techniques in the estimates. Besides, weaker diffusion poses technical issues, for example in the control of the drift and nonlinear terms. Indeed, when the diffusion is not strong enough to control the nonlinear term (in $2$D the threshold is $\alpha=\frac{2}{3}$ while in $3$D it is $\alpha=\frac{5}{6}$ for $(-\Delta)^\alpha$), the regularity of the solution is not fully understood. On the other hand, in the fractional case we have coercivity in the full $H^\alpha$-level, while in the $2$D Navier-Stokes case the coercivity only holds in the sense of $\dot H^1$. To recover control of the $L^2$ quantity, one can apply an additional $L^p$-estimate for $p\in(1,2)$, as introduced in \cite{gui2026forwardselfsimilarsolutionstwodimensional}. \\
\indent Finally, we remark that there are recent constructions of self-similar algebraic spiral solutions for the 2-D Euler equations \cite{ShaoWeiZhang2025,choi2025asymmetricselfsimilarspiralsolutions}, originated from the seminal works of Elling \cite{ELLING2013JDE,EllingCMP2016,EllingBBMS2016}.

\subsubsection{Connection with nonuniqueness}
Recently, forward self-similar solutions have played an important role in the investigation of the nonuniqueness phenomenon in fluid mechanics. For the $3$D Navier-Stokes equation, one of the major problems is the nonuniqueness of the Leray-Hopf solutions. In \cite{JiaSverak_Invention}, Jia and Šverák conjectured that nonuniqueness of solutions might occur for large initial data, and they described a bifurcation scenario in \cite{JiaSverak2015JFA} where multiple self-similar solutions emerge from the same initial data. The mechanism responsible for this behavior is the crossing of isolated eigenvalues of the linearized operator through the imaginary axis, resulting in instability. Later, Guillod and Šverák investigated this bifurcation numerically in \cite{Guillod2023}, showing that there is indeed crossing of eigenvalues when increasing the magnitude of the initial data. However, it is hard to elevate this idea to a rigorous computer-assisted proof, since the verification of such crossing behavior requires keeping track of the eigenvalue dynamically. \\
\indent Later in \cite{abc_forcing}, Albritton, Bru\'e  and Colombo simplified this mechanism, showing that the mere existence of an unstable mode is sufficient to construct multiple solutions with the same initial data. Based on this idea, they proved nonuniqueness of the Leray-Hopf solutions of the $3$D Navier-Stokes equation with forcing, by lifting Vishik's unstable vortex for the $2$D Euler equation in \cite{vishik2018I,vishik2018II} to the $3$D axisymmetric setting. See also \cite{Instability_after_Vishik} for an illustration of Vishik's work. The same problem \textit{without} forcing is very different though, since there is no freedom to choose the self-similar profile anymore, and it was unclear whether there exists a self-similar solution that admits an unstable mode, hence leading to nonuniqueness. Most recently, in \cite{hou2025nonuniquenesslerayhopfsolutionsunforced} T. Hou, Y. Wang, and C. Yang answered this question affirmatively by using computer-assistance, and rigorously proved the nonuniqueness of the Leray-Hopf solutions to the $3$D Navier-Stokes without forcing. \\
\indent
In $2$D, one of the major problems is the uniqueness of solutions to the Euler equation beyond the Yudovich class. Vishik's work established the nonuniqueness in the forced case. However, to the best of our knowledge, the unforced case remains open. One fundamental difference from the Navier-Stokes case is that the self-similar solutions to the Euler equation, if exist, must be singular, due to the absence of viscosity. This poses serious issues both theoretically and numerically. The introduction of forcing is one way to circumvent it, since the presence of forcing makes smooth self-similar profiles feasible. Another possible approach to investigate this problem is to add certain viscous regularization term to the equation and then try to recover the original $2$D Euler scenario through some limiting process. One interesting result is \cite{albritton2025vanishingviscositynonuniquesolutions}, where the nonuniqueness of vanishing viscosity solutions of the forced $2$D Euler equation is established. We note that the Euler equation has one more degree of scaling freedom than the Navier-Stokes equation. Thus, one might consider regularization that reflects this degree of freedom, and one natural choice is the family of fractional Laplacians. This is the motivation for our work. Numerical evidence in \cite{albritton2026forwardselfsimilarsolutions2d} already indicates the possibility of nonuniqueness in $2$D viscous systems. It is worth noting that the nonuniqueness of the Leray-Hopf solution to the forced $2$D hypodissipative Navier-Stokes equations has been proved in \cite{hypoNS_force23}, using again the Vishik's unstable vortex. Now based on the information about the self-similar profiles obtained in the current work together with the preliminary numerical results, one might be able to prove the nonuniqueness of the unforced problem via computer assistance. We plan to address this in future work. Furthermore, in the framework of convex integration, nonuniqueness of the Leray-Hopf solutions for the fractional Navier-Stokes equations on the $3$D torus $(\reall/\mathbb{Z})^3$ was established in \cite{ConvexIntegration2018,ConvexIntegration2019}, provided that the exponent of the fractional Laplacian is small enough (specifically, $\alpha<\frac{1}{3}$ for $(-\Delta)^\alpha$).\\
\indent Finally, we remark that (backward) self-similar solutions are important mathematical objects in the study of finite time singularities, for example of the $3$D Euler equations \cite{Elgindi_Annals,Elgindi2019stability,ChenHou_numerics,ChenHou_analysis}. For the case of $3$D Navier-Stokes, one of the celebrated millennium prize problems, finite energy self-similar blowup solution has been ruled out in \cite{Necas_Acta1996,Tsai_LeraySol1998}. Nevertheless, some numerical results indicate singularity formations in a nearly self-similar way \cite{Hou_3DNS23}.  

\subsection{Strategy of the proof}
We decompose the profile $U:= U_0 + V$. Then, the remainder $V$ solves
\begin{align}\label{self similar eq for V}
    \Lambda^{2\alpha}V - \frac{2\alpha-1}{2\alpha}V - \frac{1}{2\alpha}y\cdot\nabla V + \nabla P= -U_0\cdot\nabla U_0 -V\cdot\nabla U_0 -U_0\cdot\nabla V -V\cdot\nabla V,\quad \div V=0.
\end{align}
The fractional heat profile $U_0$ can be expressed explicitly via the convolution of the heat kernel, so its pointwise information is clear. To show the existence of $V\in H^\alpha$, we perform a priori energy estimates and apply Leray-Schauder fixed point theorem. At a formal level, testing \eqref{self similar eq for V} with $V$, through integration by parts, the diffusion term and the scaling term yields
\[
   \intt\dr{ \Lambda^{2\alpha}V - \frac{2\alpha-1}{2\alpha}V - \frac{1}{2\alpha}y\cdot\nabla V}\cdot V \;dy= \|\Lambda^\alpha V\|^2_{\Ltw}+\frac{1-\alpha}{\alpha}\|V\|^2_{\Ltw}. 
\]
Since $\alpha<1$, $\frac{1-\alpha}{\alpha}>0$, and we have coercivity in the $H^\alpha$-level. For the source term $U_0\cdot \nabla U_0$, basic regularity of $u_0$ is required to make $U_0\cdot\nabla U_0$ decay fast enough so that it belongs to $L^2(\reall^2)$. For the term $V\cdot \nabla U_0$ on the right-hand side, since $\|\nabla U_0\|_{L^\infty}$ is not small, it cannot be absorbed directly into the coercive $L^2$-part. Inspired by \cite{gui2026forwardselfsimilarsolutionstwodimensional}, we therefore introduce a second decomposition of the velocity field via a Bogovskiĭ correction. More precisely, we first truncate $U_0$ by a large-scale cutoff, and then restore the divergence-free condition by removing the resulting divergence defect with the Bogovskiĭ operator. In this way, we obtain a new background velocity field whose gradient is arbitrarily small, so that the corresponding transport term becomes perturbative and can be absorbed into the coercive part of the energy estimate. Then, one can go back to the energy estimate of $V$ up to some error depending only on $U_0$. Roughly speaking, the aforementioned observations give us the uniform boundedness of the $H^\alpha$-norm of $V$, a key ingredient in the Leray-Schauder fixed point theorem. To make the argument rigorous, as in \cite{LAI2019981} we add a vanishing viscous term $\nu\Delta$ to get information of $\nabla V$, so that the drift term $y\cdot\nabla V$ makes sense. In addition, we approximate the whole space with an increasing sequence of concentric balls to maintain compactness. Finally, taking $\nu\to 0^+$ and the radius of the domain to infinity yields a desired global self-similar solution to \eqref{self similar eq for V}. \\
\indent After we show the existence of $H^\alpha$ weak solutions to \eqref{self similar eq for V}, we perform a posteriori energy estimates to improve their regularities. 
The nonlinear term is the main obstacle for the improvement of regularity, and the threshold $\alpha=\frac{2}{3}$ is seen clearly from the Sobolev embedding of multiplication $H^\alpha(\reall^2)\times H^\alpha(\reall^2)\hookrightarrow H^{2\alpha-1}(\reall^2)$ for $\alpha\in (\frac{1}{2},1)$. Indeed, when $\alpha>\frac{2}{3}$, the nonlinear term is controlled by the diffusion and we can improve the regularity of $V$ all the way to the class $\cap_{k\in\mathbb{N}}H^k(\reall^2)$.
To overcome the regularity issue at the beginning (for example, to make the drift term well-defined), we mollify $V$ to get $\Ve$. Necessarily, one needs to deal with the commutators emerging from the mollification of \eqref{self similar eq for V}. Through quantitative estimates of the convergence rate of mollification, we show that the commutators are actually of order $\eps^\kappa$ to some positive power $\kappa$ when measuring with suitable norms, hence negligible when we take $\eps$ small. The mollification argument does not rely on the way that $V$ is constructed, thus the results apply to \textit{all} weak solutions that are $H^\alpha$.
Next, to prepare for the decay estimates, we derive preliminary pointwise information of $V$ via weighted energy estimates, following the idea in \cite{Lai_TAMS21}. One technical difficulty is that, to close the weighted energy estimate on the $H^\alpha$-level, one needs to design the test function carefully so that it justifies the pairing with the drift term while preserving the $H^\alpha$-coercivity. The two things sound contradictory, because it is exactly the growing behavior of $y$ that produces a good sign and leads to coercivity. An informative toy computation is 
\[
    -\intt (y|y|^{-s}\cdot \nabla V)\cdot V = \frac{1}{2}\intt \div(y|y|^{-s})|V|^2\;dy = \frac{2-s}{2}\intt |V|^2\;dy,
\]
so that coercivity is lost when $s>0$ is not small. A direct cutoff, or a multiplication of the test function by $\frac{1}{(1+\delta^2|y|^2)^q}$, will not work, since it will destroy the coercivity at the region of cutoff or in the far field. One solution to this issue is a modification function consisting of three parts: a growing part, a long slow decaying part, and a fast decay tail. In the slow decaying region, the overall drift is ``almost" $y$ and the coercivity is preserved. In the tail, the modification is small (since it has been decaying in a long distance), hence negligible. With preliminary pointwise estimates in hand, one can use the Duhamel expression of $V$, treating all the nonlinear interactions as the source term, to bootstrap the decay. Ideally, the decay rate of $V$ should be the same as the source term $U_0\cdot\nabla U_0$. However, due to criticality of integration, one gets logarithm loss from direct estimates. This loss is removed by a multiplier trick $\int fg = \int (-\Delta)^s f (-\Delta)^{-s}g$ introduced in \cite{LAI2024JFA,BradshawPhelps2023_RemoveLogLoss}, which breaks the criticality.    
\\ [3pt]
\textit{Organization of the paper.} In Section \ref{Sec 2}, we prove the important pointwise estimates for the fractional heat profile $U_0$, as well as other preliminary results which are useful later. In Section \ref{Sec 3}, we prove the existence of an $H^\alpha$ weak solution to \eqref{self similar eq for V}. In Section \ref{Sec 4}, we first prove that any $H^\alpha$ weak solution can be improved to $H^{k}$ for any $k\geq 1$. Next, we prove the weighted $H^\alpha$ estimate, and then the weighted $H^{1+\alpha}$ estimate. In Section \ref{Sec 5}, we prove the pointwise estimates of $V$. A few technical lemmas are deferred to Appendix \ref{Appendix A}. Finally, numerical investigation for nonuniqueness is in Section \ref{Sec 6} and Appendix \ref{Appendix B}.
\\[3pt]
\textit{Notations.} The subscript $\sigma$ stands for ``divergence-free", for example $H^s_{\sigma}:=\{f\in H^s:\div f = 0\}$. We denote constants generically by $C$, the specific values of which can change from line to line. Moreover, the dependence of a constant is indicated in the bracket. For example, $C(Q)$ denotes a constant that depends (only) on $Q$. Denote $\bkt{y}:=(1+|y|^2)^{\frac{1}{2}}$. $A\lesssim B$ means that there exists a universal constant $C>0$ such that $A\leq CB$, where $A$ and $B$ are nonnegative quantities.

\section{Preliminaries}\label{Sec 2}
On $\reall^n$, the fractional Laplacian can be defined via the Fourier transform:
\[
    (-\Delta)^\alpha f:= \Fc^{-1}\dr{|\xi|^{2\alpha}\hat f(\xi)}.
\]
Alternatively, it can also be defined in physical variables for suitable function $f$ as
\begin{align}
    (-\Delta)^\alpha f(x) &= c_{n,\alpha}\,\PV\int_{\reall^n}\frac{f(x)-f(y)}{|x-y|^{n+2\alpha}}\;dy \\
    &=c_{n,\alpha}\lim_{\eps\to 0^+}\int_{\{y\in\reall^2:\;|y|>\eps\}}\frac{f(x)-f(y)}{|x-y|^{n+2\alpha}}\;dy,\quad\alpha\in(0,1),
\end{align}
where $c_{n,\alpha}$ is some constant depending only on $\alpha$ and the dimension. First of all, we derive some important pointwise estimates of the fractional heat profile.
\begin{lemma}[Estimates of $U_0$]\label{lemma: estimates on U_0}
    Let $u_0(x) = |x|^{1-2\alpha}\bar u_0\dr{\frac{x}{|x|}}$ be a divergence free velocity field on $\reall^2\backslash\{0\}$ and denote
    \[
        \|\bar u_0\|_{C^{m,\beta}(\Sp^1)} = M<+\infty,\quad m\in\Z_{\geq 0}, \quad 0<\beta\leq 1.
    \]
    Then, $U_0:= e^{-\Lambda^{2\alpha}}u_0$ is smooth, divergence free, and satisfies
    \begin{align}\label{lemma: estimates of U_0_integer orders}
        |\nabla^k U_0(x)|\leq \frac{C(M,\alpha,\beta,k)}{(1+|x|)^{2\alpha-1+\bar k}},\quad \bar k:= \min(k,\beta+m).
    \end{align}
    Moreover, for any $\gamma\in(0,\beta)$, we have
    \begin{align}\label{lemma: estimates of U_0_fraction orders}
        |\nabla^{k}(-\Delta)^{\frac{\gamma}{2}}U_0(x)|\leq \frac{C(\gamma,M,\alpha,\beta,k)}{(1+|x|)^{2\alpha-1+\gamma+\bar k}},\quad \bar k:= \min(k,m+\beta-\gamma).
    \end{align}
\end{lemma}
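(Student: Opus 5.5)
We plan to write $U_0 = G_\alpha * u_0$, where $G_\alpha = \Fc^{-1}(e^{-|\xi|^{2\alpha}})$ is the fractional heat kernel at time $1$. We will use its standard properties: $G_\alpha$ is smooth and radial, $|\nabla^j G_\alpha(x)|\lesssim_j (1+|x|)^{-2-2\alpha-j}$, and $\int G_\alpha = 1$. Since $|u_0(x)|\le M|x|^{1-2\alpha}$ is locally integrable (because $2\alpha-1<1$) and decays at infinity, the convolution is well defined. Smoothness follows by differentiating the kernel, and $\div U_0 = 0$ follows because convolution commutes with distributional derivatives. The bound $|U_0(x)|\lesssim M$ for $|x|\le 2$ is immediate. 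The real content is the decay for $|x|\ge 2$.

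For $|x| = R\ge 2$ we will split $u_0 = \chi u_0 + (1-\chi)u_0$, where $\chi$ is a cutoff equal to $1$ on $B_{R/4}$ and supported in $B_{R/2}$, with $|\nabla^j\chi|\lesssim R^{-j}$. For the near-origin piece we put all derivatives on the kernel. Since $|x-y|\ge R/2$ on its support,
\[
|\nabla^k(G_\alpha*\chi u_0)(x)|\lesssim R^{-2-2\alpha-k}\int_{B_{R/2}}|y|^{1-2\alpha}\,dy\lesssim R^{1-4\alpha-k},
\]
which is better than what we need, because $2\alpha>\bar k$ would suffice and in fact $R^{-2\alpha}\cdot R^{1-2\alpha-k}\le R^{1-2\alpha-\bar k}$. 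The far piece $w:=(1-\chi)u_0$ is smooth away from the origin. By homogeneity it satisfies $|\nabla^j w(y)|\lesssim M|y|^{1-2\alpha-j}$ for $j\le m$, and its $m$-th derivatives are $\beta$-H\"older with seminorm $\lesssim M R^{1-2\alpha-m-\beta}$ on the region $|y|\gtrsim R$. For $k\le m$ we will write $\nabla^k(G_\alpha*w)=G_\alpha*\nabla^k w$ and split the integral into $|x-y|<R/2$, where $|\nabla^k w|\lesssim R^{1-2\alpha-k}$ and $\int G_\alpha\le 1$, and the complement. On the complement $G_\alpha\lesssim R^{-2-2\alpha}$ near $x$, while in the tail we use the integrability of $|y|^{1-2\alpha-k}(1+|x-y|)^{-2-2\alpha}$. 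For $k>m$ we will distribute the derivatives as $\nabla^k(G_\alpha*w) = \nabla^{k-m}G_\alpha*\nabla^m w$. Since $\int\nabla^{k-m}G_\alpha=0$, we may subtract $\nabla^m w(x)$, and the H\"older bound then gives
\[
\int|\nabla^{k-m}G_\alpha(z)|\,|\nabla^m w(x-z)-\nabla^m w(x)|\,dz\lesssim R^{1-2\alpha-m-\beta}\int_{|z|<R/2}|z|^\beta|\nabla^{k-m} G_\alpha(z)|\,dz + (\text{tail}),
\]
and the $z$-integral is finite because $\beta<1\le k-m+2\alpha$. For $|z|>R/2$ the kernel bound $|z|^{-2-2\alpha-(k-m)}$ makes this region at least as good. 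Combining the two regimes yields \eqref{lemma: estimates of U_0_integer orders}.

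For \eqref{lemma: estimates of U_0_fraction orders} we will use the same splitting with the kernel $K_\gamma := (-\Delta)^{\gamma/2}G_\alpha$. It is smooth, has zero mean, and decays like $(1+|x|)^{-2-2\alpha}$; in fact $\widehat{K_\gamma}=|\xi|^\gamma e^{-|\xi|^{2\alpha}}$, so its decay is governed by the lowest singularity, giving $|\nabla^j K_\gamma(x)|\lesssim(1+|x|)^{-2-\gamma-j}$. This decay is slower than that of $G_\alpha$, and the main obstacle is exactly this weaker tail of $K_\gamma$. We will handle it by writing $(-\Delta)^{\gamma/2}\nabla^k U_0 = K_\gamma*\nabla^{k'}w + \dots$, with $k'=\min(k,m)$ derivatives placed on $w$, and then exploiting the cancellation $\int K_\gamma=0$ together with the $C^{\beta}$ regularity of $\nabla^m w$. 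The estimate
\[
\int|K_\gamma(z)|\min(|z|^\beta R^{1-2\alpha-m-\beta}, R^{1-2\alpha-m})\,dz\lesssim R^{1-2\alpha-m-\gamma}
\]
holds precisely because $\gamma<\beta$, which is where the restriction $\gamma\in(0,\beta)$ enters. The remaining $k-k'$ derivatives stay on the kernel, improving the decay until the cap $m+\beta-\gamma$ is reached. The near-origin piece decays like $R^{-2-\gamma-k}\cdot R^{3-2\alpha}=R^{1-2\alpha-\gamma-k}$, which is exactly what is needed. Alternatively, we can use the singular integral formula for $(-\Delta)^{\gamma/2}$ applied directly to the already established bounds on $\nabla^k U_0$ and their H\"older continuity at scale $|x|$.
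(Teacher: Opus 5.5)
Your proof is correct. For the integer-order bound it is essentially the paper's near/far argument at scale $R=|x|$, organized a little differently. The paper splits in the kernel variable $z$, subtracts the Taylor polynomial of $u_0$ at $x$, and uses the vanishing moments of $\nabla^kH_\alpha$; this leaves the piece $\int_{|z|\ge R/2}\nabla^kH_\alpha T_{k-1}$, which it handles by integrating by parts to boundary terms. Your cutoff $\chi$ instead lets you put up to $m$ derivatives directly on $w=(1-\chi)u_0$, and you need only $\int\nabla^{k-m}G_\alpha=0$ when $k>m$, which is slightly cleaner. There are two harmless slips. The convergence condition is $\beta<2\alpha+k-m$, which holds since $\beta\le 1<2\alpha$ (and $\beta=1$ is allowed). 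Also, $K_\gamma$ decays like $(1+|x|)^{-2-\gamma}$, not $(1+|x|)^{-2-2\alpha}$, as you then correctly use.

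For the fractional bound your route genuinely differs from the paper's.

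\begin{itemize}
\item The paper moves $(-\Delta)^{\gamma/2}$ onto the data. By its appendix lemma, $(-\Delta)^{\gamma/2}u_0$ is $(1-2\alpha-\gamma)$-homogeneous with angular profile in $C^{m,\beta-\gamma}(\Sp^1)$. The integer-order argument is then rerun verbatim with $(2\alpha-1,\beta)$ replaced by $(2\alpha-1+\gamma,\beta-\gamma)$.
\item You move it onto the kernel instead. You trade the fast decay of $G_\alpha$ for the zero mean of $K_\gamma$ together with $|\nabla^jK_\gamma(x)|\lesssim(1+|x|)^{-2-\gamma-j}$; this decay follows from the same dyadic argument as the paper's Oseen-kernel estimate, with $P_{ab}$ dropped. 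The restriction $\gamma<\beta$ enters exactly through $\int_{|z|<R}|z|^\beta|K_\gamma(z)|\,dz\lesssim R^{\beta-\gamma}$. For $k>m$, the convergence of $\int|z|^\beta|\nabla^{k-m}K_\gamma(z)|\,dz$ produces the cap $m+\beta-\gamma$.
\end{itemize}

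Your approach avoids the H\"older estimate on $\Sp^1$ for the fractional Laplacian of homogeneous data. The paper's approach establishes that structural fact and makes the fractional case a direct corollary of the integer one.

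One caution about your closing ``alternatively''. Applying the singular-integral formula directly to the bounds on $\nabla^kU_0$ loses in the region $|y|<|x|/2$ once $\bar k\ge 3-2\alpha$. There the crude bound is $R^{-2-\gamma}$ (or $R^{-2-\gamma}\log R$ at equality), which is worse than $R^{1-2\alpha-\gamma-\bar k}$. That variant would therefore need an additional integration by parts onto the kernel in that region, as the paper does when estimating $(-\Delta)^{\gamma/2}(U_0\cdot\nabla U_0)$. Your main cutoff-based route does not have this issue, since in the near-origin piece all derivatives sit on $K_\gamma$.
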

\begin{proof}
   Let $H_{\alpha}(x,t)$ be the fundamental solution of the fractional heat operator in $2$D, i.e., for $t>0$,
   \[
   (\partial_t + (-\Delta)^\alpha)H_\alpha(x,t)=0,\quad H_\alpha(x,t)\to \delta_0(x),\quad \text{as} \quad t\to 0^+.
   \]
   Note that $H_\alpha(x,t)$ can be expressed explicitly as 
   $$H_\alpha(x,t) = \Fc^{-1}_\xi (e^{-t|\xi|^{2\alpha}})(x)\quad\Longleftrightarrow \quad H_\alpha(x,t) = \frac{1}{4\pi^2}\intt e^{ix\cdot\xi}e^{-t|\xi|^{2\alpha}}\;d\xi.$$
   Some useful properties of $H_\alpha$ are:
   \begin{equation}
       \intt H_{\alpha}(x,t)\;dx = 1,\quad H_\alpha(x,t) = t^{-\frac{1}{\alpha}}H(t^{-\frac{1}{2\alpha}}x,1),
   \end{equation}
   and the decay estimates
   \begin{align}\label{lemma: estimates of U_0_estimates of H_alpha}
       |\nabla^{k}H_{\alpha}(x,1)|\leq C(\alpha,k)\bkt{x}^{-2-2\alpha-k},\quad\forall\; k\in \Z_{\geq 0}.
   \end{align}
   Now, we can write
   \[
       U_0(x) = \intt H_\alpha(x-y,1)u_0(y)\;dy,
   \]
   which is clearly smooth and divergence free on $\reall^2$. 
    For any $R\geq 2$, we denote the annulus $A_R:=\{x\in\reall^2:\frac{R}{2}\leq |x|\leq R\}$. Then, by scaling one sees that
   \begin{equation}\label{lemma: U_0 estimates_u_0 esti integer case}
       \|\nabla^j u_0\|_{L^\infty(A_R)}\leq C(\alpha,m)M R^{1-2\alpha-j},\quad 1\leq j\leq m, 
   \end{equation}
   and
   \begin{equation}\label{lemma: U_0 estimates_u_0 esti fraction case}
       \|\nabla^m u_0\|_{\dot C^{0,\beta}(A_R)}\leq C(\alpha,m,\beta)M R^{1-2\alpha-m-\beta}.
   \end{equation}
   It is also helpful to note that for any $k\geq 1$ and $|\gamma|=k$,
   \[
      \intt\pa^\gamma H_\alpha(z,1)P(z)\;dz=0,
   \]
   for any polynomial $P(z)$ with degree equal to or less than $k-1$. From now on, we denote constants $C(M,\alpha,\beta,k,\gamma)$ by $C$ for brevity. \\
   \textit{Proof of \eqref{lemma: estimates of U_0_integer orders}}: In the following, we assume $|x|>2$ and $R:= |x|$. First, we split the integration as
   \begin{equation}
       \nabla^{k} U_0(x) = \int_{|z|\leq R/2}\nabla^k H_\alpha(z,1)u_0(x-z)\;dz+\int_{|z|> R/2}\nabla^k H_\alpha(z,1)u_0(x-z)\;dz:= I_{near}+I_{far}.
   \end{equation}
   The estimate of the far field part is straightforward. By \eqref{lemma: estimates of U_0_estimates of H_alpha},  we have
   \begin{align}
       |I_{far}| &\leq  \int_{|z|>R/2,\;|x-z|<R/4}\da{\nabla^k H_\alpha(z,1)u_0(x-z)}\;dz+\int_{|z|>R/2,\;|x-z|\geq R/4}\da{\nabla^k H_\alpha(z,1)u_0(x-z)}\;dz\\
       &\leq C R^{-2-2\alpha-k}\int_{|x-z|<R/4}|x-z|^{1-2\alpha}\;dz+CR^{1-2\alpha}\int_{|z|>R/2}\bkt{z}^{-2-2\alpha-k}\;dz\\
       &\leq C R^{1-4\alpha-k}.
   \end{align}
   For the near field part, it divides into three cases for different $k$'s. When $k=0$, we note that
   \begin{align}
       |I_{near}| \leq C R^{1-2\alpha}\int_{|z|\leq R/2}\bkt{z}^{-2-2\alpha}\;dz\leq CR^{1-2\alpha},\quad k=0.
   \end{align}
   When $1\leq k\leq m$, we consider the Taylor expansion of $u_0(x-z)$ at $z=0$:
   \begin{equation}
       T_{k-1}(z):= \sum_{|l|\leq k-1}\frac{(-z)^l}{l!}(\pa^l u_0)(x).
   \end{equation}
   Then, since $\intt \nabla^k H_\alpha(z)T_{k-1}(z)\;dz=0$, we have
   \begin{align}
       I_{near} = \int_{|z|<R/2}\nabla^k H_\alpha(z,1)(u_0(x-z)-T_{k-1}(z))\;dz-\int_{|z|\geq R/2}\nabla^kH_\alpha(z,1)T_{k-1}(z)\;dz.
   \end{align}
   By Taylor's theorem and \eqref{lemma: U_0 estimates_u_0 esti integer case}, for $|z|<R/2$ we have
   \[
   |u_0(x-z) - T_{k-1}(z)|\leq C |z|^k\|\nabla^ku_0\|_{L^\infty(A_R)}\leq C|z|^kR^{1-2\alpha-k}.
   \]
   It then follows that
   \begin{align}
       \da{\int_{|z|<R/2}\nabla^k H_\alpha(z,1)(u_0(x-z)-T_{k-1}(z))\;dz}\leq CR^{1-2\alpha-k}\int_{|z|<R/2}\bkt{z}^{-2-2\alpha}\;dz\leq CR^{1-2\alpha-k}.
   \end{align}
   For the other term, through integration by parts for $k$ times, what remains are boundary integrals on $|z|=R/2$. Then, due to the estimates \eqref{lemma: U_0 estimates_u_0 esti integer case}, we get
   \begin{equation}
       \da{\int_{|z|\geq R/2}\nabla^kH_\alpha(z,1)T_{k-1}(z)\;dz} \leq C\int_{z = R/2}R^{-1-4\alpha-k}\;d\sigma \leq C R^{-4\alpha-k}.
   \end{equation}
   Therefore, we obtain the estimate for the near field part,
   \begin{equation}
       |I_{near}| \leq CR^{1-2\alpha-k},\quad 1\leq k\leq m. 
   \end{equation}
   When $k\geq m+1$, we consider the Taylor expansion
      \begin{equation}
       T_{m}(z):= \sum_{|l|\leq m}\frac{(-z)^l}{l!}(\pa^l u_0)(x).
   \end{equation}
   By Taylor's theorem and \eqref{lemma: U_0 estimates_u_0 esti fraction case}, for $|z|<R/2$ we have
   \[
   |u_0(x-z) - T_{m}(z)|\leq C |z|^{m+\beta}\|\nabla^mu_0\|_{\dot C^{0,\beta}(A_R)}\leq C|z|^{m+\beta} R^{1-2\alpha-m-\beta}.
   \]
   Then, following the same argument as in the $1\leq k\leq m$ case, we obtain
   \begin{equation}
       |I_{near}|\leq R^{1-2\alpha-m-\beta},\quad k\geq m+1.
   \end{equation}
   Finally, since $1-4\alpha-k < 1-2\alpha-\min(k,m+\beta)$, collecting all these estimates yields \eqref{lemma: estimates of U_0_integer orders}.\\
   \textit{Proof of \eqref{lemma: estimates of U_0_fraction orders}}: First of all, we note that (e.g., via Fourier transform)
   \begin{equation}
       (-\Delta)^{\frac{\gamma}{2}}U_0(x) = 
       \intt H_\alpha(x-y,1)(-\Delta)^{\frac{\gamma}{2}}u_0(y)\;dy.
   \end{equation}
   By scaling analysis, we have $(-\Delta)^{\frac{\gamma}{2}}u_0(x) = |x|^{1-2\alpha-\gamma}\tilde u_{0,\gamma}(\frac{x}{|x|})$ for some function $\tilde u_{0,\gamma}$ on $\Sp^1$. We claim that
   \begin{equation}\label{lemma: estimates of U_0_technical result}
       \|\tilde u_{0,\gamma}\|_{C^{m,\beta-\gamma}(\Sp^1)}\leq C(m,\alpha,\beta,\gamma)\|\bar u_0\|_{C^{m,\beta}(\Sp^1)},\quad 0<\gamma<\beta.
   \end{equation}
   We remark that since we have $\gamma<3-2\alpha$, it holds that $(-\Delta)^{\frac{\gamma}{2}}u_0\in L^1_{loc}(\reall^2)$ so that $(-\Delta)^{\frac{\gamma}{2}}U_0$ is a well-defined smooth function. Assuming \eqref{lemma: estimates of U_0_technical result} holds, then the proof of \eqref{lemma: estimates of U_0_fraction orders} is exactly the same as that of \eqref{lemma: estimates of U_0_integer orders}. We defer the proof of \eqref{lemma: estimates of U_0_technical result} to the appendix (Lemma \ref{appendix: a technical holder estimate on S1}).
\end{proof}
The commutator between the operator $\Lambda^s$ and a function $\vp$ is defined in the usual way as 
\[
    [\Lambda^s,\vp]f:= \Lambda^s(\vp f) - \vp\Lambda^s f.
\]
Here is a commutator estimate that is useful in the energy estimates below.
\begin{lemma}[Commutator estimates]\label{lemma: commutator estimates}
    Let $\alpha\in (0,1)$, $f\in L^2(\reall^2)$ and $\vp\in \dot C^{\beta}(\reall^2)\cap\dot W^{1,\infty}(\reall^2)$ where $\beta\in [0,\alpha)$. Then, there exists some constant $C$ such that
    \begin{equation}\label{lemma: commutator estimate_ineq}
        \|[\Lambda^\alpha,\vp]f\|_{L^2(\reall^2)}\leq C(\|\vp\|_{\dot C^{\beta}(\reall^2)}+\|\vp\|_{\dot W^{1,\infty}(\reall^2)})\|f\|_{\Ltw}.
    \end{equation}
\end{lemma}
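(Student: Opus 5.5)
We will use the singular-integral representation of $\Lambda^\alpha$ for $\alpha\in(0,1)$. Since $0<\alpha<2$, the formula recalled at the beginning of this section applies with $\alpha$ replaced by $\alpha/2$, and we get the pointwise identity
\[
    [\Lambda^\alpha,\vp]f(x) = c_{2,\alpha/2}\,\PV\intt \frac{\vp(x)f(x)-\vp(y)f(y) - \vp(x)\big(f(x)-f(y)\big)}{|x-y|^{2+\alpha}}\;dy = c_{2,\alpha/2}\intt \frac{\big(\vp(x)-\vp(y)\big)f(y)}{|x-y|^{2+\alpha}}\;dy .
\]
We will first justify this for $f\in C_c^\infty$ and then extend to all of $\Ltw$ by density once the $L^2$ bound is proved. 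For smooth compactly supported $f$, the last integral converges absolutely: near the diagonal $|\vp(x)-\vp(y)|\leq \|\vp\|_{\dot W^{1,\infty}}|x-y|$, so the integrand is bounded by $|x-y|^{-1-\alpha}|f(y)|$, which is locally integrable in two dimensions since $\alpha<1$. Hence the principal value can be dropped. Commutators of this type arise from Leibniz-type manipulations in the energy estimates, so the kernel form is the natural starting point.

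Next we split the integral into the regions $|x-y|\leq 1$ and $|x-y|>1$ and bound the kernel in each.

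\emph{Near region.} The Lipschitz bound gives the kernel estimate $|K(x,y)|\leq \|\vp\|_{\dot W^{1,\infty}}|x-y|^{-1-\alpha}\mathbbm{1}_{|x-y|\leq 1}$. This is an integrable convolution kernel with $L^1$ norm $C(\alpha)=2\pi/(1-\alpha)$. Young's inequality then bounds this piece by $C\|\vp\|_{\dot W^{1,\infty}}\|f\|_{\Ltw}$.

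\emph{Far region.} The H\"older bound gives $|\vp(x)-\vp(y)|\leq \|\vp\|_{\dot C^\beta}|x-y|^\beta$, so the kernel is bounded by $\|\vp\|_{\dot C^\beta}|x-y|^{-2-\alpha+\beta}\mathbbm{1}_{|x-y|>1}$. This is integrable at infinity precisely because $\beta<\alpha$, with $L^1$ norm $2\pi/(\alpha-\beta)$. Young's inequality again gives the bound $C\|\vp\|_{\dot C^\beta}\|f\|_{\Ltw}$.

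The case $\beta=0$ is included in this argument, with $\dot C^0$ meaning $L^\infty$, where $|\vp(x)-\vp(y)|\leq 2\|\vp\|_{L^\infty}$. Adding the two regions yields \eqref{lemma: commutator estimate_ineq} for $f\in C_c^\infty$.

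The main point needing care is justifying the kernel formula and the extension to general $f\in\Ltw$. For the formula, we will compute $\Lambda^\alpha(\vp f)$ for $f\in C_c^\infty$: the product $\vp f$ is compactly supported and Lipschitz, which is enough for the principal-value formula to hold pointwise, since only Lipschitz regularity is needed when the order is $\alpha<1$. We will check this by the Fourier definition tested against Schwartz functions. For the extension, both $\Lambda^\alpha(\vp f)$ and $\vp\Lambda^\alpha f$ make sense as distributions when $f\in\Ltw$, because $\vp$ is Lipschitz with at most linear growth. Since the right-hand side of the kernel identity is a bounded operator on $\Ltw$, density of $C_c^\infty$ and distributional convergence identify it with the commutator. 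This gives the estimate for all $f\in\Ltw$.
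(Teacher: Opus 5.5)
Your proof is correct and is essentially the paper's argument. The paper likewise writes $[\Lambda^\alpha,\vp]f(x)=c_\alpha\intt\frac{\vp(x)-\vp(y)}{|x-y|^{2+\alpha}}f(y)\,dy$, splits into $B_1(x)$ and its complement, uses the Lipschitz bound near the diagonal and the $\dot C^{\beta}$ bound away from it, and concludes with Young's convolution inequality; your added justification of the kernel identity for $f\in C_c^\infty$ and the density extension to all of $\Ltw$ fills in details the paper leaves implicit without changing the route.
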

\begin{proof}
    By the equivalent definition of the fractional Laplacian operator, we have (note that the principal value is not needed here as $\alpha<1$)
    \begin{equation}
        ([\Lambda^\alpha,\vp]f)(x) = c_\alpha\intt\frac{\vp(x)-\vp(y)}{|x-y|^{2+\alpha}}f(y)\;dy.
    \end{equation}
    Then, by the assumption of $\vp$, we have
    \begin{align}
        \da{([\Lambda^\alpha,\vp]f)(x)} \leq c_\alpha\|\vp\|_{\dot W^{1,\infty}(\reall^2)}\int_{B_1(x)}\frac{|f(y)|}{|x-y|^{1+\alpha}}\;dy+c_\alpha\|\vp\|_{\dot C^{\beta}(\reall^2)}\int_{\reall^2\backslash B_1(x)}\frac{|f(y)|}{|x-y|^{2+\alpha-\beta}}\;dy.
    \end{align}
    By Young's convolution inequality and the fact that $\frac{1}{2} < \alpha < 1$ and $0 < \beta < \alpha$, we obtain the desired estimate \eqref{lemma: commutator estimate_ineq}.
\end{proof}

Next, we introduce the standard mollifier. Let $\rho\in C^\infty_0(\reall^n)$ be a nonnegative function such that $\int \rho \;dy= 1$. For any $\eps>0$, we define 
\[
    \rho_\eps(y):=\frac{1}{\eps^n}\rho\dr{\frac{y}{\eps}}.
\]
For any other function $f$, unless otherwise indicated from the context, we denote its mollification as
\[
    f_\eps := \rho_\epsilon * f.
\]
Here are some useful convergence properties of the mollification:
\begin{lemma}[Convergence of mollification]\label{lemma: mollification}
    Let $f\in H^s(\reall^n)$ for some $s\in\reall$, then we have
    \begin{equation}
        \lim_{\eps\to 0^+}\|f_\eps - f\|_{H^s(\reall^n)}=0.
    \end{equation}
    Moreover, if $\|f\|_{H^{s+\delta}(\reall^2)}$ is finite for some $\delta\in (0,1]$, then we can quantify the convergence rate as
    \begin{equation}
        \|f_\eps - f\|_{H^s(\reall^n)} \lesssim \eps^\delta\|f\|_{H^{s+\delta}(\reall^n)}.
    \end{equation}
\end{lemma}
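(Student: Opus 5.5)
The plan is to work entirely on the Fourier side. Since the mollifier has compact support and is smooth, its Fourier transform $\hat\rho_\eps(\xi)=\hat\rho(\eps\xi)$ is a Schwartz function. It satisfies $|\hat\rho(\eta)|\le \int\rho=1$ and $\hat\rho(0)=1$. Then $\widehat{f_\eps}-\hat f=(\hat\rho(\eps\xi)-1)\hat f(\xi)$, and by Plancherel
\[
\|f_\eps-f\|_{H^s(\reall^n)}^2=\int_{\reall^n}\langle\xi\rangle^{2s}|\hat\rho(\eps\xi)-1|^2|\hat f(\xi)|^2\,d\xi .
\]
With this identity both claims become statements about the multiplier $m_\eps(\xi):=\hat\rho(\eps\xi)-1$.

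For the qualitative convergence, note that $|m_\eps(\xi)|\le 2$ uniformly in $\eps$ and $\xi$. Also $m_\eps(\xi)\to 0$ pointwise as $\eps\to0^+$, by continuity of $\hat\rho$ at $0$. The integrand is therefore dominated by $4\langle\xi\rangle^{2s}|\hat f(\xi)|^2\in L^1$, and dominated convergence gives $\|f_\eps-f\|_{H^s}\to0$. Nothing beyond $f\in H^s$ is needed, and $s\in\reall$ can be arbitrary.

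For the quantitative rate, the key bound is
\[
|\hat\rho(\eta)-1|\lesssim \min(1,|\eta|)\le |\eta|^{\delta},\qquad \delta\in(0,1].
\]
The factor $|\eta|$ comes from the mean value theorem, using $\|\nabla\hat\rho\|_{L^\infty}\le\int|y|\rho(y)\,dy<\infty$ (compact support of $\rho$). The factor $1$ comes from $|\hat\rho|\le1$. Interpolating, $\min(1,|\eta|)\le|\eta|^\delta$ for $0<\delta\le1$. Applying this with $\eta=\eps\xi$ gives $|m_\eps(\xi)|^2\lesssim \eps^{2\delta}|\xi|^{2\delta}\le\eps^{2\delta}\langle\xi\rangle^{2\delta}$. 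Inserting this into the Plancherel identity yields
\[
\|f_\eps-f\|_{H^s}^2\lesssim \eps^{2\delta}\int\langle\xi\rangle^{2s+2\delta}|\hat f|^2\,d\xi=\eps^{2\delta}\|f\|_{H^{s+\delta}}^2,
\]
which is the claim. The implicit constant depends only on $\rho$ (through $\int|y|\rho$).

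The only step needing any care is the elementary multiplier bound $|\hat\rho(\eta)-1|\lesssim|\eta|^\delta$. In particular, the restriction $\delta\le1$ is exactly what is used: without moment cancellations of $\rho$ beyond the zeroth, one cannot do better than first order. Everything else is routine.
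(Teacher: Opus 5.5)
Your proposal is correct and follows essentially the same route as the paper: Plancherel with the multiplier $\hat\rho(\eps\xi)-1$, dominated convergence using $|\hat\rho|\le 1$ for the qualitative limit, and the bound $\sup_{\xi\neq 0}|1-\hat\rho(\xi)|/|\xi|^{\delta}<\infty$ for $\delta\in(0,1]$ for the rate. You also justify this last bound explicitly (via $\|\nabla\hat\rho\|_{L^\infty}\le\int|y|\rho$ together with $|\hat\rho|\le 1$), a step the paper only asserts.
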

\begin{proof}
    First, we note that $\widehat{\rho_\eps}(\xi) = \hat\rho(\eps\xi)$, and by construction
    \[
        \hat\rho(0) =\int\rho = 1,\quad \|\hat\rho\|_{L^\infty}\leq 1.
    \]
    Thus, we get
    \begin{align}
        \|f_\eps - f\|^2_{H^s} = \int |1-\hat\rho(\eps\xi)|^2|\hat f(\xi)|^2\bkt{\xi}^{2s}\;d\xi \to 0,\quad\text{as}\quad\eps\to0^+,
    \end{align}
    by the Lebesgue dominated convergence theorem.
    In addition, if $f\in H^{s+\delta}(\reall^n)$, we have
    \begin{align}
        \|f_\eps - f\|^2_{H^s} = \int |1-\hat\rho(\eps\xi)|^2|\hat f(\xi)|^2\bkt{\xi}^{2s}\;d\xi= \int \eps^{2\delta}|\xi|^{2\delta}\frac{|1-\hat\rho(\eps\xi)|^2}{|\eps\xi|^{2\delta}}|f(\xi)|^2\bkt{\xi}^{2s}\;d\xi\lesssim \eps^{2\delta}\|f\|^2_{H^{s+\delta}(\reall^n)},
    \end{align}
    where we have used 
    \[
        \sup_{\xi\neq 0}\frac{|1-\hat\rho(\xi)|}{|\xi|^{\delta}} <+\infty,\quad \delta\in [0,1].
    \]
    We remark that the range of $\delta$ can be enlarged once we choose $\rho$ such that $\hat\rho$ has a higher vanishing order at the origin (e.g., choose $\rho$ to be radially symmetric). However, we do not need this stronger property.
\end{proof}
Since we will conduct weighted energy estimates, it is helpful to consider the weighted Lebesgue spaces on which all Calderón–Zygmund operators, in particular the Riesz transforms, are bounded. Specifically, let $\om\in L^1_{loc}(\reall^n)$ be positive almost everywhere. We define the weighted $L^2$ space as
\[ 
    \|f\|^2_{L^2(\omega)}:= \int_{\reall^n}|f(y)|^2\omega(y)\;dy.
\]
In addition, we say that the weight function $\omega$ belongs to the $\Ac_2$ class, if it satisfies
\begin{equation}
    \|\omega\|_{\Ac_2} := \sup_{Q}\frac{1}{|Q|^2}\dr{\int_{Q}\om(y)\;dy}\dr{\int_Q \om^{-1}(y)\;dy}<+\infty,
\end{equation}
where the supremum above is taken over all cubes $Q\subset\reall^n$ and $|Q|$ is the volume of the cube under the Lebesgue measure. Then, we have the following result:
\begin{theorem}[{\cite[Theorem 1.3]{Hytonen2012SharpWeightedBound}}]\label{theorem: A2 conjecture}
    Let $T\in \Ls(L^2(\reall^n))$ be any Calderón–Zygmund operator for $n\in \Z_{\geq 1}$. Then, it holds that
    \begin{align}
        \|Tf\|_{L^2(\om)}\leq C(T)\|\om\|_{\Ac_2}\|f\|_{L^2(\om)}.
    \end{align}
\end{theorem}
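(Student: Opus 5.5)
For the purposes of this paper only the qualitative statement is needed: every Calderón–Zygmund operator, and in particular each Riesz transform, is bounded on $L^2(\om)$ with a constant depending on $T$ and $\|\om\|_{\Ac_2}$. That weaker bound already follows from the classical Coifman–Fefferman theory, namely a good-$\lambda$ inequality comparing $Tf$ with the maximal function $Mf$, combined with Muckenhoupt's theorem. The sharp linear dependence on $\|\om\|_{\Ac_2}$ is Hytönen's theorem, which we take from \cite{Hytonen2012SharpWeightedBound}. If one wants a self-contained proof of the linear bound, the plan I would follow is the sparse domination route of Lerner, which is shorter than Hytönen's original argument through dyadic representation.

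\emph{Step 1 (sparse domination).} I would fix $f\in L^2(\om)$, say bounded with compact support, and show that there are $3^n$ shifted dyadic grids $\mathcal D^j$ and sparse families $\mathcal S_j\subset \mathcal D^j$ such that
\begin{equation}
|Tf(x)|\le C(T)\sum_{j=1}^{3^n}\sum_{Q\in\mathcal S_j}\langle |f|\rangle_Q\,\mathbbm{1}_Q(x).
\end{equation}
Here $\langle |f|\rangle_Q$ is the average of $|f|$ over $Q$, and sparse means that each $Q\in\mathcal S_j$ contains a set $E_Q$ with $|E_Q|\ge\frac12|Q|$, the sets $E_Q$ being pairwise disjoint. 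The construction is a recursive stopping time on a cube $Q_0$. The stopping cubes are the maximal dyadic subcubes on which either $\langle|f|\rangle_{3Q}$ is large, or the grand maximal truncation $\mathcal M_{T,Q_0}f$ is large. The key input is the weak $(1,1)$ boundedness of the grand maximal truncation operator. That in turn comes from the weak $(1,1)$ bound for $T$ together with the smoothness (Hölder) condition on the kernel.

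\emph{Step 2 (weighted bound for a sparse operator).} Write $A_{\mathcal S}f:=\sum_{Q\in\mathcal S}\langle f\rangle_Q\mathbbm 1_Q$ and set $\sigma:=\om^{-1}$. It suffices to prove
\begin{equation}
\|A_{\mathcal S}(f\sigma)\|_{L^2(\om)}\lesssim\|\om\|_{\Ac_2}\|f\|_{L^2(\sigma)}.
\end{equation}
By duality this reduces to bounding $\sum_{Q\in\mathcal S}\langle f\sigma\rangle_Q\langle g\om\rangle_Q|Q|$. I would rewrite each term as
\begin{equation}
\frac{\om(Q)\sigma(Q)}{|Q|^2}\,\langle f\rangle^\sigma_Q\langle g\rangle^\om_Q\,|Q|,
\end{equation}
where $\langle\cdot\rangle^\sigma_Q$ and $\langle\cdot\rangle^\om_Q$ denote averages with respect to the measures $\sigma\,dy$ and $\om\,dy$. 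Bounding the prefactor by $\|\om\|_{\Ac_2}$ leaves $\sum_Q\langle f\rangle^\sigma_Q\langle g\rangle^\om_Q|Q|$. Using sparseness I would replace $|Q|$ by $2|E_Q|$. Then I would write $|E_Q|\le \sigma(E_Q)^{1/2}\om(E_Q)^{1/2}$, which is Cauchy–Schwarz with $\sigma\om=1$. A further Cauchy–Schwarz in $Q$ reduces everything to $\|M^{\mathcal D}_\sigma f\|_{L^2(\sigma)}\|M^{\mathcal D}_\om g\|_{L^2(\om)}$. These weighted dyadic maximal functions are bounded on $L^2$ of their own measure with constants independent of the weight, by Doob's inequality. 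This gives the linear bound, and summing over the $3^n$ grids finishes the proof.

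\emph{Main obstacle.} The main obstacle is Step 1, specifically the pointwise control in the recursion. One must check that on a stopping cube $P\subset Q_0$,
\begin{equation}
|T(f\mathbbm 1_{3Q_0})-T(f\mathbbm 1_{3P})|\lesssim \langle|f|\rangle_{3Q_0}\quad\text{a.e. on } P\setminus E,
\end{equation}
which uses the kernel smoothness and the definition of $\mathcal M_{T,Q_0}$. One must also ensure the stopping cubes occupy at most half of $Q_0$, which uses the weak $(1,1)$ bounds with a suitably large threshold. The remaining steps are routine dyadic bookkeeping.
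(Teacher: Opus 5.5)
The paper gives no proof of this statement. It is quoted from \cite{Hytonen2012SharpWeightedBound}, where the linear bound comes from the dyadic representation theorem. There $T$ is written as an average, over randomly translated dyadic grids, of dyadic shifts and paraproducts whose coefficients decay exponentially in the complexity. This is combined with $\Ac_2$ bounds for the shifts that grow only polynomially in the complexity.

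Your route is a genuinely different and correct proof of the same linear bound: Lerner's pointwise sparse domination via the grand maximal truncation $\mathcal M_{T,Q_0}$, followed by the duality/Cauchy--Schwarz/Doob estimate for sparse operators. Your Step 2 is complete as written. The sparse route is much shorter and needs only a Dini-type modulus of continuity for the kernel. Because it majorizes $|Tf|$ pointwise by positive dyadic operators, it also gives sharp weighted $L^p$ and weak-type bounds directly. The representation theorem is heavier, but it represents $T$ itself rather than a positive majorant of $|Tf|$. That is why it extends to settings such as multi-parameter singular integrals, where pointwise sparse domination is not available. You are also right that the paper only needs the qualitative bound, for $\nabla(-\Delta)^{-1}\div$ and the weight $\bkt{y}^{2\beta}$ with $0<\beta<1$. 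For that, Coifman--Fefferman plus Muckenhoupt suffices.

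Two small corrections to Step 1:
\begin{itemize}
\item The estimate for $T(f\mathbf 1_{3Q_0})-T(f\mathbf 1_{3P})=T(f\mathbf 1_{3Q_0\setminus 3P})$ is needed a.e.\ on all of $P$, not just on $P\setminus E$, because the recursion handles $T(f\mathbf 1_{3P})$ on the whole of $P$. It does hold there: $\mathcal M_{T,Q_0}$ contains an essential supremum over the entire cube, so any point of $P\setminus E$ controls all of $P$, and $P\setminus E$ has positive measure by the stopping rule. Kernel smoothness is not used at that step. It enters only through the weak $(1,1)$ bound for the grand maximal truncation, via $\mathcal M_T f\lesssim Mf+T^\star f$ with $T^\star$ the maximal truncated operator.
\item Moving the cubes $3Q$ into the $3^n$ lattices produces $\eta$-sparse families with a dimensional $\eta$, not $\frac12$-sparse ones. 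This only replaces the factor $2$ in Step 2 by $\eta^{-1}$.
\end{itemize}
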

This result will be used later to control the pressure term in weighted energy estimates.

\section{Existence of solutions in $H^\alpha(\reall^2)$}\label{Sec 3}
In this section, we establish the existence of $H^\alpha(\reall^2)$ solutions to \eqref{self similar eq for V} via the Leray-Schauder fixed point theorem. From now on, we confine our discussion to the scenario where $\bar u_0\in C^{0,1}(\Sp^1)$. The main result in this section is:
\begin{theorem}\label{theorem: existence of solution V in H alpha}
    There exists $V\in H^\alpha_\sigma(\reall^2)$ that solves \eqref{self similar eq for V} in the sense of distribution.
\end{theorem}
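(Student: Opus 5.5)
We will follow the Leray--Schauder scheme outlined in the strategy section: regularize, localize to balls, obtain a priori $H^\alpha$ bounds uniform in the regularization, and pass to the limit. Concretely, for $\nu>0$ and $R\gg1$ we consider on the ball $B_R$ the problem
\[
\nu(-\Delta) V+\Lambda^{2\alpha}V - \tfrac{2\alpha-1}{2\alpha}V - \tfrac{1}{2\alpha}y\cdot\nabla V + \nabla P= \lambda\,\mathcal N(V),\qquad \div V=0,
\]
with $V\in H^1_{0,\sigma}(B_R)$ (extended by zero for $\Lambda^{2\alpha}$) and $\lambda\in[0,1]$. Here $\mathcal N(V)$ denotes the right-hand side of \eqref{self similar eq for V}. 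The linear operator on the left is coercive on $H^1_0(B_R)$ by the identity $\int(\dots)\cdot V=\nu\|\nabla V\|^2+\|\Lambda^\alpha V\|^2+\frac{1-\alpha}{\alpha}\|V\|^2$, using that $y$ is bounded on $B_R$. Lax--Milgram therefore gives a solution map $K:H^1_{0,\sigma}(B_R)\cap L^4\to H^1_{0,\sigma}(B_R)$ with $V=\lambda K(V)$. By Rellich compactness on $B_R$, $K$ is compact on a space such as $L^4_\sigma(B_R)$. The Leray--Schauder theorem then yields a fixed point at $\lambda=1$, provided we have a bound on fixed points that is uniform in $\lambda$. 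Moreover, since $u_0$ is Lipschitz off the origin, Lemma~\ref{lemma: estimates on U_0} with $m=0,\beta=1$ gives $|U_0\cdot\nabla U_0|\lesssim (1+|y|)^{1-4\alpha-1}$, which lies in $L^2(\reall^2)$ because $4\alpha>1$. This handles the source term.

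\textbf{Main obstacle: the a priori bound.} Testing with $V$, the terms $U_0\cdot\nabla V\cdot V$ and $V\cdot\nabla V\cdot V$ vanish by the divergence-free condition and zero boundary values. The term $\int (V\cdot\nabla U_0)\cdot V$ is not small, since $\|\nabla U_0\|_\infty$ need not be. Following the Bogovski\u{\i} idea, we fix a large $L$ and split $U_0=W_L+(U_0-W_L)$, where
\[
W_L=(1-\chi(\cdot/L))U_0+\mathcal B\bigl[\nabla\chi(\cdot/L)\cdot U_0\bigr]
\]
with $\mathcal B$ the Bogovski\u{\i} operator on the annulus $L\le|y|\le 2L$. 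Then $W_L$ is divergence free, $\|\nabla W_L\|_{L^\infty}\lesssim L^{-2\alpha}$, and $U_0-W_L$ is divergence free and compactly supported in $B_{2L}$. We rewrite the problem for $\tilde V:=V+(U_0-W_L)$ (restricted and cut off appropriately, or simply bounding the difference), so that the linear transport term involves only $\nabla W_L$. This term is absorbed into $\frac{1-\alpha}{\alpha}\|\tilde V\|^2$ once $L$ is large. The new source terms involve only the smooth, compactly supported field $U_0-W_L$: they include its image under the linear operator, $(U_0-W_L)\cdot\nabla(U_0-W_L)$, and cross terms with $W_L$ and $U_0$. These are all bounded in $L^2$ or $H^{-\alpha}$ by constants depending on $L,u_0$. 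The terms $\int(\tilde V\cdot\nabla(U_0-W_L))\cdot\tilde V$ would be problematic, so we choose the decomposition so that only $W_L$ multiplies $\nabla$ in the remaining transport term. The bound we obtain is $\|\Lambda^\alpha V\|^2+\|V\|^2+\nu\|\nabla V\|^2\le C(u_0,\alpha)$, independent of $\lambda,\nu,R$. Some care is needed because the shift by $U_0-W_L$ is not in $H^1_0(B_R)$ unless $R>2L$; we therefore take $R>2L$. We expect the justification of the testing (the drift term with the $\nu$-regularization) and the precise Bogovski\u{\i} estimates to be the delicate part.

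\textbf{Passing to the limit.} For fixed $\nu$, the uniform $H^1$ bound lets us send $R\to\infty$ along a subsequence. We obtain weak convergence in $H^1$ and strong convergence in $L^2_{loc}$, which suffices to pass to the limit in $V\cdot\nabla V$ tested against $C_c^\infty$ divergence-free fields. We then let $\nu\to0$ using the $H^\alpha$ bound and local compactness, since $\alpha>1/2$ gives strong $L^2_{loc}$, indeed $L^4_{loc}$, convergence. The drift term $y\cdot\nabla V$ is handled distributionally by moving the derivative onto the test function. The pressure is recovered from the divergence-free weak formulation by de Rham's theorem. The limit $V\in H^\alpha_\sigma(\reall^2)$ then solves \eqref{self similar eq for V} in the sense of distributions.
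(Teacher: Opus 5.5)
Your proof is correct and has the same architecture as the paper's: viscous regularization $\nu(-\Delta)$, exhaustion by balls $B_R$, Leray--Schauder, the Bogovski\u{\i}-corrected cutoff $W_L$ (exactly the paper's $U_1=\eta_{R_0}U_0+\tilde U$), the shift $\tilde V=V+(U_0-W_L)$ (the paper's $V'_\lambda$), and then $R\to\infty$ followed by $\nu\to0$.

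The genuine difference is the homotopy.

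\textbf{The paper's homotopy.} The paper inverts only $\nu(-\Delta)+\Lambda^{2\alpha}$, via the Riesz representation in $\mathcal H_R$. It puts the drift and the $\frac{2\alpha-1}{2\alpha}V$ term on the right-hand side together with $F$, all multiplied by $\lambda$. Its $L^2$-coercivity is therefore only $\lambda\frac{1-\alpha}{\alpha}$. The error terms produced by the shift are not multiplied by $\lambda$, so they must be absorbed with the $R$-dependent Poincar\'e inequality \eqref{H_0 alpha poincare ineq}. The estimate is then re-run at $\lambda=1$ to remove the $R$-dependence.

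\textbf{Your homotopy.} You invert the full linear self-similar operator $L$ by Lax--Milgram. It is bounded on $H^1_0(B_R)$ because $|y|\le R$, and coercive by the identity you state; $\lambda$ sits only in front of $\mathcal N$. Since $U_0+V=W_L+\tilde V$, the shifted problem is $L\tilde V=L(U_0-W_L)-\lambda(W_L+\tilde V)\cdot\nabla(W_L+\tilde V)$. Here $L(U_0-W_L)\in L^2$ because $U_0-W_L\in C^\infty_0(B_{2L})$, and $W_L\cdot\nabla W_L\in L^2$. Testing with $\tilde V$ keeps the full term $\frac{1-\alpha}{\alpha}\|\tilde V\|^2_{L^2}$ for every $\lambda$. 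A single estimate then gives a bound uniform in $\lambda\in[0,1]$, $\nu\le 1$ and $R>2L$. You need no Poincar\'e inequality, no second pass, and no appeal to \eqref{equation that U_0 solves} to handle $y\cdot\nabla U_1$.

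The only cost is that the fixed-point map lives on $L^4_\sigma(B_R)$ through a non-symmetric Lax--Milgram inverse. Compactness still follows from $H^1_0(B_R)\hookrightarrow L^4(B_R)$ compactly, so this is harmless.

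\textbf{A small slip.} With $m=0$, $\beta=1$, Lemma \ref{lemma: estimates on U_0} gives $|U_0|\lesssim\bkt{y}^{1-2\alpha}$ and $|\nabla U_0|\lesssim\bkt{y}^{-2\alpha}$. Hence $|U_0\cdot\nabla U_0|\lesssim\bkt{y}^{1-4\alpha}$, not $\bkt{y}^{-4\alpha}$. This is square-integrable on $\reall^2$ because $2(4\alpha-1)>2$, i.e.\ because $\alpha>\frac12$, not because $4\alpha>1$. Your conclusion is unaffected, and the same bound covers $W_L\cdot\nabla W_L$, which equals $U_0\cdot\nabla U_0$ outside $B_{2L}$.
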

In order to get information of the first derivative and facilitate our argument, we add a perturbative viscosity term to \eqref{self similar eq for V} and restrict ourselves to bounded domains $B_R(0)$ first. Then, the solutions of the original equation on the whole domain are obtained once we set the viscosity to zero and $R\to\infty$. Namely, for any $\nu>0$, we consider 
\begin{align}\label{self similar eq for V plus viscosity}
\begin{split}
        \nu(-\Delta)V+\Lambda^{2\alpha}V - \frac{2\alpha-1}{2\alpha}V - \frac{1}{2\alpha}y\cdot\nabla V + \nabla P&= -U_0\cdot\nabla U_0 -V\cdot\nabla U_0 -U_0\cdot\nabla V -V\cdot\nabla V\\
        &:=F(V),  \qquad \div V=0.
\end{split}
\end{align}
For $R>0$, we define the function space
\begin{equation}
    \Hc_R := \{u:\; u\in H^1_{0,\sigma}(B_R)\cap H^{\alpha}_{0,\sigma}(B_R),\; u(x)\equiv0 \text{ for }x\in \reall^2\backslash B_R\}
\end{equation}
together with the inner product 
\[
   \scl{u}{v}_{\Hc_R}:=\nu\int_{\reall^2} \nabla u\cdot\nabla v\;dy+\int_{\reall^2} \Lambda^\alpha u\cdot\Lambda^\alpha v\;dy,\quad u,v\in \Hc_R.
\]
We remark that functions in $\Hc_R$ belong to $H^1(\reall^2)\cap H^\alpha(\reall^2)$, since the zero extension operator from $H^s_0(B_R)$ to $H^s(\reall^2)$ is bounded for $s\geq 0$. Moreover, for any $u\in \Hc_R$, by the (fractional) Poincar\'e inequality, we have
\begin{equation}\label{H_0 alpha poincare ineq}
    \|u\|_{\Ltw}\leq C(R)\|\Lambda^\alpha u\|_{\Ltw}.
\end{equation}
Now, we look for weak solutions $V_{R,\nu}$ to \eqref{self similar eq for V plus viscosity} in the space $\Hc_R$, i.e., an element in $\Hc_R$ that satisfies
\begin{equation}\label{weak formulation for V_R,nu}
    \scl{V_{R,\nu}}{\vp}_{\Hc_R} = \int_{\reall^2} \dr{\frac{2\alpha-1}{2\alpha}V_{R,\nu}+\frac{1}{2\alpha}y\cdot\nabla V_{R,\nu}+F(V_{R,\nu})}\cdot\vp\;dy,\quad\forall\,\vp\in \Hc_R ,
\end{equation}
which is well defined for functions in $\Hc_R$ due to Sobolev embeddings.
\begin{proposition}[Existence of solutions on $B_R$]
The system \eqref{self similar eq for V plus viscosity} admits a solution $V_{R,\nu}\in \Hc_R$ (in the sense of \eqref{weak formulation for V_R,nu}).
\end{proposition}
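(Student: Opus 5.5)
We plan to apply the Leray--Schauder fixed point theorem on $\Hc_R$. Given $W\in\Hc_R$, we define $V=\Tc(W)\in\Hc_R$ as the unique solution of the \emph{linear} problem
\begin{equation}
    \scl{V}{\vp}_{\Hc_R} - \int_{\reall^2}\dr{\frac{2\alpha-1}{2\alpha}V+\frac{1}{2\alpha}y\cdot\nabla V - U_0\cdot\nabla V}\cdot\vp\;dy = \int_{\reall^2}\dr{-U_0\cdot\nabla U_0 - W\cdot\nabla U_0 - W\cdot\nabla W}\cdot\vp\;dy
\end{equation}
for all $\vp\in\Hc_R$. The $W\cdot\nabla W$ term can be placed on either side; placing it on the right keeps the left side linear. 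The bilinear form on the left is bounded on $\Hc_R$ because $|y|\le R$ on $B_R$ and $U_0\in L^\infty$. It is coercive: for $\vp=V$ we have $\int (y\cdot\nabla V)\cdot V=-\|V\|_{L^2}^2$ and $\int (U_0\cdot\nabla V)\cdot V=0$, since $U_0$ and $V$ are divergence free and $V$ vanishes outside $B_R$. The form therefore equals $\|V\|^2_{\Hc_R}+\frac{1-\alpha}{\alpha}\|V\|^2_{L^2}$. The right side is a bounded functional on $\Hc_R$: $W\in H^1_0(B_R)\subset L^p$ for all $p<\infty$, $\nabla W\in L^2$, and $U_0,\nabla U_0\in L^\infty$ by Lemma \ref{lemma: estimates on U_0}. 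Lax--Milgram then gives existence and uniqueness of $V$, and a fixed point of $\Tc$ is exactly a solution of \eqref{weak formulation for V_R,nu}.

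Next we check that $\Tc$ is continuous and compact. The map $W\mapsto$ (right-hand functional) is continuous into $\Hc_R^*$, and Lax--Milgram yields a bounded inverse. For compactness, let $W_n$ be bounded in $\Hc_R$. By Rellich on $B_R$, after passing to a subsequence $W_n\to W$ strongly in $L^p(B_R)$ for every $p<\infty$, while $\nabla W_n\rightharpoonup\nabla W$ weakly in $L^2$. Write $\int (W\cdot\nabla W)\cdot\vp=-\int (W\cdot\nabla\vp)\cdot W$, which is legitimate since $\div W=0$. Then the functionals $\vp\mapsto\int W_n\otimes W_n:\nabla\vp$ converge in $\Hc_R^*$, because $W_n\otimes W_n\to W\otimes W$ in $L^2$. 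The linear terms converge in $\Hc_R^*$ as well, since $\nabla U_0\in L^\infty$ and $W_n\to W$ in $L^2$. Hence $\Tc(W_n)\to\Tc(W)$ in $\Hc_R$, so $\Tc$ is compact.

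The main step is the uniform a priori bound: every $V$ with $V=\lambda\Tc(V)$, $\lambda\in[0,1]$, must satisfy $\|V\|_{\Hc_R}\le C$ with $C$ independent of $\lambda$ (it may depend on $R,\nu$). Test with $\vp=V$. The cubic term vanishes, $\int (V\cdot\nabla V)\cdot V=0$, and so does the $U_0\cdot\nabla V$ term. This leaves
\begin{equation}
    \|V\|^2_{\Hc_R}+\frac{1-\alpha}{\alpha}\|V\|^2_{L^2}\le \lambda\da{\int (U_0\cdot\nabla U_0)\cdot V}+\lambda\da{\int (V\cdot\nabla U_0)\cdot V}.
\end{equation}
For the source term, $\|U_0\cdot\nabla U_0\|_{L^2(B_R)}$ is finite, so Young's inequality absorbs it. The obstacle is $\int (V\cdot\nabla U_0)\cdot V$, because $\|\nabla U_0\|_{L^\infty}$ is not small. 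On the fixed ball $B_R$ it is enough to use $\nu$: $|\int (V\cdot\nabla U_0)\cdot V|\le \|U_0\|_{L^\infty}\|V\|_{L^2}\|\nabla V\|_{L^2}$ after integrating by parts, and with the Poincar\'e inequality $\|V\|_{L^2}\le C(R)\|\Lambda^\alpha V\|_{L^2}$ this still does not absorb. So we would instead follow the Bogovski\u{\i} strategy described in the introduction. Choose a large cutoff $L$ and set $\tilde U_0=\chi_L U_0-\mathcal B(\nabla\chi_L\cdot U_0)$, which is divergence free. Because $|\nabla U_0|\lesssim|y|^{-2\alpha}$ by Lemma \ref{lemma: estimates on U_0}, the field $U_0-\tilde U_0$ has $\|\nabla(U_0-\tilde U_0)\|_{L^\infty}\le\delta$ for $L$ large. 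We then redo the decomposition with $\tilde U_0$ as the background, meaning that the fixed-point map is set up for $\tilde V=V+(U_0-\tilde U_0)$. In the energy inequality the transport coefficient is now $\nabla(U_0-\tilde U_0)$, which has size $\delta<\frac{1-\alpha}{2\alpha}$ and is absorbed by the coercive $L^2$ part. All other terms involving $\tilde U_0$ are compactly supported, smooth forcing terms, or terms of the form $\int (V\cdot\nabla V)\cdot\tilde U_0$. The latter are quadratic and cannot simply be absorbed. If they cause trouble, the fallback is to keep the original decomposition and argue by contradiction in the style of Leray. Suppose $\|V_n\|_{\Hc_R}\to\infty$ and normalize. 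The normalized limit then solves a stationary Euler-type problem on $B_R$ with nonzero energy, namely $\int (\hat V\cdot\nabla U_0)\cdot\hat V=\frac{\lambda_0}{?}$ times the coercive part. Since $U_0$ restricted to that problem is curl-generated, this must be ruled out by an Euler-type identity. We expect the Bogovski\u{\i} route to close, with bounds uniform in $R$ and $\nu$, which is what the later limit will need.
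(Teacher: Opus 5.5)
Your Lax--Milgram/Leray--Schauder setup, continuity and compactness are fine. Putting the drift and $U_0\cdot\nabla V$ on the left even helps: the coercive term $\frac{1-\alpha}{\alpha}\|V\|^2_{\Ltw}$ in the identity for $V=\lambda\Tc(V)$ then carries no factor $\lambda$.

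\textbf{The gap is the a priori bound.} This is the heart of the proof, and you leave it as an expectation. Worse, your Bogovski\u{\i} decomposition goes the wrong way. Let $\tilde U_0=\chi_L U_0-\mathcal B(\nabla\chi_L\cdot U_0)$ be the compactly supported near-field piece, whose gradient is large. Let $U_1:=U_0-\tilde U_0$ be the far-field piece, with $\|\nabla U_1\|_{L^\infty}\le\delta$.

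Your unknown $\tilde V=V+U_1$ does not vanish on $\pa B_R$, so it is not in $\Hc_R$. Also, with background $\tilde U_0$, the transport coefficient in its energy identity is $\nabla\tilde U_0$, which is large, not $\nabla(U_0-\tilde U_0)$ as you claim.

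The right choice, which is the paper's, is the reverse. Put the near-field piece into the unknown, $V':=V+\tilde U_0\in\Hc_R$ for $R>2L$, so that $U_0+V=U_1+V'$. Then regroup the \emph{whole} nonlinearity around the total velocity before testing with $V'$:
\[
\intt\big((U_1+V')\cdot\nabla(U_1+V')\big)\cdot V'\,dy=\intt(U_1\cdot\nabla U_1)\cdot V'\,dy+\intt(V'\cdot\nabla U_1)\cdot V'\,dy,
\]
since $\intt((U_1+V')\cdot\nabla V')\cdot V'\,dy=0$. The first term is an $\Ltw$ forcing, because $|U_1\cdot\nabla U_1|\lesssim\bkt{y}^{1-4\alpha}$. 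The second is at most $\delta\|V'\|^2_{\Ltw}$.

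The quadratic term $\int(V\cdot\nabla V)\cdot\tilde U_0$ that worries you never arises. It only appears if you test the $V$-equation with $V'$ without regrouping.

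In your own formulation, $U_0\cdot\nabla V$ sits on the left without $\lambda$, so it produces an extra $(1-\lambda)\intt(U_0\cdot\nabla V)\cdot V'\,dy$. This equals $-(1-\lambda)\intt(U_0\cdot\nabla\tilde U_0)\cdot V'\,dy$. That term, and the linear operator applied to $\tilde U_0$, are harmless compactly supported forcings. The bound would therefore close uniformly in $\lambda\in[0,1]$, $R>2L$ and $\nu\le 1$.

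\textbf{The Leray-type fallback is not an argument as written.} The constant is left as ``$?$'', and ``curl-generated'' is not the relevant property. It can be completed for fixed $R,\nu$, as follows.
\begin{itemize}
\item The normalized limit $\hat V$ is a nonzero solution of stationary Euler in $H^1_{0,\sigma}(B_R)$.
\item The normalized energy identity forces $\intt(\hat V\cdot\nabla U_0)\cdot\hat V\,dy\le-1/\lambda_0<0$.
\item The Amick/Kapitanski\u{\i}--Pileckas lemma makes the Euler pressure constant on the connected boundary $\pa B_R$.
\item Hence $\intt(\hat V\cdot\nabla U_0)\cdot\hat V\,dy=\int_{\pa B_R}p\,U_0\cdot\vec n\,d\sigma=0$, because $U_0$ has zero flux, which is a contradiction.
\end{itemize}
That external lemma is the whole mechanism, and it is absent from your sketch. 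In addition, a contradiction argument gives no control of the bound in $R$ and $\nu$. The later limits $R\to\infty$ and $\nu\to0$ need exactly the uniform bound that the correctly oriented Bogovski\u{\i} estimate provides.
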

\begin{proof}
    This is an application of the Leray-Schauder fixed point theorem. First of all, for any $v,\vp\in\Hc_R$, by the Sobolev embedding $H^\alpha(\reall^2)\hookrightarrow L^4(\reall^2)$ for $\alpha>\frac{1}{2}$, it holds that
    \begin{align}
         \int_{\reall^2}(v\cdot\nabla v)\cdot\vp\;dx=-\int_{\reall^2}(v\otimes v)\cdot\nabla\vp \lesssim \|v\|^2_{L^4}\|\nabla\vp\|_{L^2} \lesssim \|v\|^2_{\Hc_R}\|\vp\|_{\Hc_R}.
    \end{align}
    This yields by the regularity of $U_0$ that
    \begin{align}
        \int_{\reall^2} \dr{\frac{2\alpha-1}{2\alpha}v+\frac{1}{2\alpha}y\cdot\nabla v+F(v)}\cdot\vp\;dy \leq C(U_0,R)(1+\|v\|_{\Hc_R}+\|v\|^2_{\Hc_R})\|\vp\|_{\Hc_R}.
    \end{align}
    Then, by the Riesz representation theorem, there exists some $\Tc(v)\in \Hc_R$, such that
    \[
       \scl{\Tc(v)}{\vp}_{\Hc_R} = \int_{\reall^2} \dr{\frac{2\alpha-1}{2\alpha}v+\frac{1}{2\alpha}y\cdot\nabla v+F(v)}\cdot\vp\;dy,\quad \forall\,\vp\in \Hc_R.
    \]
    Now \eqref{weak formulation for V_R,nu} translates into a fixed point problem
    \[
       V_{R,\nu} = \Tc(V_{R,\nu}).
    \]
    To apply the Leray-Schauder fixed point theorem, a few verifications are in order.\\
    \textbf{Step 1: Continuity and compactness of $\Tc:\Hc_R\to\Hc_R$:} For any $u,v\in \Hc_R$, based on the definition of $\Tc$ we have
    \begin{align}
        \|\Tc(u) - \Tc(v)\|_{\Hc_R} &= \sup_{\vp\in\Hc_R\backslash\{0\}} \frac{1}{\|\vp\|_{\Hc_R}}\scl{\Tc(u) - \Tc(v)}{\vp}_{\Hc_R}\\
        &=\sup_{\vp\in\Hc_R\backslash\{0\}} \frac{1}{\|\vp\|_{\Hc_R}}\int_{\reall^2}\dr{\frac{2\alpha-1}{2\alpha}(u-v)+\frac{1}{2\alpha}y\cdot\nabla(u-v)+F(u)-F(v)}\cdot\vp\;dy \\
        &\leq C(U_0,R)\dr{1+\|u\|_{\Hc_R}+\|v\|_{\Hc_R}}\|u-v\|_{\Hc_R}.
    \end{align}
    Thus, $\Tc$ is continuous. Now, suppose that $\{v_n\}$ is a bounded sequence in $\Hc_R$. Then, by the compact embedding $H^1(B_R)\to L^p(B_R)$ for any $p\geq 2$, up to a subsequence we have
    \begin{align}
        &v_n \rightharpoonup v \quad \text{ in }\Hc_R,\\
        &v_n \to v \quad\text{ in } L^p(B_R),\quad \forall\, p\geq 2.
    \end{align}
   
  Similar estimates (integration by parts whenever we encounter the term $\nabla(v_n-v)$) show that
    \begin{equation}
        \|\Tc(v_n)-\Tc(v)\|_{\Hc_R}\leq C(U_0,R) (1+\|v_n\|_{\Hc_R}+\|v\|_{\Hc_R})\|v_n-v\|_{L^4}\to 0,\quad n\to+\infty. 
    \end{equation}
    Therefore, we conclude that $\Tc$ is compact.\\
    \textbf{Step 2: A priori estimate:} Assume for $\lambda\in[0,1]$, $V_\lambda\in \Hc_R$ solves
    \begin{equation}\label{Leray-Schauder lambda formation}
        V_\lambda = \lambda \Tc(V_\lambda).
    \end{equation}
    We need to derive uniform estimates on $\|V_\lambda\|_{\Hc_R}$ which is independent of $\lambda$. Now, testing \eqref{Leray-Schauder lambda formation} with $V_\lambda$ and performing integration by parts yields
    \begin{align}\label{energy estimate for V lambda}
        \int_{\reall^2}\nu|\nabla V_\lambda|^2+|\Lambda^\alpha V_\lambda|^2\;dy+\frac{\lambda(1-\alpha)}{\alpha}\int_{\reall^2}|V_\lambda|^2\;dy = -\lambda\int_{\reall^2} (U_0\cdot\nabla U_0)\cdot V_\lambda + (V_\lambda\cdot\nabla U_0)\cdot V_\lambda\;dy,
    \end{align}
    where we have used that
    \[
       \int_{\reall^2} \dr{(V_\lambda+U_0)\cdot\nabla V_\lambda }\cdot V_\lambda \;dy= 0.
    \]
    By Lemma \ref{lemma: estimates on U_0}, we know that  $U_0\cdot\nabla U_0$ belongs to $L^2(\reall^2)$ when $\bar u_0\in C^{0,1}(\Sp^1)$. Then, H\"older's inequality gives us the control of the first term on the right hand side of \eqref{energy estimate for V lambda}. For the second term, it can be controlled if $\|\nabla U_0\|_{L^\infty}$ is sufficiently small. However, we cannot expect this to be valid in general. To address this issue, we make a second decomposition of the velocity field, inspired by the work \cite{gui2026forwardselfsimilarsolutionstwodimensional}. Let $\eta(y)$ be a smooth cutoff function such that $\eta(y)=0$ for $|y|\leq 1$, and $\eta(y)\equiv 1$ for $|y|\geq 2$. Fix some sufficiently large $R_0>0$, we denote $\eta_{R_0}(y):=\eta\dr{\frac{y}{R_0}}$. Since $-\nabla\eta_{R_0}\cdot U_0\in C^\infty_0(\reall^2)$, by the Bogovskiĭ formula (see, for example, \cite[Theorem \Rmnum{3}.3.3]{galdi2011introduction}), there exists some $\tilde U\in C^\infty_0(\reall^2)$ such that
    \begin{equation}
        \div \tilde U = -\nabla\eta_{R_0}\cdot U_0,\quad \supp(\tilde U) \subset B_{2R_0}\backslash B_{R_0}.
    \end{equation}
    Moreover, due to the scaling and the property of the Bogovskiĭ operator, we have 
    \begin{equation}
        \|\nabla \tilde U\|_{L^\infty}\leq C\|\nabla\eta_{R_0}\cdot U_0\|_{L^\infty},
    \end{equation}
    where $C$ is some universal constant independent of $R$. By the far field asymptotics of $U_0$, we know that $\|\nabla\eta_{R_0}\cdot U_0\|_{L^\infty}=\Oc(R_0^{-2\alpha})$ as $R_0\to\infty$. Therefore, for any small constant $\delta>0$, we can always find $R_0 = R_0(\delta)$, such that
    \[
        \|\nabla \tilde U\|_{L^\infty} + \|\nabla(\eta_{R_0}U_0)\|_{L^\infty} <\delta.
    \]
    Now, we define
    \[ 
        U_1 := \eta_{R_0}U_0 + \tilde U.
    \]
    By construction, we know that $\div U_1 = 0$ and $\|\nabla U_1\|_{L^\infty}<\delta$.
    Then, we decompose
    \begin{equation}
        V_\lambda = U_1-U_0+V_\lambda'.
    \end{equation}
    Since $U_1-U_0 \in \Hc_R$ (without loss of generality, we can assume $R>R_0$), we have $V_\lambda'\in\Hc_R$.
    Inserting the decomposition into \eqref{Leray-Schauder lambda formation} and testing it with $V'_\lambda$, we obtain
    \begin{align}\label{energy estimates for V prime lambda}
    \begin{split}
         &\int_{\reall^2}\nu|\nabla V'_\lambda|^2+|\Lambda^\alpha V'_\lambda|^2\;dy+\frac{\lambda(1-\alpha)}{\alpha}\int_{\reall^2}|V'_\lambda|^2\;dy \\
         &= -\lambda\int_{\reall^2} (U_1\cdot\nabla U_1)\cdot V'_\lambda + (V_\lambda'\cdot\nabla U_1)\cdot V'_\lambda\;dy-\intt N(U_0,\tilde U,\lambda)\cdot V'_\lambda \;dy,
    \end{split}
    \end{align}
    where
    \begin{equation}
        N(U_0,\tilde U,\lambda) := -\nu\Delta (U_1-U_0)+\Lambda^{2\alpha}U_1+(\lambda-1)\Lambda^{2\alpha}U_0-\lambda\dr{\frac{2\alpha-1}{2\alpha}U_1+\frac{1}{2\alpha}y\cdot\nabla U_1}. 
    \end{equation}
    Since $U_1$ has the same decay properties as $U_0$, we know that $U_1\cdot\nabla U_1\in L^2(\reall^2)$, and it follows that
    \begin{equation}
        -\lambda\int_{\reall^2} (U_1\cdot\nabla U_1)\cdot V'_\lambda + (V_\lambda'\cdot\nabla U_1)\cdot V'_\lambda\leq 2\delta \lambda\int_{\reall^2}|V'_\lambda|^2\;dy+C(U_0,\delta).
    \end{equation}
    Now it remains to estimate the term involving $N(U_0,\tilde U,\lambda)$. Note by \eqref{equation that U_0 solves} that
    \begin{equation}
        \frac{2\alpha-1}{2\alpha}U_1+\frac{1}{2\alpha}y\cdot\nabla U_1 = \frac{1}{2\alpha}y\cdot\nabla\eta_{R_0} U_0+\eta_{R_0}\Lambda^{2\alpha}U_0+ \frac{2\alpha-1}{2\alpha}\tilde U+\frac{1}{2\alpha}y\cdot\nabla \tilde U.
    \end{equation}
    Therefore, we get
    \begin{align}
        \lambda\int_{\reall^2}\dr{\frac{2\alpha-1}{2\alpha}U_1+\frac{1}{2\alpha}y\cdot\nabla U_1}\cdot V'_\lambda\;dy&\leq \delta\lambda\|V'_\lambda\|^2_{L^2(\reall^2)}+C(\delta)\dr{\|y\cdot\nabla\eta_{R_0} U_0\|^2_{L^2(\reall^2)}+\|\eta_{R_0}\Lambda^{2\alpha}U_0\|^2_{\reall^2}}\\
        &\quad+C(\delta)\dr{\|\tilde U\|^2_{\Ltw}+\|y\cdot \nabla\tilde U\|^2_{\Ltw}}\\
        &\leq \delta\lambda\|V'_\lambda\|^2_{L^2(\reall^2)}+C(U_0,R_0,\delta),
    \end{align}
    where we have used the interpolation estimate (as $1<2\alpha<2$)
    \[
       \|\eta_{R_0}\Lambda^{2\alpha}U_0\|^2_{\Ltw} \lesssim\|\Lambda^{2\alpha}U_0\|^2_{\Ltw}\lesssim\|\nabla U_0\|^2_{\Ltw}+\|\nabla^2 U_0\|^2_{\Ltw}<+\infty,
    \]
    since we have $\nabla^{k}U_0\in \Ltw$ for $k\geq 1$ by the decay estimates of $U_0$.
    Similarly, since $U_1-U_0$ is compactly supported and $\Lambda^{2\alpha}U_1$ belongs to $\Ltw$ by the same interpolation estimate described above, we have
    \begin{equation}
        \int_{\reall^2}\dr{\nu\Delta(U_1-U_0) - \Lambda^{2\alpha}U_1-(\lambda-1)\Lambda^{2\alpha}U_0}\cdot V'_\lambda\;dy \leq \delta\|V'_\lambda\|^2_{L^2(\reall^2)} + C(U_0,R_0,\delta).
    \end{equation}
    Collecting all these estimates together with \eqref{H_0 alpha poincare ineq}, once we choose $\delta$ small enough (depending only on $\alpha$ and $R$), \eqref{energy estimates for V prime lambda} yields
    \begin{equation}
        \int_{\reall^2}\nu|\nabla V'_\lambda|^2+|\Lambda^\alpha V'_\lambda|^2\;dy+\frac{\lambda(1-\alpha)}{\alpha}\int_{\reall^2}|V'_\lambda|^2\;dy \leq C(U_0,R).
    \end{equation}
    We have dropped the dependence on $R_0$ and $\alpha$ since they are constants once fixed.
    Finally, since $U_1-U_0\in C^\infty_0(\reall^2)$, the norms of which depend only on $U_0$, it follows that
    \begin{equation}
        \int_{\reall^2}\nu|\nabla V_\lambda|^2+|\Lambda^\alpha V_\lambda|^2\;dy+\frac{\lambda(1-\alpha)}{\alpha}\int_{\reall^2}|V_\lambda|^2\;dy \leq C(U_0,R),\quad\forall\,\lambda\in[0,1].
    \end{equation}
    \textbf{Conclusion:} Now that $\Tc:\Hc_R\to \Hc_R$ is well-defined, continuous and compact, and the solutions to $V_\lambda = \lambda\Tc(V_\lambda)$ are uniformly bounded in the $\Hc_R$-norm for $\lambda\in [0,1]$, by the Leray-Schauder fixed point theorem, there exists some $V_{R,\nu}\in \Hc_R$ that solves $V_{R,\nu} = \Tc(V_{R,\nu})$. One important observation: once the existence of the fixed point $V_{R,\nu}$ is established, one can go over the energy estimates above again for $V_{R,\nu}$ and remove the dependence of the constant on $R$. This is because when $\lambda=1$, the error terms of size $\delta\|V_{R,\nu}\|^2$ can be absorbed into the coercive part $\frac{1-\alpha}{\alpha}\|V_{R,\nu}\|^2_{\Ltw}$, once we take $\delta$ small (depending only on $\alpha$ this time). Hence, we do not need to turn to the Poincar\'e inequality \eqref{H_0 alpha poincare ineq} for help, and there is no dependence on $R$ in the estimate. To summarize, for the fixed point $V_{R,\nu}\in\Hc_R$, the following estimate holds:
    \begin{align}\label{proposition: existence of V R nu_uniform esti of V_R nu}
         \int_{\reall^2}\nu|\nabla V_{R,\nu}|^2+|\Lambda^\alpha V_{R,\nu}|^2\;dy+\frac{1-\alpha}{\alpha}\int_{\reall^2}|V_{R,\nu}|^2\;dy \leq C(U_0).
    \end{align}
\end{proof}
One crucial point in \eqref{proposition: existence of V R nu_uniform esti of V_R nu} is that the estimate for $V_{R,\nu}$ does not depend on $R$ or $\nu$ (on the right hand side). This helps when we are to take the limits $R\to+\infty$ and $\nu\to 0$. 
\begin{proposition}\label{proposition: existence of V nu}
    There is a solution $V_\nu\in H^\alpha_{\sigma}(\reall^2)\cap H^1_\sigma(\reall^2)$ to the system \eqref{self similar eq for V plus viscosity}.
\end{proposition}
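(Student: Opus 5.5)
We take the limit $R\to+\infty$ with $\nu>0$ fixed, using the uniform bound \eqref{proposition: existence of V R nu_uniform esti of V_R nu}. Extending each $V_{R,\nu}$ by zero gives a family in $H^1_\sigma(\reall^2)\cap H^\alpha_\sigma(\reall^2)$. Its norm $\nu\|\nabla V_{R,\nu}\|^2_{\Ltw}+\|\Lambda^\alpha V_{R,\nu}\|^2_{\Ltw}+\frac{1-\alpha}{\alpha}\|V_{R,\nu}\|^2_{\Ltw}$ is bounded by $C(U_0)$, independently of $R$.

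\textbf{Step 1 (compactness).} Choose $R_j\to\infty$. By weak compactness and a diagonal argument, extract a subsequence with $V_{R_j,\nu}\rightharpoonup V_\nu$ weakly in $H^1(\reall^2)$ and in $H^\alpha(\reall^2)$. Weak limits preserve the divergence-free condition, and the norms are lower semicontinuous, so $V_\nu\in H^\alpha_\sigma\cap H^1_\sigma$ and satisfies the same bound. By Rellich on each ball $B_K$, after a further diagonal extraction we also have $V_{R_j,\nu}\to V_\nu$ strongly in $L^p(B_K)$ for every $p\in[2,\infty)$ and every $K$.

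\textbf{Step 2 (passing to the limit in the weak formulation).} Fix a divergence-free test field $\vp\in C^\infty_{0,\sigma}(\reall^2)$ with $\supp\vp\subset B_K$. For $R_j>K$ we have $\vp\in\Hc_{R_j}$, so \eqref{weak formulation for V_R,nu} holds. The linear terms pass to the limit by weak convergence:
\begin{itemize}
\item $\nu\int\nabla V\cdot\nabla\vp$ and $\int\Lambda^\alpha V\cdot\Lambda^\alpha\vp$ converge because $\vp$ lies in both $H^1$ and $H^\alpha$;
\item $\int\frac{2\alpha-1}{2\alpha}V\cdot\vp$ converges directly;
\item $\int (y\cdot\nabla V)\cdot\vp$ converges because $y\vp\in L^2$ with compact support.
\end{itemize}
The terms involving $U_0$ are linear in $V$ with smooth bounded coefficients on $B_K$. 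Integrating by parts (using $\div U_0=\div V=0$) puts the derivative on $\vp$, and these terms converge too. The source $-U_0\cdot\nabla U_0$ does not depend on $R$.

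For the nonlinear term, write $\int (V\cdot\nabla V)\cdot\vp=-\int (V\otimes V):\nabla\vp$. This converges by the strong $L^4(B_K)$ convergence. Hence $V_\nu$ satisfies the weak formulation of \eqref{self similar eq for V plus viscosity} for all $\vp\in C^\infty_{0,\sigma}$.

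\textbf{Step 3 (recovering the pressure).} By de Rham's theorem, or by explicitly taking $P=(-\Delta)^{-1}\div\div(\ldots)$ via Riesz transforms, the equation holds in the sense of distributions with some pressure. One could also check directly that $V_\nu$ solves the projected equation.

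The main obstacle is the absence of global compactness on $\reall^2$: the $L^2$ bound does not give strong convergence on the whole space. It is resolved by working only with compactly supported test functions, since every term is local after pairing with $\vp$, and the drift weight $y$ is harmless on $\supp\vp$. The only point needing care is the nonlocal term $\Lambda^\alpha$. It is handled purely by weak convergence in $H^\alpha$, because the pairing $\int\Lambda^\alpha V\cdot\Lambda^\alpha\vp$ is a bounded linear functional of $V$ in $H^\alpha$.
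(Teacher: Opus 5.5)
Your proposal is correct and follows essentially the same route as the paper: you use the $R$-uniform bound, extract a subsequence converging weakly in $H^1\cap H^\alpha$ and strongly in $L^4$ on compact sets, and pass to the limit against $C^\infty_{0,\sigma}$ test fields, treating the nonlinear term by local strong convergence. Your pressure-recovery step (de Rham, or $P=(-\Delta)^{-1}\div\div(\cdots)$) is a small addition that the paper leaves implicit.
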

\begin{proof}
    Since we have 
    \[
        \|V_{R,\nu}\|^2_{\Hc_R}+\frac{1-\alpha}{\alpha}\|V_{R,\nu}\|^2_{\Ltw}\leq C(U_0,\alpha),\quad\forall\,R>0,
    \]
    there exists a sequence $R_j\to+\infty$, and some $V_\nu\in H^\alpha_{\sigma}(\reall^2)\cap H^1_\sigma(\reall^2)$, such that
    \begin{align}\label{convergences of V_R_j,nu}
    \begin{split}
        &V_{R_j,\nu}\rightharpoonup V_{\nu},\quad\text{ in }\;H^\alpha_{\sigma}(\reall^2)\cap H^1_\sigma(\reall^2),\\
        &V_{R_j,\nu}\to V_\nu,\quad\text{ in }\;L^4(B_R),\quad\forall\,R\in(0,+\infty).
    \end{split}
    \end{align}
    Now we verify that $V_\nu$ is a solution to \eqref{self similar eq for V plus viscosity}. For any $\vp\in C^\infty_{0,\sigma}(\reall^2)$, by the construction of $V_{R_j,\nu}$ we have
    \begin{align}
        \int_{\reall^2}\nu\nabla V_{R_j,\nu}\cdot\nabla\vp+\Lambda^\alpha V_{R_j,\nu}\cdot\Lambda^\alpha\vp\;dy = \int_{\reall^2} \dr{\frac{2\alpha-1}{2\alpha}V_{R_j,\nu}+\frac{1}{2\alpha}y\cdot\nabla V_{R_j,\nu}+F(V_{R_j,\nu})}\cdot\vp\;dy.
    \end{align}
    It is easy to see from \eqref{convergences of V_R_j,nu} that
    \begin{align}
        &\int_{\reall^2}\nu\nabla V_{R_j,\nu}\cdot\nabla\vp+\Lambda^\alpha V_{R_j,\nu}\cdot\Lambda^\alpha\vp+\dr{-\frac{2\alpha-1}{2\alpha}V_{R_j,\nu}-\frac{1}{2\alpha}y\cdot\nabla V_{R_j,\nu}+U_0\cdot \nabla V_{R_j,\nu}+V_{R_j,\nu}\cdot\nabla U_0}\cdot\vp\;dy\\
        &\quad\to \int_{\reall^2}\nu\nabla  V_\nu\cdot\nabla\vp+\Lambda^\alpha V_\nu\cdot\Lambda^\alpha\vp+\dr{-\frac{2\alpha-1}{2\alpha}V_\nu-\frac{1}{2\alpha}y\cdot\nabla V_\nu+U_0\cdot \nabla V_\nu+V_\nu\cdot\nabla U_0}\cdot\vp\;dy,\quad j\to+\infty
    \end{align}
    Furthermore, we observe that
    \begin{align}
        \bigg|\int_{\reall^2} (V_{R_j,\nu}\otimes V_{R_j,\nu})&\cdot\nabla\vp- (V_{\nu}\otimes V_{\nu})\cdot\nabla\vp\;dy \bigg|= \da{\int_{\reall^2}\dr{V_{R_j,\nu}\otimes(V_{R_j,\nu}-V_{\nu})+(V_{R_j,\nu}-V_\nu)\otimes V_{\nu}}\cdot\nabla\vp\;dy}\\
        &\lesssim (\|V_{R_j,\nu}\|_{L^4(\reall^2)}+\|V_\nu\|_{L^4(\reall^2)})\|\nabla\vp\|_{L^2(\reall^2)}\|V_{R_j,\nu}-V_\nu\|_{L^4(B_{\supp(\vp)})}\to0,\quad j\to +\infty.
    \end{align}
    Combining all these results, we have shown that $V_\nu$ solves \eqref{self similar eq for V plus viscosity} in the sense of distribution. Moreover, by the weak lower semicontinuity of the norm, the limiting function satisfies
    \begin{equation}
        \int_{\reall^2}\nu|\nabla V_\nu|^2+|\Lambda^\alpha V_\nu|^2+\frac{1-\alpha}{\alpha}|V_\nu|^2\;dy\leq C(U_0,\alpha).
    \end{equation}
\end{proof}
Finally, we are ready to prove the main theorem in this section.\\
\begin{proof}[Proof of Theorem \ref{theorem: existence of solution V in H alpha}]
Since we have 
\[
    \|V_\nu\|_{H^\alpha(\reall^2)}\leq C(U_0,\alpha),
\]
which is independent of $\nu$, there exists a sequence $\nu_j\to 0$ and $V\in H^\alpha_\sigma(\reall^2)$ such that
\begin{align}
     &V_{\nu_j}\rightharpoonup V,\quad\text{ in }\;H^\alpha_{\sigma}(\reall^2),\\
    &V_{\nu_j}\to V,\quad\text{ in }\;L^4(B_R),\quad\forall\,R\in(0,+\infty).
\end{align}
Then, using the same argument as in the previous proof and the observation that $\nu_j (-\Delta) V_{\nu_j}\to 0$ in the sense of distribution as $j\to +\infty$, we can show that $V$ is a solution to \eqref{self similar eq for V} in the sense of distribution. 
\end{proof}
Once we construct a solution $U = U_0+V$ with $V\in H^\alpha(\reall^2)$ to \eqref{self similar eq for U}, the pressure can be recovered (up to a constant) by solving a Poisson equation.
\begin{lemma}[Control of pressure]\label{lemma: control of pressure}
    Let $V\in H^\alpha(\reall^2)$ be a velocity field for $\alpha\in(\frac{1}{2},1)$. Then, there exists pressure $P$ that solves 
    \[
    -\Delta P = \div\div((U_0+V)\otimes(U_0+V)). 
\]
If we further decompose 
\begin{align}
    P &= (-\Delta)^{-1}\div\div((U_0\otimes V)+(V\otimes U_0)+(V\otimes V))+(-\Delta)^{-1}\div\div(U_0\otimes U_0)\\
    &:=P_1+P_2 ,
\end{align}
then we have the control
\begin{equation}
    \|\nabla^{k-1}P_1\|_{H^{2\alpha-1}(\reall^2)}\leq C(U_0)(\|V\|^2_{H^{k-1+\alpha}(\reall^2)}+1),\quad \|\nabla^{k}P_2\|_{\Ltw}\leq C(U_0,k),\quad \forall\;k\geq 1.
\end{equation}
The same estimates hold for the mollified pressure $P_\eps := P_{1,\eps}+P_{2,\eps}$ as well.
\end{lemma}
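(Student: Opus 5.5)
The plan is to treat $P_1$ and $P_2$ separately, using that $(-\Delta)^{-1}\div\div = -\sum_{i,j}R_iR_j$ is a composition of Riesz transforms. These are Fourier multipliers with symbols bounded by $1$, hence bounded on every $H^s(\reall^2)$ and commuting with derivatives and with mollification. Accordingly I define $P:=P_1+P_2$, with each piece given by the Riesz-transform formula acting on the corresponding tensor. Then $-\Delta P=\div\div((U_0+V)\otimes(U_0+V))$ holds in the sense of distributions as soon as each tensor lies in some $H^s$, which the estimates below provide.

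\emph{Estimate for $P_2$.} Here $\nabla^kP_2 = -\sum R_iR_j\nabla^k(U_0\otimes U_0)$, so it suffices to show $\nabla^k(U_0\otimes U_0)\in\Ltw$ for $k\geq1$. By the Leibniz rule and Lemma \ref{lemma: estimates on U_0} with $m=0$, $\beta=1$, every term $\nabla^a U_0\otimes\nabla^b U_0$ with $a+b=k\geq1$ contains at least one derivative. It is therefore bounded by $C(1+|x|)^{2-4\alpha-1}=C(1+|x|)^{1-4\alpha}$, which lies in $L^2(\reall^2)$ since $8\alpha-2>2$.

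\emph{Estimate for $P_1$.} The bound reduces to
\[
\|\nabla^{k-1}(U_0\otimes V+V\otimes U_0+V\otimes V)\|_{H^{2\alpha-1}}\lesssim C(U_0)(\|V\|^2_{H^{k-1+\alpha}}+1).
\]
For the quadratic term I will use the product estimate $\|fg\|_{H^{2\alpha-1}}\lesssim\|f\|_{H^\alpha}\|g\|_{H^\alpha}$ in 2D for $\alpha\in(\frac12,1)$. This is the embedding $H^\alpha\times H^\alpha\hookrightarrow H^{2\alpha-1}$ quoted in the introduction, which follows from Kato–Ponce or paraproducts, using $H^\alpha\hookrightarrow L^{2/(1-\alpha)}$ together with the $\dot H^{2\alpha-1}$ Leibniz rule. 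Applying it after distributing $\nabla^{k-1}$ by Leibniz gives terms $\nabla^aV\otimes\nabla^bV$ with $a+b=k-1$, each bounded by $\|V\|_{H^{a+\alpha}}\|V\|_{H^{b+\alpha}}\le\|V\|^2_{H^{k-1+\alpha}}$.

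For the cross term $\nabla^a U_0\otimes\nabla^bV$, I use that $U_0$ and all its derivatives are smooth and bounded with bounded derivatives, by Lemma \ref{lemma: estimates on U_0}. Since $2\alpha-1<1$, multiplication by a $W^{1,\infty}$ function is bounded on $H^{2\alpha-1}$, which can be seen by interpolating between $L^2$ and $H^1$. Hence $\|\nabla^aU_0\nabla^bV\|_{H^{2\alpha-1}}\le C(U_0,k)\|V\|_{H^{b+2\alpha-1}}\le C\|V\|_{H^{k-1+\alpha}}$, using $2\alpha-1<\alpha$. Young's inequality $\|V\|\le\|V\|^2+1$ then yields the claimed form. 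One caveat: the constant actually depends on $k$ as well, which I would absorb into $C(U_0)$.

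\emph{Mollified pressure.} For $P_\eps$, mollification commutes with $R_iR_j$ and with derivatives, and $\|\rho_\eps*f\|_{H^s}\le\|f\|_{H^s}$ because $|\hat\rho|\le1$. So the same bounds hold uniformly in $\eps$.

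The main obstacle is the product estimate in $H^{2\alpha-1}$ with the correct exponents; everything else is Leibniz bookkeeping. I would cite it as standard (e.g. fractional Leibniz rule / Sobolev multiplication theorem, valid since $\alpha+\alpha-(2\alpha-1)=1=n/2$ with $\alpha<1=n/2$).
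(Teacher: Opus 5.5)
Your proposal is correct and follows essentially the paper's proof. The common steps are: boundedness of the zero-order multiplier $(-\Delta)^{-1}\div\div$ on $H^s$, Leibniz expansion, the product lemma $H^\alpha\times H^\alpha\hookrightarrow H^{2\alpha-1}$ for $V\otimes V$, bounded derivatives of $U_0$ for the cross terms, pointwise decay of $U_0$ for $P_2$, and commutation of mollification with Fourier multipliers. On the cross terms the paper bounds them in $H^\alpha$ via its $W^{\lceil s\rceil,\infty}$ multiplier lemma and then embeds into $H^{2\alpha-1}$, while you multiply directly at the $H^{2\alpha-1}$ level; this difference is immaterial.

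One small correction to your well-definedness remark, a point the paper also glosses over. For $u_0\not\equiv 0$ and $\alpha\le\frac34$, $U_0\otimes U_0\sim\bkt{x}^{2-4\alpha}$ lies in no $H^s$. So $P_2$ should be defined via the $L^p$-boundedness of $R_iR_j$ for some finite $p>\frac{1}{2\alpha-1}$, not via $H^s$.
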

\begin{proof}
First of all, the operator
\[
(-\Delta)^{-1}\operatorname{div}\operatorname{div}
\]
is a zero-order Fourier multiplier, with symbol $\frac{\xi_i\xi_j}{|\xi|^2}$, and hence is bounded on $H^s(\mathbb{R}^2)$ for every $s\in\mathbb{R}$. Therefore, once the source term lies in $H^{2\alpha-1}(\mathbb{R}^2)$, the same regularity follows for $P_1$.
For $P_1$, by the chain rule we have
\[
   \nabla^{{k-1}}(U_0\otimes V) = \sum_{j=0}^{k-1}(\nabla^j U_0\otimes \nabla^{k-1-j}V),\quad  \nabla^{{k-1}}(V\otimes V) = \sum_{j=0}^{k-1}(\nabla^j V\otimes \nabla^{k-1-j}V).
\]
When $V \in H^{k-1+\alpha}(\mathbb{R}^2)$, Lemma \ref{appendix: product estimate for the L infty case} implies that the mixed terms
\[
\nabla^{k-1}(U_0 \otimes V), \qquad \nabla^{k-1}(V \otimes U_0)
\]
belong to $H^\alpha(\mathbb{R}^2)$. Since $\alpha \in \left(\frac12,1\right)$, we have $\alpha > 2\alpha - 1$,
and hence the continuous Sobolev embedding
\[
H^\alpha(\mathbb{R}^2) \hookrightarrow H^{2\alpha-1}(\mathbb{R}^2).
\]
Therefore, these mixed terms are also controlled in $H^{2\alpha-1}(\mathbb{R}^2)$. On the other hand, the quadratic term $\nabla^{k-1}(V \otimes V)$ belongs to $H^{2\alpha-1}(\mathbb{R}^2)$ by the multiplicative embedding
\[
\mathfrak{m}:H^\alpha(\mathbb{R}^2)\times H^\alpha(\mathbb{R}^2)\hookrightarrow H^{2\alpha-1}(\mathbb{R}^2)
\]
from Lemma \ref{appendix: product estimate for H alpha times H alpha to H 2alpha-1}. Thus the source term for $\nabla^{k-1} P_1$ lies in $H^{2\alpha-1}(\mathbb{R}^2)$, which yields the stated bound for $P_1$.
For $P_2$, we note that $\nabla^k (U_0\otimes U_0)\in\Ltw$ for $k\geq 1$, thanks to the pointwise estimates of $U_0$ in Lemma \ref{lemma: estimates on U_0}. For the last statement, it suffices to note that mollification commutes with differential operators (as well as $(-\Delta)^{-1}$), is bounded on Sobolev spaces, and preserves the pointwise decay properties. 
\end{proof}

\section{Regularity upgrade and weighted energy estimates}\label{Sec 4}
Our goal in this section is to improve the regularity of $V$ to the space $\cap_{k\in\mathbb{N}}H^k(\reall^2)$ as well as the weighted Sobolev space characterized by $\|\bkt{y}^{\frac{1}{2}} V\|_{H^{1+\alpha}(\reall^2)}$ when $\alpha\in (\frac{2}{3},1)$. 
Our starting point is an arbitrary weak solution $V\in H^\alpha_\sigma(\reall^2)$ to the system \eqref{self similar eq for V} (not necessarily the one constructed in Theorem \ref{theorem: existence of solution V in H alpha}). 
In order to overcome the low initial regularity issue, we consider $V_\eps$, the mollification of $V$, so that the gradient of the velocity field makes sense. After deriving the desired uniform estimates of $V_\eps$, sending $\eps\to 0^+$ yields the same estimates for $V$. Note, however, that mollification introduces extra commutators in our equation. Specifically, for any $\eps>0$, $V_\eps\in \cap_{k\geq 0}H^k(\reall^2)$ is a solution to the system
\begin{align}\label{equation for mollified V epsilon}
\begin{split}
        \Lambda^{2\alpha}V_\eps - \frac{2\alpha-1}{2\alpha}V_\eps - \frac{1}{2\alpha}y\cdot\nabla V_\eps + \nabla P_\eps&= -(U_0\cdot\nabla U_0)_\eps -V_\eps\cdot\nabla U_0 -U_0\cdot\nabla V_\eps -V_\eps\cdot\nabla V_\eps\\
        &\quad +\frac{1}{2\alpha}\dr{(y\cdot\nabla V)_\eps - y\cdot\nabla V_\eps}+\dr{V_\eps\cdot\nabla U_0 - (V\cdot\nabla U_0)_\eps}\\
        &\quad+\dr{U_0\cdot\nabla V_\eps - (U_0\cdot\nabla V)_\eps}+\dr{V_\eps\cdot\nabla V_\eps - (V\cdot\nabla V)_\eps}\\
        &:= F(V_\eps)+\Cc(V,\eps),\qquad\qquad \div V_\eps = 0,
\end{split}
\end{align}
where we define
\begin{equation}
    F(V_\eps):= -(U_0\cdot\nabla U_0)_\eps -V_\eps\cdot\nabla U_0 -U_0\cdot\nabla V_\eps -V_\eps\cdot\nabla V_\eps,
\end{equation}
and the commutators
\begin{align}
    \Cc(V,\eps)&:= \frac{1}{2\alpha}\dr{(y\cdot\nabla V)_\eps - y\cdot\nabla V_\eps}+\dr{V_\eps\cdot\nabla U_0 - (V\cdot\nabla U_0)_\eps}\\
        &\quad+\dr{U_0\cdot\nabla V_\eps - (U_0\cdot\nabla V)_\eps}+\dr{V_\eps\cdot\nabla V_\eps - (V\cdot\nabla V)_\eps}.
\end{align}
The mollified pressure can be expressed as
\begin{equation}
    P_\eps = (-\Delta)^{-1}\div\div \dr{\dr{(U_0+V)\otimes(U_0+V)}_\eps}.
\end{equation}
We remark that \eqref{equation for mollified V epsilon} can be tested with any $\vp$ such that $\bkt{y}\vp(y)\in\Ltw$. 
For brevity, from now on we denote
\begin{align}
    &T_{1,\eps} := \frac{1}{2\alpha}((y\cdot \nabla V)_\eps - y\cdot\nabla\Ve),\quad T_{2,\eps}:=(\Ve\cdot\nabla U_0 - (V\cdot\nabla U_0)_\eps),\\
    &T_{3,\eps} :=(U_0\cdot\nabla\Ve - (U_0\cdot\nabla V)_\eps),\quad T_{4,\eps}:=(\Ve\cdot\nabla\Ve - (V\cdot\nabla V)_\eps). 
\end{align}
The following estimates of the commutator terms will be useful.
\begin{lemma}\label{lemma: commutators T_i eps estimates}
    Let $m\in \Z_{\geq 0}$ and denote the spatial derivatives $\pa_j\;(j=1,2)$ generically by $\pa$ . It holds that:\\
    (1) For $\alpha\in(\frac{1}{2},1)$,
    \begin{align}
        &\|\pa^m T_{1,\eps}\|_{H^\alpha(\reall^2)}+\|\pa^m T_{2,\eps}\|_{H^\alpha(\reall^2)}\leq C(U_0)\|V\|_{H^{m+\alpha}(\reall^2)},\quad \|\pa^mT_{2,\eps}\|_{\Ltw}\leq C(U_0)\eps^{\alpha}\|V\|_{H^{m+\alpha}(\reall^2)},\\
        &\|\pa^mT_{3,\eps}\|_{\dot H^{-\alpha}(\reall^2)}\leq C(U_0)\eps^{2\alpha-1}\|V\|_{H^{m+\alpha}(\reall^2)}.
    \end{align}
    (2) For $\alpha\in (\frac{2}{3},1)$, 
    \begin{align}
        \|\pa^mT_{4,\eps}\|_{\dot H^{-\alpha}(\reall^2)}\leq C\eps^{3\alpha-2}\|V\|^2_{H^{m+\alpha}(\reall^2)}.
    \end{align}
\end{lemma}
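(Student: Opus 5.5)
We plan to treat each commutator separately. In every case the derivative $\pa^m$ commutes with mollification and falls either on $V$ or on the coefficient ($y$, $U_0$, or $V$) via the Leibniz rule. The terms in which derivatives hit $U_0$ or $y$ are lower order and are handled like the base case, using $|\nabla^j U_0|\lesssim 1$ from Lemma \ref{lemma: estimates on U_0} (valid for every $j$, since $\bar k\ge 0$). So it suffices to prove the $m=0$ estimates with $V$ replaced by $\pa^l V$, $l\le m$. Throughout we write commutators in the Friedrichs form
\[
(a\,\pa f)_\eps - a\,\pa f_\eps=\int \rho_\eps(z)\,\bigl(a(y-z)-a(y)\bigr)\,\pa f(y-z)\,dz .
\]

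\textbf{$T_1$ and $T_2$.} For $T_1$ with $a=y$, the increment is $a(y-z)-a(y)=-z$. Integrating by parts in $z$ gives
\[
T_{1,\eps}=\tfrac{1}{2\alpha}\int \bigl(z\cdot\nabla\rho_\eps(z)\bigr)V(y-z)\,dz+\text{(a multiple of }V_\eps).
\]
The kernel $z\cdot\nabla\rho_\eps$ is $L^1$-bounded uniformly in $\eps$, so $\|T_{1,\eps}\|_{H^\alpha}\lesssim\|V\|_{H^\alpha}$. For $T_2=V_\eps\nabla U_0-(V\nabla U_0)_\eps$, the $H^\alpha$ bound follows from the product estimate with a smooth bounded factor (Lemma \ref{appendix: product estimate for the L infty case}) together with the boundedness of mollification. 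The $L^2$ rate comes from the splitting
\[
T_{2,\eps}=(V_\eps-V)\nabla U_0+\bigl(V\nabla U_0-(V\nabla U_0)_\eps\bigr),
\]
where each piece is $\lesssim\eps^\alpha\|V\|_{H^\alpha}$ by Lemma \ref{lemma: mollification} (using $V\nabla U_0\in H^\alpha$ for the second piece).

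\textbf{$T_3$.} We write $T_{3,\eps}=\div\bigl(U_0\otimes V_\eps-(U_0\otimes V)_\eps\bigr)$, using $\div U_0=\div V=0$. Hence
\[
\|T_{3,\eps}\|_{\dot H^{-\alpha}}\lesssim\|U_0\otimes V_\eps-(U_0\otimes V)_\eps\|_{\dot H^{1-\alpha}}.
\]
We split this as $U_0(V_\eps-V)+\bigl(U_0V-(U_0V)_\eps\bigr)$. By Lemma \ref{lemma: mollification} with $s=1-\alpha$ and $\delta=2\alpha-1$, each piece is $\lesssim\eps^{2\alpha-1}\|V\|_{H^\alpha}$, again using the $L^\infty$-multiplier product estimate in $H^{1-\alpha}$ and $H^\alpha$.

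\textbf{$T_4$, the main obstacle.} Similarly, $T_{4,\eps}=\div\bigl(V_\eps\otimes V_\eps-(V\otimes V)_\eps\bigr)$, so we need
\[
\|V_\eps\otimes V_\eps-(V\otimes V)_\eps\|_{\dot H^{1-\alpha}}\lesssim\eps^{3\alpha-2}\|V\|_{H^\alpha}^2 .
\]
We split it as
\[
(V_\eps-V)\otimes V_\eps+V\otimes(V_\eps-V)+\bigl(V\otimes V-(V\otimes V)_\eps\bigr).
\]
The product embedding $H^\alpha\times H^\alpha\hookrightarrow H^{2\alpha-1}$ (Lemma \ref{appendix: product estimate for H alpha times H alpha to H 2alpha-1}) puts $V\otimes V$ in $H^{2\alpha-1}$. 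Lemma \ref{lemma: mollification} with $s=1-\alpha$ and $\delta=3\alpha-2\in(0,1]$ (positive exactly when $\alpha>\frac23$) then gives the last piece. For the first two pieces we need a product estimate of the form $\|fg\|_{H^{1-\alpha}}\lesssim\|f\|_{H^{2-2\alpha}}\|g\|_{H^\alpha}$, i.e.\ $H^{2-2\alpha}\times H^{\alpha}\to H^{1-\alpha}$. The exponents fit, since $(2-2\alpha)+\alpha-1=1-\alpha$ and $2-2\alpha<1$. We then apply it with $f=V_\eps-V$, where $\|V_\eps-V\|_{H^{2-2\alpha}}\lesssim\eps^{3\alpha-2}\|V\|_{H^\alpha}$, which is again valid since $2-2\alpha<\alpha$ precisely when $\alpha>\frac23$.

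Verifying this fractional product estimate at the endpoint, and keeping the Leibniz terms under control when $m>0$, is where we expect the technical work to lie. The fallback is to reprove the product estimate via Littlewood--Paley paraproducts.
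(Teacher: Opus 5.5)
Your proposal is correct and follows the paper's proof essentially step by step. The shared ingredients are: $T_{1,\eps}$ written as the convolution of $V$ with the mean-zero kernel $\eta_\eps=2\rho_\eps+y\cdot\nabla\rho_\eps$; Lemma~\ref{lemma: mollification} applied with $\delta=\alpha,\,2\alpha-1,\,3\alpha-2$; and the divergence form of $T_{3,\eps}$ and $T_{4,\eps}$, measured in $\dot H^{1-\alpha}$. Your splitting $(V_\eps-V)\otimes V_\eps+V\otimes(V_\eps-V)$ is in fact exact, whereas the paper's $(V+V_\eps)\otimes(V_\eps-V)$ omits an antisymmetric term; that term is of the same type and is estimated identically.

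The product estimate $H^{2-2\alpha}\times H^{\alpha}\hookrightarrow H^{1-\alpha}$ that you flag as the main risk is exactly Lemma~\ref{appendix: product estimate for H alpha times H alpha to H 2alpha-1} with $a=\alpha$, $b=2-2\alpha$. Since $a+b=2-\alpha>1$, nothing is borderline, and no paraproduct fallback is needed.
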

\begin{proof}
    For $T_{1,\eps}$, we note that
\begin{align}
    T_{1,\eps} &= -\frac{1}{2\alpha}\intt \rho_\eps(y-z)(y-z)\cdot \nabla V(z)\;dz \\
    &=-\frac{1}{2\alpha}\intt \dr{2\rho_\eps(y-z)+(y-z)\cdot\nabla\rho_\eps(y-z)}V(z)\;dz\\
    & = -\frac{1}{2\alpha}\eta_\eps*V,
\end{align}
where we denote
\[
   \eta(y):= 2\rho(y)+y\cdot\nabla\rho(y), \quad \eta_\eps(y):=\frac{1}{\eps^2}\eta\dr{\frac{y}{\eps}}.
\]
Since $\eta\in C^\infty_0(\reall^2)$, $\eta_\eps*\cdot: H^s(\reall^2) \to H^s(\reall^2)$ is uniformly bounded in $\eps$, for any $s\in \reall$. Thus, we have
\begin{equation}
     \|\pa^m T_{1,\eps}\|_{H^\alpha(\reall^2)}\leq C\|V\|_{H^{m+\alpha}(\reall^2)}.
\end{equation}
For $T_{2,\eps}$, by Lemma \ref{appendix: product estimate for the L infty case} and the fact that $U_0\in W^{k,\infty}(\reall^2)$ for any integer $k$, we have
\begin{align}
    \|\pa^mT_{2,\eps}\|_{H^\alpha(\reall^2)}&\leq \sum_{j=0}^{m}\|\pa^j\Ve\cdot\nabla \pa^{m-j}U_0\|_{H^\alpha(\reall^2)} +\sum_{j=0}^{m}\| (\pa^jV\cdot\nabla \pa^{m-j}U_0)_\eps\|_{H^\alpha(\reall^2)} \\
    &\leq C(U_0)\|V\|_{H^{m+\alpha}(\reall^2)}.
\end{align}
Similarly, by Lemma \ref{lemma: mollification} we obtain
\begin{align}
    \|\pa^mT_{2,\eps}\|_{\Ltw}&\leq \sum_{j=0}^{m}\|(\pa^jV_\eps-\pa^j V)\cdot\nabla \pa^{m-j}U_0\|_{\Ltw} \\
    &\quad+\sum_{j=0}^{m}\|\pa^jV\cdot\nabla \pa^{m-j}U_0 - (\pa^jV\cdot\nabla \pa^{m-j}U_0)_\eps\|_{\Ltw}\\
    &\leq C(U_0)\eps^{\alpha}\|V\|_{H^{m+\alpha}(\reall^2)}.
\end{align}
For $T_{3,\eps}$, we note that
\begin{align}
    T_{3,\eps} = U_0\cdot\nabla\Ve - (U_0\cdot\nabla V)_\eps = U_0\cdot\nabla(\Ve - V) + U_0\cdot\nabla V - (U_0\cdot\nabla V)_\eps. 
\end{align}
Note that by Lemma \ref{lemma: mollification}, Lemma \ref{appendix: product estimate for the L infty case}, and $\alpha>\frac{1}{2}$, for any $0\leq j\leq m$
\begin{align}
    \|\pa^{m-j}U_0\cdot\nabla(\pa^jV_\eps-\pa^jV)\|_{\dot H^{-\alpha}(\reall^2)} &= \|\div(\pa^{m-j}U_0\otimes (\pa^jV_\eps - \pa^jV))\|_{\dot H^{-\alpha}(\reall^2)}\\
    &\leq \|\pa^{m-j}U_0\otimes(\pa^jV_\eps -\pa^j V)\|_{\dot H^{1-\alpha}(\reall^2)}\\
    &\leq C(U_0)\|\pa^jV-\pa^j\Ve\|_{H^{1-\alpha}(\reall^2)}\\
    &\leq C(U_0)\eps^{2\alpha-1}\|V\|_{H^{m+\alpha}(\reall^2)}.
\end{align}
and similarly, we have
\begin{align}
    \|\pa^{m-j}U_0\cdot\nabla \pa^jV -(\pa^{m-j}U_0\cdot\nabla \pa^jV)_\eps \|_{\dot H^{-\alpha}(\reall^2)} &\leq \|\pa^{m-j}U_0\otimes \pa^jV - (\pa^{m-j}U_0\otimes \pa^jV)_\eps\|_{\dot H^{1-\alpha}(\reall^2)}\\
    &\leq 
    C(U_0)\eps^{2\alpha-1}\|V\|_{H^{m+\alpha}(\reall^2)}.
\end{align}
Finally, for $T_{4,\eps}$, we write
\begin{align}
    T_{4,\eps} =\Ve\cdot\nabla\Ve - (V\cdot\nabla V)_\eps =  \div((V+\Ve)\otimes(\Ve - V)) + \div(V\otimes V - (V\otimes V)_\eps).
\end{align}
By Lemma \ref{lemma: mollification}, the multiplicative embedding $\mathfrak{m}: H^{\alpha}(\reall^2)\times H^{2-2\alpha}(\reall^2)\hookrightarrow H^{1-\alpha}(\reall^2)$ in Lemma \ref{appendix: product estimate for H alpha times H alpha to H 2alpha-1} and the fact that $\alpha>\frac{2}{3}$, for any $0\leq j\leq m$ we have
\begin{align}
    \|\div(\pa^j(V+\Ve)\otimes\pa^{m-j}(\Ve - V))\|_{\dot H^{-\alpha}(\reall^2)}&\leq \|\pa^j(V+\Ve)\otimes\pa^{m-j}(\Ve - V)\|_{H^{1-\alpha}(\reall^2)}\\
    &\leq C\|V\|_{H^{m+\alpha}(\reall^2)}\|\pa^{m-j}\Ve - \pa^{m-j}V\|_{H^{2-2\alpha}(\reall^2)}\\
    &\leq C\eps^{3\alpha-2}\|V\|^2_{H^{m+\alpha}(\reall^2)}.
\end{align}
Similarly, by $\mathfrak{m}: H^{\alpha}(\reall^2)\times H^{\alpha}(\reall^2)\hookrightarrow H^{2\alpha-1}(\reall^2)$, for any $0\leq j\leq m$, we obtain
\begin{align}
    \|\div(\pa^jV\otimes \pa^{m-j}V - (\pa^jV\otimes \pa^{m-j}V)_\eps)\|_{\dot H^{-\alpha}(\reall^2)}&\leq \|\pa^jV\otimes \pa^{m-j}V - (\pa^jV\otimes \pa^{m-j}V)_\eps\|_{\dot H^{1-\alpha}(\reall^2)}\\
    &\leq C\eps^{3\alpha-2}\|\pa^jV\otimes \pa^{m-j}V\|_{H^{2\alpha-1}(\reall^2)}\\
    &\leq C\eps^{3\alpha-2}\|V\|^2_{H^{m+\alpha}(\reall^2)}.
\end{align}
\end{proof}

\subsection{$H^{m+\alpha}$-estimates}
We denote the spatial partial derivatives $\pa_j\;(j=1,2)$ generically by $\pa$. We note that $\pa^m(y\cdot\nabla\Ve) = y\cdot\nabla(\pa^m\Ve)+m\pa^m\Ve$. Then, applying $\pa^m$ to \eqref{equation for mollified V epsilon} yields
\begin{align}\label{eq for mollified partial m V}
\Lambda^{2\alpha}V_\eps^m - \frac{2\alpha-1+m}{2\alpha}V_\eps^m - \frac{1}{2\alpha}y\cdot\nabla V_\eps^m + \nabla \pa^m P_\eps = \pa^m F(V_\eps)+ \pa^m\Cc(V,\eps),\quad \div V^m_\eps = 0,       
\end{align}
where we denote $\Ve^m := \pa^m\Ve$ for simplicity.
\begin{proposition}\label{proposition: H m+alpha estimate}
    For any $m\in \Z_{\geq 0}$, let $V\in H^{m+\alpha}(\reall^2)$ be a weak solution to \eqref{self similar eq for V} for $\alpha\in (\frac{2}{3},1)$. Then, it holds that $V\in H^{m+1+\alpha}(\reall^2)$ and
    \begin{equation}
        \|V\|_{H^{m+1+\alpha}(\reall^2)}\leq C(U_0,\|V\|_{H^{m+\alpha}(\reall^2)}).
    \end{equation}
    In particular, any solution $V\in H^\alpha(\reall^2)$ to \eqref{self similar eq for V} for $\alpha\in(\frac{2}{3},1)$ is actually smooth.
\end{proposition}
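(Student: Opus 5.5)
We plan to test the differentiated mollified equation \eqref{eq for mollified partial m V} with $\Lambda^{2\alpha}V^m_\eps$, or more economically with $\Lambda^{2-2\alpha}\cdot$ of a suitable order. Concretely, we test with $\Lambda^{2}V^m_\eps$, which is equivalent to estimating $\pa V^m_\eps$ in $\dot H^{\alpha}$. This gains one full derivative and yields uniform-in-$\eps$ control of $\|\Lambda^{1+\alpha}V^m_\eps\|_{L^2}$. We then let $\eps\to0^+$ and use weak lower semicontinuity to conclude $V\in H^{m+1+\alpha}$. Equivalently, we apply $\pa$ once more and test the equation for $V^{m+1}_\eps=\pa V^m_\eps$ with $V^{m+1}_\eps$ itself. $V^{m+1}_\eps$ is smooth and decays like $V_\eps$, and the remark after \eqref{equation for mollified V epsilon} allows testing with it, so all pairings make sense.

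The linear part gives coercivity. Pairing the drift with $V^{m+1}_\eps$ and integrating by parts gives
\[
\intt\dr{\Lambda^{2\alpha}V^{m+1}_\eps-\tfrac{2\alpha+m}{2\alpha}V^{m+1}_\eps-\tfrac{1}{2\alpha}y\cdot\nabla V^{m+1}_\eps}\cdot V^{m+1}_\eps=\|\Lambda^\alpha V^{m+1}_\eps\|^2_{L^2}-\tfrac{2\alpha-2+m+1}{2\alpha}\|V^{m+1}_\eps\|^2_{L^2}.
\]
The $L^2$ term has a bad sign for large $m$, but it is harmless. By hypothesis $\|V^{m+1}_\eps\|_{L^2}\le\|V\|_{H^{m+1}}$, and since $\alpha>\frac12$ one can interpolate $\|V\|_{H^{m+1}}\le\|V\|_{H^{m+\alpha}}^{\theta}\|V\|_{H^{m+1+\alpha}}^{1-\theta}$. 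Young's inequality then absorbs it into $\frac14\|\Lambda^\alpha V^{m+1}_\eps\|^2$ plus $C(\|V\|_{H^{m+\alpha}})$. The pressure drops out by incompressibility.

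Next we treat the source terms. The term $(U_0\cdot\nabla U_0)_\eps$ is bounded in every $H^k$ by Lemma \ref{lemma: estimates on U_0}. The linear terms $\pa^{m+1}(V_\eps\cdot\nabla U_0)$ and $\pa^{m+1}(U_0\cdot\nabla V_\eps)$ we put in $\dot H^{-\alpha}$ by writing them in divergence form. Lemma \ref{appendix: product estimate for the L infty case} then gives a bound by $C(U_0)\|V\|_{H^{m+2-\alpha}}$. Since $2-\alpha<1+\alpha$, interpolation makes these absorbable. The commutators $\pa^{m+1}\Cc(V,\eps)$ are handled by Lemma \ref{lemma: commutators T_i eps estimates} applied with $m+1$ in place of $m$. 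This requires $\|V\|_{H^{m+1+\alpha}}$, which is finite for fixed $\eps$ only through $V_\eps$, not uniformly. To avoid circularity I would instead pair the commutators via $\pa^{m+1}T\cdot V^{m+1}_\eps=-\pa^m T\cdot \pa V^{m+1}_\eps$ and use the lemma at level $m$. For example, $\|\pa^m T_{3,\eps}\|_{\dot H^{-\alpha}}\lesssim\eps^{2\alpha-1}$ against $\|\pa^{m+2}V_\eps\|_{\dot H^{\alpha}}\lesssim\eps^{-1}\|V^{m+1}_\eps\|_{\dot H^\alpha}$. The powers of $\eps$ need care here, because $\eps^{2\alpha-2}$ blows up. Failing this bookkeeping, the cleaner route is to test directly with $\Lambda^{2}V^m_\eps$ at the level of $V^m$ and use the $\eps$-rates, which make the commutator contributions of size $\eps^\kappa\cdot\eps^{-1+\kappa'}$. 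Alternatively, one can bootstrap in fractional steps of size $3\alpha-2$ rather than a full derivative. In that case each step only needs Lemma \ref{lemma: commutators T_i eps estimates} at the current level, and the gains $\eps^{2\alpha-1},\eps^{3\alpha-2}$ exactly compensate the $\eps^{-(3\alpha-2)}$ cost of the extra derivative.

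The main obstacle is the nonlinear term $\pa^{m+1}\div(V_\eps\otimes V_\eps)$ paired with $V^{m+1}_\eps$. For $m+1\ge1$ the top-order piece $(V_\eps\cdot\nabla)V^{m+1}_\eps\cdot V^{m+1}_\eps$ vanishes by incompressibility. The remaining terms are $\sum_{j\ge1}\pa^jV_\eps\otimes\pa^{m+1-j}V_\eps$ in $H^{1-\alpha}$. Using $\mathfrak m:H^{\alpha}\times H^{s}\to H^{s+\alpha-1}$ (Lemma \ref{appendix: product estimate for H alpha times H alpha to H 2alpha-1}) and Gagliardo–Nirenberg, these are bounded by $\|V\|_{H^{m+\alpha}}^{a}\|V^{m+1}_\eps\|_{\dot H^\alpha}^{b}$ with $b<2$ precisely when $4\alpha-2>\alpha$, i.e. $\alpha>\frac23$. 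Young's inequality then closes the estimate, giving
\[
\|V_\eps\|_{H^{m+1+\alpha}}\le C(U_0,\|V\|_{H^{m+\alpha}})+o(1).
\]
Iterating over $m$ yields $V\in\cap_k H^k$, and smoothness follows by Sobolev embedding.
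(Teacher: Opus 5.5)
The parts of your plan that match the paper are fine: the linear terms, and the main nonlinear term (top-order cancellation, then $\dot H^{2-2\alpha}\times\dot H^{\alpha}\to\dot H^{1-\alpha}$ plus interpolation, which closes because $4-3\alpha<2$). The step you flag yourself, the mollification commutators, is a genuine gap, and none of your three fixes closes it.

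After a full extra derivative, $\pa^{m+1}\Cc(V,\eps)$ involves $\pa^{m+1}$ of the unmollified $V$. $T_{1,\eps}$ and $T_{2,\eps}$ survive, since Lemma~\ref{lemma: commutators T_i eps estimates} gives $\eps$-uniform $H^\alpha$ bounds at level $m$ and $1-\alpha<\alpha$. $T_{3,\eps}$ could be rescued by a Friedrichs-type commutator bound. $T_{4,\eps}$ cannot:
\begin{itemize}
\item Together with $-\Ve\cdot\nabla\Ve$ from $F(\Ve)$ it is just $-(V\cdot\nabla V)_\eps$. The cancellation you rely on is therefore lost, because one factor is unmollified.
\item Pairing $\pa^{m+1}\div(V\otimes V)$ by duality against $\pa^{m+1}\Ve\in\dot H^\alpha$ would need $\alpha\ge 1$.
\item Moving the derivative onto the test function and using the lemma at level $m$ leaves $\eps^{3\alpha-2}\|\pa^{m+2}\Ve\|_{\dot H^\alpha}$. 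Your bound $\|\pa^{m+2}\Ve\|_{\dot H^\alpha}\lesssim\eps^{-1}\|\pa^{m+1}\Ve\|_{\dot H^\alpha}$ is a reverse Bernstein inequality, which fails for a compactly supported mollifier; only $\eps^{-2}\|V\|_{H^{m+\alpha}}$ is available. Even granting it, the resulting $\eps^{3\alpha-3}$ diverges.
\end{itemize}
Testing with $\Lambda^2V^m_\eps$ is not a different route. The integral $\int\pa^m(\cdots)\cdot(-\Delta V^m_\eps)$ is the same pairing, so nothing produces the $\kappa'$ in your $\eps^\kappa\cdot\eps^{-1+\kappa'}$.

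The fractional-step variant has the same accounting error. Measured in $\dot H^\alpha$, $\Lambda^{2\sigma}V^m_\eps$ costs $\eps^{-2\sigma}$ relative to $\|V\|_{H^{m+\alpha}}$, so $\sigma=3\alpha-2$ leaves a divergent factor $\eps^{-(3\alpha-2)}$. Two variants might work: steps $\sigma\le\frac{3\alpha-2}{2}$, or keeping $(V\cdot\nabla V)_\eps$ unsplit and estimating it directly by duality (possible as long as $\sigma\le 3\alpha-2$). Either one needs fractional-order analogues of the commutator lemma and a weighted fractional test function, which you do not supply.

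The idea you are missing is the paper's use of difference quotients. Instead of differentiating once more, it tests \eqref{eq for mollified partial m V} with $-\Dbn(g_\delta^2\Db\Ve^m)$. Transferring $\Dbn$ onto the commutators then costs a factor $|h|^{-2}$ rather than negative powers of $\eps$. Choosing $\eps^{\alpha},\eps^{2\alpha-1},\eps^{3\alpha-2}<|h|^2$ bounds every $T_{i,\eps}$ contribution by $C(U_0,\|V\|_{H^{m+\alpha}})$, using the lemma only at level $m$. One then lets $\eps\to0^+$ with $h$ fixed, obtains an $h$-uniform bound on $\|\Lambda^\alpha\Db V^m\|_{\Ltw}$, and finally lets $h\to0$.

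There is also a second, more technical gap: $V^{m+1}_\eps$ is not an admissible test function. The remark after \eqref{equation for mollified V epsilon} requires $\bkt{y}\vp\in\Ltw$, and at this stage nothing is known about the decay of $V$ (smoothness of $\Ve$ is not decay). Your integration by parts in the drift term is therefore only formal. The paper fixes this as follows:
\begin{itemize}
\item it inserts the weight $g_\delta=\bkt{\delta y}^{-1}$ and controls $y\cdot\nabla g_\delta$ by $g_\delta$;
\item it handles $[\Lambda^\alpha,g_\delta^2]$ with the refined commutator bound carrying a factor $\delta^\alpha$;
\item it estimates the pressure, which no longer drops out, via Lemma~\ref{lemma: control of pressure};
\item it removes $\delta$ by Fatou's lemma.
\end{itemize}
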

\begin{proof}
    As mentioned before, the threshold $\alpha=\frac{2}{3}$ arises from the control on the nonlinear term when we improve the regularity of $V$ from $H^\alpha(\reall^2)$ to $H^{1+\alpha}(\reall^2)$. 
    Denote the difference operator
    \[
    \Db f(y):= \frac{f(y+he_k)-f(y)}{h},\quad h\in\reall\backslash\{0\}.
    \]
    It is helpful to note the standard fact:
    \begin{equation}
        \|\Db f\|_{\Ltw}\leq \|\nabla f\|_{L^2(\reall^2)},\quad \forall\,h\neq 0,
    \end{equation}
    which is a result of the Fundamental Theorem of Calculus. We also have the following analogue of the chain rule
    \begin{equation}
        \Db(fg)(y) = (\Db f)(y)g(y)+f(y+he_k)(\Db g)(y), 
    \end{equation}
    and the analogue of integration by parts
    \begin{equation}
        \int f\Dbn g = -\int (\Db f) g .
    \end{equation}
    The application of the difference operator instead of partial derivatives facilitates the control of various commutators, as we do not ``really" need to control the high order error terms. This point will be clear in the estimates below.
    We denote the weight function $g_\delta(y):=\bkt{\delta y}^{-1} = \frac{1}{(1+\delta^2|y|^2)^{\frac{1}{2}}}$. 
    Here are some useful observations about $g_\delta$ ($0<\beta<1$):
    \begin{equation}
        \|\nabla g_\delta\|_{L^\infty} =\dn{\delta\frac{\delta y}{\bkt{\delta y}^3}}_{L^\infty} \leq C\delta,\quad |y\cdot \nabla g_\delta(y)|= \frac{\delta^2|y|^2}{\bkt{\delta y}^3} \leq |g_\delta(y)|,\quad  \|g_{\delta}\|_{\dot C^\beta}\leq C,
    \end{equation}
    and
    \begin{align}
        &\|g^2_\delta\|_{\dot C^{\beta}} = \sup_{a\in\reall^2\backslash\{0\}}\frac{1}{|a|^{\beta}}\dn{\frac{\delta^2(|y+a|^2-|y|^2)}{\bkt{\delta y}^2\bkt{\delta(y+a)}^2}}\leq C \sup_{a\in\reall^2\backslash\{0\}}\dn{\frac{\delta^2(2|y|\cdot |a|^{1-\beta}+|a|^{2-\beta})}{1+\delta^2|y|^2+\delta^2|a|^2}}\leq C\delta^{\beta}.
    \end{align}
    We also have the following refined commutator estimates which is proved in the appendix (Lemma \ref{appendix: refined commutator estimate}):
     \begin{equation}\label{proposition: H m+alpha estimate_refined commutator esti}
        \|g_\delta^{-\beta}[\Lambda^\alpha,g^2_\delta]f\|_{L^2}\leq C(\alpha,\beta)\delta^{\alpha}\|f\|_{L^2(\reall^2)},\quad 0\leq \beta<1+\alpha. 
    \end{equation}
    Now we are ready to prove the proposition. Note that  $-\Dbn(g^2_\delta\Db\Ve^m)$ is a valid test function to  \eqref{eq for mollified partial m V} as $\|\bkt{y}\Dbn(g^2_\delta\Db\Ve^m)\|_{\Ltw}\leq +\infty$, so that pairing with the drift term is justified. Testing \eqref{eq for mollified partial m V} with $-\Dbn(g^2_\delta\Db\Ve^m)$ yields
    \begin{align}\label{proposition: H m+alpha_test eq in H1 est}
    \begin{split}
        &\int_{\reall^2}  \Lambda^\alpha(\Db \Ve^m)\cdot\Lambda^\alpha(g^2_\delta\Db\Ve^m)-\frac{2\alpha-1+m}{2\alpha}|g_\delta\Db\Ve^m|^2\\
        &\quad-\frac{1}{2\alpha}\dr{\Db (y\cdot\nabla\Ve^m)}\cdot(g_\delta^2\Db\Ve^m)-\Db \nabla\pa^m P_\eps\cdot\dr{g^2_\delta\Db \Ve^m} \;dy\\
        &=\int_{\reall^2}\Db \pa^mF(\Ve)\cdot (g^2_\delta\Db\Ve^m)+\Db\pa^m\Cc(V,\eps)\cdot (g^2_\delta\Db\Ve^m)\;dy. 
    \end{split}
    \end{align}
    We will estimate term by term.\\
    \textbf{Estimates of the LHS:} First of all, for the diffusion term, we have
    \begin{align}
        \int_{\reall^2}  \Lambda^\alpha(\Db \Ve^m)\cdot\Lambda^\alpha(g^2_\delta\Db\Ve^m)\;dy = \|g_\delta\Lambda^\alpha(\Db\Ve^m)\|^2_{\Ltw}+ \intt \Lambda^\alpha(\Db\Ve^m)\cdot[\Lambda^\alpha,g^2_\delta](\Db\Ve^m)\;dy. 
    \end{align}
    The commutator is estimated by \eqref{proposition: H m+alpha estimate_refined commutator esti} as
    \begin{align}
        \da{\intt \Lambda^\alpha(\Db\Ve^m)\cdot[\Lambda^\alpha,g^2_\delta](\Db\Ve^m)\;dy}\leq \frac{1}{2}\|g_\delta\Lambda^\alpha(\Db\Ve^m)\|^2_{\Ltw}+C\delta^{2\alpha}\|\Db\Ve^m\|^2_{\Ltw}. 
    \end{align}
    For the drift term, through integration by parts, we get
    \begin{align}
        -\intt\frac{1}{2\alpha}\dr{\Db (y\cdot\nabla\Ve^m)}\cdot(g_\delta^2\Db\Ve^m)\;dy &= -\frac{1}{2\alpha}\intt y\cdot\nabla(\Db\Ve^m)\cdot(g^2_\delta\Db\Ve^m)\;dy\\
        &\quad-\frac{1}{2\alpha}\intt \pa_k \Ve^m(\cdot+he_k)\cdot(g^2_\delta\Db\Ve^m)\;dy\\
        &= \frac{1}{2\alpha}\|g_\delta\Db\Ve^m\|^2_{\Ltw}+\frac{1}{2\alpha}\intt y\cdot\nabla g_\delta g_\delta|\Db\Ve^m|^2\;dy\\
        &\quad-\frac{1}{2\alpha}\intt \pa_k \Ve^m(\cdot+he_k)\cdot(g^2_\delta\Db\Ve^m)\;dy.
    \end{align}
    By the boundedness of $y\cdot\nabla g_\delta$, we have
    \begin{align}
        \da{\frac{1}{2\alpha}\intt y\cdot\nabla g_\delta g_\delta|\Db\Ve^m|^2\;dy}\leq C\|g_\delta\Db\Ve^m\|^2_{\Ltw}.
    \end{align}
    Furthermore, by the commutator estimate \eqref{proposition: H m+alpha estimate_refined commutator esti} and the fact that $\alpha>\frac{1}{2}$, we get
    \begin{align}
        \da{\intt \pa_k \Ve^m(\cdot+he_k)\cdot(g^2_\delta\Db\Ve^m)\;dy} &= \|\pa_k\Ve^m\|_{\dot H^{-\alpha}(\reall^2)}\|g^2_\delta\Db\Ve^m\|_{\dot H^{\alpha}(\reall^2)}\\
        &\leq \|\Ve^m\|_{\dot H^{1-\alpha}(\reall^2)}\|g^2_\delta\Db\Ve^m\|_{\dot H^{\alpha}(\reall^2)}\\
        &\leq \frac{1}{4}\|g_\delta\Lambda^\alpha(\Db\Ve^m)\|^2_{\Ltw}+C\delta^{2\alpha}\|\Db\Ve^m\|_{\Ltw}+ C\|V\|^2_{H^{m+\alpha}(\reall^2)}.
    \end{align}
    For the pressure term, we decompose
    \begin{align}
        P_\eps &= (-\Delta)^{-1}\div\div\dr{(V\otimes U_0)_\eps+(U_0\otimes V)_\eps+(V\otimes V)_\eps}+(-\Delta)^{-1}\div\div((U_0\otimes U_0)_\eps)\\
        &:= P_{1,\eps}+P_{2,\eps}.
    \end{align}
    By Lemma \ref{lemma: control of pressure}, we have
    \begin{align}
        \|\pa^mP_{1,\eps}\|_{\Ltw}\leq C(U_0)\dr{1+\|V\|^2_{H^{m+\alpha}(\reall^2)}},
    \end{align}
    and
    \begin{equation}
        \|\pa^m\nabla P_{2,\eps}\|_{\Ltw}\leq C(U_0).
    \end{equation}
    Therefore, using integration by parts, we obtain
    \begin{align}
        \da{\intt \Db\nabla \pa^m P_\eps\cdot(g^2_\delta\Db\Ve^m)\;dy} &= \da{\intt -\pa^m P_{1,\eps}\Dbn(g_\delta\nabla g_\delta\cdot\Db\Ve^m)+\Db \pa^m P_{2,\eps} g_\delta\nabla g_\delta\cdot\Db\Ve^m\;dy}\\
        &\leq \|\pa^m P_{1,\eps}\|_{\Ltw} \|\Dbn(g_\delta\nabla g_\delta\cdot\Db\Ve^m)\|_{\Ltw}\\
        &\quad +C\delta\|\nabla\pa^m P_{2,\eps}\|_{\Ltw}\|g_\delta\Db\Ve^m\|_{\Ltw}\\
        &\leq C(U_0)\dr{1+\|V\|^3_{H^{m+\alpha}(\reall^2)}},
    \end{align}
    once we choose $\delta < |h|^2$. Collecting all these estimates, we obtain that for $\delta<|h|^2$,
    \begin{align}\label{proposition: H m+alpha estimate_LHS}
        \text{LHS}\geq \frac{1}{4}\|g_\delta\Lambda^\alpha(\Db\Ve^m)\|^2_{\Ltw} - C\|\Db\Ve^m\|^2_{\Ltw}- C(U_0)\dr{1+\|V\|^3_{H^{m+\alpha}(\reall^2)}}.
    \end{align}
    \textbf{Estimates of the $F(V_\eps)$ term on the RHS:} We expand the term involving $F(\Ve)$ as
    \begin{align}
        \intt \Db\pa^m F(\Ve)\cdot(g^2_\delta\Db\Ve^m)\;dy &= -\intt \Db\pa^m(U_0\cdot\nabla U_0)_\eps\cdot(g^2_\delta\Db\Ve^m)+\Db\pa^m(\Ve\cdot\nabla U_0)\cdot(g^2_\delta\Db\Ve^m)\\
        &\quad +\Db\pa^m(U_0\cdot\nabla \Ve)\cdot(g^2_\delta\Db\Ve^m)+\Db\pa^m(\Ve\cdot\nabla \Ve)\cdot(g^2_\delta\Db\Ve^m)\;dy\\
        & := F_1+F_2+F_3+F_4.
    \end{align}
    For the first term above, since $\Db\pa^m(U_0\cdot\nabla U_0)_\eps\in\Ltw$ by the pointwise estimates of $U_0$, we have
    \begin{align}
        |F_1|\leq C(U_0)\|g_\delta\Db\Ve^m\|_{\Ltw}.
    \end{align}
    Similarly, for the second term, by the chain rule and H\"older's inequality we get
    \begin{equation}
        |F_2|\leq C(U_0)\dr{\|g_\delta \Db\Ve^m\|^2_{\Ltw}+\|\Ve\|^2_{H^m(\reall^2)}}.
    \end{equation}
    For the third term, we note that
    \begin{align}
        \Db\pa^m(U_0\cdot\nabla \Ve) &= \sum_{j=0}^{m}\pa^{m-j} U_0(\cdot+he_k)\cdot\nabla(\Db\pa^j\Ve)+\sum_{j=0}^{m}\pa^{m-j} \Db U_0\cdot\nabla(\pa^j\Ve)\\
        &= U_0(\cdot+he_k)\cdot\nabla(\Db\Ve^m)+\pa U_0(\cdot+he_k)\cdot\nabla(\Db\pa^{m-1}\Ve)+ \Db U_0\cdot\nabla\Ve^m\\
        &\quad+\dr{\sum_{j=0}^{m-2}\pa^{m-j} U_0(\cdot+he_k)\cdot\nabla(\Db\pa^j\Ve)+\sum_{j=0}^{m-1}\pa^{m-j} \Db U_0\cdot\nabla(\pa^j\Ve)}\\
        &:= \Psi_1(y)+\Psi_2(y)+\Psi_3(y)+\Psi_4(y).
    \end{align}
    For $\Psi_1$, through integration by parts, we obtain
    \begin{align}
        \da{\intt \Psi_1\cdot(g^2_\delta\Db\Ve^m)\;dy} = \da{\intt U_0(\cdot+he_k)\cdot\nabla g_\delta g_\delta|\Db\Ve^m|^2\;dy}\leq C(U_0)\|\Ve^m\|^2_{\Ltw},
    \end{align}
    once we choose $\delta< |h|^2$. For $\Psi_2$, we have
    \begin{align}
        \da{\intt \Psi_2\cdot(g^2_\delta\Db\Ve^m)\;dy}&\leq C(U_0)\da{\intt\nabla(\Db\pa^{m-1}\Ve)\cdot(g^2_\delta\Db\Ve^m)\;dy}\\
        &\leq C(U_0)\|\nabla(\Db\pa^{m-1}\Ve)\|_{\dot H^{-\alpha}(\reall^2)}\|g^2_\delta\Db\Ve^m\|_{\dot H^\alpha(\reall^2)}\\
        &\leq C(U_0)\|\Db\pa^{m-1}\Ve\|_{\dot H^{1-\alpha}(\reall^2)}\|\Lambda^\alpha(g^2_\delta\Db\Ve^m)\|_{\Ltw}\\
        &\leq \frac{1}{32}\|g_\delta\Lambda^\alpha(\Db\Ve^m)\|^2_{\Ltw} + C(U_0)\dr{\|\Db\Ve^m\|^2_{\Ltw}+\|\Ve\|^2_{H^{m+\alpha}(\reall^2)}},
    \end{align}
    where we have used the fact that $\|\Db f\|_{H^s(\reall^2)}\leq \|f\|_{H^{1+s}(\reall^2)}$ (which is clear from the Fourier multiplier estimate $\frac{|e^{ih\xi_k}-1|}{|h|}\leq |\xi_k|$). For $\Psi_3$, similarly we have
    \begin{align}
        \da{\intt \Psi_3\cdot(g^2_\delta\Db\Ve^m)\;dy}&\leq C(U_0)\da{\intt\nabla(\pa^{m}\Ve)\cdot(g^2_\delta\Db\Ve^m)\;dy}\\
        &\leq C(U_0)\|\nabla(\Ve^m)\|_{\dot H^{-\alpha}(\reall^2)}\|g^2_\delta\Db\Ve^m\|_{\dot H^\alpha(\reall^2)}\\
        &\leq\frac{1}{32}\|g_\delta\Lambda^\alpha(\Db\Ve^m)\|^2_{\Ltw} + C(U_0)\dr{\|\Db\Ve^m\|^2_{\Ltw}+\|\Ve\|^2_{H^{m+\alpha}(\reall^2)}}.
    \end{align}
    The remaining $\Psi_4$ are lower order terms, and we estimate by the H\"older inequality that
    \begin{align}
        \da{\intt \Psi_4\cdot(g^2_\delta\Db\Ve^m)\;dy}&\leq C(U_0)\dr{\|\Db\Ve^m\|^2_{\Ltw}+\|\Ve\|^2_{H^{m}(\reall^2)}}.
    \end{align}
    To summarize, we obtain
    \begin{align}
        |F_3|\leq \frac{1}{16}\|g_\delta\Lambda^\alpha(\Db\Ve^m)\|^2_{\Ltw} + C(U_0)\dr{\|\Db\Ve^m\|^2_{\Ltw}+\|\Ve\|^2_{H^{m+\alpha}(\reall^2)}}.
    \end{align}
    The last term (nonlinear term $F_4$) is where the assumption $\alpha>\frac{2}{3}$ matters. First, by the chain rule,
    \begin{align}
        \Db\pa^m(\Ve\cdot\nabla \Ve) &= \sum_{j=0}^{m}\pa^{m-j} \Ve(\cdot+he_k)\cdot\nabla(\Db\pa^j\Ve)+\sum_{j=0}^{m}\pa^{m-j} \Db \Ve\cdot\nabla(\pa^j\Ve)\\
        &= \Ve(\cdot+he_k)\cdot\nabla(\Db\Ve^m)+\Db \Ve^m\cdot\nabla(\Ve)\\
        &\quad+ \dr{\sum_{j=0}^{m-1}\pa^{m-j} \Ve(\cdot+he_k)\cdot\nabla(\Db\pa^j\Ve)+\sum_{j=1}^{m}\pa^{m-j} \Db \Ve\cdot\nabla(\pa^j\Ve)}\\
        &:=\Phi_1+\Phi_2+\Phi_3.
    \end{align}
    For $\Phi_1$, through integration by parts and the Sobolev embedding $H^\alpha(\reall^2)\hookrightarrow L^4(\reall^2)$, we get
    \begin{align}
        \da{\intt \Phi_1\cdot(g^2_\delta\Db\Ve^m)\;dy }&= \da{\intt\Ve(\cdot+he_k)\cdot\nabla g_\delta g_\delta |\Db\Ve^m|^2\;dy}\\
        &\leq C\delta\|\Ve\|_{\Ltw}\|\Db\Ve^m\|^2_{L^4(\reall^2)}\\
        &\leq C\|\Ve\|^3_{H^{m+\alpha}(\reall^2)},
    \end{align}
    once we have $\delta<|h|^2$. For the second term, we note that
    \begin{align}
        \intt \Phi_2\cdot(g^2_\delta\Db\Ve^m)\;dy &= \intt \div(g_\delta\Db\Ve^m\otimes\Ve)\cdot(g^\delta\Db\Ve^m)\;dy\\
        &\quad+\intt(\nabla g_\delta\cdot \Db\Ve^m)(\Ve\cdot g_\delta\Db\Ve^m)\;dy\\
        &:=\Upsilon_{2,1}+\Upsilon_{2,2}.
    \end{align}
    The estimate of $\Upsilon_{2,2}$ is the same as $\Phi_1$. Once we take $\delta<|h|^2$, we have
    \begin{equation}
        |\Upsilon_{2,2}|\leq C\|\Ve\|^3_{H^{m+\alpha}(\reall^2)}.
    \end{equation}
    For $\Upsilon_{2,1}$, we have
    \begin{align}
    |\Upsilon_{2,1}|&\leq \|\div(g_\delta\Db\Ve^m\otimes\Ve)\|_{\dot H^{-\alpha}(\reall^2)}\|g_\delta\Db\Ve^m\|_{\dot H^\alpha(\reall^2)}\\
    &\leq C\|g_\delta\Db\Ve^m\otimes\Ve\|_{\dot H^{1-\alpha}(\reall^2)}\|g_\delta\Db\Ve^m\|_{\dot H^\alpha(\reall^2)}.
    \end{align}
    For the nonlinear part, by the embedding $\mathfrak{m}:\dot H^{2-2\alpha}(\reall^2)\times \dot H^{\alpha}\hookrightarrow \dot H^{1-\alpha}(\reall^2)$ in Lemma \ref{appendix: product estimate for H alpha times H alpha to H 2alpha-1}, we have
    \begin{align}
        \|g_\delta\Db\Ve^m\otimes\Ve\|_{\dot H^{1-\alpha}(\reall^2)}\leq C\|g_\delta\Db\Ve^m\|_{\dot H^{2-2\alpha}}\|\Ve\|_{\dot H^\alpha(\reall^2)}.
    \end{align}
    Then, since $\alpha>\frac{2}{3}$, the interpolation through the H\"older's inequality yields
    \begin{equation}
        \|g_\delta\Db\Ve^m\|_{\dot H^{2-2\alpha}(\reall^2)}\leq \|g_\delta\Db\Ve^m\|^{\frac{3\alpha-2}{\alpha}}_{\Ltw}\|g_\delta\Db\Ve^m\|^{\frac{2-2\alpha}{\alpha}}_{\dot H^\alpha(\reall^2)}.
    \end{equation}
    Finally, by Young's inequality ($ab\leq\frac{a^p}{p}+\frac{b^q}{q}  \;,\frac{1}{p}+\frac{1}{q}=1$), we obtain
    \begin{align}
        |\Upsilon_{2,1}|&\leq C\|g_\delta\Db\Ve^m\|^{\frac{3\alpha-2}{\alpha}}_{\Ltw}\|g_\delta\Db\Ve^m\|^{\frac{2-\alpha}{\alpha}}_{\dot H^\alpha(\reall^2)}\|\Ve\|_{\dot H^\alpha(\reall^2)}\\
        &\leq \frac{1}{64}\|g_\delta\Db\Ve^m\|^2_{\dot H^\alpha(\reall^2)}+C \|\Ve\|^{\frac{2\alpha}{3\alpha-2}}_{\dot H^\alpha(\reall^2)}\|g_\delta\Db\Ve^m\|^2_{\Ltw}\\
        &\leq  \frac{1}{64}\|g_\delta\Lambda^\alpha(\Db\Ve^m)\|^2_{\Ltw}+C(\|\Ve\|_{H^\alpha(\reall^2)})\|\Db\Ve^m\|^2_{\Ltw}.
    \end{align}
    In summary, we have
    \begin{align}
        \da{\intt \Phi_2\cdot(g^2_\delta\Db\Ve^m)\;dy}\leq \frac{1}{64}\|g_\delta\Lambda^\alpha(\Db\Ve^m)\|^2_{\Ltw}+C(\|\Ve\|_{H^\alpha(\reall^2)})\|\Db\Ve^m\|^2_{\Ltw}+C\|\Ve\|^3_{H^{m+\alpha}(\reall^2)}.
    \end{align}
    The remaining $\Phi_3$ is of lower order. Indeed, for $0\leq j\leq m-1$, by the embeddings $\mathfrak{m}:H^\alpha(\reall^2)\times H^{\alpha}(\reall^2)\hookrightarrow H^{2\alpha-1}(\reall^2)$ and $H^{2\alpha-1}(\reall^2)\hookrightarrow H^{1-\alpha}(\reall^2)$, we have
    \begin{align}
        &\da{\intt \pa^{m-j} \Ve(\cdot+he_k)\cdot\nabla(\Db\pa^j\Ve)\cdot(g^2_\delta\Db\Ve^m)\;dy}\\
        &\quad\leq \|\pa^{m-j}\Ve(\cdot+he_k)\otimes\Db\pa^{j}\Ve\|_{\dot H^{1-\alpha}(\reall^2)}\|g^2_\delta\Db\Ve\|_{\dot H^{\alpha}(\reall^2)}\\
        &\quad\leq \frac{1}{64}\|g_\delta\Lambda^\alpha(\Db\Ve^m)\|^2_{\Ltw}+C\|\Db\Ve^m\|^2_{\Ltw}+C\|\Ve\|^4_{H^{m+\alpha}(\reall^2)}.
    \end{align}
    Similarly, for $1\leq j\leq m$, we get
    \begin{align}
        &\da{\intt \pa^{m-j} \Db \Ve\cdot\nabla(\pa^j\Ve)\cdot(g^2_\delta\Db\Ve^m)}\\
        &\quad\leq \|\pa^{m-j}\Db\Ve\otimes\pa^{j}\Ve\|_{\dot H^{1-\alpha}(\reall^2)}\|g^2_\delta\Db\Ve\|_{\dot H^{\alpha}(\reall^2)}\\
        &\quad\leq \frac{1}{64}\|g_\delta\Lambda^\alpha(\Db\Ve^m)\|^2_{\Ltw}+C\|\Db\Ve^m\|^2_{\Ltw}+C\|\Ve\|^4_{H^{m+\alpha}(\reall^2)}.
    \end{align}
    Collecting the estimates of $\Phi_i$'s, we obtain the estimate for the nonlinear term:
    \begin{align}
       |F_4|&= \da{\intt \Db\pa^m(\Ve\cdot\nabla\Ve)\cdot(g^2_\delta\Db\Ve^m)\;dy}\\
        &\leq \frac{1}{16}\|g_\delta\Lambda^\alpha(\Db\Ve^m)\|^2_{\Ltw}+C(\|\Ve\|_{H^\alpha(\reall^2)})\|\Db\Ve^m\|^2_{\Ltw}+C\dr{\|\Ve\|^2_{H^{m+\alpha}(\reall^2)}+\|\Ve\|^4_{H^{m+\alpha}(\reall^2)}}.
    \end{align}
    Finally, collecting all the estimates of $F_i$'s, we obtain
    \begin{align}\label{proprosition: H m+alpha estimate_RHS F}
        &\da{\intt\Db\pa^m F(\Ve)\cdot(g^2_\delta\Db\Ve^m)\;dy}\\
        &\quad\leq \frac{1}{8}\|g_\delta\Lambda^\alpha(\Db\Ve^m)\|^2_{\Ltw}+C(\|\Ve\|_{H^\alpha(\reall^2)},U_0)\|\Db\Ve^m\|^2_{\Ltw}+C(U_0)\dr{1+\|\Ve\|^4_{H^{m+\alpha}(\reall^2)}}.
    \end{align}
    \textbf{Estimates of the $\Cc(V,\eps)$ term on the RHS:} Recall that there are four commutator terms $T_{j,\eps}\; (j=1,2,3,4)$. To estimate them, we mainly rely on Lemma \ref{lemma: commutators T_i eps estimates}. For $T_{1,\eps}$, by $\|\pa^mT_{1,\eps}\|_{H^\alpha(\reall^2)}\leq C\|V\|_{H^{m+\alpha}(\reall^2)}$ we have
    \begin{align}
        \da{\intt \Db\pa^m T_{1,\eps}\cdot(g^2_\delta\Db\Ve^m)\;dy}&\leq \|\Db\pa^m T_{1,\eps}\|_{\dot H^{-\alpha}(\reall^2)}\|g^2_\delta\Db\Ve^m\|_{\dot H^\alpha(\reall^2)}\\
        &\leq C\|\pa^mT_{1,\eps}\|_{\dot H^{1-\alpha}(\reall^2)}\|\Lambda^\alpha(g^2_\delta\Db\Ve^m)\|_{\Ltw}\\
        &\leq \frac{1}{16}\|g^2_\delta\Lambda^\alpha(\Db\Ve^m)\|_{\Ltw}+C\|\Db\Ve^m\|^2_{\Ltw}+C\|V\|^2_{H^{m+\alpha}(\reall^2)}.
    \end{align}
    For $T_{2,\eps}$, by $\|\pa^m T_{2,\eps}\|_{\Ltw}\leq C(U_0)\eps^\alpha\|V\|_{H^{m+\alpha}(\reall^2)}$, we have
    \begin{align}
        \da{\intt \Db\pa^m T_{2,\eps}\cdot(g^2_\delta\Db\Ve^m)\;dy}&\leq \|\pa^m T_{2,\eps}\|_{\Ltw}\|\Dbn(g^2_\delta\Db\Ve^m)\|_{\Ltw}\\
        &\leq \|V\|^2_{H^{m+\alpha}(\reall^2)},
    \end{align}
    once we take $\eps^\alpha<|h|^2$.
    For $T_{3,\eps}$, by $\|\pa^m T_{3,\eps}\|_{\dot H^{-\alpha}(\reall^2)}\leq C(U_0)\eps^{2\alpha-1}\|V\|_{H^{m+\alpha}(\reall^2)}$, we get
    \begin{align}
        \da{\intt \Db\pa^m T_{3,\eps}\cdot(g^2_\delta\Db\Ve^m)\;dy}&\leq \|\pa^m T_{3,\eps}\|_{\dot H^{-\alpha}(\reall^2)}\|\Dbn(g^2_\delta\Db\Ve^m)\|_{\dot H^{\alpha}(\reall^2)}\\
        &\leq C(U_0)\|V\|^2_{H^{m+\alpha}(\reall^2)},
    \end{align}
    once we take $\eps^{2\alpha-1}<|h|^2$. Finally, for $T_{4,\eps}$, by  $\|\pa^m T_{4,\eps}\|_{\dot H^{-\alpha}(\reall^2)}\leq C(U_0)\eps^{3\alpha-2}\|V\|^2_{H^{m+\alpha}(\reall^2)}$, we get
    \begin{align}
        \da{\intt \Db\pa^m T_{4,\eps}\cdot(g^2_\delta\Db\Ve^m)\;dy}&\leq \|\pa^m T_{4,\eps}\|_{\dot H^{-\alpha}(\reall^2)}\|\Dbn(g^2_\delta\Db\Ve^m)\|_{\dot H^{\alpha}(\reall^2)}\\
        &\leq C\|V\|^3_{H^{m+\alpha}(\reall^2)},
    \end{align}
    once we take $\eps^{3\alpha-2}<|h|^2$. In summary, we obtain the estimate for the commutators:
    \begin{align}\label{proposition: H m+alpha estimate_RHS C}
        &\da{\intt \Db\pa^m\Cc(V,\eps)\cdot(g^2_\delta\Db\Ve^m)\;dy}\\
        &\quad\leq \frac{1}{16}\|g^2_\delta\Lambda^\alpha(\Db\Ve^m)\|_{\Ltw}+C\|\Db\Ve^m\|^2_{\Ltw}+C\dr{\|V\|^2_{H^{m+\alpha}(\reall^2)}+\|V\|^3_{H^{m+\alpha}(\reall^2)}}.
    \end{align}
    as long as $\eps$ is sufficiently small (depending on $h$).\\
    \textbf{Conclusion}: Combining \eqref{proposition: H m+alpha estimate_LHS}, \eqref{proprosition: H m+alpha estimate_RHS F}, and \eqref{proposition: H m+alpha estimate_RHS C}, we obtain
    \begin{align}
        \frac{1}{16}\|g_\delta\Lambda^\alpha(\Db\Ve^m)\|^2_{\Ltw} \leq C(U_0,\|V\|_{H^\alpha(\reall^2)})\|\Db\Ve^m\|^2_{\Ltw}+C(U_0,\|V\|_{H^{m+\alpha}(\reall^2)}).
    \end{align}
    A key observation is that the term $\|\Db\Ve^m\|_{\Ltw}$, which is of the order $H^{m+1}(\reall^2)$, can be controlled by interpolation:
    \begin{align}
        C(U_0,\|V\|_{H^\alpha(\reall^2)})\|\Db\Ve^m\|^2_{\Ltw}&\leq C(U_0,\|V\|_{H^\alpha(\reall^2)})\|\Lambda^\alpha(\Db\Ve^m)\|_{\Ltw}\|\Db\Ve^m\|_{\dot H^{-\alpha}(\reall^2)}\\
        &\leq\frac{1}{32}\|\Lambda^\alpha(\Db\Ve^m)\|^2_{\Ltw}+C(U_0,\|V\|_{H^\alpha(\reall^2)})\|\Ve^m\|^2_{\dot H^{1-\alpha}(\reall^2)}\\
        &=\frac{1}{32}\|\Lambda^\alpha(\Db\Ve^m)\|^2_{\Ltw}+C(U_0,\|V\|_{H^{m+\alpha}(\reall^2)}),
    \end{align}
    where we have used the control $\|\Db f\|_{\dot H^{-\alpha}(\reall^2)}\leq C\|f\|_{\dot H^{1-\alpha}(\reall^2)}$ from the Fourier multiplier estimate. It follows that
    \begin{align}
         \frac{1}{16}\|g_\delta\Lambda^\alpha(\Db\Ve^m)\|^2_{\Ltw} \leq \frac{1}{32}\|\Lambda^\alpha(\Db\Ve^m)\|^2_{\Ltw}+C(U_0,\|V\|_{H^{m+\alpha}(\reall^2)}).
    \end{align}
    By Fatou's lemma, taking $\delta\to 0$ yields
    \begin{align}
        \frac{1}{32}\|\Lambda^\alpha(\Db\Ve^m)\|^2_{\Ltw} \leq C(U_0,\|V\|_{H^{m+\alpha}(\reall^2)}).
    \end{align}
    Finally, taking $\eps\to 0^+$ and then $h\to 0$, we get
    \begin{align}
        \frac{1}{32}\|\Lambda^\alpha(\pa_k\pa^m V)\|^2_{\Ltw}\leq C(U_0,\|V\|_{H^{m+\alpha}(\reall^2)}).
    \end{align}
    Summing over $k=1,2$ and over all choices of $\pa^m$ yields the result.
\end{proof}

\subsection{Weighted $H^\alpha$-estimate}
In this subsection, we derive the preliminary weighted $H^\alpha$-estimate of $V$, which serves as a basis in the weighted $H^{1+\alpha}$-estimate below.
\begin{proposition}\label{proposition: weighted H alpha esti}
    Let $V\in H^\alpha_\sigma(\reall^2)$ be a weak solution to \eqref{self similar eq for V} for $\alpha\in(\frac{2}{3},1)$. Then, for any $\beta\in (0,\alpha)$ it holds that $\bkt{y}^\beta V(y)\in H^\alpha(\reall^2)$ and
    \begin{equation}
        \|\bkt{y}^\beta V\|_{H^\alpha(\reall^2)}\leq C(U_0,\beta,\|V\|_{H^\alpha(\reall^2)}).
    \end{equation}
\end{proposition}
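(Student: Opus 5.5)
We will test the mollified equation \eqref{equation for mollified V epsilon} (with $m=0$) against a weighted multiple of $\Ve$, namely $\vp = w^2\Ve$, where $w$ is a carefully designed radial weight. By Proposition \ref{proposition: H m+alpha estimate}, $V$ is already in every $H^k$, so the commutators $T_{j,\eps}$ vanish as $\eps\to0^+$ by Lemma \ref{lemma: commutators T_i eps estimates}. The real issue is therefore only the drift term together with the admissibility of the test function.

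Following the strategy sketched in the introduction, the weight has three pieces. It is $w=\bkt{y}^\beta$ for $|y|\le R_1$; it decays slowly, for instance like $\bkt{y}^{\beta}(|y|/R_1)^{-\theta}$ with $\theta>0$ small, on a long range $R_1\le|y|\le R_2$; and beyond $R_2$ it has a fast decaying tail, say $\bkt{y}^{-1}$ times a constant. This makes $\bkt{y}w^2\Ve\in\Ltw$, so the pairing with the drift is justified. The drift term gives
\[
-\frac{1}{2\alpha}\intt (y\cdot\nabla\Ve)\cdot w^2\Ve = \frac{1}{2\alpha}\intt \Big(w^2+ y\cdot\nabla(w^2)/2\Big)|\Ve|^2 .
\]
Combined with $-\frac{2\alpha-1}{2\alpha}\|w\Ve\|^2$, this yields the coefficient $\frac{1}{2\alpha}\big((2-2\alpha) + y\cdot\nabla w/w\big)$. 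Since $y\cdot\nabla w/w\approx\beta-\theta$ in the slow region, it stays bounded below by $\frac{\alpha-\beta-\theta}{2\alpha}>0$ there and in the core, because $\beta<\alpha$. It fails only in the tail, where $w$ is tiny relative to its maximum. Choosing $R_2\gg R_1$ makes the tail contribution bounded by $C\|V\|^2_{L^2}$ times a factor independent of $R_1$. All final constants must be uniform in $R_1,R_2$, after which we let $R_1,R_2\to\infty$ and $\theta\to0$ by monotone convergence.

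We then estimate the remaining terms in order.

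\emph{Diffusion.} We write $\intt\Lambda^\alpha\Ve\cdot\Lambda^\alpha(w^2\Ve)=\|w\Lambda^\alpha\Ve\|^2+\intt \Lambda^\alpha\Ve\cdot[\Lambda^\alpha,w^2]\Ve$. The commutator is controlled by a refined commutator estimate in the spirit of \eqref{proposition: H m+alpha estimate_refined commutator esti} (Lemma \ref{appendix: refined commutator estimate}), giving a bound like $\|w^{-1}[\Lambda^\alpha,w^2]f\|_{L^2}\le C\|wf\|_{L^2}$ with a small constant for the lower-order part. We then absorb it using $\beta<\alpha<1$.

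\emph{Transport terms.} The term $U_0\cdot\nabla\Ve$ integrates to $-\intt (U_0\cdot\nabla w)w|\Ve|^2$, which is harmless since $|U_0||\nabla w|\lesssim \bkt{y}^{-2\alpha}w$ and is small away from a compact set. The compact part is bounded by $\|V\|_{L^2}^2$. Similarly $\Ve\cdot\nabla\Ve$ yields $\intt(\Ve\cdot\nabla w)w|\Ve|^2$, which we bound by $\|w\Ve\|_{L^4}^2\|\Ve\|_{L^2}$ and interpolate into $\|w\Ve\|_{H^\alpha}$. Here the smallness comes from $|\nabla w|\lesssim w\bkt{y}^{-1}$ and decay of $V$ being available only at the unweighted level, so I expect to localize the estimate: split into $|y|\le R_*$ and $|y|>R_*$, using that $\|V\|_{L^2(|y|>R_*)}\to0$. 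The source $(U_0\cdot\nabla U_0)_\eps$ is of size $\bkt{y}^{1-4\alpha}$, so $w\,U_0\cdot\nabla U_0\in L^2$ exactly when $\beta<4\alpha-2$. Since $\alpha>2/3$ gives $4\alpha-2>\alpha$, this holds for all $\beta<\alpha$. The term $V\cdot\nabla U_0$ is bounded by $\delta\|w\Ve\|^2$ outside a large ball, using the same Bogovski\u{\i}-style smallness of $\nabla U_0$ at infinity, and by $C\|V\|^2_{L^2}$ inside.

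\emph{Pressure.} We write $\intt\nabla P_\eps\cdot w^2\Ve=-\intt P_\eps\, 2w\nabla w\cdot\Ve$. The pressure $P_1$ is bounded in $L^2(w^2\bkt{y}^{-2}\cdots)$ via Theorem \ref{theorem: A2 conjecture}, because power weights $\bkt{y}^{2\gamma}$ with $|\gamma|<1$ lie in $\Ac_2$ in 2D. To keep the $\Ac_2$ constant uniform, the three-piece weight must be built as a product of admissible powers. The pressure $P_2$ is handled by the explicit decay of $U_0\otimes U_0$.

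I expect the main obstacle to be the construction of $w$. It must simultaneously preserve the drift coercivity with $\beta$ arbitrarily close to $\alpha$, keep the commutator $[\Lambda^\alpha,w^2]$ and the $\Ac_2$ norm bounded uniformly in $R_1,R_2$, and make the tail errors small. If the slow-decay piece causes trouble, I would first try a logarithmic slow decay, $w=\bkt{y}^\beta(1+\theta\log(|y|/R_1))^{-1}$, whose logarithmic derivative is $O(\theta)$.
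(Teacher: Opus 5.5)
Your overall scheme (mollify, test with $w^2\Ve$, use a three-piece weight, conclude with Fatou) is the paper's. Invoking Proposition \ref{proposition: H m+alpha estimate} so that the commutators $T_{j,\eps}$ vanish for a fixed bounded weight is a legitimate shortcut. The gap is in the weight itself, which is exactly the step you identify as the crux. For $|y|\ge R_1$ one has $\bkt{y}^\beta(|y|/R_1)^{-\theta}\approx R_1^{\theta}|y|^{\beta-\theta}$, which \emph{increases} when $0<\theta<\beta$; your logarithmic variant increases too. So the $\bkt{y}^{-1}$ tail starts at the maximum of $w$, of height about $R_1^{\theta}R_2^{\beta-\theta}$, not where $w$ is tiny. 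In the tail $y\cdot\nabla w/w\approx-1$, so the net drift coefficient is $\frac{1-2\alpha}{2\alpha}<0$. You must therefore bound $\frac{2\alpha-1}{2\alpha}\int_{|y|>R_2}w^2|\Ve|^2$ independently of $R_1,R_2$. With your weight this is only $\lesssim R_1^{2\theta}R_2^{2\beta-2\theta}\|V\|^2_{L^2(|y|>R_2)}$, and enlarging $R_2$ makes it worse. No decay rate for $\|V\|_{L^2(|y|>R_2)}$ is available, since that is what is being proved. Relatedly, your coefficient has the sign of the growth reversed. With $y\cdot\nabla w/w\approx\beta-\theta$ it equals $\frac{2-2\alpha+\beta-\theta}{2\alpha}$, not $\frac{\alpha-\beta-\theta}{2\alpha}$: growth of the weight helps the drift, and $\beta<\alpha$ plays no role there.

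The missing idea is that the weight must be brought down to size $O(1)$ by a slow decay \emph{before} the fast tail starts. The paper does this in three steps:
\begin{itemize}
\item it caps the growth, $\Theta_R(\bkt{y}^\beta)\le R^\beta+1$, whose contribution to $y\cdot\nabla w$ is $\ge0$;
\item it multiplies by $h_{\delta,R}=(|y|/R)^{-\delta}$ on $R<|y|\le R^{2/\delta+1}$, which lowers the weight by the factor $R^{-2}$;
\item only then does it attach the tail $R^{2/\delta-1}|y|^{-1}$.
\end{itemize}
This yields $|y\cdot\nabla h_{\delta,R}|\Theta_R w_{\delta,R}\le2\delta w_{\delta,R}^2+CR^{2\beta-2}$, which is absorbable once $\delta\le\frac{1-\alpha}{4}$. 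Within your ansatz you could instead fix $\theta\in(\beta,\beta+2-2\alpha)$ and take $R_2\gtrsim R_1^{\theta/(\theta-\beta)}$, never sending $\theta\to0$.

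In the paper, $\beta<\alpha$ is used for the uniform bound $\|\nabla w_{\delta,R}\|_{L^\infty}+\|w_{\delta,R}\|_{\dot C^\beta}\le C$. This feeds Lemma \ref{lemma: commutator estimates} in the symmetric splitting $\|\Lambda^\alpha(w\Ve)\|^2+\int\Lambda^\alpha\Ve\cdot[\Lambda^\alpha,w](w\Ve)-\int[\Lambda^\alpha,w]\Ve\cdot\Lambda^\alpha(w\Ve)$. There each error pairs one weighted factor with one unweighted factor, so no smallness is needed. Your splitting $\|w\Lambda^\alpha\Ve\|^2+\int\Lambda^\alpha\Ve\cdot[\Lambda^\alpha,w^2]\Ve$ instead requires an unexplained small constant in front of $\|w\Ve\|^2$. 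Also, Lemma \ref{appendix: refined commutator estimate} concerns the decaying weights $g_\delta$, not growing ones. Finally, $|\nabla(w^2)|\le Cw$ together with the unweighted bounds on $\|P_{1,\eps}\|_{L^2}$ and $\|\Ve\|_{L^4}$ already handles the $P_1$ and cubic terms. No localization and no weighted $\Ac_2$ bounds on $P_1$ are needed.
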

\begin{proof}
    Consider the mollified equation \eqref{equation for mollified V epsilon}. Due to the presence of the drift term (i.e., $y\cdot\nabla V$), we need to define our test function carefully to maintain coercivity at the $L^2$ level. Define the weight function
    \begin{equation}
        w_{\delta,R}(y) := \Theta_R(w(y))h_{\delta,R}(y),
    \end{equation}
    where $w(y) := (1+|y|^2)^\frac{\beta}{2}=\bkt{y}^\beta$ and $\Theta_R(s)$ is a smooth function such that
    \begin{equation}
        \Theta_R(s) = \begin{cases}
            s,\quad 0\leq s \leq R^\beta,\\
            R^\beta+1,\quad  s>R^\beta+2,
        \end{cases}\quad 0\leq \Theta_R'(s)\leq 1.
    \end{equation}
    The extra modification term $h_{\delta,R}$ is defined as
    \begin{equation}
        h_{\delta,R}(y):= \begin{cases}
            1,&\quad |y|\leq R,\\
            \dr{\frac{|y|}{R}}^{-\delta},&\quad R<|y|\leq R^{\frac{2}{\delta}+1},\\
            R^{\frac{2}{\delta}-1}|y|^{-1},&\quad |y|>R^{\frac{2}{\delta}+1}.
        \end{cases}
    \end{equation}
    A minor remark is that for simplicity $h_{\delta,R}$ defined above only belongs to $H^1$ as its gradient is not continuous, but one can of course smooth it without changing its essential properties. 
    From the construction, $|w_{\delta,R}(y)|\leq R^\beta+1$, and $w_{\delta,R}(y)\to w(y)$ as $R\to +\infty$ for any $y\in \reall^2$. Moreover, the H\"older continuity of $w_{\delta,R}$ inherits from that of $w$:
    \begin{equation}
        \|\nabla w_{\delta,R}\|_{L^\infty(\reall^2)}+\|w_{\delta,R}\|_{\dot C^{\beta}(\reall^2)}\leq C,
    \end{equation}
    where $C$ is some universal constant independent of $\delta$ and $R$. It follows from Lemma \ref{lemma: commutator estimates} that
    \begin{equation}
        \|[\Lambda^\alpha,w_{\delta,R}]f\|_{\Ltw}\leq C\|f\|_{\Ltw},
    \end{equation}
    which means that the commutator $[\Lambda^\alpha,w_{\delta,R}]$ is under control.
    The purpose of the construction of $h_{\delta,R}$ is the following inequality:
    \begin{equation}\label{proposition: weighted H alpha esti_ineq of w delta R}
        |y\cdot\nabla h_{\delta,R}(y)| \leq 2\max(\delta h_{\delta,R},\;R^{-2}),
    \end{equation}
    where $\delta$ is chosen later to satisfy certain smallness condition.
    The tail $|y|^{-1}$ guarantees that $\Ve w_{\delta,R}^2$ is a valid test function to \eqref{equation for mollified V epsilon}, i.e., $\|\bkt{y}\Ve w_{\delta,R}^2\|_{\Ltw}<\infty$. 
    Now, we test \eqref{equation for mollified V epsilon} with $w_{\delta,R}\Ve$ and obtain
    \begin{align}\label{proposition: weighted H alpha esti_test eq}
    \begin{split}
        \intt \Lambda^{2\alpha}\Ve\cdot(w^2_{\delta,R}\Ve) - \frac{2\alpha-1}{2\alpha}|w_{\delta,R}\Ve|^2-\frac{1}{2\alpha}y\cdot\nabla\Ve\cdot(w^2_{\delta,R}\Ve) - P_\eps\div(w^2_{\delta,R}\Ve)\;dy \\
        = \intt F(\Ve)\cdot w^2_{\delta,R}\Ve+\Cc(V,\eps)\cdot w^2_{\delta,R}\Ve\;dy.
    \end{split}
    \end{align}
    It remains to investigate \eqref{proposition: weighted H alpha esti_test eq} term by term.\\
    \textbf{Estimates on the left hand side:} By integration by parts, we have
    \begin{align}
        \text{LHS} &= \|\Lambda^\alpha(w_{\delta,R} \Ve)\|^2_{\Ltw}+\frac{1-\alpha}{\alpha}\|w_{\delta,R}\Ve\|^2_{\Ltw}\\
        &\quad+\intt \Lambda^\alpha\Ve\cdot[\Lambda^\alpha,w_{\delta,R}](w_{\delta,R}\Ve) - [\Lambda^\alpha,w_{\delta,R}]\Ve\cdot\Lambda^\alpha(w_{\delta,R}\Ve)\;dy\\
        &\quad+\intt\frac{1}{2\alpha}y\cdot\nabla w_{\delta,R} w_{\delta,R}|\Ve|^2 - P_\eps\Ve\cdot\nabla(w_{\delta,R}^2)\;dy.
    \end{align}
    By the control of commutators, we get
    \begin{align}
        &\da{\intt \Lambda^\alpha\Ve\cdot[\Lambda^\alpha,w_{\delta,R}](w_{\delta,R}\Ve) - [\Lambda^\alpha,w_{\delta,R}]\Ve\cdot\Lambda^\alpha(w_{\delta,R}\Ve)\;dy}\\
        &\quad\leq \frac{1}{2}\|\Lambda^\alpha(w_{\delta,R}\Ve)\|^2_{\Ltw}+\frac{1-\alpha}{8\alpha}\|w_{\delta,R}\Ve\|^2_{\Ltw}+C\|\Ve\|^2_{H^\alpha(\reall^2)}.
    \end{align}
    For the remainder of the drift term, we calculate that
    \begin{align}
        y\cdot \nabla w_{\delta,R} w_{\delta,R} &= \beta|y|^2(1+|y|^2)^{\frac{\beta}{2}-1}\Theta_R'h_{\delta,R}w_{\delta,R}+y\cdot\nabla h_{\delta,R}\Theta_R w_{\delta,R}.
    \end{align}
    Since $\Theta_R'\geq 0$, we have
    \[
        \beta|y|^2(1+|y|^2)^{\frac{\beta}{2}-1}\Theta_R'h_{\delta,R}w_{\delta,R}\geq 0.
    \]
    Besides, by \eqref{proposition: weighted H alpha esti_ineq of w delta R},
    \begin{equation}
        \da{y\cdot\nabla h_{\delta,R}\Theta_R w_{\delta,R}}\leq 2\delta w^2_{\delta,R}+CR^{2\beta-2}.
    \end{equation}
    Therefore, if we fix $\delta\leq \frac{1-\alpha}{4}$, then we have
    \begin{align}
        \intt\frac{1}{2\alpha}y\cdot\nabla w_{\delta,R} w_{\delta,R}|\Ve|^2\;dy\geq -\frac{1-\alpha}{4\alpha}\|w_{\delta,R}\Ve\|^2_{\Ltw} - C\|\Ve\|^2_{\Ltw}.
    \end{align}
    For the pressure term, as before we decompose
    \begin{align}
        P_\eps &= (-\Delta)^{-1}\div\div\dr{(V\otimes U_0)_\eps+(U_0\otimes V)_\eps+(V\otimes V)_\eps}+(-\Delta)^{-1}\div\div((U_0\otimes U_0)_\eps)\\
        &:= P_{1,\eps}+P_{2,\eps}.
    \end{align}
    By the embedding $\mathfrak{m}:H^\alpha(\reall^2)\times H^\alpha(\reall^2)\hookrightarrow H^{2\alpha-1}(\reall^2)$ (Lemma \ref{appendix: product estimate for H alpha times H alpha to H 2alpha-1}), we know that $\|P_{1,\eps}\|_{\Ltw}\leq C(U_0)(1+\|V\|^2_{H^\alpha(\reall^2)})$.
    Then, by the fact that $|\nabla(w^2_{\delta,R})(y)|\leq C|w_{\delta,R}(y)|$, we obtain
    \begin{align}
        \da{\intt P_{1,\eps}\Ve\cdot\nabla(w^2_{\delta,R})\;dy}&\leq \frac{1-\alpha}{16\alpha}\|w_{\delta,R} V_\eps\|^2_{\Ltw}+C \|P_{1,\eps}\|^2_{\Ltw}\\
        &\leq \frac{1-\alpha}{16\alpha}\|w_{\delta,R} V_\eps\|^2_{\Ltw}+C(U_0)(1+\|V\|^4_{H^\alpha(\reall^2)}).
    \end{align}
    For $P_{2,\eps}$, observe that $\nabla(-\Delta)^{-1}\div$ is a Calderón–Zygmund operator (as it is a composition of Riesz transforms), and it is standard that the weight $\bkt{y}^{2\beta}$ belongs to the $\Ac_2$-class (more generally, in $\reall^n$ any $\bkt{y}^{\gamma}$ belongs to $\Ac_2$ for $-n<\gamma<n$; see for example \cite[Chapter \Rmnum{5}]{Stein_HarmonicAnalysis_1993}). Then, it follows from Theorem \ref{theorem: A2 conjecture}, the pointwise estimates of $U_0$, and the fact $\alpha>\frac{2}{3}$, that
    \begin{align}
        \intt|\nabla P_{2,\eps}(y)|^2\bkt{y}^{2\beta}\;dy\leq \intt |(U_0\cdot\nabla U_0)_\eps(y)|^2\bkt{y}^{2\beta}\;dy\leq C(U_0)\intt\bkt{y}^{2\beta+2-8\alpha}\;dy\leq C(U_0).
    \end{align}
    Then, H\"older's inequality yields
    \begin{align}
        \da{\intt \nabla P_{2,\eps}\cdot\Ve w_{\delta,R}^2\;dy}\leq\|w_{\delta,R}\nabla P_{2,\eps}\|_{\Ltw}\|w_{\delta,R}\Ve\|_{\Ltw} \leq\frac{1-\alpha}{16\alpha}\|w_{\delta,R}\Ve\|^2_{\Ltw}+C(U_0)
    \end{align}
    Collecting all these estimates, we obtain
    \begin{align}\label{proposition: weighted H alpha esti_LHS}
        \text{LHS}\geq \frac{1}{2}\|\Lambda^\alpha(w_{\delta,R}\Ve)\|^2_{\Ltw}+\frac{1-\alpha}{2\alpha}\|w_{\delta,R}\Ve\|^2_{\Ltw}-C(U_0)(1+\|V\|^4_{H^\alpha(\reall^2)}).
    \end{align}
    \textbf{Estimates of the $F(\Ve)$ term on the right hand side:} We expand the $F(\Ve)$ term as
    \begin{align}
        \intt F(\Ve)\cdot (w^2_{\delta,R}\Ve)\;dy = -\intt\dr{(U_0\cdot\nabla U_0)_\eps+\Ve\cdot\nabla U_0 + U_0\cdot\nabla\Ve + \Ve\cdot\nabla\Ve}\cdot (w^2_{\delta,R}\Ve)\;dy.
    \end{align}
    By Lemma \ref{lemma: estimates on U_0}, since $\alpha>\frac{2}{3}$ and $\beta<\alpha$, we have
    $$
    |\bkt{y}^\beta (U_0\cdot\nabla U_0)_\eps(y)|\leq C\bkt{y}^{1-4\alpha+\beta}.
    $$
    It follows that $\bkt{y}^\beta(U_0\cdot\nabla U_0)_\eps\in \Ltw$, and
    \begin{align}
        \da{\intt (U_0\cdot\nabla U_0)_\eps w^2_{\delta,R}\Ve\;dy}&\leq \frac{1-\alpha}{16\alpha}\|w_{\delta,R}\Ve\|^2_{\Ltw}+C\|w_{\delta,R}(U_0\cdot\nabla U_0)_\eps\|^2_{\Ltw}\\
        &\leq \frac{1-\alpha}{16\alpha}\|w_{\delta,R}\Ve\|^2_{\Ltw}+C(U_0).
    \end{align}
    For the linear terms, using integration by parts and the fact that $\|w_{\delta,R}\nabla U_0\|_{L^\infty(\reall^2)}\leq C$, we get
    \begin{align}
        \da{\intt (\Ve\cdot\nabla U_0+U_0\cdot\nabla\Ve)w^2_{\delta,R}\Ve\;dy}&\leq \da{\intt \Ve\cdot\nabla U_0w^2_{\delta,R}\Ve\;dy}+\da{\intt U_0\cdot\nabla w_{\delta,R} w_{\delta,R}|\Ve|^2\;dy}\\
        &\leq \frac{1-\alpha}{32\alpha}\|w_{\delta,R}\Ve\|^2_{\Ltw}+C(U_0)\|\Ve\|^2_{\Ltw}.
    \end{align}
    Finally, for the nonlinear term, integration by parts and the Sobolev embedding $H^\alpha(\reall^2)\hookrightarrow L^4(\reall^2)$ yield
    \begin{align}
        \da{\intt (\Ve\cdot\nabla\Ve)\cdot \Ve w^2_{\delta,R}\;dy} &= \da{\intt \Ve\cdot\nabla w_{\delta,R} w_{\delta,R}|\Ve|^2\;dy}\\
        &\leq \frac{1-\alpha}{32\alpha}\|w_{\delta,R}\Ve\|^2_{\Ltw}+\|\Ve\|^4_{H^\alpha(\reall^2)}.
    \end{align}
    Collecting all these estimates, we obtain
    \begin{align}\label{proposition: weighted H alpha_RHS F}
        \da{\intt F(\Ve)w^2_{\delta,R}\Ve\;dy} \leq  \frac{1-\alpha}{8\alpha}\|w_{\delta,R}\Ve\|^2_{\Ltw}+C(U_0)(1+\|\Ve\|^4_{H^\alpha(\reall^2)}).
    \end{align}
    \textbf{Estimates of the $\Cc(V,\eps)$ term on the right hand side:} For $T_{1,\eps}$, we note that
    \begin{align}
        T_{1,\eps}(y)w_{\delta,R}(y) &= -\frac{1}{2\alpha}\intt\eta_\eps(y-z)V(z)w_{\delta,R}(y)\;dz\\
        & = -\frac{1}{2\alpha}\intt\eta_\eps(y-z)V(z)w_{\delta,R}(z)\;dz-\frac{1}{2\alpha}\intt\eta_\eps(y-z)V(z)(w_{\delta,R}(y)-w_{\delta,R}(z))\;dz\\
        &:= J_1(\eps,\delta) + J_2(\eps,\delta).
    \end{align}
    Since $\intt\eta_\eps(z)\;dz = \intt\eta(z)\;dz = \intt \nabla\cdot(z\rho(z))\;dz=0$ and $V w_{\delta,R}\in L^2(\reall^2)$ (although its $L^2$-norm depends on $\delta$ and $R$), we have
    \begin{equation}
        \|J_1(\eps,\delta)\|_{\Ltw} = \frac{1}{2\alpha}\|\eta_\eps*(V w_{\delta,R})\|_{\Ltw}\to 0,\quad{as}\quad \eps\to 0^+,
    \end{equation}
    for any fixed $\delta,R$. Since $|w_{\delta,R}(y)-w_{\delta,R}(z)|\leq \|\nabla w_{\delta,R}\|_{L^\infty}|y-z|\leq C|y-z|$, by Young's convolution inequality, we obtain
    \begin{align}
        \|J_2(\eps,\delta)\|_{\Ltw} \leq C\eps\|(|y|\eta(y))_\eps*|V|\|_{\Ltw}\leq C\eps\|V\|_{\Ltw}.
    \end{align}
    Therefore,  H\"older's inequality yields
    \begin{align}
        \da{\intt T_{1,\eps}w^2_{\delta,R}\Ve\;dy}\leq \frac{1-\alpha}{8\alpha}\|w_{\delta,R}\Ve\|^2_{\Ltw}+C\|\eta_\eps*(Vw_{\delta,R})\|^2_{\Ltw}+C\|V\|^2_{\Ltw}.
    \end{align}
    For $T_{2,\eps}$, by Lemma \ref{lemma: commutators T_i eps estimates}, once we take $\eps^\alpha < R^{-2\beta}$ (so that $\eps^\alpha|w^2_{\delta,R}(y)|\leq C$), we obtain
    \begin{align}
        \da{\intt T_{2,\eps}w^2_{\delta,R}\Ve\;dy} &\leq\|T_{2,\eps}\|_{\Ltw}\|w^2_{\delta,R} \Ve\|_{\Ltw}\\
        &\leq C(U_0)\eps^\alpha\|V\|_{H^\alpha(\reall^2)}\||w^2_{\delta,R}\Ve\|_{\Ltw}\\
        &\leq C(U_0)\|V\|^2_{H^\alpha(\reall^2)}. 
    \end{align}
    Similarly, by Lemma \ref{lemma: commutators T_i eps estimates}, once we take $\eps^{3\alpha-2}<R^{-2\beta}$, we get
    \begin{align}
        \da{\intt (T_{3,\eps}+T_{4,\eps})w^2_{\delta,R}\Ve\;dy} &\leq \|T_{3,\eps}+T_{4,\eps}\|_{\dot H^{-\alpha(\reall^2)}}\|w^2_{\delta,R}\Ve\|_{\dot H^\alpha(\reall^2)}\\
        &\leq C(U_0)(\eps^{2\alpha-1}\|V\|_{H^\alpha(\reall^2)}+\eps^{3\alpha-2}\|V\|^2_{H^\alpha(\reall^2)})\|w^2_{\delta,R}\Ve\|_{\dot H^\alpha(\reall^2)}\\
        &\leq C(U_0)(\|V\|^2_{H^\alpha(\reall^2)}+\|V\|^3_{H^\alpha(\reall^2)}).
    \end{align}
    To summarize, we have
    \begin{align}\label{proposition: weighted H alpha_RHS C}
        &\da{\intt \Cc(V,\eps)w^2_{\delta,R}\Ve\;dy}\\
        &\quad\leq \frac{1-\alpha}{8\alpha}\|w_{\delta,R}\Ve\|^2_{\Ltw}+C\|\eta_\eps*(V w_{\delta,R})\|^2_{\Ltw}+C(U_0)(\|V\|^2_{H^\alpha(\reall^2)}+\|V\|^3_{H^\alpha(\reall^2)}).
    \end{align}
    \textbf{Conclusion:} Combining \eqref{proposition: weighted H alpha esti_LHS}, \eqref{proposition: weighted H alpha_RHS F} and \eqref{proposition: weighted H alpha_RHS C}, we have
    \begin{align}
        \frac{1}{2}\|\Lambda^\alpha(w_{\delta,R}\Ve)\|^2_{\Ltw}+\frac{1-\alpha}{4\alpha}\|w_{\delta,R}\Ve\|^2_{\Ltw}\leq C\|\eta_\eps*(w_{\delta,R}V)\|^2_{\Ltw}+C(U_0)(1+\|V\|^4_{H^\alpha(\reall^2)}).
    \end{align}
    First, since $\|\eta_\eps*(w_{\delta,R}V)\|^2_{\Ltw}\to 0$ as $\eps\to 0^+$, sending $\eps\to 0^+$ yields
    \begin{align}
         \frac{1}{2}\|\Lambda^\alpha(w_{\delta,R}V)\|^2_{\Ltw}+\frac{1-\alpha}{4\alpha}\|w_{\delta,R}V\|^2_{\Ltw}\leq C(U_0)(1+\|V\|^4_{H^\alpha(\reall^2)}).
    \end{align}
    Then, taking $R\to +\infty$, Fatou's lemma yields
    \begin{align}
         \frac{1}{2}\|\Lambda^\alpha(\bkt{y}^\beta V)\|^2_{\Ltw}+\frac{1-\alpha}{4\alpha}\|\bkt{y}^\beta V\|^2_{\Ltw}\leq C(U_0)(1+\|V\|^4_{H^\alpha(\reall^2)}).
    \end{align}
    This completes the proof.
\end{proof}
\subsection{Weighted $H^{1+\alpha}$-estimate}
\begin{proposition}\label{proposition: weighted H 1+alpha esti}
    Let $\alpha\in (\frac{2}{3},1)$. Suppose $V\in H^{1+\alpha}_\sigma(\reall^2)$ is a weak solution to \eqref{self similar eq for V}, such that $\bkt{y}^\beta V\in H^\alpha(\reall^2)$ for some $\beta\in (0,\alpha)$. Then, it holds that $\bkt{y}^\beta V(y)\in H^{1+\alpha}(\reall^2)$, and
    \begin{equation}
        \|\bkt{y}^\beta V(y)\|_{H^{1+\alpha}(\reall^2)} \leq C(U_0,\beta,\|V\|_{H^{1+\alpha}(\reall^2)},\|\bkt{y}^\beta V\|_{H^\alpha(\reall^2)})
    \end{equation}
\end{proposition}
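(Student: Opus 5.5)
The plan is to combine the difference-quotient argument of Proposition \ref{proposition: H m+alpha estimate} (with $m=0$) with the weight construction of Proposition \ref{proposition: weighted H alpha esti}. We work with the mollified equation \eqref{equation for mollified V epsilon}. We test its $\Db$-differentiated form against $-\Dbn(w^2_{\delta,R}\Db\Ve)$, where $w_{\delta,R}=\Theta_R(\bkt{y}^\beta)h_{\delta,R}$ is the bounded, Lipschitz, $\dot C^\beta$ weight from the previous proof. Because $w_{\delta,R}$ is bounded, $h_{\delta,R}$ has the $|y|^{-1}$ tail, and $V\in H^{1+\alpha}$, this is an admissible test function.

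Since $\|\nabla w_{\delta,R}\|_{L^\infty}+\|w_{\delta,R}\|_{\dot C^\beta}\le C$, Lemma \ref{lemma: commutator estimates} controls $[\Lambda^\alpha,w_{\delta,R}]$ uniformly. Exactly as in the weighted $H^\alpha$ estimate, the diffusion term then gives
\[
\tfrac12\|\Lambda^\alpha(w_{\delta,R}\Db\Ve)\|^2_{\Ltw}-C\|\Db\Ve\|^2_{H^\alpha(\reall^2)},
\]
and the lower-order error is bounded by $\|V\|_{H^{1+\alpha}}$. For the drift term, differentiating $y\cdot\nabla\Ve$ produces a second copy of $\Db\Ve$, which turns the zeroth-order coefficient into $-\frac{2\alpha}{2\alpha}$. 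After integration by parts this yields $\frac{1-\alpha}{\alpha}\|w_{\delta,R}\Db\Ve\|^2$, minus the $\frac1{2\alpha}\int y\cdot\nabla w\,w|\Db\Ve|^2$ contribution. As before, that contribution is nonnegative up to $2\delta w^2+CR^{2\beta-2}$, so fixing $\delta\le\frac{1-\alpha}{4}$ preserves coercivity. The one-index shift $\pa_k\Ve(\cdot+he_k)$ that appears in the drift term has no sign, but it is bounded by $\|w\Db\Ve\|\,\|w\nabla\Ve\|$. The factor $\|w\nabla\Ve\|$ is lower order: interpolate $\bkt{y}^\beta\nabla V$ between $\|\bkt{y}^\beta V\|_{H^\alpha}$ and $\|\bkt{y}^\beta V\|_{H^{1+\alpha}}$ plus commutator terms, then absorb with Young's inequality.

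For the pressure we split $P_\eps=P_{1,\eps}+P_{2,\eps}$ and integrate by parts so that $\nabla$ falls on the weight. The $P_{2,\eps}$ part is handled by Theorem \ref{theorem: A2 conjecture} with the $\Ac_2$ weight $\bkt{y}^{2\beta}$, together with the decay of $\nabla(U_0\cdot\nabla U_0)$ from Lemma \ref{lemma: estimates on U_0}. The $P_{1,\eps}$ part only needs unweighted bounds in $H^{2\alpha-1}$, which suffice because $|\nabla w^2|\lesssim w$. The sources $F$ and the commutators $\Cc(V,\eps)$ are treated as in Proposition \ref{proposition: H m+alpha estimate}:
\begin{itemize}
\item The terms linear in $U_0$ use $\|w\nabla^jU_0\|_{L^\infty}<\infty$.
\item $(U_0\cdot\nabla U_0)_\eps$ is weighted $L^2$ after one derivative.
\item The commutators $T_{i,\eps}$ are killed by choosing $\eps$ small relative to $h$ and $R$. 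For $T_{1,\eps}$ we use the $J_1/J_2$ splitting from the weighted $H^\alpha$ proof.
\end{itemize}

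The main obstacle is the nonlinear term $\Db(\Ve\cdot\nabla\Ve)\cdot w^2\Db\Ve$. The transport part $\Ve\cdot\nabla(\Db\Ve)$ integrates by parts onto $\nabla w^2$, which is bounded. For the stretching part we write
\[
\Db\Ve\cdot\nabla\Ve=\div(\Db\Ve\otimes\Ve)
\]
and move the weight onto the first factor, bounding $\|w\Db\Ve\otimes\Ve\|_{\dot H^{1-\alpha}}\lesssim\|w\Db\Ve\|_{\dot H^{2-2\alpha}}\|\Ve\|_{\dot H^\alpha}$ by Lemma \ref{appendix: product estimate for H alpha times H alpha to H 2alpha-1}. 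We then interpolate between $L^2$ and $\dot H^\alpha$ using $\alpha>\frac23$, exactly as for $\Upsilon_{2,1}$. This absorbs the top-order piece and leaves $C(\|V\|_{H^\alpha})\|w\Db\Ve\|^2_{\Ltw}$. That remainder cannot be absorbed by the coercive constant $\frac{1-\alpha}{\alpha}$, so we handle it by interpolation: bound $\|w\Db\Ve\|_{L^2}$ by $\|w\Db\Ve\|^{\theta}_{\dot H^{\alpha}}\|w\Ve\|^{1-\theta}_{H^\alpha}$ plus commutator errors, and close with Young's inequality, using the known bound on $\|\bkt{y}^\beta V\|_{H^\alpha}$. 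Finally we let $\eps\to0$, then $h\to0$, then $R\to\infty$ via Fatou's lemma, which gives $\bkt{y}^\beta\nabla V\in H^\alpha$. Combined with $\bkt{y}^\beta V\in H^\alpha$ and the commutator $[\nabla,\bkt{y}^\beta]$, which is bounded, this yields the claimed estimate.
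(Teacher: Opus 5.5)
Your proposal is correct in substance and uses the same mechanism as the paper. That mechanism has three parts: a weighted energy estimate for the differentiated mollified equation, commutator control via Lemma \ref{lemma: commutator estimates}, and a closing step that interpolates every $L^2$-level term (such as $\|w\nabla V\|^2_{\Ltw}$) between the top-order diffusion term $\|\Lambda^\alpha(w\nabla V)\|^2_{\Ltw}$ and the assumed bound on $\|\bkt{y}^\beta V\|_{H^\alpha}$.

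One claim is wrong, however: there is no $L^2$ coercivity at this level. You correctly note that the zeroth-order coefficient becomes $-1$, but you then report $\frac{1-\alpha}{\alpha}$ after integrating by parts. The net coefficient is actually $-1+\frac{1}{2\alpha}=\frac{1-2\alpha}{2\alpha}<0$. Equivalently, suppose you keep $\frac{1-\alpha}{\alpha}$ and treat the shift $\pa_k\Ve(\cdot+he_k)$ as an error. That error tends to exactly $-\frac{1}{2\alpha}\|w\pa_k V\|^2_{\Ltw}$ as $h\to0$, and this dominates $\frac{1-\alpha}{\alpha}$ because $\alpha>\frac12$.

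Consequently, the $h_{\delta,R}$ construction and the choice $\delta\le\frac{1-\alpha}{4}$ buy nothing here. Your argument survives only because the final interpolation absorbs these terms with an arbitrary constant. There is also an ordering issue. At fixed $h$, your bound for the shift term involves $\|w\nabla\Ve\|_{\Ltw}$, not the quantity $\|\Lambda^\alpha(w\Db\Ve)\|_{\Ltw}$ that the diffusion controls. So this absorption can only be carried out after $\eps\to0$ and $h\to0$, at fixed $R$, where all quantities are finite.

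The paper's proof is leaner in several places:
\begin{itemize}
\item Since $V\in H^{1+\alpha}$ is assumed, it applies $\pa$ directly rather than difference quotients.
\item It uses the simple weight $w_\delta=\bkt{y}^\beta\bkt{\delta y}^{-\beta-\frac12}$. This satisfies $|y\cdot\nabla w_\delta|\le Cw_\delta$, so the whole drift remainder is just $C\|w_\delta\Ve^1\|^2_{\Ltw}$.
\item It bounds the pressure by the unweighted $\|\nabla P_\eps\|_{\Ltw}$, using only $|\nabla w^2|\lesssim w$; no $\Ac_2$ theory is needed.
\item For the stretching term, it puts the weight on $\Ve$ rather than on the differentiated factor, and obtains $\|\Ve^1\otimes w_\delta\Ve\|_{\dot H^{1-\alpha}}\lesssim\|V\|_{H^{1+\alpha}}\|\bkt{y}^\beta V\|_{H^\alpha}$ directly from $\mathfrak m$ and $H^{2\alpha-1}\hookrightarrow H^{1-\alpha}$.
\end{itemize}
Your variant of the stretching term also works: putting the weight on $\Db\Ve$ and interpolating as for $\Upsilon_{2,1}$ simply adds one more $C\|w\Db\Ve\|^2_{\Ltw}$ term to the final interpolation.
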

\begin{proof}
    The key point in the proof is that: as the diffusion provides the leading order coercive part \\$\|\bkt{y}^\beta V\|_{H^{1+\alpha}(\reall^2)}$, the lower order errors of size $\|\bkt{y}V\|_{H^1(\reall^2)}$ can be controlled by the interpolation between $\|\bkt{y}^\beta V\|_{H^{1+\alpha}(\reall^2)}$ and $\|\bkt{y}^\beta V\|_{H^{\alpha}(\reall^2)}$.
    Now, we apply the partial derivative $\pa$ to the mollified velocity system \eqref{equation for mollified V epsilon} and obtain \eqref{eq for mollified partial m V} for $m=1$. Recall the notation $\Ve^1:=\pa\Ve$. Define the weight function $w_\delta(y):=\frac{(1+|y|^2)^\frac{\beta}{2}}{(1+\delta^2|y|^2)^{\frac{\beta}{2}+\frac{1}{4}}}$. Then, it is clear that $w^2_\delta\Ve^1$ is a valid test function to \eqref{eq for mollified partial m V} ($m=1$). We note that
    \begin{align}
        |y\cdot\nabla w_\delta(y)|\leq C|w_\delta(y)|,\quad\|\nabla w_\delta\|_{L^\infty(\reall^2)}+\|w_\delta\|_{\dot C^\beta(\reall^2)}\leq C.
    \end{align}
    Thus, by Lemma \ref{lemma: commutator estimates}, the commutators of the weight function are under control.
    Testing \eqref{eq for mollified partial m V} ($m=1$) with $w^2_\delta\Ve^1$ yields
    \begin{align}\label{proposition: weighted H 1+alpha esti_test eq}
    \begin{split}
        \intt \Lambda^{2\alpha}\Ve^1\cdot(w^2_{\delta}\Ve^1) - |w_{\delta}\Ve^1|^2-\frac{1}{2\alpha}y\cdot\nabla\Ve^1\cdot(w^2_{\delta}\Ve^1) - \pa P_\eps\div(w^2_{\delta}\Ve^1)\;dy \\
        = \intt \pa F(\Ve)\cdot w^2_{\delta}\Ve^1+\pa\Cc(V,\eps)\cdot w^2_{\delta}\Ve^1\;dy.
    \end{split}
    \end{align}
    \textbf{Estimates on the LHS:} For the diffusion term, we have
    \begin{align}
         \intt \Lambda^{2\alpha}\Ve^1\cdot(w^2_{\delta}\Ve^1)\;dy &= \|\Lambda^\alpha(w_{\delta}\Ve^1)\|^2_{\Ltw}+\intt \Lambda^\alpha\Ve^1\cdot[\Lambda^\alpha,w_{\delta}](w_{\delta}\Ve^1)\;dy\\
         &\quad- \intt [\Lambda^\alpha,w_{\delta}]\Ve^1\cdot\Lambda^\alpha(w_{\delta}\Ve^1)\;dy.
    \end{align}
    By the commutator estimates, we get
    \begin{align}
        &\quad\da{\intt \Lambda^\alpha\Ve^1\cdot[\Lambda^\alpha,w_{\delta}](w_{\delta}\Ve^1)-[\Lambda^\alpha,w_{\delta}]\Ve^1\cdot\Lambda^\alpha(w_{\delta}\Ve^1)\;dy}\\
        &\leq \frac{1}{2}\|\Lambda^\alpha(w_{\delta}\Ve^1)\|^2_{\Ltw}+C\|w_{\delta}\Ve^1\|^2_{\Ltw}+C\|\Ve\|^2_{H^{1+\alpha}(\reall^2)}.
    \end{align}
    For the drift term, integration by parts gives
    \begin{align}
        -\intt\frac{1}{2\alpha}y\cdot\nabla\Ve^1\cdot(w^2_{\delta}\Ve^1)\;dy = \frac{1}{2\alpha}\|w_{\delta}\Ve^1\|^2_{\Ltw} +\frac{1}{2\alpha}\intt y\cdot\nabla w_\delta w_\delta|\Ve^1|^2\;dy,
    \end{align}
    and
    \begin{align}
        \da{\frac{1}{2\alpha}\intt y\cdot\nabla w_\delta w_\delta|\Ve^1|^2\;dy}\leq C\|w_{\delta}\Ve^1\|^2_{\Ltw}.
    \end{align}
    Finally, for the pressure term, since $\|\nabla P_\eps\|_{\Ltw}\leq C(U_0)(1+\|V\|^2_{H^{1+\alpha}(\reall^2)})$ due to Lemma \ref{lemma: control of pressure}, we have 
    \begin{align}
        \da{\intt \pa P_\eps\div(w^2_{\delta}\Ve^1)\;dy}&=\da{2\intt \pa P_\eps w_\delta\nabla w_{\delta}\cdot\Ve^1\;dy}\\
        &\leq C\|w_\delta \Ve^1\|^2_{\Ltw}+C(U_0)\dr{1+\|V\|^4_{H^{1+\alpha}(\reall^2)}}.
    \end{align}
    In summary, we conclude that
    \begin{align}\label{proposition: weighted H 1+alpha esti_LHS esti}
        \text{LHS}\geq \frac{1}{2}\|\Lambda^\alpha(w_\delta\Ve^1)\|^2_{\Ltw}-C\|w_\delta \Ve^1\|^2_{\Ltw}-C(U_0)\dr{1+\|V\|^4_{H^{1+\alpha}(\reall^2)}}
    \end{align}
    \textbf{Estimate of the $F(\Ve)$ term on the RHS}: We expand this part as
    \begin{align}
        \intt  \pa F(\Ve)\cdot(w_{\delta}^2\Ve^1)\;dy = -\intt\pa\dr{(U_0\cdot\nabla U_0)_\eps+\Ve\cdot\nabla U_0 + U_0\cdot\nabla\Ve + \Ve\cdot\nabla\Ve}\cdot (w^2_{\delta}\Ve^1)\;dy.
    \end{align}
    By Lemma \ref{lemma: estimates on U_0}, since $\alpha>\frac{2}{3}$ and $\beta<\alpha$, we have
    $$
    |\bkt{y}^\beta \pa(U_0\cdot\nabla U_0)_\eps(y)|\leq C\bkt{y}^{1-4\alpha+\beta}.
    $$
    It follows that $\bkt{y}^\beta\pa(U_0\cdot\nabla U_0)_\eps\in \Ltw$, and
    \begin{align}
        \da{\intt \pa(U_0\cdot\nabla U_0)_\eps w^2_{\delta}\Ve^1\;dy}\leq \|w_{\delta}\Ve^1\|^2_{\Ltw}+C(U_0).
    \end{align}
    For the linear terms, integration by parts yields
    \begin{align}
        \intt \pa\dr{\Ve\cdot\nabla U_0+U_0\cdot\nabla\Ve}\cdot(w^2_\delta\Ve^1)\;dy&=\intt \Ve\cdot\nabla(\pa U_0)\cdot(w^2_\delta\Ve^1)+(\Ve^1\cdot\nabla U_0+\pa U_0\cdot\nabla\Ve)\cdot(w^2_\delta\Ve^1)\\
        &-U_0\cdot \nabla w_\delta w_\delta|\Ve^1|^2\;dy.
    \end{align}
    Since $|\nabla^k U_0(y)w_\delta(y)|\leq C(U_0)\bkt{y}^{\beta-2\alpha}\leq C(U_0)$ for any $k\geq 1$, it is clear from the previous expression that
    \begin{align}
        \da{
        \intt \pa\dr{\Ve\cdot\nabla U_0+U_0\cdot\nabla\Ve}\cdot(w^2_\delta\Ve^1)\;dy}\leq C\|w_\delta \Ve^1\|^2_{\Ltw}+C(U_0)\|\Ve\|^2_{H^1(\reall^2)}.
    \end{align}
    For the nonlinear term, we obtain by integration by parts that
    \begin{align}
        -\intt \pa(\Ve\cdot\nabla\Ve)\cdot(w^2_\delta\Ve^1)\;dy &=  \intt \Ve\cdot\nabla w_\delta w_\delta|\Ve^1|^2 - \Ve^1\cdot\nabla\Ve\cdot(w^2_\delta\Ve^1)\;dy\\
        &=\intt \Ve\cdot\nabla w_\delta w_\delta|\Ve^1|^2 +\Ve^1\cdot\nabla w_\delta\Ve\cdot(w_\delta\Ve^1)- \Ve^1\cdot\nabla(w_\delta\Ve)\cdot(w_\delta\Ve^1)\;dy\\
        &:= N_1+N_2+N_3.
    \end{align}
    For the first two terms above, by the Sobolev embedding $H^{1+\alpha}(\reall^2)\hookrightarrow L^\infty(\reall^2)$, we have
    \begin{align}
        |N_1+N_2|&\leq \|w_\delta\Ve^1\|^2_{\Ltw}+C\|\Ve^1\otimes\Ve\|^2_{\Ltw}\\
        &\leq\|w_\delta\Ve^1\|^2_{\Ltw}+C\|\Ve\|^4_{H^{1+\alpha}(\reall^2)}.
    \end{align}
    For the third term above, by the embedding $\mathfrak{m}:H^\alpha(\reall^2)\otimes H^\alpha(\reall^2)\hookrightarrow H^{2\alpha-1}(\reall^2)$ together with the fact that $\alpha>\frac{2}{3}$, we obtain
    \begin{align}
        |N_3| &= \da{\intt \div(\Ve^1\otimes\Ve w_\delta)\cdot(w_\delta\Ve^1)\;dy}\\
        &\leq \|\div(\Ve^1\otimes\Ve w_\delta)\|_{\dot H^{-\alpha}(\reall^2)}\|\Lambda^\alpha(w_\delta\Ve^1)\|_{\Ltw}\\
        &\leq \frac{1}{4}\|\Lambda^\alpha(w_\delta\Ve^1)\|^2_{\Ltw}+ \|\Ve^1\otimes\Ve w_\delta\|^2_{\dot H^{1-\alpha}(\reall^2)}\\
        &\leq \frac{1}{4}\|\Lambda^\alpha(w_\delta\Ve^1)\|^2_{\Ltw}+C\|\Ve\|^4_{H^{1+\alpha}(\reall^2)}+C\|\bkt{y}^\beta\Ve\|^4_{H^\alpha(\reall^2)}.
    \end{align}
    Collecting the estimates of $N_i$'s, we have
    \begin{align}
        &\quad\da{\intt \pa(\Ve\cdot\nabla\Ve)\cdot(w^2_\delta\Ve^1)\;dy }\\
        &\leq \frac{1}{4}\|\Lambda^\alpha(w_\delta\Ve^1)\|^2_{\Ltw}+C\|w_\delta\Ve^1\|^2_{\Ltw}+C\|\Ve\|^4_{H^{1+\alpha}(\reall^2)}+C\|\bkt{y}^\beta\Ve\|^4_{H^\alpha(\reall^2)}.
    \end{align}
    Finally, we obtain the estimate of the $F(\Ve)$ term:
    \begin{align}\label{proposition: weighted H 1+alpha esti_RHS F}
    \begin{split}
        &\quad\da{\intt \pa F(\Ve)\cdot(w^2_\delta\Ve^1)\;dy }\\
        &\leq \frac{1}{4}\|\Lambda^\alpha(w_\delta\Ve^1)\|^2_{\Ltw}+C\|w_\delta\Ve^1\|^2_{\Ltw}+C(U_0)\dr{1+\|\Ve\|^4_{H^{1+\alpha}(\reall^2)}}+C\|\bkt{y}^\beta\Ve\|^4_{H^\alpha(\reall^2)}.
    \end{split}
    \end{align}
    \textbf{Estimates of the $\Cc(V,\eps)$ term on the RHS:} We recall that $\pa\Cc(V,\eps) = \sum_{i=1}^4\pa T_{i,\eps}$. For $T_{1,\eps}$, the same as in the previous proof, we note that
    \begin{align}
        \pa T_{1,\eps}(y)w_{\delta}(y) &= -\frac{1}{2\alpha}\intt\eta_\eps(y-z)\pa V(z)w_{\delta,R}(y)\;dz\\
        & = -\frac{1}{2\alpha}\intt\eta_\eps(y-z)\pa V(z)w_{\delta}(z)\;dy-\frac{1}{2\alpha}\intt\eta_\eps(y-z)\pa V(z)(w_{\delta}(y)-w_{\delta}(z))\;dy\\
        &:= J_1(\eps) + J_2(\eps).
    \end{align}
    Since $\intt\eta_\eps(z)\;dz = \intt\eta(z)\;dz = \intt \nabla\cdot(z\rho(z))\;dz=0$ and $\pa V w_{\delta}\in L^2(\reall^2)$, the norm of which depends on $\delta$, we have
    \begin{equation}
        \|J_1(\eps)\|_{\Ltw} = \frac{1}{2\alpha}\|\eta_\eps*(\pa V w_{\delta})\|_{\Ltw}\to 0,\quad{as}\quad \eps\to 0^+,
    \end{equation}
    for any fixed $\delta$. Since $|w_{\delta}(y)-w_{\delta}(z)|\leq \|\nabla w_{\delta}\|_{L^\infty}|y-z|\leq C|y-z|$, by Young's convolution inequality, we get
    \begin{align}
        \|J_2(\eps)\|_{\Ltw} \leq C\eps\|(|y|\eta(y))_\eps*|\pa V|\|_{\Ltw}\leq C\eps\|\pa V\|_{\Ltw}.
    \end{align}
    Therefore, H\"older's inequality yields
    \begin{align}
        \da{\intt \pa T_{1,\eps}w^2_{\delta}\Ve^1\;dy}\leq \|w_{\delta}\Ve^1\|^2_{\Ltw}+C\|\eta_\eps*(\pa Vw_{\delta})\|^2_{\Ltw}+C\|V\|^2_{H^1(\reall^2)}.
    \end{align}
    For $T_{2,\eps}$, Lemma \ref{lemma: commutators T_i eps estimates} implies that
    \begin{align}
        \da{\intt \pa T_{2,\eps}w^2_{\delta}\Ve\;dy}&\leq \|\pa T_{2,\eps}\|_{\Ltw}\|w^2_{\delta}\Ve^1\|_{\Ltw}\\
        &\leq C(U_0)\eps^\alpha\|V\|_{H^{1+\alpha}(\reall^2)}\|w_\delta^2\Ve^1\|_{\Ltw}\\
        &\leq C(U_0)\|V\|^2_{H^{1+\alpha}(\reall^2)},
    \end{align}
    once we take $\eps^\alpha\leq \delta^{2\beta}$. For $T_{3,\eps}$, again by Lemma \ref{lemma: commutators T_i eps estimates}, we get
    \begin{align}
     \da{\intt \pa T_{3,\eps}w^2_{\delta}\Ve\;dy}&\leq \|\pa T_{3,\eps}\|_{\dot H^{-\alpha}(\reall^2)}\|w^2_{\delta}\Ve^1\|_{\dot H^{\alpha}(\reall^2)}\\
        &\leq C(U_0)\eps^{2\alpha-1}\|V\|_{H^{1+\alpha}(\reall^2)}\|\Lambda^\alpha(w_\delta^2\Ve^1)\|_{\Ltw}\\
        &\leq C(U_0)\|V\|^2_{H^{1+\alpha}(\reall^2)},
    \end{align}
    once we take $\eps^{2\alpha-1}\leq \delta^{2\beta}$. Similarly, for $T_{4,\eps}$, by Lemma \ref{lemma: commutators T_i eps estimates} we have
    \begin{align}
     \da{\intt \pa T_{4,\eps}w^2_{\delta}\Ve\;dy}&\leq \|\pa T_{4,\eps}\|_{\dot H^{-\alpha}(\reall^2)}\|w^2_{\delta}\Ve^1\|_{\dot H^{\alpha}(\reall^2)}\\
        &\leq C\eps^{3\alpha-2}\|V\|^2_{H^{1+\alpha}(\reall^2)}\|\Lambda^\alpha(w_\delta^2\Ve^1)\|_{\Ltw}\\
        &\leq C\|V\|^3_{H^{1+\alpha}(\reall^2)},
    \end{align}
    once we take $\eps^{3\alpha-2}\leq \delta^{2\beta}$. Collecting all these estimates yields
    \begin{align}\label{proposition: weighted H 1+alpha esti_RHS C}
        \da{\intt \pa \Cc(V,\eps)\cdot(w^2_\delta\Ve^1)\;dy}\leq \|w_{\delta}\Ve^1\|^2_{\Ltw}+C\|\eta_\eps*(\pa Vw_{\delta})\|^2_{\Ltw}+C(U_0)\dr{1+\|V\|^3_{H^{1+\alpha}(\reall^2)}},
    \end{align}
    when $\eps$ is sufficiently small.\\
    \textbf{Conclusion:} Combining \eqref{proposition: weighted H 1+alpha esti_LHS esti}, \eqref{proposition: weighted H 1+alpha esti_RHS F}, and \eqref{proposition: weighted H 1+alpha esti_RHS C}, we have
    \begin{align}
        \frac{1}{4}\|\Lambda^\alpha(w_\delta\Ve^1)\|^2_{\Ltw}&\leq C\|w_\delta\Ve^1\|^2_{\Ltw}+C\|\eta_\eps*(\pa Vw_{\delta})\|^2_{\Ltw}\\
        &\quad+C\|\bkt{y}^\beta\Ve\|^4_{H^\alpha(\reall^2)}+C(U_0)\dr{1+\|V\|^4_{H^{1+\alpha}(\reall^2)}}.
    \end{align}
    First of all, let $\eps\to 0^+$, since $\lim_{\eps\to 0^+}\|\eta_\eps*(\pa Vw_{\delta})\|^2_{\Ltw}=0$, we have
    \begin{align}
        \frac{1}{4}\|\Lambda^\alpha(w_\delta\pa V)\|^2_{\Ltw}\leq C\|w_\delta \pa V\|^2_{\Ltw}+C\|\bkt{y}^\beta V\|^4_{H^\alpha(\reall^2)}+C(U_0)\dr{1+\|V\|^4_{H^{1+\alpha}(\reall^2)}}, 
    \end{align}
    where we have used that
    \begin{align}
        \dn{\intt \rho_\eps(y-z)\Lambda^{s}V(z)\bkt{y}^{\beta}\;dz}_{L^2_y}&\leq \dn{\intt \rho_\eps(y-z)\Lambda^{s}V(z)\bkt{z}^{\beta}\;dz}_{L^2_y}\\
        &\quad+\dn{\intt \rho_\eps(y-z)\Lambda^{s}V(z)(\bkt{z}^{\beta}-\bkt{y}^\beta)\;dz}_{L^2_y}\\
        &\leq C\|\bkt{y}^\beta\Lambda^s V\|_{\Ltw},\quad s\geq 0.
    \end{align}
    Importantly, by the interpolation inequality $\|\pa f\|^2_{\Ltw}\leq \eps'\|\Lambda^\alpha \pa f\|^2_{\Ltw} +C(\eps')\|f\|^2_{\Ltw}$ and commutator estimates, we have
    \begin{align}
        C\|w_\delta \pa V\|^2_{\Ltw}&\leq C\|\pa(w_\delta V)\|^2_{\Ltw}+C\|V\|^2_{\Ltw}\\
        &\leq \frac{1}{8}\|\Lambda^{\alpha}\pa(w_\delta V)\|^2_{\Ltw}+C\|w_\delta V\|^2_{\Ltw}+C\|V\|^2_{\Ltw}\\
        &\leq \frac{1}{8}\|\Lambda^\alpha(w_\delta\pa V)\|^2_{\Ltw}+C\|\bkt{y}^\beta V\|^2_{\Ltw}+C\|V\|^2_{H^\alpha(\reall^2)}.
    \end{align}
    It follows that
    \begin{align}
         \frac{1}{8}\|\Lambda^\alpha(w_\delta\pa V)\|^2_{\Ltw}\leq C\|\bkt{y}^\beta V\|^4_{H^\alpha(\reall^2)}+C(U_0)\dr{1+\|V\|^4_{H^{1+\alpha}(\reall^2)}}.
    \end{align}
    Finally, taking $\delta\to 0$, by Fatou's lemma and commutator estimates, we obtain
    \begin{align}
        \|\bkt{y}^\beta V\|_{H^{1+\alpha}(\reall^2)}\leq C(U_0,\beta,\|V\|_{H^{1+\alpha}(\reall^2)},\|\bkt{y}^\beta V\|_{H^\alpha(\reall^2)}).
    \end{align}
    This completes the proof.
\end{proof}

\section{Decay estimates of $V$}\label{Sec 5}
Now, let $\alpha\in (\frac{2}{3},1)$. By the previous section we know that there exists some $V\in \cap_{k\in\mathbb{N}}H^{k}(\reall^2)$ that solves \eqref{self similar eq for V}. On the other hand, we can view $V$ as a solution to the following elliptic equation with force:
\begin{align}\label{elliptic eq with F force}
    (-\Delta)^\alpha V -\frac{2\alpha-1}{2\alpha}V-\frac{1}{2\alpha}y\cdot\nabla V = F,
\end{align}
where, denoting the Leray projection by $\P := \text{Id} - \nabla(\Delta)^{-1}\div$,
\begin{equation}
    F = -\mathbb{P}[\div((U_0+V)\otimes (U_0+V))].
\end{equation}
Then, we rewrite it as an inhomogeneous fractional heat equation:
\begin{align}\label{inhomo fractional heat eq}
    \pa_t v + (-\Delta)^\alpha v = f,
\end{align}
where
\begin{align}
    v(x,t) :=  t^{\frac{1}{2\alpha}-1}V\dr{\frac{x}{t^{\frac{1}{2\alpha}}}},\quad f(x,t) = t^{\frac{1}{2\alpha}-2}F\dr{\frac{x}{t^{\frac{1}{2\alpha}}}}.
\end{align}
For any $p\in [1,+\infty]$ and $f\in L^1_{loc}((0,T);L^p(\reall^n))$, it is standard by the Duhamel principle that 
\begin{align}\label{fractional heat Duhamel}
    v(x,t) = \int_0^{t}\int_{\reall^n} H_\alpha(x-y,t-s)f(y,s)\;dt ds\in L^\infty((0,T);L^p(\reall^n))
\end{align}
is a solution to \eqref{inhomo fractional heat eq} in the distributional sense, such that $\|v(\cdot,t)\|_{L^p}\to 0$ as $t\to 0^+$. In addition, such solution is unique, i.e., if $u(x,t)\in L^\infty((0,T);L^p(\reall^n))$ is another solution to \eqref{inhomo fractional heat eq} such that $\|u(\cdot,t)\|_{L^p}\to 0$ as $t\to 0^+$, then necessarily $u=v$. The proof is a standard one, but for the sake of completeness we give it in the appendix (Lemma \ref{appendix: uniqueness of the fractional heat eq}). In our case, using the fact that $V\in H^{2}(\reall^2)$ from the previous section and the decay properties of $U_0$ (Lemma \ref{lemma: estimates on U_0}), we have $F\in L^q(\reall^2)$ for any $q\geq 2$. We also note that
\[
    \|v(\cdot,t)\|_{L^p_x} = t^{\frac{1}{2\alpha}-1+\frac{1}{\alpha p}}\|V\|_{L^p(\reall^2)},\quad \|f(\cdot,t)\|_{L^p_x} =  t^{\frac{1}{2\alpha}-2+\frac{1}{\alpha p}}\|F\|_{L^p(\reall^2)}.
\]
Therefore, when $ \frac{1}{2\alpha}-1+\frac{1}{\alpha p}>0$ and $2\leq p$, i.e., $2\leq p<\frac{2}{2\alpha-1}$, the unique solution to \eqref{inhomo fractional heat eq} in the class $L^\infty((0,T);L^p(\reall^2))$ with zero $L^p$-trace is given by the expression \eqref{fractional heat Duhamel} (with the particular forcing term that we have just discussed). In particular, setting $U = V+U_0$, we have
\begin{align}
    V(x) = v(x,1) &= \int_{0}^{1}\intt H_{\alpha}(x-y,1-s)s^{\frac{1}{2\alpha}-2}F\dr{\frac{y}{s^{\frac{1}{2\alpha}}}}\;dyds\\
    &= -\int_{0}^{1}\intt H_{\alpha}(x-y,1-s)s^{\frac{1}{2\alpha}-2}(\text{Id} - \nabla(\Delta)^{-1}\mathrm{div})\div(U\otimes U)\dr{\frac{y}{s^\frac{1}{2\alpha}}}\;dyds\\
    &:=-\int_{0}^{1}s^{\frac{1}{\alpha}-2}\intt \nabla\Oc(x-y,1-s):(U(y/s^{\frac{1}{2\alpha}})\otimes U(y/s^{\frac{1}{2\alpha}}))\;dyds,
\end{align}
where we define the non-local Oseen kernel $\Oc =\{\Oc_{jk}\}_{1\leq j,k\leq 2}$ as a symmetric $2$-tensor: 
\begin{equation}\label{Oseen kernel def}
    \Oc_{jk}(x,t) := \delta_{jk}H_\alpha(x,t) - \pa_j\pa_k (\Delta)^{-1} H_\alpha(x,t). 
\end{equation}
We can also write $V$ in local coordinates (note the order of contraction),
\begin{align}
    V_j(x) = -\int_{0}^{1}s^{\frac{1}{\alpha}-2}\intt \pa_i\Oc_{jk}(x-y,1-s) U_i(y/s^{1/2\alpha})U_k(y/s^{1/2\alpha})\;dyds, 
\end{align}
where we adopt the Einstein convention of summing repeated indices.
It has the scaling property $\nabla\Oc(x,t) = t^{-\frac{3}{2\alpha}}\nabla\Oc(t^{-\frac{1}{2\alpha}}x,1)$. Moreover,
since the operator $\nabla^{k}\pa_j\pa_k(\Delta)^{-1}$ can be represented as the convolution with a kernel that decays like $|y|^{-2}$, one can show that $|\nabla^{k}_x\Oc(x,1)|\leq C\bkt{x}^{-2-k}$. In fact, a more general decay estimate is available
\begin{equation}\label{decay esti of the Oceen kernel}
    |\nabla^k_x(-\Delta)^{\frac{\beta}{2}}\Oc(x,t)|\leq C(k,\beta) (t^{\frac{1}{2\alpha}}+|x|)^{-2-\beta-k},\quad \beta+k>-2.
\end{equation}
We refer the readers to Lemma \ref{appendix: estimates of the Oseen kernel} in the appendix for a proof. This estimate, together with the following lemma, will be helpful in deriving sharp pointwise estimates for $V$:
\begin{lemma}\label{lemma: frac esti of U_0 U_0}
    Let $\bar u_0\in C^{1,\beta}(\Sp^1)$ for some $\beta\in (0,1]$ and let $\alpha\in (\frac{1}{2},1)$. Then, for any $\gamma\in(0,\beta)$, it holds that 
    \begin{equation}\label{lemma: frac esti of U_0 U_0_div case}
        |(-\Delta)^{\frac{\gamma}{2}}(U_0\cdot\nabla U_0)(x)|\leq C(\alpha,\gamma,\bar u_0)\bkt{x}^{1-4\alpha-\gamma},
    \end{equation}
    and
    \begin{equation}\label{lemma: frac esti of U_0 U_0_no div case}
        |(-\Delta)^{\frac{\gamma}{2}}(U_0\otimes U_0)(x)|\leq C(\alpha,\gamma,\bar u_0)\bkt{x}^{2-4\alpha-\gamma},
    \end{equation}
\end{lemma}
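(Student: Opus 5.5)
The plan is to work with the singular-integral representation
\[
(-\Delta)^{\frac{\gamma}{2}}g(x)=c_{2,\gamma}\intt\frac{g(x)-g(x+z)}{|z|^{2+\gamma}}\;dz ,\qquad 0<\gamma<1,
\]
and to split the $z$-integral into a near region $|z|\lesssim R$ and a far region $|z|\gtrsim R$, where $R:=|x|\geq 2$; for $|x|\leq 2$ the bounds reduce to uniform boundedness and follow the same way. The inputs are the pointwise bounds of Lemma \ref{lemma: estimates on U_0} with $m=1$:
\[
|U_0|\lesssim \bkt{y}^{1-2\alpha},\qquad |\nabla U_0|\lesssim \bkt{y}^{-2\alpha},\qquad |\nabla^2U_0|\lesssim\bkt{y}^{-2\alpha-\beta},
\]
together with the fractional bounds
\[
|(-\Delta)^{\frac{\gamma}{2}}U_0|\lesssim \bkt{y}^{1-2\alpha-\gamma},\qquad |(-\Delta)^{\frac{\gamma}{2}}\nabla U_0|\lesssim \bkt{y}^{-2\alpha-\gamma},
\]
where the second uses $\bar k=\min(1,1+\beta-\gamma)=1$ since $\gamma<\beta$. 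In the far region I will use a smooth cutoff $1-\chi(z/R)$ instead of a sharp one, so that we can integrate by parts in $z$.

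\textbf{The estimate \eqref{lemma: frac esti of U_0 U_0_no div case}.} Set $G=U_0\otimes U_0$. Then $|G|\lesssim\bkt{y}^{2-4\alpha}$ and $|\nabla G|\lesssim \bkt{y}^{1-4\alpha}$.
\begin{itemize}
\item Near part: use $|G(x)-G(x+z)|\leq |z|\sup_{B_{R/2}(x)}|\nabla G|\lesssim |z|R^{1-4\alpha}$. Integrating against $|z|^{-2-\gamma}$ over $|z|<R/2$ gives $R^{2-4\alpha-\gamma}$.
\item Far part, the $G(x)$ piece: $|G(x)|\int_{|z|>R/2}|z|^{-2-\gamma}\,dz\lesssim R^{2-4\alpha-\gamma}$.
\item Far part, the $G(x+z)$ piece: split according to whether $|x+z|<R/2$ or not. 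The first region contributes $R^{-2-\gamma}\int_{|w|<R/2}\bkt{w}^{2-4\alpha}\,dw\lesssim R^{-2-\gamma}R^{4-4\alpha}$, which is admissible because $\alpha<1$. The second region contributes $R^{2-4\alpha}\int_{|z|>R/2}|z|^{-2-\gamma}\,dz$.
\end{itemize}
All pieces are $O(R^{2-4\alpha-\gamma})$.

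\textbf{The estimate \eqref{lemma: frac esti of U_0 U_0_div case}.} A direct Lipschitz or second-difference bound on $f=U_0\cdot\nabla U_0$ loses a factor $R^{1-\beta}$ in the near region. So I would instead use the fractional Leibniz formula componentwise, with $g=U_0^i$ and $h=\pa_iU_0$:
\[
(-\Delta)^{\frac{\gamma}{2}}(gh)=g(-\Delta)^{\frac{\gamma}{2}}h+h(-\Delta)^{\frac{\gamma}{2}}g-c\intt\frac{(g(x)-g(x+z))(h(x)-h(x+z))}{|z|^{2+\gamma}}\;dz .
\]
\begin{itemize}
\item The two leading terms are bounded by $R^{1-2\alpha}R^{-2\alpha-\gamma}$ and $R^{-2\alpha}R^{1-2\alpha-\gamma}$, using the fractional bounds above.
\item In the bilinear term, the near region uses $|g(x)-g(x+z)|\lesssim |z|R^{-2\alpha}$ and $|h(x)-h(x+z)|\lesssim R^{-2\alpha}$. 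This gives $R^{-4\alpha}\int_{|z|<R/2}|z|^{-1-\gamma}\,dz\sim R^{1-4\alpha-\gamma}$, which is fine because $\gamma<1$.
\item In the far region, expand the product into four pieces. The pieces $g(x)h(x)$, $g(x)h(x+z)$ and $g(x+z)h(x)$ are handled exactly as in the previous paragraph. For example, $R^{1-2\alpha}\big(R^{-2-\gamma}\int_{|w|<R/2}\bkt{w}^{-2\alpha}\,dw+R^{-2\alpha-\gamma}\big)\lesssim R^{1-4\alpha-\gamma}$, using $2\alpha<2$.
\end{itemize}

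\textbf{Main obstacle.} The remaining piece is $\sum_i g_i(x+z)h_i(x+z)=f(x+z)$ integrated against the far kernel. A size-only estimate gives $R^{-2-\gamma}$, which is worse than $R^{1-4\alpha-\gamma}$ when $\alpha>3/4$. Here I would exploit the structure $f=\div(U_0\otimes U_0)$, which holds since $\div U_0=0$. Integrating by parts in $z$ against the smooth far kernel $(1-\chi(z/R))|z|^{-2-\gamma}$, whose gradient is $\lesssim R^{-3-\gamma}$ for $|z|\gtrsim R$, produces
\[
\intt (U_0\otimes U_0)(x+z):\nabla_z\big[(1-\chi(z/R))|z|^{-2-\gamma}\big]\;dz .
\]
Bounding this as before gives $R^{-3-\gamma}\int_{|w|<R/2}\bkt{w}^{2-4\alpha}\,dw+R^{2-4\alpha}R^{-1-\gamma}\lesssim R^{1-4\alpha-\gamma}$, again because $\alpha<1$.

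The bilinear formula must be used with the same smooth near/far cutoff, so that its near part is unchanged. The bookkeeping of this smooth splitting, while keeping the Leibniz identity exact, is where I expect the care to be needed.
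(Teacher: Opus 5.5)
Your proposal is correct and follows essentially the paper's argument: the fractional Leibniz splitting combined with the bounds of Lemma~\ref{lemma: estimates on U_0}, a Lipschitz bound on the bilinear term near $x$, size bounds away from $x$, and an integration by parts using $U_0\cdot\nabla U_0=\div(U_0\otimes U_0)$ for the $U_0(y)\cdot\nabla U_0(y)$ piece, which is exactly how the paper treats its region $\Omega_3$. The differences are cosmetic: your smooth cutoff replaces the paper's sharp regions $\Omega_1,\Omega_2,\Omega_3$ and removes the boundary terms on $\partial\Omega_3$, and your direct estimate of $U_0\otimes U_0$ without Leibniz is also fine; the bookkeeping you worry about is harmless, since the Leibniz identity is exact pointwise and every piece converges absolutely for $\gamma<1$.
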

\begin{proof}
    We first prove \eqref{lemma: frac esti of U_0 U_0_div case}.
    By the equivalent definition of the fractional Laplacian,
    \begin{equation}
        (-\Delta)^{\frac{\gamma}{2}}f = c_\gamma\intt\frac{f(x)-f(y)}{|x-y|^{2+\gamma}}\;dy,\quad \gamma\in(0,1).
    \end{equation}
    Thus, we have
    \begin{align}
        (-\Delta)^{\frac{\gamma}{2}}( U_0\cdot\nabla U_0)(x) &= ((-\Delta)^{\frac{\gamma}{2}}U_0\cdot\nabla U_0)(x)+ ( U_0\cdot\nabla(-\Delta)^{\frac{\gamma}{2}}U_0)(x)\\
        &\quad-c_\gamma\intt\frac{((U_0(x)-U_0(y))\cdot(\nabla U_0(x)-\nabla U_0(y))}{|x-y|^{2+\gamma}}\;dy.
    \end{align}
     Thanks to Lemma \ref{lemma: estimates on U_0}, we have 
     \begin{align}
         &|U_0(x)|\leq C\bkt{x}^{1-2\alpha}, \quad|\nabla U_0(x)|\leq C\bkt{x}^{-2\alpha},\\
         &|(-\Delta)^{\frac{\gamma}{2}}U_0(x)|\leq C\bkt{x}^{1-2\alpha-\gamma}, \quad|(-\Delta)^{\frac{\gamma}{2}}\nabla U_0(x)|\leq C\bkt{x}^{-2\alpha-\gamma},
     \end{align}
     and the first two terms above possess the desired decay rate. Now it remains to estimate the third term. Without loss of generality, we assume $|x|>1$. We decompose the integral into three different regions: $\Omega_1:=\{y:|x-y|\leq \frac{|x|}{2}\}$, $\Omega_2:=\{y:|y|>4|x|\}$, and $\Omega_3:=\{y:|x-y|>\frac{|x|}{2},|y|\leq 4|x|\}$. First of all, in $\Omega_1$ we have $y\sim x$, and it follows from the mean value theorem that
     \begin{align}
         \da{\int_{\Omega_1}\frac{(U_0(x)-U_0(y))\cdot(\nabla U_0(x)-\nabla U_0(y))}{|x-y|^{2+\gamma}}\;dy}&\leq \|\nabla U_0\|^2_{L^\infty(\Omega_1)}\da{\int_{\Omega_1}\frac{1}{|x-y|^{1+\gamma}}\;dy}\\
         &\leq C\bkt{x}^{1-4\alpha-\gamma}.
     \end{align}
     Secondly, by the far field decays of $U_0$ and $\nabla U_0$,
     \begin{align}
          \da{\int_{\Omega_2}\frac{(U_0(x)-U_0(y))\cdot(\nabla U_0(x)-\nabla U_0(y))}{|x-y|^{2+\gamma}}\;dy}&\leq \bkt{x}^{1-4\alpha}\da{\int_{|x-y|>\frac{|x|}{2}}\frac{1}{|x-y|^{2+\gamma}}\;dy}\\
          &\leq C\bkt{x}^{1-4\alpha-\gamma}.
     \end{align}
     For $\Omega_3$, we separate the $U_0(y)\cdot\nabla U_0(y)$ part out and use integration by parts to obtain,
     \begin{align}
          &\quad\bigg|\int_{\Omega_3}\frac{(U_0(x)-U_0(y))\cdot(\nabla U_0(x)-\nabla U_0(y))}{|x-y|^{2+\gamma}}\;dy\Bigg|\\
          &\leq\da{\int_{\Omega_3}\frac{U_0(x)\cdot\nabla(U_0(x)-U_0(y))+U_0(y)\cdot \nabla U_0(x)}{|x-y|^{2+\gamma}}\;dy}\\
          &\quad+\da{\int_{\Omega_3}U_0(y)\cdot\nabla\dr{\frac{1}{|x-y|^{2+\gamma}}}U_0(y)\;dy-\int_{\pa \Omega_3}\frac{(U_0(y)\cdot\vec n )U_0(y)}{|x-y|^{2+\gamma}}\;d\sigma}
          \\
          &\leq C\bkt{x}^{-2-\gamma}\da{\int_{|y|\leq 4|x|}\dr{\bkt{x}^{1-4\alpha}+\bkt{x}^{-2\alpha}\bkt{y}^{1-2\alpha}+\bkt{y}^{-2\alpha}\bkt{x}^{1-2\alpha}}\;dy}\\
          &\quad+\bkt{x}^{-\gamma-3}\int_{|y|\leq 4|x|}\bkt{y}^{2-4\alpha}\;dy+\bkt{x}^{1-4\alpha-\gamma}
          \\&\leq C\bkt{x}^{1-4\alpha-\gamma},
     \end{align}
     where we have used that $\int_{|y|\leq \frac{|x|}{2}}\bkt{y}^{2-4\alpha}\;dy\leq C\bkt{x}^{4-4\alpha}$ since $\alpha<1$. Collecting all these estimates completes the proof. The proof of \eqref{lemma: frac esti of U_0 U_0_no div case} is actually easier: one can directly estimate the region $\Omega_3$ without integration by parts, as $U_0\otimes U_0$ already has slow decay. The rest are exactly the same and we omit the details. 
\end{proof}
Before we establish the decay estimates of $V$, two simple estimates of the convolution with the Oseen kernel are useful:
\begin{lemma}\label{Lemma: key Oseen kernel esti for U_0 U_0}
    Let $U_0 = e^{-\Lambda^{2\alpha}u_0}$ be the fractional heat profile with $u_0(x) = |x|^{1-2\alpha}\bar u_0(\frac{x}{|x|})$ and $\alpha\in (\frac{1}{2},1)$. We denote
    \begin{equation}
        O_1(x):=  \int_{0}^{1}s^{\frac{1}{2\alpha}-2}\intt \Oc(x-y,1-s)\cdot(U_0\cdot\nabla U_0)(y/s^{\frac{1}{2\alpha}})\;dyds
    \end{equation}
    Then, the following estimates hold:\\
    (\romannumeral 1) When $\bar u_0\in C^{0,1}(\Sp^1)$,
    \begin{equation}\label{lemma: key Oseen kernel esti_low reg log loss}
        |\nabla^kO_1(x)|\leq C\bkt{x}^{1-4\alpha}\log(2+|x|),\quad\forall\; k\in\Z_{\geq0}.
    \end{equation}
    (\romannumeral 2) When $\bar u_0\in C^{1,\beta}(\Sp^1)$ for some $\beta\in(0,1]$, 
    \begin{equation}\label{lemma: key Oseen kernel esti_low reg no log loss}
        |\nabla^kO_1(x)|\leq C\bkt{x}^{1-4\alpha},\quad\forall\; k\in\Z_{\geq0}.
    \end{equation}
    (\romannumeral 3) When $\bar u_0\in C^{\infty}(\Sp^1)$, 
    \begin{equation}\label{lemma: key Oseen kernel esti_high reg}
        |\nabla^k O_1(x)|\leq C\bkt{x}^{1-4\alpha-k},\quad\forall\; k\in\Z_{\geq0}.
    \end{equation}
\end{lemma}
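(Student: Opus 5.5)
We write $g:=U_0\cdot\nabla U_0$ and use the bounds on $g$ from Lemma \ref{lemma: estimates on U_0}. Under $\bar u_0\in C^{0,1}$, every derivative of $U_0$ has $|\nabla^k U_0|\lesssim\bkt{y}^{-2\alpha}$ for $k\ge1$, so $|\nabla^k g(y)|\lesssim\bkt{y}^{1-4\alpha}$ for all $k$. In the smooth case this improves to $|\nabla^k g|\lesssim\bkt{y}^{1-4\alpha-k}$. We change variables $z=y/s^{1/(2\alpha)}$ and split the $s$-integral at $s=1/2$. The scaled source $g(y/s^{1/2\alpha})$ then has size $s^{(4\alpha-1)/(2\alpha)}|y|^{1-4\alpha}$ for $|y|\gtrsim s^{1/2\alpha}$, and size $O(1)$ on a ball of radius $s^{1/2\alpha}$. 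Derivatives in $x$ are placed on the kernel when $s<1/2$ and on the source when $s>1/2$, where the source is smooth at scale $s^{1/2\alpha}\sim1$. This placement is what makes the result valid for every $k$.

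\textbf{Region $s\in(1/2,1)$.} Here we put all $x$-derivatives on $g$. We use $\int|\Oc(z,1-s)|\,dz$ only in a weak form: $\Oc$ decays like $(t^{1/2\alpha}+|x|)^{-2}$ by \eqref{decay esti of the Oseen kernel} with $\beta=k=0$, which is borderline non-integrable. So we instead write the integrand as $\Oc*g=\P(H_\alpha(\cdot,1-s)*g)$. Pointwise, $H_\alpha*g\lesssim\bkt{x}^{1-4\alpha}$. The operator $\P$ acting on a function with decay $\bkt{x}^{1-4\alpha}$, where $1-4\alpha<-2$ exactly when $\alpha>3/4$, is the source of the logarithm. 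For (i) we accept it: splitting the CZ integral into $|x-y|<1$ (using derivative bounds), $1<|x-y|<|x|/2$, and the far region gives $\bkt{x}^{1-4\alpha}\log(2+|x|)$, where the log comes from $\int_{1<|x-y|<|x|/2}|x-y|^{-2}dy$. Derivatives cost nothing, since $\nabla^k g$ obeys the same bound. For (ii) we remove the log with the multiplier trick $\Oc*g=((-\Delta)^{-\gamma/2}\Oc)*((-\Delta)^{\gamma/2}g)$. Lemma \ref{lemma: frac esti of U_0 U_0} gives $|(-\Delta)^{\gamma/2}g|\lesssim\bkt{y}^{1-4\alpha-\gamma}$, and \eqref{decay esti of the Oseen kernel} gives $|(-\Delta)^{-\gamma/2}\Oc(z,t)|\lesssim(t^{1/2\alpha}+|z|)^{-2+\gamma}$, which is now integrable locally and yields no log. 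For (iii) we use the improved decay of $\nabla^kg$ directly, with the same multiplier trick applied to the $k$-th derivative.

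\textbf{Region $s\in(0,1/2)$.} Here $1-s\sim1$, so $\nabla^k_x\Oc(x-y,1-s)$ is bounded by $\bkt{x-y}^{-2-k}$. We also exploit the divergence structure: since $g=\div(U_0\otimes U_0)$, moving one derivative onto $\Oc$ gives kernel decay $\bkt{x-y}^{-3-k}$ against $|U_0\otimes U_0|(y/s^{1/2\alpha})\lesssim s^{(2\alpha-1)/\alpha}\,\bkt{y/s^{1/2\alpha}}^{2-4\alpha}$. Splitting into $|y|<|x|/2$, $|x-y|<|x|/2$ and the rest, the near-origin piece contributes $\bkt{x}^{-3-k}\int_{|y|<|x|}|U_0\otimes U_0|(y/s^{1/2\alpha})dy$. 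Because $4\alpha-2<2$, this integral is of size $|x|^{4-4\alpha}s^{(4\alpha-2)/(2\alpha)}$, and after the $s$-integration this gives $\bkt{x}^{1-4\alpha-k}$. The $s$-powers are integrable, which we check using $s^{1/\alpha-2}$ from the prefactor together with the Jacobian; this bookkeeping is routine. The piece near $x$ is handled by the pointwise bound on $g$ (or on $\nabla^kg$ in case (iii)) times an integrable kernel, giving $\bkt{x}^{1-4\alpha}$ (or $\bkt{x}^{1-4\alpha-k}$). The far piece is easier.

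\textbf{Main obstacle.} The main obstacle is the critical logarithm in the near-$x$ region for $s\sim1$. It appears because $\Oc(\cdot,t)$ has exactly $|z|^{-2}$ tails and the source has borderline decay. We expect the fractional-derivative trick via Lemma \ref{lemma: frac esti of U_0 U_0} to handle it, but we still need to check that $(-\Delta)^{-\gamma/2}\Oc$ is controlled by \eqref{decay esti of the Oseen kernel} with $\beta=-\gamma>-2$, and that the $s$-integration near $s=1$ stays integrable given the factor $(1-s)^{\cdot}$ coming from small-scale kernel behaviour.
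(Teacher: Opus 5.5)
Your time splitting at $s=1/2$ has a real merit: for $s<1/2$, putting the $x$-derivatives on the kernel avoids the factor $s^{-k/(2\alpha)}$ that $k\ge2$ derivatives on the rescaled source would produce in cases (i)--(ii). The spatial part of the argument, however, has two genuine gaps. \textbf{First}, for $s\in(1/2,1)$ you bound $\mathbb{P}(H_\alpha(\cdot,1-s)*g)$ using only pointwise bounds on $g$ (resp.\ $\nabla^k g$, $(-\Delta)^{\gamma/2}g$). On the intermediate region $\{|x-y|>|x|/2,\ |y|\le 4|x|\}$ this gives $|x|^{-2}\int_{|y|\le4|x|}\langle y\rangle^{1-4\alpha}\,dy$. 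That is $\lesssim\langle x\rangle^{1-4\alpha}$ only for $\alpha<3/4$. For $\alpha\in(3/4,1)$ it is of order $|x|^{-2}$, which decays more slowly than the target. In case (iii) with $k\ge1$ it fails for every $\alpha$, since $1-4\alpha-k<-2$. No estimate that uses only $|g|$ can do better, because it cannot see the cancellation $\int g=0$. The threshold $1-4\alpha<-2$ you noticed is not the source of the logarithm; it is exactly where this region breaks. The missing step is the one you use only for $s<1/2$. On this region, integrate by parts to move the divergence in $g=\div(U_0\otimes U_0)$ onto $\Oc$, and in (iii) move all $k$ derivatives there as well. Since $|x-y|\gtrsim|x|$ there, $|\nabla^{k+1}\Oc(x-y,1-s)|\lesssim|x|^{-3-k}$ uniformly in $s\in(0,1)$. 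Combined with $\int_{|y|\le4|x|}|U_0\otimes U_0|(y/s^{1/(2\alpha)})\,dy\lesssim s^{2-1/\alpha}|x|^{4-4\alpha}$ (using $4\alpha-2<2$), this yields $\langle x\rangle^{1-4\alpha-k}$ plus harmless boundary terms. The paper does this on $\Omega_3$ for every $s$, not just small $s$.

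\textbf{Second}, the logarithm is not an $s\sim1$ effect. Even when $1-s\sim1$, $\Oc(\cdot,1-s)$ has an exact $|z|^{-2}$ tail. So for $s<1/2$ and $k=0$, the near-$x$ piece is $\langle x\rangle^{1-4\alpha}\int_{|z|<|x|/2}\langle z\rangle^{-2}\,dz\sim\langle x\rangle^{1-4\alpha}\log|x|$; the kernel is not integrable as you claim, and the log reappears in (ii) and (iii). In (iii) this happens for every $k$. Gaining $\langle x\rangle^{-k}$ near $x$ forces all derivatives onto the source, since putting them on the kernel near $x$ only gives $\langle x\rangle^{1-4\alpha}$, and that leaves the bare $\Oc$ as kernel. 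Hence in (ii) with $k=0$ the multiplier trick must be used for all $s\in(0,1)$. This causes no difficulty near $s=1$: $|(-\Delta)^{-\gamma/2}\Oc(z,t)|\lesssim (t^{1/(2\alpha)}+|z|)^{-2+\gamma}\le|z|^{-2+\gamma}$ uniformly in $t$, so the near-$x$ integral is $\lesssim|x|^{\gamma}$ for every $s$. In (iii) you need a device such as the paper's rewriting $\Oc*\pa^k g=\nabla(-\Delta)^{-1}\Oc:\pa^k(-\Delta)(U_0\otimes U_0)$. There the kernel is $O(|z|^{-1})$, contributing a factor $|x|$ over the near region, and the source is $O(\langle x\rangle^{-4\alpha-k})$ with no loss in $s$.
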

\begin{proof}
     These results are applications of the pointwise estimates of $\Oc$ and $\nabla^kU_0$, as well as integration by parts.
     We decompose the integral into three different regions: $\Omega_1:=\{y:|x-y|\leq \frac{|x|}{2}\}$, $\Omega_2:=\{y:|y|>4|x|\}$, and $\Omega_3:=\{y:|x-y|>\frac{|x|}{2},|y|\leq 4|x|\}$. We also assume $|x|>2$.\\
     \textit{Proof of \eqref{lemma: key Oseen kernel esti_low reg log loss}}: It suffices to consider $k=1$, since $\pa^k (U_0\cdot\nabla U_0)$ has equivalent or better decay property as $U_0\cdot\nabla U_0$. For $\Omega_1$, by the decay estimates of $\Oc$ in \eqref{decay esti of the Oceen kernel} and those of $U_0$,
     \begin{align}
         &\quad\da{\int_{0}^{1}s^{\frac{1}{2\alpha}-2}\int_{\Omega_1} \Oc(x-y,1-s)\cdot(U_0\cdot\nabla U_0)(y/s^{\frac{1}{2\alpha}})\;dyds}\\
         &\leq C \int_{0}^1\int_{\Omega_1}\frac{1}{((1-s)^\frac{1}{2\alpha}+|x-y|)^2}\cdot\frac{1}{(s^\frac{1}{2\alpha}+|y|)^{4\alpha-1}}\;dyds\\
         &\leq C\bkt{x}^{1-4\alpha}\int_0^1 \int_{|x-y|\leq \frac{|x|}{2}}\frac{1}{((1-s)^\frac{1}{2\alpha}+|x-y|)^2}\;dyds\\
         &\leq C\bkt{x}^{1-4\alpha}\int_0^1 \int_{|z|\leq(1-s)^{-\frac{1}{2\alpha}}\frac{|x|}{2} }\frac{1}{(1+|z|)^2}\;dzds\\
         & \leq C\bkt{x}^{1-4\alpha}\log(2+|x|). 
     \end{align}
     This is exactly where the logarithm loss appears: it comes from integrating the kernel over the near-field region $\Omega_1$, where the borderline singularity produces the factor $\log(2+|x|)$. For $\Omega_2$, it is straightforward to show that
     \begin{align}
         \da{\int_{0}^{1}s^{\frac{1}{2\alpha}-2}\int_{\Omega_2} \Oc(x-y,1-s)\cdot(U_0\cdot\nabla U_0)(y/s^{\frac{1}{2\alpha}})\;dyds}
         &\leq C \int_0^1\int_{|y|\geq 4|x|}\frac{1}{|y|^{4\alpha+1}}\;dyds\\
         &\leq C\bkt{x}^{1-4\alpha}.
     \end{align}
     For $\Omega_3$, through integration by parts, we note that
     \begin{align}
         \int_{\Omega_3} \Oc(x-y,1-s)\cdot(U_0\cdot\nabla U_0)(y/s^{\frac{1}{2\alpha}})\;dy &= -s^{\frac{1}{2\alpha}}\int_{\Omega_3} \nabla\Oc(x-y,1-s):(U_0\otimes U_0)(y/s^{\frac{1}{2\alpha}})\;dy\\
         &\quad+s^{\frac{1}{2\alpha}}\int_{\pa \Omega_3}(\Oc(x-y,1-s)\cdot U_0(y/s^{\frac{1}{2\alpha}}))(U_0(y/s^{\frac{1}{2\alpha}})\cdot\vec n)\;d\sigma
     \end{align}
     For the first term above, we have the pointwise estimate
     \begin{align}
         \da{\int_{\Omega_3} \nabla\Oc(x-y,1-s):(U_0\otimes U_0)(y/s^{\frac{1}{2\alpha}})\;dy}&\leq \bkt{x}^{-3}\int_{|y|\leq 4|x|}\frac{1}{(1+|y/s^{\frac{1}{2\alpha}}|)^{4\alpha-2}}\;dy\\
         &\leq Cs^{2-\frac{1}{\alpha}}\bkt{x}^{1-4\alpha}.
     \end{align}
     For the second term, since $\pa\Omega_3 = \{y: |y|=4|x|\}\cup\{y:|x-y| = \frac{|x|}{2}\}$, we get
     \begin{align}
         \da{\int_{\pa \Omega_3}(\Oc(x-y,1-s)\cdot U_0(y/s^{\frac{1}{2\alpha}}))(U_0(y/s^{\frac{1}{2\alpha}})\cdot\vec n)\;d\sigma}\leq Cs^{2-\frac{1}{\alpha}}\bkt{x}^{-4\alpha}\int_{\pa\Omega_3}\;d\sigma\leq Cs^{2-\frac{1}{\alpha}}\bkt{x}^{1-4\alpha}.
     \end{align}
     Combining these estimates, we have
     \begin{align}
          \da{\int_{0}^{1}s^{\frac{1}{2\alpha}-2}\int_{\Omega_3} \Oc(x-y,1-s)\cdot(U_0\cdot\nabla U_0)(y/s^{\frac{1}{2\alpha}})\;dyds}
         &\leq C \bkt{x}^{1-4\alpha}.
     \end{align}
     In summary, we obtain
     \begin{align}
         |O_1(x)|\leq C\bkt{x}^{1-4\alpha}\log(2+|x|),
     \end{align}
     when $\bar u_0\in C^{0,1}(\Sp^1)$.\\
     \textit{Proof of \eqref{lemma: key Oseen kernel esti_low reg no log loss}}: When $\bar u_0\in C^{1,\beta}(\Sp^1)$ for $\beta\in (0,1]$, there is a technique to remove the logarithm loss. As before, it suffices to assume $k=1$. Fix some $\gamma\in (0,\min(\beta,4-4\alpha))$, we rewrite the integral as
     \begin{align}
         \intt(-\Delta_y)^{-\frac{\gamma}{2}}\Oc(x-y,1-s)\cdot(-\Delta_y)^{\frac{\gamma}{2}}(U_0\cdot\nabla U_0)(y/s^{\frac{1}{2\alpha}})\;dy
     \end{align}
     By \eqref{decay esti of the Oceen kernel}, we have
     \begin{align}
         |\nabla^j(-\Delta_y)^{-\frac{\gamma}{2}}\Oc(x-y,1-s)|\leq C((1-s)^{\frac{1}{2\alpha}}+|x-y|)^{-2-j+\gamma},\quad j=0,1, 
     \end{align}
     and by Lemma \ref{lemma: frac esti of U_0 U_0},
     \begin{align}
         &|(-\Delta_y)^{\frac{\gamma}{2}}(U_0\cdot\nabla U_0)(y/s^{\frac{1}{2\alpha}})|\leq Cs^{2-\frac{1}{2\alpha}}(s^{\frac{1}{2\alpha}}+|y|)^{1-4\alpha-\gamma},\\
         & |(-\Delta_y)^{\frac{\gamma}{2}}(U_0\otimes U_0)(y/s^{\frac{1}{2\alpha}})|\leq Cs^{2-\frac{1}{\alpha}}(s^{\frac{1}{2\alpha}}+|y|)^{2-4\alpha-\gamma}.
     \end{align}
     Then, we estimate the integration on the $\Omega_1$ region,
     \begin{align}
         &\quad\da{\int_{0}^{1}s^{\frac{1}{2\alpha}-2}\int_{\Omega_1} (-\Delta_y)^{-\frac{\gamma}{2}}\Oc(x-y,1-s)\cdot(-\Delta_y)^{\frac{\gamma}{2}}(U_0\cdot\nabla U_0)(y/s^{\frac{1}{2\alpha}})\;dyds}\\
         &\leq C \int_{0}^1\int_{\Omega_1}\frac{1}{((1-s)^\frac{1}{2\alpha}+|x-y|)^{2-\gamma}}\cdot\frac{1}{(s^\frac{1}{2\alpha}+|y|)^{4\alpha+\gamma-1}}\;dyds\\
         &\leq C\bkt{x}^{1-4\alpha-\gamma}\int_0^1 \int_{|x-y|\leq \frac{|x|}{2}}\frac{1}{((1-s)^\frac{1}{2\alpha}+|x-y|)^{2-\gamma}}\;dyds\\
         &\leq C\bkt{x}^{1-4\alpha-\gamma}\int_0^1 \int_{|z|\leq\frac{|x|}{2} }\frac{1}{|z|^{2-\gamma}}\;dzds\\
         & \leq C\bkt{x}^{1-4\alpha}. 
     \end{align}
     The estimate on $\Omega_2$ is the same as before:
     \begin{align}
         &\quad\da{\int_{0}^{1}s^{\frac{1}{2\alpha}-2}\int_{\Omega_2} (-\Delta_y)^{-\frac{\gamma}{2}}\Oc(x-y,1-s)\cdot(-\Delta_y)^{\frac{\gamma}{2}}(U_0\cdot\nabla U_0)(y/s^{\frac{1}{2\alpha}})\;dyds}\\
         &\leq C \int_0^1\int_{|y|\geq 4|x|}\frac{1}{(1+|y|)^{4\alpha+1}}\;dyds\\
         &\leq C\bkt{x}^{1-4\alpha}.
     \end{align}   
     The estimate of $\Omega_3$ is also similar. Through integration by parts, the boundary term contributes to the desired $\bkt{x}^{1-4\alpha}$ as before, and the other term is estimated as  
     \begin{align}
         \da{\int_{\Omega_3} \nabla(-\Delta_y)^{-\frac{\gamma}{2}}\Oc(x-y,1-s):(-\Delta_y)^{\frac{\gamma}{2}}(U_0\otimes U_0)(y/s^{\frac{1}{2\alpha}})\;dy}&\leq \bkt{x}^{-3+\gamma}\int_{|y|\leq 4|x|}\frac{s^{2-\frac{1}{\alpha}}}{(s^{\frac{1}{2\alpha}}+|y|)^{4\alpha+\gamma-2}}\;dy\\
         &\leq Cs^{2-\frac{1}{\alpha}}\bkt{x}^{1-4\alpha},
     \end{align}
     once we note that $4\alpha+\gamma-2<2$ and that $\int_{|y|\leq 4|x|}\frac{1}{|y|^{4\alpha+\gamma-2}}\;dy\leq C |x|^{4-4\alpha-\gamma}$. This explains why we choose $\gamma<4-4\alpha$. Collecting all these estimates yields \eqref{lemma: key Oseen kernel esti_low reg no log loss}.\\
     \textit{Proof of \eqref{lemma: key Oseen kernel esti_high reg}}: For any $k\in\Z_{\geq0}$, we write 
     \begin{align}
         \pa^k O_1(x) = \int_0^1 s^{\frac{1}{\alpha}-2}\intt \nabla(-\Delta_y)^{-1}\Oc(x-y,1-s):\pa^k(-\Delta_y)(U_0\otimes U_0)(y/s^\frac{1}{2\alpha})\;dyds.  
     \end{align}
     When $\bar u_0\in C^{\infty}(\Sp^1)$, by \eqref{lemma: estimates on U_0} we have
     \begin{align}
         |\pa^k(-\Delta_y)(U_0\otimes U_0)(y/s^{\frac{1}{2\alpha}})|\leq C s^{2-\frac{1}{\alpha}}(s^{\frac{1}{2\alpha}}+|y|)^{-4\alpha-k}.
     \end{align}
     Besides, by \eqref{decay esti of the Oceen kernel}, we have
     \begin{align}
         |(-\Delta_y)^{-1}\Oc(x-y,1-s)|\leq C((1-s)^{\frac{1}{2\alpha}}+|x-y|)^{-1}.
     \end{align}
     The rest of the proof is straightforward. On $\Omega_1$,
     \begin{align}
         &\quad\da{ \int_0^1 s^{\frac{1}{\alpha}-2}\int_{\Omega_1} \nabla(-\Delta_y)^{-1}\Oc(x-y,1-s):\pa^k(-\Delta_y)(U_0\otimes U_0)(y/s^\frac{1}{2\alpha})\;dyds}\\
         &\leq C\bkt{x}^{-4\alpha-k}\int_0^1\int_{|x-y|\leq \frac{|x|}{2}}\frac{1}{((1-s)^{\frac{1}{2\alpha}}+|x-y|)}\;dyds\\
         &\leq C\bkt{x}^{1-4\alpha-k}.
     \end{align}
     On $\Omega_2$, we get
     \begin{align}
         &\quad\da{ \int_0^1 s^{\frac{1}{\alpha}-2}\int_{\Omega_2} \nabla(-\Delta_y)^{-1}\Oc(x-y,1-s):\pa^k(-\Delta_y)(U_0\otimes U_0)(y/s^\frac{1}{2\alpha})\;dyds}\\
         &\leq C \int_0^1 \int_{|y|\geq 4|x|}\frac{1}{|y|^{4\alpha+k+1}}\;dy\\
         &\leq C\bkt{x}^{1-4\alpha-k}.
     \end{align}
     On $\Omega_3$, we use the same integration by parts trick as before, moving the derivatives $\pa^k(-\Delta_y)$ from the $U_0$ part to the $\Oc$ part: 
     \begin{align}
         &\quad\int_{\Omega_3} \nabla(-\Delta_y)^{-1}\Oc(x-y,1-s)\cdot\pa^k(-\Delta_y)(U_0\otimes U_0)(y/s^\frac{1}{2\alpha})\;dy \\
         &= (-1)^k\int_{\Omega_3}\nabla\pa^k\Oc(x-y,1-s):(U_0\otimes U_0)(y/s^{\frac{1}{2\alpha}})\;dy+B_k(x),
    \end{align}
    where $B_k(x)$ are all the boundary terms (integrations on $\pa \Omega_3$). Again by \eqref{decay esti of the Oceen kernel}, it is clear that
    \begin{align}
        |B_k(x)|\leq Cs^{2-\frac{1}{\alpha}}\bkt{x}^{1-4\alpha-k},
    \end{align}
    and
    \begin{align}
        \da{\int_{\Omega_3}\nabla\pa^k\Oc(x-y,1-s):(U_0\otimes U_0)(y/s^{\frac{1}{2\alpha}})\;dy}&\leq C\bkt{x}^{-3-k}\int_{|y|\leq 4|x|}\frac{s^{2-\frac{1}{\alpha}}}{|y|^{4\alpha-2}}\;dy\\
        &\leq Cs^{2-\frac{1}{\alpha}}\bkt{x}^{1-4\alpha-k}.
    \end{align}
    Therefore, we obtain
    \begin{align}
        \da{ \int_0^1 s^{\frac{1}{\alpha}-2}\int_{\Omega_3} \nabla(-\Delta_y)^{-1}\Oc(x-y,1-s):\pa^k(-\Delta_y)(U_0\otimes U_0)(y/s^\frac{1}{2\alpha})\;dyds}\leq C\bkt{x}^{1-4\alpha-k}.
    \end{align}
    Collecting these estimates gives \eqref{lemma: key Oseen kernel esti_high reg}.
\end{proof}
Now we are ready to prove the main result in this section.
\begin{proposition}\label{proposition: decay esti of V}
    Let $V\in \cap_{m\in\Z_{\geq 0}}H^{m}(\reall^2)$ be a solution to \eqref{self similar eq for V}, where $U_0 = e^{-\Lambda^{2\alpha}}u_0$, $u_0(x) = |x|^{1-2\alpha}\bar u_0(\frac{x}{|x|})$, and $\alpha\in (\frac{2}{3},1)$. We assume that we have the pointwise decay estimates for $V$ as $|V(x)|\leq C(1+|x|)^{1-2\alpha}$. Then, it holds that:\\
    (\romannumeral 1)(Low regularity case) When $\bar u_0\in C^{0,1}(\Sp^1)$,
    \begin{align}\label{proposition: decay esti of V_low reg 1}
     |\nabla^{k}V(x)|\leq C(1+|x|)^{1-4\alpha}\log(2+|x|),\quad\forall\; k\in \Z_{\geq 0}.
    \end{align}
    When $\bar u_0\in C^{1,\beta}(\Sp^1)$ for some $\beta\in(0,1]$, 
    \begin{align}\label{proposition: decay esti of V_low reg 2}
     |\nabla^{k}V(x)|\leq C(1+|x|)^{1-4\alpha},\quad\forall\; k\in \Z_{\geq 0}.
    \end{align}
    (\romannumeral 2)(High regularity case)
    When $\bar u_0\in C^{\infty}(\Sp^1)$,
    \begin{align}\label{proposition: decay esti of V_high reg}
     |\nabla^{k}V(x)|\leq C(1+|x|)^{1-4\alpha-k},\quad\forall\; k\in \Z_{\geq 0}.
    \end{align}
    The constants above depend on $\bar u_0,k,\alpha$ only. 
\end{proposition}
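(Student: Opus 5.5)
We work from the Duhamel representation of $V$ derived above. Since $U=U_0+V$, we split $U\otimes U = U_0\otimes U_0 + (U_0\otimes V+V\otimes U_0+V\otimes V)$. This gives $V = -O_1 - O_2$, where $O_1$ is the $U_0\cdot\nabla U_0$ contribution already handled in Lemma \ref{Lemma: key Oseen kernel esti for U_0 U_0}, and
\[
O_2(x) := \int_0^1 s^{\frac{1}{\alpha}-2}\intt \nabla\Oc(x-y,1-s):W(y/s^{\frac{1}{2\alpha}})\;dyds,\qquad W:=U_0\otimes V+V\otimes U_0+V\otimes V .
\]
The three cases (i), (ii) of the proposition then follow once $O_2$ is shown to decay at least as fast as the corresponding bound for $O_1$.

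The strategy is a bootstrap on the decay exponent of $V$. We start from the hypothesis $|V|\le C\bkt{x}^{1-2\alpha}$. Together with $|U_0|\le C\bkt{x}^{1-2\alpha}$, this gives $|W(y)|\le C\bkt{y}^{2-4\alpha}$. We then estimate $O_2$ by splitting $y$ into the regions $\Omega_1,\Omega_2,\Omega_3$ as in the proof of Lemma \ref{Lemma: key Oseen kernel esti for U_0 U_0}, and use the kernel bound \eqref{decay esti of the Oceen kernel}, namely $|\nabla\Oc(z,t)|\le C(t^{\frac{1}{2\alpha}}+|z|)^{-3}$. The region estimates are:
\begin{itemize}
\item On $\Omega_1$ the kernel is integrable, since $\int (\cdot)^{-3}$ over $|z|\ge (1-s)^{1/2\alpha}$, combined with the time integral, is finite because $\frac{1}{2\alpha}\cdot(-1)>-1$. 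This yields $\bkt{x}^{2-4\alpha}$.
\item On $\Omega_2$ we get $\int_{|y|>4|x|}|y|^{-3}|y|^{2-4\alpha}\,dy\lesssim \bkt{x}^{1-4\alpha}$.
\item On $\Omega_3$ we get $\bkt{x}^{-3}\int_{|y|\le4|x|}\bkt{y}^{2-4\alpha}\,dy\lesssim\bkt{x}^{1-4\alpha}$, using $\alpha<1$.
\end{itemize}
So one pass improves $|V|\lesssim \bkt{x}^{1-2\alpha}$ to $|V|\lesssim \bkt{x}^{a_1}$ with $a_1=\max(2-4\alpha,\,1-4\alpha\text{ (+log)})$.

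In general, if $|V|\lesssim\bkt{x}^{a}$, the $\Omega_1$ piece gives $\bkt{x}^{a+1-2\alpha}$ (from $U_0\otimes V$), while the $\Omega_2$ and $\Omega_3$ pieces stay at $\bkt{x}^{1-4\alpha}$ (up to the log in the Lipschitz case). Since $1-2\alpha<0$, each iteration improves the exponent by $2\alpha-1$. After finitely many steps we reach the limiting rate of $O_1$: $\bkt{x}^{1-4\alpha}\log(2+|x|)$ for $C^{0,1}$ data and $\bkt{x}^{1-4\alpha}$ for $C^{1,\beta}$ data. Once $a<1-2\alpha$, the $\Omega_1$ contribution of $O_2$ is strictly better than $\bkt{x}^{1-4\alpha}$, so no logarithm is created by $O_2$ itself.

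For derivatives $\nabla^kV$, we first note that $V\in\cap H^m$ implies that $V$ and all its derivatives are bounded. We then differentiate the representation. In the $\Omega_1$ region we move the derivatives onto $W$ by translation invariance; in the other regions we put them on the kernel. In cases (i) and (ii), we bootstrap $|\nabla^jV|\lesssim\bkt{x}^{1-2\alpha}$ first and then the rate $\bkt{x}^{1-4\alpha}$, using the fact that $\nabla^j W$ decays at least like $W$, since $\nabla^jU_0$ decays like $\bkt{x}^{1-2\alpha-\bar j}$ by Lemma \ref{lemma: estimates on U_0}. In the smooth case (ii) we need the extra $\bkt{x}^{-k}$. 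Here I would run an induction on $k$, mimicking part (iii) of Lemma \ref{Lemma: key Oseen kernel esti for U_0 U_0}: we write $\pa^kO_2$ with $\pa^k$ on $W$ in $\Omega_1$, and there $\pa^k W$ decays like $\bkt{y}^{1-2\alpha}\bkt{y}^{1-4\alpha-k}$ by the induction hypothesis on lower derivatives and the $U_0$ bounds. In $\Omega_2$ and $\Omega_3$ we integrate by parts onto $\nabla\pa^k\Oc$, which gains $\bkt{x}^{-k}$, with boundary terms controlled as before.

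The main obstacle I expect is the $\Omega_1$ estimate for the top-order term $\pa^k(V\otimes V)$ and $U_0\otimes\pa^kV$ in the smooth case. These involve $\pa^kV$ itself, so the induction must be closed within the same order. I would handle this by iterating in the decay exponent at fixed $k$ exactly as for $k=0$: the factor $U_0$ or $V$ multiplying $\pa^kV$ provides the extra $\bkt{x}^{1-2\alpha}$ gain per step, so finitely many iterations reach $\bkt{x}^{1-4\alpha-k}$. The time singularity of $\nabla\Oc$ near $s=1$ is integrable since $\frac{1}{2\alpha}<1$, so it causes no trouble.
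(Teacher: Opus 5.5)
Your argument for $V$ itself is the paper's argument, and it is correct. You use the same splitting $V=O_1+O_2$, the same regions $\Omega_1,\Omega_2,\Omega_3$, and the same iteration gaining $\bkt{x}^{1-2\alpha}$ per pass until $O_1$ dominates. There is a harmless slip: the $\Omega_1$ piece of $O_2$, which is $\bkt{x}^{a+1-2\alpha}$, beats $\bkt{x}^{1-4\alpha}$ only once $a\le-2\alpha$, not once $a<1-2\alpha$.

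The derivative part, which also mirrors the paper's induction, has a genuine gap at $s\to0$; you only checked the singularity at $s=1$. Moving $\pa^k$ onto the source gives
\[
\pa_y^k\bigl[W(y/s^{\frac{1}{2\alpha}})\bigr]=s^{-\frac{k}{2\alpha}}(\pa^kW)(y/s^{\frac{1}{2\alpha}}).
\]
For $y\in\Omega_1$ you only know $|y/s^{\frac{1}{2\alpha}}|\gtrsim|x|s^{-\frac{1}{2\alpha}}$. So a bound $|\pa^kW(z)|\lesssim\bkt{z}^{-q}$ controls the $\Omega_1$ contribution only by
\[
|x|^{-q}\int_0^1 s^{\frac{1}{\alpha}-2+\frac{q-k}{2\alpha}}(1-s)^{-\frac{1}{2\alpha}}\,ds,
\]
which is finite if and only if $q>k+2\alpha-2$. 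Starting from $\nabla^kV\in L^\infty$, the term $U_0\otimes\pa^kV$ gives only $q=2\alpha-1$. The exponent at $s=0$ is then $-1-\frac{k-1}{2\alpha}$, so the very first pass already yields a divergent bound for every $k\ge1$ (logarithmically for $k=1$).

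In the two low-regularity cases it is worse. There $\pa^kV$ is only expected to decay like $\bkt{x}^{1-4\alpha}$, so $\pa^kW$ never decays faster than $\bkt{z}^{2-6\alpha}$. The condition therefore fails for all $k\ge4\alpha$ even at the final rate. Neither bootstrapping from $L^\infty$ nor placing all derivatives on $W$ in $\Omega_1$ works as stated. The paper's own write-up ("following the same lines of calculation as before") passes over exactly this point, so you cannot import the step from there.

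One way to close it is to split $\int_0^1=\int_0^{1/2}+\int_{1/2}^1$.

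\emph{On $[\frac12,1]$:} the factor $s^{-k/(2\alpha)}$ is bounded. Put the derivatives on $W$ and run your iteration from $L^\infty$; only the $(1-s)^{-1/(2\alpha)}$ singularity you already handled remains.

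\emph{On $[0,\frac12]$:} the kernel is smooth, with $|\nabla^j\nabla\Oc(z,1-s)|\lesssim(1+|z|)^{-3-j}$, so keep derivatives on it. Where some derivatives sit on $W$, integrate by parts onto the kernel in $\Omega_3$ as you proposed.

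\emph{Low-regularity cases:} putting all derivatives on the kernel on $[0,\frac12]$ gives $\bkt{x}^{2-6\alpha}$ (times a log), which beats the target. Together with induction on $k$ for the lower-order terms of $\pa^kW$, this finishes the argument.

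\emph{Smooth case:} the split alone is not sharp. Put one derivative on the kernel and $k-1$ on $W$; the induction hypothesis controls these, and the $s$-exponent becomes $+1$. This yields only $|\pa^kV|\lesssim\bkt{x}^{3-6\alpha-k}$, weaker than the target. However, this intermediate decay has exponent $\sigma=6\alpha-3+k>k-1$, which gives $q=2\alpha-1+\sigma>k+2\alpha-2$. You can then rerun the unsplit estimate with all $k$ derivatives on $W$, now integrable at $s=0$, and iterate to $\bkt{x}^{1-4\alpha-k}$.
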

We remark that the global $H^{m+\alpha}(\reall^2)$-estimates established in Proposition \ref{proposition: H m+alpha estimate} play a critical role here, since they provide pointwise bounds on derivatives of arbitrary order of $V$. Based on these estimates, we can further improve their decay rates to the optimal orders based on the Duhamel representation.  \\
\indent One minor remark is that although the estimates for intermediate regularity cases are available, the ones listed above are the most representative, so we will only focus on them. Other results can be easily derived by following the same ideas in the proof below.
\begin{proof}[Proof of Proposition \ref{proposition: decay esti of V}]
    We first prove the estimates for $V$ itself, and then illustrate how to derive estimates of its derivatives. We decompose 
    \begin{align}
        V(x) &= -\int_{0}^{1}s^{\frac{1}{\alpha}-2}\intt \nabla\Oc(x-y,1-s):(U_0\otimes U_0)(y/s^{\frac{1}{2\alpha}})\;dyds\\
        &\quad-\int_{0}^{1}s^{\frac{1}{\alpha}-2}\intt \nabla\Oc(x-y,1-s):(V\otimes U_0+U_0\otimes V+V\otimes V)(y/s^{\frac{1}{2\alpha}})\;dyds\\
        &:= O_1(x)+O_2(x).
    \end{align}
    Roughly speaking, the decay of $O_1$ determines the decay of $V$, since for $O_2$ the source always decays faster than $V$. Indeed, the proof is relatively simple once we have Lemma \ref{Lemma: key Oseen kernel esti for U_0 U_0}. As before, we assume $|x|\geq 2$ and decompose $\reall^2$ into $\Omega_1:=\{y:|x-y|\leq \frac{|x|}{2}\}$, $\Omega_2:=\{y:|y|>4|x|\}$, and $\Omega_3:=\{y:|x-y|>\frac{|x|}{2},|y|\leq 4|x|\}$.\\
    \textit{Proof of \eqref{proposition: decay esti of V_low reg 1} and \eqref{proposition: decay esti of V_low reg 2}}:
    Denote 
    \begin{equation}
        G(x):= (V\otimes U_0+U_0\otimes V+V\otimes V)(x).
    \end{equation}
    Suppose we have the a priori decay estimate as
    \begin{align}
        |G(y/s^{\frac{1}{2\alpha}})|\leq Cs^{\frac{q}{2\alpha}} (s^{\frac{1}{2\alpha}}+|y|)^{-q}
    \end{align}
    for some $q>0$. Then, we have the direct estimate
    \begin{align}
        &\quad\da{\int_0^1 s^{\frac{1}{\alpha}-2}\intt\nabla\Oc(x-y,1-s):G(y/s^{\frac{1}{2\alpha}})\;dyds} \\
        &=\da{\int_0^1 s^{\frac{1}{\alpha}-2}\dr{\int_{\Omega_1}+\int_{\Omega_2}+\int_{\Omega_3}}\nabla\Oc(x-y,1-s):G(y/s^{\frac{1}{2\alpha}})\;dyds} \\
        &\leq C\bkt{x}^{-q}\int_0^1 s^{\frac{q}{2\alpha}+\frac{1}{\alpha}-2}\int_{|x-y|\leq \frac{|x|}{2}}\frac{1}{((1-s)^\frac{1}{2\alpha}+|x-y|)^3}\;dyds\\
        &\quad+ C\int_0^1 s^{\frac{q}{2\alpha}+\frac{1}{\alpha}-2}\int_{|y|\geq 4|x|}\frac{1}{|y|^{3+q}}\;dyds+C|x|^{-3}\int_0^1 s^{\frac{q}{2\alpha}+\frac{1}{\alpha}-2}\int_{|y|\leq 4|x|}\frac{1}{(s^{\frac{1}{2\alpha}}+|y|)^q}\;dyds\\
        &\leq C\max(\bkt{x}^{-q},\bkt{x}^{-3}),
    \end{align}
    where we have used that $\int_0^1 s^{\frac{q}{2\alpha}+\frac{1}{\alpha}-2}(1-s)^{-\frac{1}{2\alpha}}\;ds<+\infty$ since $q>0$ and $\alpha\in(\frac{1}{2},1)$. We remark that the $\bkt{x}^{-3}$ limitation above comes from the $\Omega_3$ part, as $\nabla\Oc$ decays like $\bkt{x}^{-3}$. At the beginning, by assumption we have $G(x)\leq C\bkt{x}^{2-4\alpha}$ (i.e., $q=4\alpha-2$). By Lemma \ref{Lemma: key Oseen kernel esti for U_0 U_0}, since $|O_1(x)|\leq C\bkt{x}^{1-4\alpha}\log(2+|x|)$ or $|O_1(x)|\leq C\bkt{x}^{1-4\alpha}$, the decay  which is faster than $\bkt{x}^{2-4\alpha}$, we can deduce that $|V(x)|\leq C\bkt{x}^{2-4\alpha}$. At each iteration, we gain an additional decay factor $\langle x\rangle^{-(2\alpha-1)}$ for $V$, which matches the decay exponent of $U_0$. Repeating this process, we finally obtain
    \begin{equation}
        |V(x)|\leq C|O_1(x)|\leq \begin{cases}
            C\bkt{x}^{1-4\alpha}\log(2+|x|),\quad \bar u_0\in C^{0,1}(\Sp^1),\\
            C\bkt{x}^{1-4\alpha},\quad \bar u_0\in C^{1,\beta}(\Sp^1),\quad \beta\in (0,1].
        \end{cases}
    \end{equation}
    \textit{Proof of \eqref{proposition: decay esti of V_high reg}}: We prove by induction. Suppose for any $k\in \Z_{\geq 0}$, we have $|\nabla^{j}V(x)|\lesssim \bkt{x}^{1-4\alpha-j}$ for any $0\leq j\leq k$. We now show that $|\nabla^{k+1}V|\lesssim \bkt{x}^{-4\alpha-k}$. We note that $\pa^{k+1}V(x) = \pa^{k+1}O_1(x)+\pa^{k+1}O_2(x)$, and by Lemma \ref{Lemma: key Oseen kernel esti for U_0 U_0}, we have
    \[
    |\pa^{k+1}O_1(x)|\leq C\bkt{x}^{-4\alpha-k}.
    \]
    Thus, it remains to estimate $O_2(x)$ and apply bootstrap argument. Since $V\in \cap_{m\in\Z_{\geq0}}H^{m}(\reall^2)$, the Sobolev embedding yields $\|\nabla^{k+1} V\|_{L^\infty(\reall^2)}<+\infty$. Therefore, it holds at the beginning that
    \begin{equation}
        |\pa^{k+1} G(x)|\leq C\bkt{x}^{1-2\alpha}.
    \end{equation}
    Decomposing the domain of integrations into $\Omega_i$'s, following the same lines of calculation as before we have
    \begin{align}
        \da{\int_0^1 s^{\frac{1}{\alpha}-2}\dr{\int_{\Omega_1}+\int_{\Omega_2}}\nabla\Oc(x-y,1-s):\pa^{k+1}G(y/s^{\frac{1}{2\alpha}})\;dyds} \leq C\bkt{x}^{1-2\alpha}.
    \end{align}
    For the $\Omega_3$ parts, through integration by parts we get
    \begin{align}
        &\quad\int_0^1 s^{\frac{1}{\alpha}-2}\int_{\Omega_3}\nabla\Oc(x-y,1-s):\pa^{k+1}G(y/s^{\frac{1}{2\alpha}})\;dyds \\
        &= (-1)^{k+1}\int_0^1 s^{\frac{1}{\alpha}-2}\int_{\Omega_3}\nabla\pa^{k+1}\Oc(x-y,1-s):G(y/s^{\frac{1}{2\alpha}})\;dyds+\tilde B_k(x),
    \end{align}
    where $\tilde B_k(x)$ denotes the boundary terms. By assumption, we have $|\pa^j G(x)|\leq C\bkt{x}^{2-6\alpha-j}$ for any $0\leq j\leq k$,
    $|\tilde B_k(x)|\leq C\bkt{x}^{-6\alpha-k}$,
    and
    \begin{align}
        &\quad\da{\int_0^1 s^{\frac{1}{\alpha}-2}\int_{\Omega_3}\nabla\pa^{k+1}\Oc(x-y,1-s):G(y/s^{\frac{1}{2\alpha}})\;dyds}\\
        &\leq C|x|^{-4-k}\int_0^1s\int_{|y|\leq 4|x|}\frac{1}{(s^{\frac{1}{2\alpha}}+|y|)^{6\alpha-2}}\;dyds\\
        &\leq C\bkt{x}^{-4-k}.
    \end{align}
    We see that the $\Omega_3$ part always decays faster than the target.
    In summary, we obtain that each round of bootstrap improves the decay exponent of $\pa^{k+1}V(x)$ by $1-2\alpha$. 
    Substituting the improved decay back into $G$ improves the decay of $G$ by the factor $\bkt{x}^{1-2\alpha}$. Iterating this argument finitely many times yields the sharp decay rate $|\nabla^{k+1}V(x)|\lesssim\bkt{x}^{-4\alpha-k}$. This concludes the whole proof.
\end{proof}
Finally, we complete the proof of the main theorem. 
\begin{proof}[Proof of Theorem \ref{theorem: Main}]
    The existence of solution in $H^\alpha(\reall^2)$ for $\alpha\in(\frac{1}{2},1)$ is established in Theorem \ref{theorem: existence of solution V in H alpha}. When $\alpha\in(\frac{2}{3},1)$, Proposition \ref{proposition: H m+alpha estimate}, Proposition \ref{proposition: weighted H alpha esti}, Proposition \ref{proposition: weighted H 1+alpha esti} and the Sobolev embedding $H^{1+\alpha}(\reall^2)\hookrightarrow L^\infty(\reall^2)$ yields that 
    \[
      V\in \cap_{m\in\Z_{\geq 0}}H^m(\reall^2)\quad\text{ and }\quad |V(x)|\leq C(1+|x|)^{1-2\alpha}. 
    \]
    Hence, Proposition \ref{proposition: decay esti of V} is applicable, which, together with Lemma \ref{lemma: estimates on U_0}, gives the pointwise estimates in the theorem.
\end{proof}
\section{Numerical investigations on a related system}\label{Sec 6}
\subsection{Introduction to the Navier-Stokes equations with time dependent viscosity}
In this section, we consider the modified equation \eqref{NS with time dependent visc} which shares similarities with \eqref{fractional NS} and is also interesting by itself.
At a formal level, for $\alpha\in(\frac{1}{2},1)$ one notices that the diffusion terms in \eqref{fractional NS} and \eqref{NS with time dependent visc} are both weakened near the initial time $t=0$ compared to the classic $2$D Navier-Stokes equation. In particular, for a fixed $\alpha$, one can check that \eqref{fractional NS} and \eqref{NS with time dependent visc} share the same scaling invariance \eqref{eq: scaling invariance}, and once we consider the same self-similar change of variables \eqref{self similar change of variables} for \eqref{NS with time dependent visc}, the corresponding self-similar profiles satisfy
\begin{align}\label{self similar eq for U in the time dep visc case}
    (-\Delta)U - \frac{2\alpha-1}{2\alpha}U - \frac{1}{2\alpha}y\cdot\nabla U + U\cdot\nabla U +\nabla P=0\quad \div U = 0.
\end{align} 
Similar to the study of \eqref{fractional NS}, given any $(1-2\alpha)$-homogeneous initial data for \eqref{NS with time dependent visc}:
\begin{equation}
    u_0(x) = |x|^{1-2\alpha}\bar u_0\dr{\frac{x}{|x|}},\quad  \div u_0 = 0,
\end{equation}
we expect that the leading order of the self-similar profile $U$ of \eqref{self similar eq for U in the time dep visc case} (if it exists) in the far field is given by the heat profile
\[
    U_0(x) := e^{\alpha\Delta}u_0,
\]
which satisfies
\begin{align}
    |y|^{2\alpha-1}U_0(y)= \bar u_0\dr{\frac{y}{|y|}}+o(1)\quad \text{as}\quad y\to +\infty.
\end{align}
Due to these similarities, we conjecture that \eqref{fractional NS} and \eqref{NS with time dependent visc} share similar nonuniqueness behaviors. This is the motivation of our numerical experiments in this section. Finally, we remark that with same techniques, one may establish results similar to Theorem \ref{theorem: Main} for the equation \eqref{NS with time dependent visc}. We will not pursue this theoretical direction further here.
\subsection{Numerical observations}
In $2$D, it is convenient to consider the vorticity equation of \eqref{NS with time dependent visc}:
\begin{equation}
    \om_t + u\cdot\nabla\om = t^{\frac{1}{\alpha}-1}\Delta\om,\quad u= K_2*\om,\quad (x,t)\in\reall^2\times (0,+\infty),
\end{equation}
where $K_2$ is the usual 2-dimensional Biot-Savart kernel. By the self-similar change of variables
\begin{equation}
    \omega(x,t) = t^{-1}\Omega\dr{y},\quad u(x,t) =t^{\frac{1}{2\alpha}-1}U\dr{y} ,\quad y:=\frac{x}{t^{\frac{1}{2\alpha}}},
\end{equation}
the self-similar vorticity profile $\Omega$ satisfies
\begin{equation}\label{eq: self-similar eq for Omega}
    \Delta\Omega + \Omega+\frac{1}{2\alpha} y\cdot\nabla\Omega-U\cdot\nabla\Omega = 0, \quad U = K_2*\Omega.
\end{equation}
Then, we have the following correspondence of the homogeneous initial data and the far field asymptotics of the profile:
\begin{equation}
    \om(x,0) = |x|^{-2\alpha}\bar\om_0\dr{\frac{x}{|x|}}\quad \Longleftrightarrow\quad|y|^{2\alpha}\Omega(y)= \bar \om_0\dr{\frac{y}{|y|}}+o(1)\quad \text{as}\quad y\to +\infty.
\end{equation}
As we have discussed in the introduction, in the literature mechanisms of nonuniqueness include: (a) a direct bifurcation of self-similar profiles, and (b) the existence of unstable modes of the linearized operator. In the following, we provide numerical evidence for both mechanisms. The numerical implementation details are included in Appendix \ref{Appendix B}. Consider the initial data which are $k$-fold symmetric (i.e. invariant under the rotation by $\frac{2\pi}{k}$)\footnote{It is worth mentioning that such initial data as well as the symmetric profiles found numerically, actually possess more symmetries, for example, odd symmetries with respect to $k$ axes separated by $\frac{2\pi}{k}$. Different types of symmetry may play different roles in nonuniqueness. However, we do not explore further in this direction here.}:
\begin{equation}
    \om_0(x) = |x|^{-2\alpha}\bar\om_0\dr{\frac{x}{|x|}},\quad \bar \om_0(\theta) = \sigma\cos(k\theta),\quad \sigma>0,\;k = 1,2,3,....
\end{equation}
We will see that both the unstable modes and the bifurcation of profiles are closely linked to the breaking of this $k$-fold symmetry. We remark that the breaking of the $\Z_2$-symmetry (either of the profiles or the eigenfunctions) has been numerically observed for the velocity field in the nonuniqueness scenarios of the $2$D and $3$D Navier-Stokes equations \cite{Guillod2023, hou2025nonuniquenesslerayhopfsolutionsunforced, albritton2026forwardselfsimilarsolutions2d}. To the best of our knowledge, our result gives the first numerical observation of the nonuniqueness of the vorticity that manifests a breaking of the $k$-fold symmetry. 
In the experiments, we fix $\alpha$ and $k$, and then gradually increase the value of $\sigma$. For each $\sigma$, we first compute the profile $\Omega_\sigma$ from the symmetric branch by solving the nonlinear equation \eqref{eq: self-similar eq for Omega} with a continuation method, then we compute the first $m$ eigenvalues of the linearized operator (arranged in the increasing order of their real parts)
\[
    \Lc_{\Omega_\sigma} f:= -\Delta f - f + (U_\sigma - \frac{y}{2\alpha})\cdot\nabla f + (K_2*f)\cdot\nabla\Omega_\sigma ,\quad U_\sigma := K_2*\Omega_\sigma.
\]
We remark that with our sign convention for the linearized operator, the unstable modes correspond to those with eigenvalues with negative real parts. In the following, we focus on an exemplary case where $\alpha = \frac{5}{8}$ and $k=3$. Keeping track of the first $10$ eigenvalues, we observe crossings of exactly $3$ eigenvalue branches through the imaginary axis (i.e., real parts shift to the negative half of the real axis) at around $\sigma = 120$: 
\begin{figure}[H]
    \centering
    \includegraphics[width=0.75\textwidth]{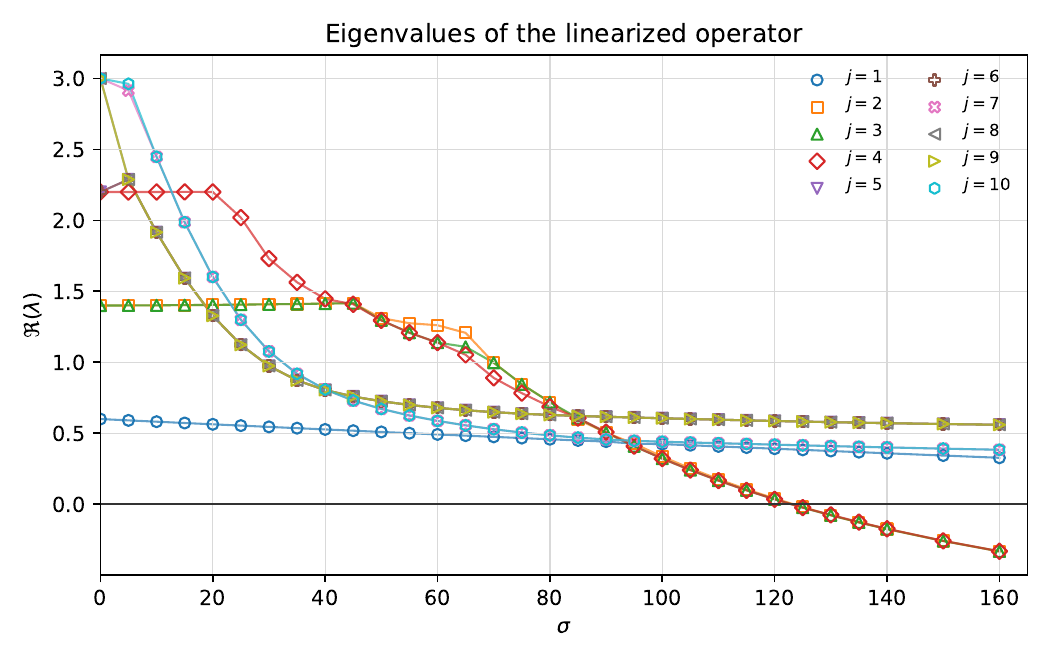}
    \caption{Numerical results of the real parts of the first ten eigenvalues with smallest real parts along the continuation branch for boundary data $\sigma\cos(3\theta)$ and $\alpha = \frac{5}{8}$.}
    \label{fig: eigenvalue-continuation}
\end{figure}
We also observe that the unstable eigenmodes break the $3$-fold symmetry of the profile:
\begin{figure}[H]
    \centering
    \includegraphics[width=0.90\textwidth]{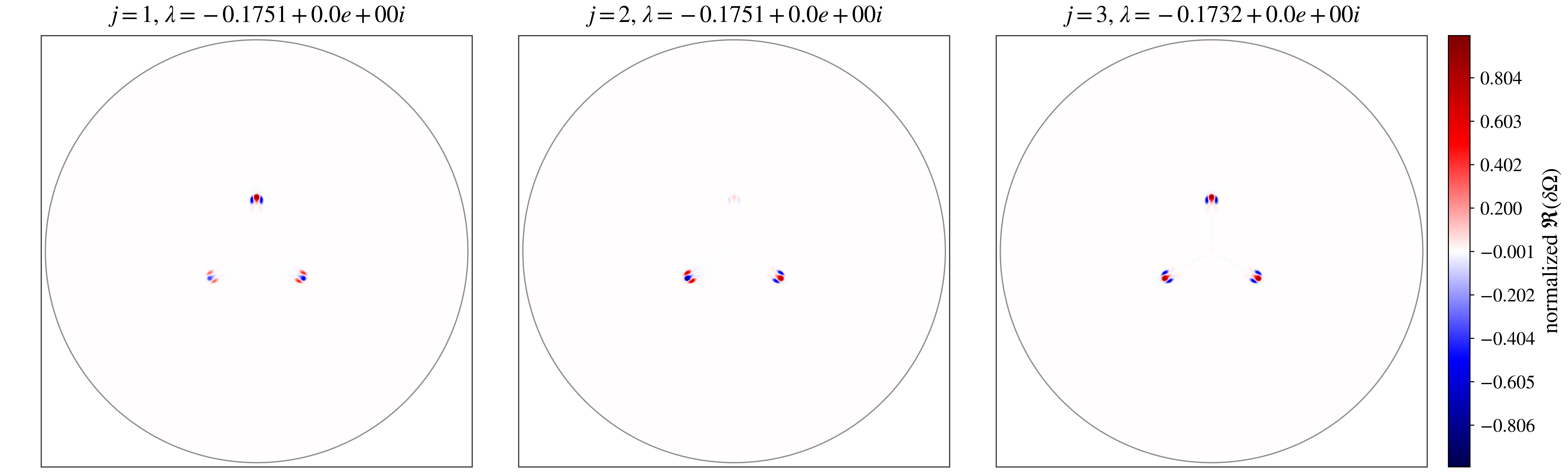}
    \caption{Heat maps of the three unstable eigenfunctions for boundary data $140\cos(3\theta)$ and $\alpha = \frac{5}{8}$.}
    \label{fig: unstable modes}
\end{figure}
Thus, due to the $3$-fold symmetry of the linearized operator, such symmetry-breaking unstable eigenspace has dimension at least $3$, which aligns with the numerical results in Figure \ref{fig: eigenvalue-continuation}.

Next, we explore the bifurcation of self-similar profiles. We add a localized symmetry-breaking perturbation (so that the boundary condition is unchanged) of the form
\[
    p(r,\theta) = c r\cos(\theta)e^{-(r/r_0)^2}\dr{1-(r/R_*)^2}^2
\]
to the profile with boundary data $140\cos(3\theta)$, take it as the initial guess, and run the nonlinear solver ($R_*$ above is the radius of the computational domain so that $p$ does not change the boundary value of the profile). We observe that the solver converged to a different, symmetry-breaking profile:
\begin{figure}[H]
    \centering
    \includegraphics[width=0.90\textwidth]{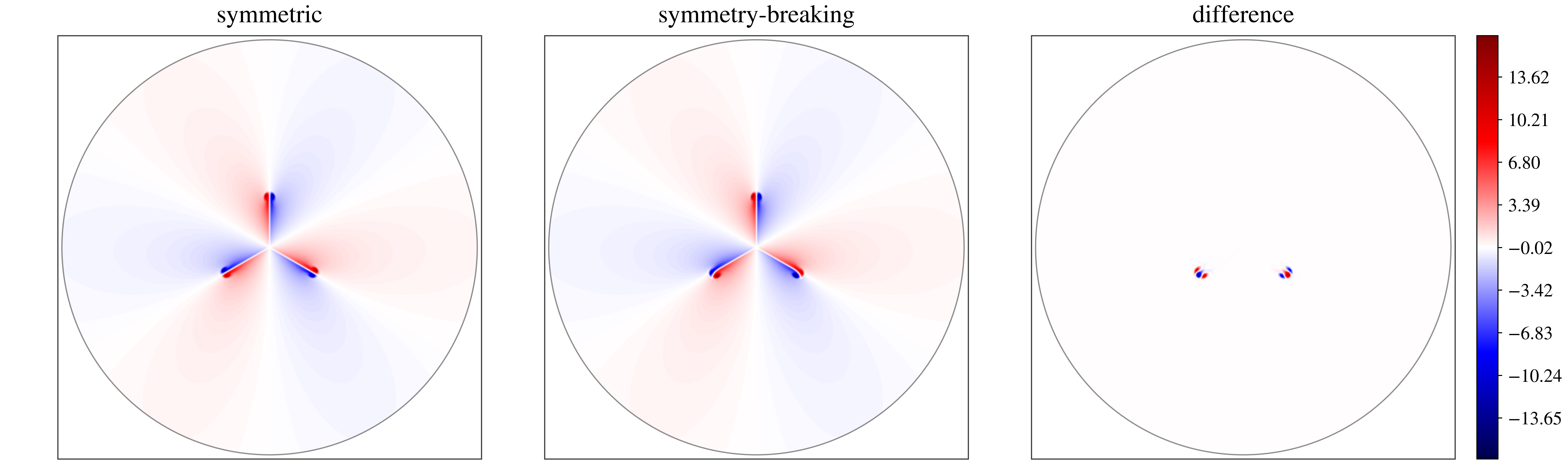}
    \caption{Comparison of the symmetric profile and the symmetry-breaking one for boundary data $140\cos(3\theta)$ and $\alpha = \frac{5}{8}$.}
    \label{fig: profile bifurcation}
\end{figure}
Starting from the bifurcation at $\sigma = 140$, we keep track of this symmetry-breaking branch in $\sigma$ by scaling the symmetry-breaking profile by the new/old $\sigma$ ratio and running the profile solver. We observe that bifurcation emerges in the same $\sigma$ bracket ($120\leq\sigma\leq 125$) where the crossings of eigenvalues take place. This aligns with the theoretical prediction that crossing of eigenvalues triggers bifurcation of profiles:
\begin{figure}[H]
    \centering
    \includegraphics[width=0.75\textwidth]{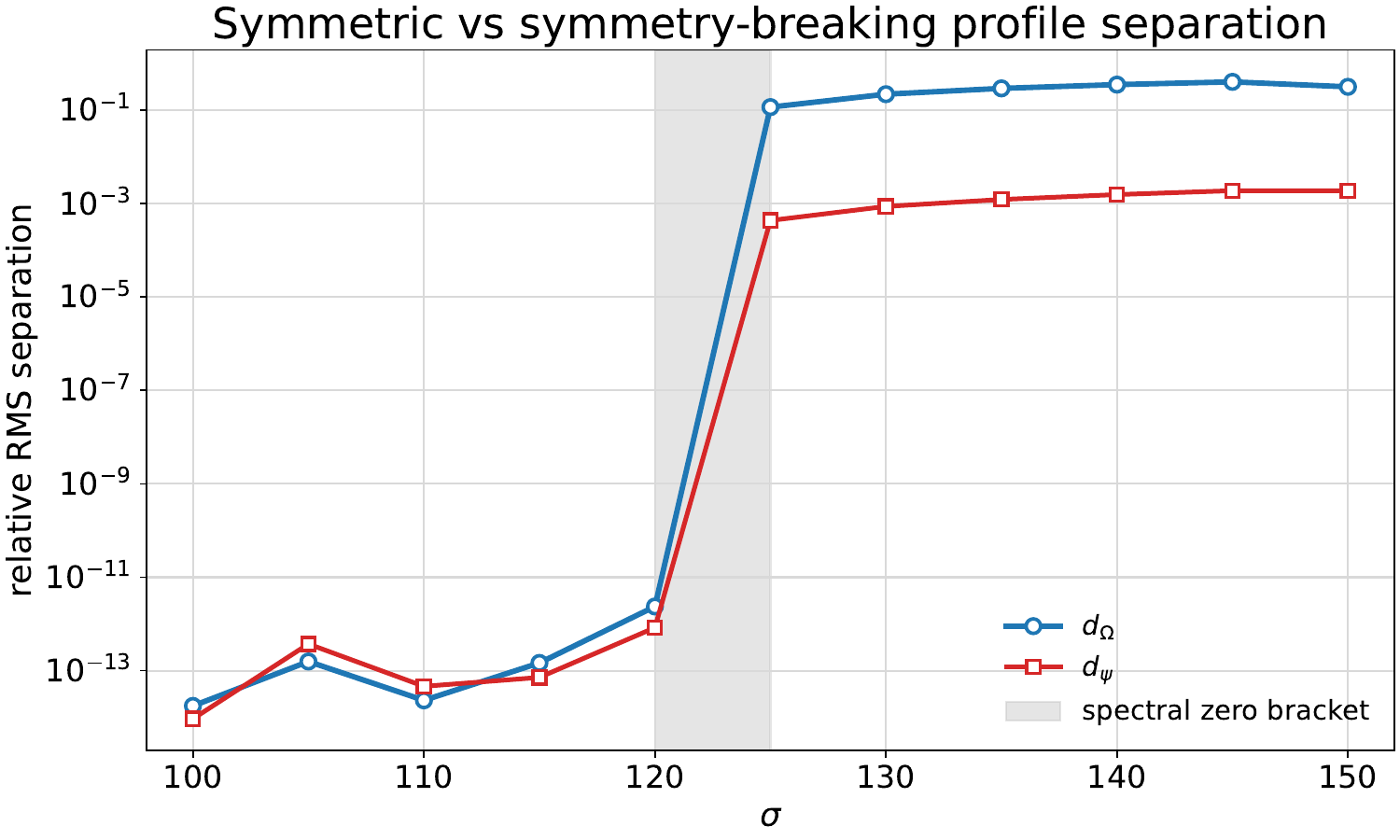}
    \caption{Tracking of the symmetry-breaking branch in $\sigma$ for boundary data $\sigma\cos(3\theta)$ and $\alpha = \frac{5}{8}$. Define the root mean square (RMS) as $\textup{RMS}(u):= \dr{\frac{1}{N}\sum_{j=1}^Nu_j^2}^{\frac{1}{2}}$.
    Then, the separation of vorticity profiles is measure by 
    $\frac{\textup{RMS}(\Omega_{\rm sym} - \Omega_{\rm broken})}{\textup{RMS}(\Omega_{\rm sym})}.$
     The same for the separation of stream function profiles.}
    \label{fig: bifurcation branch tracking}
\end{figure}
Finally, we remark that similar instability and bifurcation results appeared in other numerical experiments with different choices of $\alpha$ and $k$ as well, which implies the generality of such mechanisms for nonuniqueness. We give another example here for interested readers:
\begin{figure}[H]
    \centering
    \includegraphics[width=0.90\textwidth]{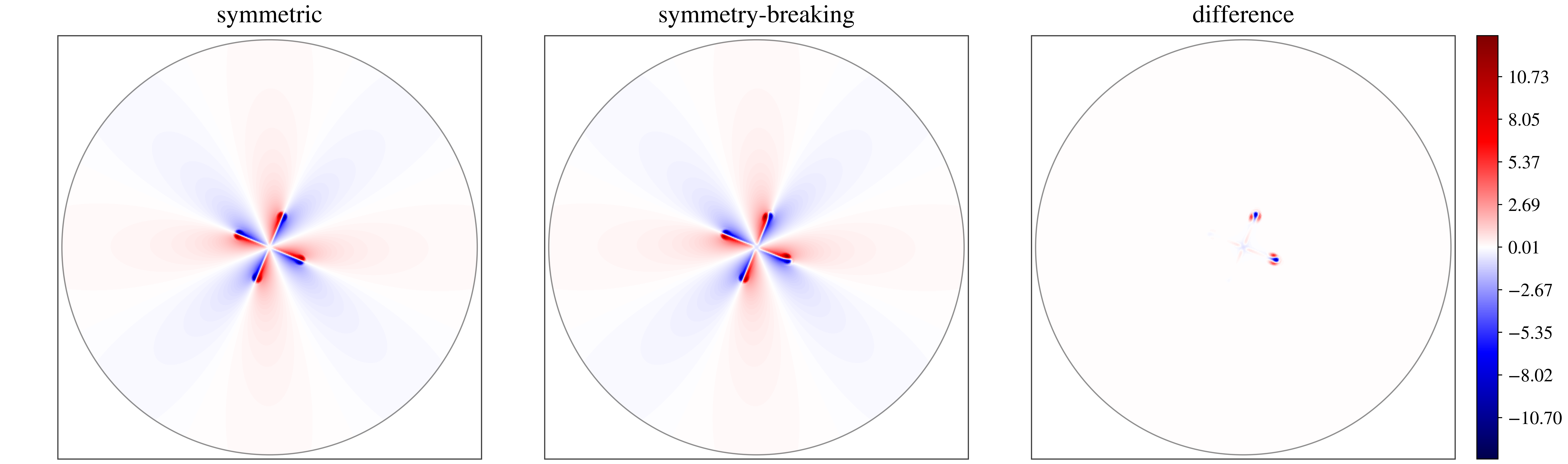}
    \caption{Comparison of the symmetric profile and the symmetry-breaking one for boundary data $240\cos(4\theta)$ and $\alpha = \frac{3}{4}$.}
    \label{fig: profile bifurcation}
\end{figure}

\appendix
\section{Appendix: Proofs of various estimates}\label{Appendix A}

\begin{lemma}[Kato–Ponce inequality]\label{appendix: general produc estimate for Sobolev spaces}
    Let $s>0$, $1<r<+\infty$, $1<p_i,q_i\leq +\infty$, with $\frac{1}{r} = \frac{1}{p_i}+\frac{1}{q_i}$ for $i = 1,2$. Then, it holds that
    \begin{equation}
        \|\Lambda^s(fg)\|_{L^r(\reall^n)} \leq C\dr{\|\Lambda^s f\|_{L^{p_1}(\reall^n)}\|g\|_{L^{q_1}(\reall^n)}+\| f\|_{L^{p_2}(\reall^n)}\|\Lambda^s g\|_{L^{q_2}(\reall^n)}},
    \end{equation}
    where $C$ depends only on $r,s,p_i,q_i$ and the spatial dimension $n$.
\end{lemma}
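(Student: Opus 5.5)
This is the Kato–Ponce (fractional Leibniz) inequality, and I would prove it with Bony's paraproduct decomposition combined with Littlewood–Paley square-function estimates and Coifman–Meyer multiplier theory. Fix a dyadic partition of unity $\sum_j \Delta_j = \mathrm{Id}$ on $\reall^n$, set $S_j = \sum_{k<j-2}\Delta_k$, and split
\[
fg = \sum_j S_j f\,\Delta_j g + \sum_j \Delta_j f\, S_j g + \sum_{|j-k|\leq 2}\Delta_j f\,\Delta_k g =: \Pi_1 + \Pi_2 + \Pi_3 .
\]
I then estimate $\Lambda^s$ of each of the three pieces in $L^r$. Each piece is paired with whichever side of the claimed inequality matches where the derivative naturally falls. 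The term $\Pi_1$, where $g$ carries the high frequency, goes with $\|f\|_{L^{p_2}}\|\Lambda^s g\|_{L^{q_2}}$. The terms $\Pi_2$ and $\Pi_3$ go with $\|\Lambda^s f\|_{L^{p_1}}\|g\|_{L^{q_1}}$.

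\textbf{Paraproduct terms.} In $\Pi_1$ the summand $S_jf\,\Delta_j g$ has Fourier support in an annulus of size $2^j$. Hence $\Lambda^s(S_jf\,\Delta_jg)$ behaves like $2^{js}S_jf\,\Delta_jg$, and the symbol $|\xi|^s$ may be replaced by a smooth multiplier $\tilde\Delta_j$ applied to $\Lambda^s g$. Concretely, I write
\[
\Lambda^s\Pi_1 = \sum_j \tilde\Delta_j\big(S_jf\cdot \Delta_j\Lambda^s g\big)
\]
up to harmless modifications of the cut-offs. I then bound the $L^r$ norm through the Littlewood–Paley square function:
\[
\Big\|\Big(\sum_j |S_jf|^2|\Delta_j\Lambda^sg|^2\Big)^{1/2}\Big\|_{L^r} \leq \|\mathcal M f\|_{L^{p_2}}\Big\|\Big(\sum_j|\Delta_j\Lambda^sg|^2\Big)^{1/2}\Big\|_{L^{q_2}} ,
\]
where $\mathcal M$ is the Hardy–Littlewood maximal function. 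This uses $|S_jf|\lesssim\mathcal Mf$ and H\"older. Maximal and square-function boundedness then give the desired bound when $p_2,q_2<\infty$. The endpoint $p_2=\infty$ is fine because $|S_jf|\leq C\|f\|_{L^\infty}$ directly. The endpoint $q_2=\infty$ is treated either by duality or by appealing to the Coifman–Meyer bilinear multiplier theorem, valid for $1<r<\infty$. The term $\Pi_2$ is handled identically with the roles of $f$ and $g$ swapped.

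\textbf{Resonant term.} This is the main obstacle. The product $\Delta_jf\,\Delta_kg$ has Fourier support in a ball of radius $\sim2^j$, not in an annulus, so $\Lambda^s$ cannot simply be traded for $2^{js}$. Positivity of $s$ is what saves the estimate. I would write
\[
\Lambda^s\Pi_3 = \sum_{\ell}\Delta_\ell\Lambda^s\sum_{j\geq \ell-3}\Delta_jf\,\tilde\Delta_jg ,
\]
so that $2^{\ell s}$ is dominated by $2^{js}2^{-(j-\ell)s}$. The geometric factor $2^{-(j-\ell)s}$ is summable precisely because $s>0$. Combining this with the square function estimate in $\ell$ and the vector-valued maximal inequality of Fefferman–Stein yields the bound by $\|\Lambda^sf\|_{L^{p_1}}\|g\|_{L^{q_1}}$.

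At the $L^\infty$ endpoints the square-function argument breaks down, and I would instead cite the Coifman–Meyer theorem for the bilinear symbol in the form of Grafakos–Oh and Bourgain–Li. Alternatively, since only the exponent cases used in the paper are needed (for instance $r=2$ with one factor in $L^\infty$), I would verify those directly. Summing the three contributions gives the inequality, with a constant depending only on $r,s,p_i,q_i,n$.
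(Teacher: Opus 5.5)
Your proposal is a correct sketch of the standard Littlewood–Paley proof: Bony decomposition with square-function and maximal-function bounds, essentially the Kenig–Ponce–Vega/Grafakos–Oh argument. The paper, by contrast, gives no proof and only cites \cite{KatoPonce}.

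What your route buys is a self-contained argument when $1<p_i,q_i<\infty$. At the $L^\infty$ endpoints you still defer to Coifman–Meyer, so there you gain nothing over the citation.

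The paper invokes this lemma only once, in the appendix product estimate $H^a\times H^b\hookrightarrow H^{a+b-1}$. That application has $r=2$ and $p_1=\frac{2}{b}$, $q_1=\frac{2}{1-b}$, $p_2=\frac{2}{1-a}$, $q_2=\frac{2}{a}$, all lying in $(2,\infty)$. So, contrary to your closing remark, no $L^\infty$ case is ever needed, and the elementary square-function part of your argument already covers every use in the paper.

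Some points of precision:
\begin{itemize}
\item The displayed identity $\Lambda^s\Pi_1=\sum_j\tilde\Delta_j(S_jf\cdot\Delta_j\Lambda^s g)$ is false as written. It becomes true once $\tilde\Delta_j$ and $\Delta_j$ are replaced by the multipliers with symbols $|2^{-j}\xi|^{s}\tilde\phi(2^{-j}\xi)$ and $|2^{-j}\xi|^{-s}\phi(2^{-j}\xi)$, which is presumably what you mean by ``modified cut-offs''.
\item The passage from $\|\sum_j\tilde\Delta_jF_j\|_{L^r}$ to the square function of the $F_j$ is a duality argument. It uses the Littlewood–Paley bound on $L^{r'}$.
\item The resonant term needs no endpoint theory. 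Write $\Delta_j^{(-s)}$ for the multiplier with symbol $|2^{-j}\xi|^{-s}\phi(2^{-j}\xi)$. In $\big(\sum_\ell|\Delta^{(-s)}_{\ell+m}\Lambda^s f|^2\,|\tilde\Delta_{\ell+m}g|^2\big)^{1/2}$ you can put the supremum on whichever factor is in $L^\infty$ and the square function on the other.
\item The only cases that genuinely require the $BMO$/Carleson or Coifman–Meyer input are the paraproducts with the $L^\infty$ norm on the high-frequency factor: $q_2=\infty$ in $\Pi_1$ and $p_1=\infty$ in $\Pi_2$. Elsewhere the pointwise bound $\sup_j|S_jf|\lesssim\mathcal M f$ and the scalar maximal theorem suffice; Fefferman–Stein is not needed.
\end{itemize}
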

\begin{proof}
    This is a standard result, and we refer the readers to \cite{KatoPonce}.
\end{proof}

\begin{lemma}\label{appendix: product estimate for H alpha times H alpha to H 2alpha-1}
    Let $0<a,b<1$ and $a+b\geq 1$. Then, there exists some constant $C(a,b)$ such that for any $f\in H^a(\reall^2)$ and $g\in H^b(\reall^2)$, their product $fg$ belongs to $H^{a+b-1}(\reall^2)$ and the following holds
    \begin{align}
        \|fg\|_{H^{a+b-1}(\reall^2)}\leq C(a,b) \|f\|_{H^a(\reall^2)}\|g\|_{H^b(\reall^2)},\\
        \|fg\|_{\dot H^{a+b-1}(\reall^2)}\leq C(a,b) \|f\|_{\dot H^a(\reall^2)}\|g\|_{\dot H^b(\reall^2)}.
    \end{align}
    In other words, the multiplication operator $\mathfrak{m}: H^a(\reall^2)\times H^b(\reall^2)\to H^{a+b-1}(\reall^2)$ is a bounded operator. In particular, multiplication is a bounded operator from $H^\alpha(\reall^2)\times H^\alpha(\reall^2)$ to $H^{2\alpha-1}(\reall^2)$ for $\alpha\in[\frac{1}{2},1)$.
\end{lemma}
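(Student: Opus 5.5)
We prove the homogeneous estimate first and deduce the inhomogeneous one from it. Set $s:=a+b-1\in[0,1)$, so $a = s + (1-b)$ and $b = s+(1-a)$. The plan is a Sobolev-embedding argument combined with the fractional Leibniz rule of Lemma \ref{appendix: general produc estimate for Sobolev spaces}.

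\textbf{The case $s>0$.} Apply the Kato–Ponce inequality with $r=2$:
\begin{equation}
    \|\Lambda^s(fg)\|_{L^2}\leq C\dr{\|\Lambda^s f\|_{L^{p_1}}\|g\|_{L^{q_1}}+\|f\|_{L^{p_2}}\|\Lambda^s g\|_{L^{q_2}}}.
\end{equation}
Choose the exponents by the homogeneous Sobolev embedding $\dot H^\theta(\reall^2)\hookrightarrow L^{2/(1-\theta)}$, valid for $0\leq\theta<1$:
\begin{itemize}
    \item[] For the first term, use $\Lambda^s f\in \dot H^{a-s}=\dot H^{1-b}\hookrightarrow L^{2/b}$ and $g\in\dot H^b\hookrightarrow L^{2/(1-b)}$. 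Then $\frac{b}{2}+\frac{1-b}{2}=\frac12$, so H\"older closes.
    \item[] The second term is symmetric: $\Lambda^s g\in\dot H^{1-a}\hookrightarrow L^{2/a}$ and $f\in L^{2/(1-a)}$.
\end{itemize}
All exponents lie in $(1,\infty)$ because $0<a,b<1$. This gives $\|fg\|_{\dot H^{s}}\leq C\|f\|_{\dot H^a}\|g\|_{\dot H^b}$.

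\textbf{The endpoint $s=0$.} Here $a+b=1$, so H\"older with $\dot H^a\hookrightarrow L^{2/(1-a)}$ and $\dot H^b\hookrightarrow L^{2/(1-b)}$ gives $\|fg\|_{L^2}\leq C\|f\|_{\dot H^a}\|g\|_{\dot H^b}$ directly.

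\textbf{The inhomogeneous bound.} It remains to control $\|fg\|_{L^2}$ by $\|f\|_{H^a}\|g\|_{H^b}$ when $s>0$. Choose $a'\le a$ and $b'\le b$ with $a'+b'=1$ and $0<a',b'<1$; for example, $a' = a-s/2$ and $b' = b-s/2$. Then $\|f\|_{\dot H^{a'}}\le\|f\|_{H^a}$, and the $s=0$ case yields $\|fg\|_{L^2}\le C\|f\|_{H^a}\|g\|_{H^b}$. Combined with the homogeneous bound (using $\|\cdot\|_{\dot H}\le\|\cdot\|_{H}$), this gives the inhomogeneous estimate.

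\textbf{Main obstacle.} The only delicate point is making sure every Lebesgue exponent stays strictly inside $(1,\infty)$, so that both Kato–Ponce and the Sobolev embeddings apply. Since $a,b<1$, the embeddings never hit the forbidden $L^\infty$ endpoint. Since $a,b>0$, the exponents $2/a$ and $2/b$ are finite.

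The particular case $a=b=\alpha\in[\frac12,1)$ follows immediately from the general statement.
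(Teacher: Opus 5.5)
Your proposal is correct and follows the paper's proof: you use Kato--Ponce with the same exponents $p_1=2/b$, $q_1=2/(1-b)$, $p_2=2/(1-a)$, $q_2=2/a$, together with the homogeneous Sobolev embeddings. The only cosmetic difference is the $L^2$ part, where you lower the indices to $a'=a-s/2$, $b'=b-s/2$ and reuse the endpoint case, whereas the paper applies H\"older in $L^{2/(1-a)}\times L^{2/a}$ and the inhomogeneous embedding $H^b\hookrightarrow L^{2/a}$ (valid because $a+b\geq 1$); the two are equivalent.
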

\begin{proof}
Denote $s:=a+b-1\geq 0$. First, when $s>0$, we control the quantity $\|\Lambda^s(fg)\|_{\Ltw}$. By Lemma \ref{appendix: general produc estimate for Sobolev spaces}, we have
\begin{equation}
    \|\Lambda^s(fg)\|_{\Ltw}\lesssim \|\Lambda^s f\|_{L^{p_1}(\reall^2)}\|g\|_{L^{q_1}(\reall^2)}+\|f\|_{L^{p_2}(\reall^2)}\|\Lambda^s g\|_{L^{q_2}(\reall^2)}, 
\end{equation}
where
\[
p_1 = \frac{2}{b},\quad q_1 = \frac{2}{1-b},\quad p_2 = \frac{2}{1-a},\quad q_2=\frac{2}{a}.
\]
By the Sobolev embeddings $\dot H^{r}(\reall^2)\hookrightarrow L^{p_r}(\reall^2)$ where $p_r = \frac{2}{1-r}$ for $0<r<1$, we get
\begin{equation}
\|\Lambda^s(fg)\|_{\Ltw}\lesssim\|f\|_{\dot H^a(\reall^2)}\|g\|_{\dot H^b(\reall^2)}.
\end{equation}
Next, we control $\|fg\|_{\Ltw}$. By H\"older's inequality, we obtain
\begin{equation}
    \|fg\|_{\Ltw}\leq \|f\|_{L^{\frac{2}{1-a}}(\reall^2)}\|g\|_{L^{\frac{2}{a}}(\reall^2)}.
\end{equation}
We note that $H^a(\reall^2)\hookrightarrow L^{\frac{2}{1-a}}(\reall^2)$ and $H^b(\reall^2)\hookrightarrow L^{\frac{2}{a}}(\reall^2)$ when $\frac{2}{1-b}\geq \frac{2}{a}$, i.e., $a+b\geq 1$. It follows that
\begin{equation}
    \|fg\|_{\Ltw}\lesssim\|f\|_{H^a(\reall^2)}\|g\|_{H^b(\reall^2)}.
\end{equation}
This completes the proof.
\end{proof}

\begin{lemma}\label{appendix: product estimate for the L infty case}
    Let $f\in H^s(\reall^n)$ for some $s>0$. Denote $\lceil s \rceil$ to be the smallest integer that is bigger than or equal to $s$, and let $g\in W^{\lceil s \rceil,\infty}(\reall^n)$. Then, it holds that
    \begin{equation}
        \|fg\|_{H^s(\reall^n)}\leq C(s)\|f\|_{H^s(\reall^n)}\|g\|_{W^{\lceil s \rceil,\infty}(\reall^n)}.
    \end{equation}
\end{lemma}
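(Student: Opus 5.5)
The plan is to prove the estimate by induction on the number of derivatives and reduce it to the Kato--Ponce inequality (Lemma \ref{appendix: general produc estimate for Sobolev spaces}) together with interpolation. We use the equivalent norm
\[
\|fg\|_{H^s}\simeq\|fg\|_{L^2}+\|\Lambda^s(fg)\|_{L^2}.
\]
The $L^2$ part is immediate: $\|fg\|_{L^2}\le\|g\|_{L^\infty}\|f\|_{L^2}$.

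\emph{Integer case $s=k$.} If $s=k\in\mathbb{Z}_{\geq1}$, we expand $\partial^\gamma(fg)$ with $|\gamma|\le k$ by the Leibniz rule. Each term $\partial^{\gamma_1}f\,\partial^{\gamma_2}g$ is bounded in $L^2$ by
\[
\|\partial^{\gamma_1}f\|_{L^2}\,\|\partial^{\gamma_2}g\|_{L^\infty}\le\|f\|_{H^k}\,\|g\|_{W^{k,\infty}}.
\]
This settles the integer case directly.

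\emph{Non-integer case, $0<s<1$.} Here $\lceil s\rceil=1$, and we apply the Kato--Ponce inequality with $r=2$, $(p_1,q_1)=(2,\infty)$, $(p_2,q_2)=(2,\infty)$ (roles swapped in the second term). This gives
\[
\|\Lambda^s(fg)\|_{L^2}\lesssim\|\Lambda^s f\|_{L^2}\|g\|_{L^\infty}+\|f\|_{L^2}\|\Lambda^s g\|_{L^\infty}.
\]
The Kato--Ponce inequality requires $q_i\le\infty$ and $1<r<\infty$, which the statement allows. The only nonstandard quantity is $\|\Lambda^s g\|_{L^\infty}$. This is the main obstacle, since $\Lambda^s$ is not bounded on $L^\infty$. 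To handle it, we bound it with the singular-integral representation
\[
\Lambda^s g(x)=c\int\frac{g(x)-g(y)}{|x-y|^{n+s}}\,dy,
\]
splitting the integral into the regions $|x-y|<1$ and $|x-y|\ge1$. The near region uses the Lipschitz bound $|g(x)-g(y)|\le\|\nabla g\|_{L^\infty}|x-y|$ and is integrable since $s<1$. The far region uses $|g(x)-g(y)|\le2\|g\|_{L^\infty}$ and is integrable since $s>0$. Together these give
\[
\|\Lambda^s g\|_{L^\infty}\lesssim\|g\|_{W^{1,\infty}}.
\]

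\emph{Non-integer case, general $s=k+\theta$.} Write $s=k+\theta$ with $k\ge1$ and $\theta\in(0,1)$, so $\lceil s\rceil=k+1$. It suffices to bound $\|\partial^\gamma(fg)\|_{H^\theta}$ for $|\gamma|\le k$. The Leibniz rule writes this as a sum of products $\partial^{\gamma_1}f\,\partial^{\gamma_2}g$ with $\partial^{\gamma_1}f\in H^{\theta+k-|\gamma_1|}\subset H^\theta$ and $\partial^{\gamma_2}g\in W^{k+1-|\gamma_2|,\infty}\subset W^{1,\infty}$. The fractional case above then yields
\[
\|\partial^{\gamma_1}f\,\partial^{\gamma_2}g\|_{H^\theta}\lesssim\|\partial^{\gamma_1}f\|_{H^\theta}\|\partial^{\gamma_2}g\|_{W^{1,\infty}}\le\|f\|_{H^s}\|g\|_{W^{\lceil s\rceil,\infty}}.
\]
Summing over multi-indices gives the claim with $C=C(s,n)$.

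As a fallback, if applying Kato--Ponce with $q=\infty$ is considered delicate, one can instead use complex interpolation of the bounded operator $f\mapsto fg$ between $H^{k}$ and $H^{k+1}$; these are the integer cases above, with constants $\|g\|_{W^{k,\infty}}$ and $\|g\|_{W^{k+1,\infty}}$ respectively. This gives the $H^s$ bound with constant at most $\|g\|_{W^{\lceil s\rceil,\infty}}$, and avoids the $L^\infty$ bound on $\Lambda^s g$ entirely.
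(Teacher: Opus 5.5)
Your proof is correct. The integer case, and the reduction of $s=k+\theta$ to the range $0<\theta<1$ via the Leibniz rule, are exactly what the paper does. The two routes differ only in the fractional step $0<s<1$.

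The paper works directly with the Gagliardo characterization. It writes $f(x)g(x)-f(y)g(y)=f(x)\bigl(g(x)-g(y)\bigr)+\bigl(f(x)-f(y)\bigr)g(y)$. The second piece gives $\|g\|_{L^\infty}^2[f]_{H^s}^2$. The first is bounded by $\|f\|_{L^2}^2\sup_x\int|g(x)-g(y)|^2|x-y|^{-n-2s}\,dy\lesssim\|g\|_{W^{1,\infty}}^2\|f\|_{L^2}^2$. This uses the same Lipschitz/boundedness near--far split you use to bound $\Lambda^s g$ pointwise. That argument is elementary and applies verbatim to $f\in H^s$, $g\in W^{1,\infty}$, with no density argument.

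Your main route instead calls on the endpoint form of Kato--Ponce, in which the differentiated factor $\Lambda^s g$ is measured in $L^\infty$. The paper's Kato--Ponce lemma formally allows this. However, such inequalities are proved for Schwartz functions, and $g$ is neither Schwartz nor approximable by Schwartz functions in $W^{1,\infty}$. So to conclude you would still need an approximation step, which you leave implicit. For example, truncate and mollify $g$ with uniform $W^{1,\infty}$ bounds, then pass to the limit using lower semicontinuity of the $H^s$ norm.

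Your complex-interpolation fallback avoids this entirely and is arguably the cleanest proof of the three. It reduces everything to the integer case and yields the slightly sharper constant $\|g\|_{W^{k,\infty}}^{1-\theta}\|g\|_{W^{k+1,\infty}}^{\theta}$. The price is quoting the identity $[H^k,H^{k+1}]_\theta=H^{k+\theta}$.
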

\begin{proof}
    The case when $s$ is an integer is straightforward via the chain rule. For the fractional case, we use the Gagliardo characterization:
    \begin{equation}
        \|u\|^2_{H^\sigma(\reall^n)}\sim \|u\|^2_{L^2(\reall^n)}+[u]^2_{H^\sigma(\reall^n)},\quad [u]^2_{H^\sigma(\reall^n)}:=\int_{\reall^n}\int_{\reall^n}\frac{|u(x)-u(y)|^2}{|x-y|^{n+2\sigma}}\;dxdy,\quad 0<\sigma<1.
    \end{equation}
    First, for $f\in H^\sigma(\reall^n)$ and $g\in W^{1,\infty}(\reall^n)$ for $\sigma\in (0,1)$, we have
    \begin{align}
        [fg]^2_{H^\sigma(\reall^n)} &= \int_{\reall^n}\int_{\reall^n}\frac{|f(x)g(x)-f(y)g(y)|^2}{|x-y|^{n+2\sigma}}\;dxdy\\
        &\leq \int_{\reall^n}\int_{\reall^n}\frac{|f(x)|^2|g(x)-g(y)|^2}{|x-y|^{n+2\sigma}}\;dxdy+\int_{\reall^n}\int_{\reall^n}\frac{|f(x)-f(y)|^2|g(y)|^2}{|x-y|^{n+2\sigma}}\;dxdy\\
        &\leq C\|g\|^2_{W^{1,\infty}(\reall^n)}\|f\|^2_{L^2(\reall^n)}+ C\|g\|^2_{L^\infty(\reall^n)}[f]^2_{H^\sigma(\reall^n)}.
    \end{align}
    Based on this result, the general case $f\in H^{m+\sigma}(\reall^n)$ and $g\in W^{m+1,\infty}(\reall^n)$ for $m\in \Z_{\geq 0}$ and $\sigma\in(0,1)$ follow by the chain rule.
\end{proof}

\begin{lemma}[Refined commutator estimate for fractional Laplacian]\label{appendix: refined commutator estimate}
    Let $g_\delta(y) = \frac{1}{(1+\delta^2|y|^2)^\frac{1}{2}}$ and $\alpha\in(0,1)$. Then, for any $f\in L^2(\reall^2)$ and $0\leq \beta<1+\alpha$, we have
    \begin{equation}
        \|g_\delta^{-\beta}[\Lambda^\alpha,g^2_\delta]f\|_{L^2}\leq C(\alpha,\beta)\delta^{\alpha}\|f\|_{L^2(\reall^2)}
    \end{equation}
\end{lemma}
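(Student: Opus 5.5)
We write the commutator through the singular-integral representation used in Lemma \ref{lemma: commutator estimates}:
\[
[\Lambda^\alpha,g_\delta^2]f(x)=c_\alpha\intt \frac{g_\delta^2(x)-g_\delta^2(y)}{|x-y|^{2+\alpha}}f(y)\,dy .
\]
The plan is to show that the kernel
\[
K(x,y):=g_\delta^{-\beta}(x)\frac{|g_\delta^2(x)-g_\delta^2(y)|}{|x-y|^{2+\alpha}}
\]
satisfies Schur's test,
\[
\sup_x\int K(x,y)\,dy+\sup_y\int K(x,y)\,dx\le C\delta^\alpha,
\]
which gives the $L^2$ bound at once. We first reduce by scaling. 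Since $g_\delta(y)=g_1(\delta y)$, the change of variables $x=x'/\delta$, $y=y'/\delta$ shows that the kernel for $g_\delta$ equals $\delta^{\alpha}\cdot\delta^{2}$ times the kernel for $g_1$ evaluated at $(x',y')$. The factor $\delta^2$ is absorbed by the Jacobian, so the Schur integrals scale exactly by $\delta^\alpha$. It therefore suffices to treat $\delta=1$. Write $g=g_1=\bkt{\cdot}^{-1}$, so that $g^{-\beta}(x)=\bkt{x}^\beta$ and $g^2=\bkt{\cdot}^{-2}$.

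\textbf{Row integral} (integrate in $y$ with $x$ fixed). We split into three regions.
\begin{itemize}
\item Near region $|x-y|\le \bkt{x}/2$. Here $\bkt{y}\sim\bkt{x}$ and $|\nabla g^2|\lesssim \bkt{\cdot}^{-3}$, so the mean value theorem gives a contribution $\lesssim \bkt{x}^{\beta-3}\int_{|z|\le\bkt{x}}|z|^{-1-\alpha}\,dz\sim\bkt{x}^{\beta-2-\alpha}$.
\item Far region $|x-y|>\bkt{x}/2$. We bound $|g^2(x)-g^2(y)|\le \bkt{x}^{-2}+\bkt{y}^{-2}$.
\begin{itemize}
\item The term with $\bkt{x}^{-2}$ contributes $\bkt{x}^{\beta-2}\int_{|z|>\bkt{x}/2}|z|^{-2-\alpha}\,dz\sim\bkt{x}^{\beta-2-\alpha}$.
\item The term with $\bkt{y}^{-2}$ contributes $\bkt{x}^{\beta}\int_{|x-y|>\bkt x/2}\bkt{y}^{-2}|x-y|^{-2-\alpha}\,dy$. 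Splitting further into $|y|\le 2|x|$ (where $|x-y|\gtrsim\bkt{x}$, giving $\bkt{x}^{-2-\alpha}\log\bkt{x}$) and $|y|>2|x|$ (giving $\bkt{x}^{-2-\alpha}$), this term is $\lesssim\bkt{x}^{\beta-2-\alpha}\log(2+|x|)$.
\end{itemize}
\end{itemize}
All three pieces are bounded uniformly in $x$ provided $\beta<2+\alpha$, so the row integral causes no difficulty.

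\textbf{Column integral} (integrate in $x$ with $y$ fixed). This is the main obstacle, because the weight $\bkt{x}^\beta$ now sits on the integration variable.
\begin{itemize}
\item Near region $|x-y|\le\bkt{y}/2$. We have $\bkt{x}\sim\bkt{y}$, so this piece is again $\lesssim\bkt{y}^{\beta-2-\alpha}$.
\item Far region, subregion $|x|\le 2\bkt{y}$. The piece is $\lesssim \bkt{y}^{\beta}\cdot\bkt{y}^{-2-\alpha}\int_{|x|\le 2\bkt y}\bkt{x}^{-2}\,dx$, which is bounded up to a logarithm. If one wants to avoid the log, one can instead use $\bkt{x}^{-2}$ in place of the cruder bound.
\item Far region, subregion $|x|>2\bkt{y}$. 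Here $|x-y|\sim|x|$ and $|g^2(x)-g^2(y)|\lesssim\bkt{y}^{-2}$, so the piece is $\lesssim\bkt{y}^{-2}\int_{|x|>\bkt y}|x|^{\beta-2-\alpha}\,dx$. This converges exactly when $\beta<\alpha$ and then equals $\bkt{y}^{\beta-2-\alpha}$.
\end{itemize}
So the column integral works directly for $\beta<\alpha$, but not for the full range $\beta<1+\alpha$ claimed in the statement.

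To reach the full range $\beta<1+\alpha$ I will not rely on Schur's test for this last piece. The far-field operator
\[
f\mapsto \mathbf 1_{|x|>2\bkt y}\,\bkt{x}^{\beta}\,|x|^{-2-\alpha}\int \bkt{y}^{-2}f(y)\,dy
\]
has rank-one structure, essentially a weighted average of $f$, so we estimate it in $L^2$ directly. By Cauchy--Schwarz, $\int_{\bkt y<|x|/2}\bkt{y}^{-2}|f|\,dy\lesssim\log^{1/2}(2+|x|)\,\|f\|_{L^2}$. Hence the $L^2_x$ norm of this piece is $\lesssim\|f\|_{L^2}\big(\int \bkt{x}^{2\beta-4-2\alpha}\log\bkt{x}\,dx\big)^{1/2}$, which is finite precisely when $\beta<1+\alpha$. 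The remaining pieces then follow from Schur's test as above, and undoing the scaling gives the factor $\delta^\alpha$.
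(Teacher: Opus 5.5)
Your proof is correct, and it takes a genuinely different route from the paper's.

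\textbf{How the paper argues.} After the same scaling reduction to $\delta=1$, the paper splits according to the ratio $|y|/|x|$.
\begin{itemize}
\item In the two off-diagonal regions $|y|<\frac12|x|$ and $|y|>2|x|$, it uses the pointwise bound $\bigl||y|^2-|x|^2\bigr|/|x-y|^{2+\alpha}\lesssim |x|^{-\alpha}$. This gives the separable kernel $|x|^{-\alpha}\langle x\rangle^{\beta-2}\langle y\rangle^{-2}$, which Cauchy--Schwarz handles, requiring $|x|^{-\alpha}\langle x\rangle^{\beta-2}\in L^2$.
\item In the comparable region $\frac12|x|\le|y|\le 2|x|$, it bounds the kernel by $|x|\langle x\rangle^{\beta-4}|x-y|^{-1-\alpha}$. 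It then applies Hardy--Littlewood--Sobolev into $L^{2/\alpha}$ followed by H\"older.
\end{itemize}

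\textbf{How you argue.} You treat everything except the tail $\{|x|>2\langle y\rangle\}$ by Schur's test, using the mean value theorem near the diagonal and crude bounds elsewhere. You reserve the separable Cauchy--Schwarz argument for that tail alone. You also correctly diagnose that the unweighted Schur test cannot cover the tail once $\beta\ge\alpha$, which is why a separate argument is needed there.

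\textbf{What each approach buys.} Your route avoids HLS entirely. It also shows that the tail $|x|\gg|y|$ is the only source of the restriction $\beta<1+\alpha$: your Schur pieces are bounded for every $\beta<2+\alpha$. The restriction is sharp on the tail. For compactly supported $f$ with $\int g^2 f\neq 0$, the commutator behaves like $-c_\alpha|x|^{-2-\alpha}\int g^2 f$ at infinity, and $\langle x\rangle^{\beta}|x|^{-2-\alpha}\in L^2$ near infinity exactly when $\beta<1+\alpha$. By contrast, the paper's HLS/H\"older step in the comparable region also needs $\beta<1+\alpha$, so it is lossy there. This costs nothing, because the off-diagonal regions impose the same condition anyway, and in exchange the paper's argument is shorter.

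\textbf{Two small remarks.}
\begin{itemize}
\item Your Cauchy--Schwarz step does not produce a logarithm, since $\int_{\reall^2}\langle y\rangle^{-4}\,dy<\infty$. The tail piece is therefore bounded directly by $\bigl\|\langle x\rangle^{\beta}|x|^{-2-\alpha}\mathbf{1}_{\{|x|>2\}}\bigr\|_{L^2}\|f\|_{L^2}$. This is exactly the paper's bound $\int \frac{|f(y)|}{1+|y|^2}\,dy\le C\|f\|_{L^2}$. The $\log^{1/2}$ you carry is harmless but unnecessary.
\item When you say ``the remaining pieces follow from Schur's test'', state explicitly that Schur is applied to $K\mathbf{1}_{\{|x|\le 2\langle y\rangle\}}$. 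Its row integrals are dominated by those of $K$, and its column integrals are exactly your near and far-but-$|x|\le 2\langle y\rangle$ pieces.
\end{itemize}
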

\begin{proof}
    First of all, by the definition of the fractional Laplacian, we have
    \begin{equation}
        [\Lambda^\alpha,g^2_\delta]f(x) = c_\alpha\intt\frac{g^2_\delta(x)-g^2_\delta(y)}{|x-y|^{2+\alpha}}f(y)\;dy =  c_\alpha\intt\frac{\delta^2(|y|^2-|x|^2)f(y)}{|x-y|^{2+\alpha}(1+\delta^2|x|^2)(1+\delta^2|y|^2)}\;dy.
    \end{equation}
    Denote
    \[
       K_{\alpha,\beta}(x,y) := c_\alpha\frac{|y|^2-|x|^2}{|x-y|^{2+\alpha}(1+|x|^2)^{1-\frac{\beta}{2}}(1+|y|^2)}.
    \]
    Then we have
    \[
       g_\delta^{-\beta}[\Lambda^\alpha,g^2_\delta]f(x) = \delta^{2+\alpha}\intt K_{\alpha,\beta}(\delta x,\delta y)f(y)\;dy.
    \]
    If we can show that
    \[
      T_{\alpha,\beta}: f\mapsto \intt K(x,y)f(y)\;dy
    \]
    is a bounded operator from $\Ltw$ to $\Ltw$,
    then we obtain by change of variables $y\mapsto y/\delta$ and $x\mapsto x/\delta$,
    \begin{align}
         &\| g_\delta^{-\beta}[\Lambda^\alpha,g^2_\delta]f\|_{\Ltw} = \delta^{2+\alpha}\dn{\intt K_{\alpha,\beta}(\delta x,\delta y)f(y)}_{\Ltw} = \delta^{\alpha}\dn{\intt K_{\alpha,\beta}(x,\delta y)f(y)}_{\Ltw}\\
         &\quad= \delta^{\alpha-2}\dn{\intt K_{\alpha,\beta}(x,y)f\dr{\frac{y}{\delta}}}_{\Ltw}\leq C(T_{\alpha,\beta})\delta^{\alpha-2}\|f(y/\delta)\|_{\Ltw}=C(T_{\alpha,\beta})\delta^{\alpha}\|f\|_{\Ltw}
    \end{align}
    Now it remains to show that $T_{\alpha,\beta}:\Ltw\to\Ltw$ is bounded. We split the integration in $y$ into three regions: $|y|<\frac{1}{2}|x|$, $|y|>2|x|$, and $\frac{1}{2}|x|\leq |y|\leq 2|x|$.\\
    \textbf{Region $|y|<\frac{1}{2}|x|$:} In this region,
    \[
       \da{\frac{|y|^2-|x|^2}{|x-y|^{2+\alpha}}}\leq C|x|^{-\alpha}.
    \]
    It follows that
    \begin{align}
        \da{\int_{\{y:\,|y|<\frac{1}{2}|x|\}} K(x,y)f(y)\;dy}\leq C\frac{1}{|x|^\alpha(1+|x|^2)^{1-\frac{\beta}{2}}}\intt \frac{|f(y)|}{1+|y|^2}\;dy\leq C\frac{1}{|x|^\alpha(1+|x|^2)^{1-\frac{\beta}{2}}}\|f\|_{\Ltw}.
    \end{align}
    It then suffices to note that $\frac{1}{|x|^\alpha(1+|x|^2)^{1-\frac{\beta}{2}}}\in\Ltw$ when $0\leq\beta<1+\alpha$.\\
    \textbf{Region $|y|>2|x|$:} In this region, 
    since
    \[
         \da{\frac{|y|^2-|x|^2}{|x-y|^{2+\alpha}}}\leq C|y|^{-\alpha}
         \leq C|x|^{-\alpha},
    \]
    we have
    \begin{align}
         \da{\int_{\{y:\,|y|>2|x|\}} K(x,y)f(y)\;dy}\leq C\frac{1}{|x|^\alpha(1+|x|^2)^{1-\frac{\beta}{2}}}\intt \frac{|f(y)|}{1+|y|^2}\;dy\leq C\frac{1}{|x|^\alpha(1+|x|^2)^{1-\frac{\beta}{2}}}\|f\|_{\Ltw},
    \end{align}
    which is bounded in $\Ltw$.\\
    \textbf{Region $\frac{1}{2}|x|\leq |y|\leq 2|x|$:} In this region, we note that
    \begin{align}
        \da{\frac{|x|^2-|y|^2}{|x-y|^{2+\alpha}(1+|x|^2)^{1-\frac{\beta}{2}}(1+|y|^2)}}\leq C\frac{|x|}{|x-y|^{1+\alpha}(1+|x|^2)^{2-\frac{\beta}{2}}}.
    \end{align}
    Then, we get
    \begin{align}
         \da{\int_{\{y:\,\frac{1}{2}|x|\leq|y|\leq2|x|\}} K(x,y)f(y)\;dy}\leq C\frac{|x|}{(1+|x|^2)^{2-\frac{\beta}{2}}}\intt\frac{|f(y)|}{|x-y|^{1+\alpha}}\;dy.
    \end{align}
    By the Hardy–Littlewood–Sobolev inequality ($\|\frac{1}{|x|^{n-s}}*f\|_{L^q(\reall^n)}\leq C\|f\|_{L^p(\reall^n)}$, where $1<p<q<\infty$, $0<s<n$ and $\frac{1}{q} = \frac{1}{p}-\frac{s}{n}$), we obtain
    \begin{align}
        \dn{\intt\frac{|f(y)|}{|x-y|^{1+\alpha}}\;dy}_{L^q(\reall^2)}\leq C\|f\|_{\Ltw},\quad q = \frac{2}{\alpha}.
    \end{align}
    Moreover, since $\frac{|x|}{(1+|x|^2)^{2-\frac{\beta}{2}}}\lesssim \frac{1}{\bkt{x}^{3-\beta}}$ and $(3-\beta)\cdot\frac{2}{2-\alpha}>2$ by the assumption $\beta<1+\alpha$, we know that $\frac{|x|}{(1+|x|^2)^{2-\frac{\beta}{2}}}\in L^{\frac{2}{2-\alpha}}(\reall^2)$. Thus, application of the H\"older's inequality gives
    \begin{align}
        \dn{\frac{|x|}{(1+|x|^2)^{2-\frac{\beta}{2}}}\intt\frac{|f(y)|}{|x-y|^{1+\alpha}}\;dy}_{\Ltw}&\leq C\dn{\frac{|x|}{(1+|x|^2)^{2-\frac{\beta}{2}}}}_{L^{\frac{2}{2-\alpha}}(\reall^2)}\dn{\intt\frac{|f(y)|}{|x-y|^{1+\alpha}}\;dy}_{L^{\frac{2}{\alpha}}(\reall^2)}\\
        &\leq C\|f\|_{\Ltw}.
    \end{align}
    Combining the estimates of the three regions completes the proof.
\end{proof}
\begin{lemma}[uniqueness of the fractional heat equation]\label{appendix: uniqueness of the fractional heat eq}
    Let $u,v\in L^\infty((0,T);L^p(\reall^n))$ for $p\in [1,+\infty]$ be two solutions to the following fractional heat equation
    \begin{align}\label{appendix: fractional heat}
        \pa_t w + (-\Delta)^\alpha w = f,\quad f\in L^1_{loc}((0,T);L^p(\reall^n)),\quad \alpha>0, 
    \end{align}
    such that $u(\cdot,t)\to u_0$ in $L^p(\reall^n)$ as $t\to 0^+$, and $v(\cdot,t)\to v_0$ in $L^p(\reall^n)$ as $t\to 0^+$. If $u_0 = v_0$, then it holds that $u=v$.
\end{lemma}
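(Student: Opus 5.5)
We set $w:=u-v\in L^\infty((0,T);L^p(\reall^n))$. By linearity $w$ solves the homogeneous equation $\pa_t w+(-\Delta)^\alpha w=0$ in the distributional sense on $\reall^n\times(0,T)$, and $w(\cdot,t)\to 0$ in $L^p$ as $t\to0^+$. The goal is to show $w\equiv0$. We will do this by duality against the fractional heat semigroup, which avoids any energy identity and so works uniformly in $p\in[1,\infty]$.

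Fix $t_0\in(0,T)$ and a test function $\psi\in C^\infty_0(\reall^n)$. We form the backward solution
\[
\phi(x,s):=\big(H_\alpha(\cdot,t_0-s)*\psi\big)(x),\qquad s<t_0 .
\]
This function satisfies $\pa_s\phi-(-\Delta)^\alpha\phi=0$ and $\phi(\cdot,t_0)=\psi$. It is smooth, and for each fixed $s$ the functions $\phi$, $\pa_s\phi$ and $(-\Delta)^\alpha\phi$ lie in every $L^{q}$, $1\le q\le\infty$. Integrability follows because $\widehat\phi=e^{-(t_0-s)|\xi|^{2\alpha}}\hat\psi$ and $H_\alpha$ has the decay $\bkt{x}^{-n-2\alpha}$, analogous to \eqref{lemma: estimates of U_0_estimates of H_alpha}. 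These bounds are uniform for $s$ in compact subsets of $[0,t_0]$, the only delicate point being $(-\Delta)^\alpha\phi$ at $s=t_0$. There $\psi\in C_0^\infty$ gives $(-\Delta)^\alpha\psi\in L^1\cap L^\infty$, with decay $|x|^{-n-2\alpha}$.

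Next take a time cutoff $\chi\in C^\infty_0((0,t_0])$ and use $\chi(s)\phi(x,s)$ as a test function in the weak formulation. A density or truncation argument in $x$ is needed here, since $\phi$ is not compactly supported. It is justified by the $L^{p'}$ integrability above together with $w\in L^\infty_tL^p_x$; concretely, we multiply by a spatial cutoff $\theta(x/R)$, let $R\to\infty$, and control the commutator $[(-\Delta)^\alpha,\theta(\cdot/R)]$ by $R^{-2\alpha}$. Because $\phi$ solves the adjoint equation, everything cancels except the term involving $\chi'$:
\[
\int_0^{t_0}\chi'(s)\int_{\reall^n}w(x,s)\phi(x,s)\,dx\,ds=0 .
\]

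Hence $g(s):=\int w(\cdot,s)\phi(\cdot,s)\,dx$ is a.e. constant on $(0,t_0)$. The initial condition gives $|g(s)|\le\|w(s)\|_{L^p}\|\phi(s)\|_{L^{p'}}\to0$ as $s\to0^+$, so $g\equiv0$. At the other endpoint, $\phi(\cdot,s)\to\psi$ in $L^{p'}$ as $s\to t_0^-$. For $p=\infty$ we instead have $p'=1$, where $L^1$ convergence of $H_\alpha(\cdot,\tau)*\psi$ still holds. Letting $s\to t_0$ along Lebesgue points, after redefining $w$ on a null set of times or noting that $w$ is weakly continuous in time from the equation, we get $\int w(\cdot,t_0)\psi=0$ for a.e. $t_0$. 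Since $\psi$ is arbitrary, $w=0$ a.e.

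The main obstacle is justifying the pairing with the non-compactly supported $\phi$ and making the time limits rigorous in the endpoint cases $p=1,\infty$. In particular, for $p=\infty$ we must handle weak-$*$ continuity in time, and we need the weak formulation to hold for test functions in $L^{1}$ rather than $C_0^\infty$. Our first attempt is the spatial cutoff $\theta(x/R)$ described above, relying on the $L^1\cap L^\infty$ bounds on $(-\Delta)^\alpha\phi$ and on the commutator bound $R^{-2\alpha}$.
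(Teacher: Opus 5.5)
Your proposal is correct and uses essentially the same duality argument as the paper: you pair $w$ with the backward solution $S(t_0-s)\psi$, note that the pairing is constant in $s$, and kill it with the zero $L^p$ trace as $s\to 0^+$; your time cutoff, spatial truncation and Lebesgue-point step supply the justification that the paper compresses into a single line. One small correction: for $\alpha>\tfrac12$ the commutator $[(-\Delta)^\alpha,\theta(\cdot/R)]$ has positive order $2\alpha-1$, so it is not $O(R^{-2\alpha})$ as an operator on $L^{p'}$; however, applied to the smooth $\phi$, whose derivatives all lie in $L^{p'}$ uniformly in $s$, it still tends to zero in $L^{p'}$, which is all your argument needs.
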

\begin{proof}
    We denote the fractional heat semigroup by
    \begin{align}
        S(t)g := e^{-(-\Delta)^{\alpha}t}g = H_\alpha(t)*g.
    \end{align}
    Now, we denote $w:=u-v\in L^\infty((0,T);L^p(\reall^n))$. Thus, $w$ is a solution to \eqref{appendix: fractional heat} with $f\equiv 0$ such that $\|w(\cdot,t)\|_{L^p}\to 0$ as $t\to 0^+$. For any function $\phi\in C^\infty_0(\reall^n)$, we define the backward test function as 
    \[
       \psi(s):= S(t-s)\phi.
    \]
    Then, $\psi$ is smooth and satisfies the adjoint backward equation:
    \[
        -\pa_s \psi+ (-\Delta)^\alpha\psi = 0,\quad \psi(t) = \phi.
    \]
    Using the weak formulation for $w$ on $(\eps,t)$ with test function $\psi$, we have
    \[
      \scl{w(t)}{\psi(t)}_{L^2_x} = \scl{w(\eps)}{\psi(\eps)}_{L^2_x},\quad 0<\eps<t.
    \]
    By H\"older's inequality and the $L^{p'}$ contraction of $S(t)$ ($\frac{1}{p}+\frac{1}{p'}=1$), we get
    \begin{align}
        \da{\scl{w(\eps)}{\psi(\eps)}_{L^2_x}}\leq \|w(\eps)\|_{L^p_x}\|\psi(\eps)\|_{L^{p'}_x}\leq \|w(\eps)\|_{L^p_x}\|\phi\|_{L^{p'}_x}\to 0,\quad \text{as}\quad \eps\to 0^+.
    \end{align}
    It follows that 
    \[
       \scl{w(t)}{\psi(t)}_{L^2_x} = \scl{w(t)}{\phi}=0.
    \]
    Since our choice of $\phi\in C^\infty_0(\reall^n)$ is arbitrary and $w(t)\in L^p(\reall^n)$, it follows that $w(t)=0$ a.e., completing the proof.
\end{proof}
\begin{lemma}\label{appendix: estimates of the Oseen kernel}
    Let $\Oc(x,t)$ be the $2$D nonlocal Oseen kernel defined in \eqref{Oseen kernel def}. Then, it holds that
    \begin{equation}
    |\nabla^k_x(-\Delta)^{\frac{\beta}{2}}\Oc(x,t)|\leq C(k,\beta) (t^{\frac{1}{2\alpha}}+|x|)^{-2-\beta-k},\quad \beta+k>-2.
\end{equation}
\end{lemma}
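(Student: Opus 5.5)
We prove the estimate for the Oseen kernel by writing $\Oc$ through its Fourier symbol and using scaling. By definition, $\widehat{\Oc_{jk}}(\xi,t)=\big(\delta_{jk}-\frac{\xi_j\xi_k}{|\xi|^2}\big)e^{-t|\xi|^{2\alpha}}$. The operator $\nabla^k(-\Delta)^{\beta/2}$ multiplies this by a homogeneous symbol of degree $k+\beta$, so
\[
\nabla^k_x(-\Delta)^{\frac{\beta}{2}}\Oc(x,t)=t^{-\frac{2+\beta+k}{2\alpha}}\big(\nabla^k(-\Delta)^{\frac{\beta}{2}}\Oc\big)(t^{-\frac{1}{2\alpha}}x,1).
\]
It therefore suffices to treat $t=1$ and show that $K(x):=\nabla^k(-\Delta)^{\beta/2}\Oc(x,1)$ satisfies $|K(x)|\le C\bkt{x}^{-2-\beta-k}$. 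Indeed, once this holds, $t^{-\frac{2+\beta+k}{2\alpha}}\bkt{t^{-1/2\alpha}x}^{-2-\beta-k}\sim (t^{\frac{1}{2\alpha}}+|x|)^{-2-\beta-k}$.

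\textbf{Boundedness near the origin.} The symbol of $K$ is $m(\xi)=\sigma(\xi)e^{-|\xi|^{2\alpha}}$, where $\sigma$ is homogeneous of degree $k+\beta$ and smooth on $\mathbb{S}^1$. Since $k+\beta>-2$, $m$ is integrable near $0$, and it decays rapidly at infinity. Hence $m\in L^1$ and $K$ is bounded, which handles $|x|\le 1$.

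\textbf{Decay for $|x|\ge 1$.} Take a smooth cutoff $\chi$ equal to $1$ near $0$ and split $m=\chi m+(1-\chi)m$. The piece $(1-\chi)m$ is Schwartz-like away from the origin: its derivatives are integrable and decay rapidly. Integrating by parts arbitrarily many times shows that its inverse Fourier transform decays faster than any power. The whole difficulty lies in the low-frequency piece $\chi(\xi)\sigma(\xi)e^{-|\xi|^{2\alpha}}$. Expand $e^{-|\xi|^{2\alpha}}=1-|\xi|^{2\alpha}+\dots$. The leading term $\chi\sigma$ is homogeneous of degree $k+\beta$ near the origin. The classical fact that the inverse Fourier transform of a degree-$d$ homogeneous symbol, smooth on the sphere, is homogeneous of degree $-2-d$ away from the origin (plus a smooth rapidly decaying error from the cutoff) gives exactly $|x|^{-2-\beta-k}$. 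The remainder terms have homogeneity $k+\beta+2\alpha j$, which is higher, so they produce the faster decay $|x|^{-2-\beta-k-2\alpha j}$.

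To make this rigorous without series, I would use a Littlewood--Paley decomposition instead. Write $\chi m=\sum_{j\le 0}\phi(2^{-j}\xi)m(\xi)$. For each dyadic piece, integrate by parts $N$ times to get the bound
\[
|\mathcal{F}^{-1}[\phi(2^{-j}\cdot)m](x)|\le C_N\,2^{j(2+k+\beta)}(1+2^j|x|)^{-N}.
\]
This uses the symbol bounds $|\pa^\gamma_\xi m(\xi)|\le C|\xi|^{k+\beta-|\gamma|}$ for $|\xi|\le 2$, which hold because $e^{-|\xi|^{2\alpha}}$ satisfies $|\pa^\gamma e^{-|\xi|^{2\alpha}}|\le C|\xi|^{-|\gamma|}$ there. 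Choose $N>2+k+\beta$ and sum over $j\le 0$. The terms with $2^j\le |x|^{-1}$ contribute about $|x|^{-2-k-\beta}$; this requires $2+k+\beta>0$ so that the geometric sum converges at $j\to-\infty$. The terms with $2^j>|x|^{-1}$ are summable by the decay factor and give the same bound.

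\textbf{Main obstacle.} The main technical step is the symbol estimate near the origin: the projection part $\xi_j\xi_k/|\xi|^2$ and the fractional power $|\xi|^\beta$ are not smooth at $0$, and $e^{-|\xi|^{2\alpha}}$ is only $C^{2\alpha}$ there. The dyadic argument is chosen precisely because it requires only these homogeneous-type symbol bounds, not smoothness at the origin.
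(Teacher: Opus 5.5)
Your proof is correct and follows essentially the same route as the paper: reduce to $t=1$ by scaling, bound each dyadic piece by $C_N 2^{j(2+\beta+k)}(1+2^j|x|)^{-N}$ via integration by parts, and sum, which is where $2+\beta+k>0$ enters. The only differences are cosmetic. The paper decomposes all frequencies dyadically, relying on the factor $e^{-2^{2j\alpha}|\eta|^{2\alpha}}$ to keep the rescaled symbols uniformly bounded as $j\to+\infty$. You instead peel off the high frequencies with the cutoff $\chi=\sum_{j\le 0}\phi(2^{-j}\cdot)$ and treat them by direct integration by parts, and you make the $|x|\le 1$ case explicit through $m\in L^1$.
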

\begin{proof}
    We adopt the approach of the Littlewood–Paley dyadic kernel estimate.
    First of all, we note that by the scaling symmetry $\Oc(x,t) = t^{-\frac{1}{\alpha}}\Oc(t^{-\frac{1}{2\alpha}}x,1)$, it suffices to prove that
    \begin{equation}
        |\nabla^k_x(-\Delta)^{\frac{\beta}{2}}\Oc(x,1)|\leq C(k,\beta)(1+|x|)^{-2-\beta-k},\quad \beta+k>-2.
    \end{equation}
    For brevity, in the following we denote 
    \[
        K(x):= \pa^\gamma(-\Delta)^{\frac{\beta}{2}}\Oc_{ab}(x,1),
    \]
    where $\gamma$ is a multi-index with $|\gamma|=k$.
    By using the Fourier transform, we obtain
    \begin{align}
        m(\xi):= \Fc\dr{K}(\xi) = (i\xi)^\gamma |\xi|^\beta P_{ab}(\xi)e^{-|\xi|^{2\alpha}},\quad    P_{ab}(\xi):= \delta_{ab} - \frac{\xi_a\xi_b}{|\xi|^2}.
    \end{align}
    Then, let $\phi(\xi)\in C^\infty_0(\reall^2)$ be such that $\supp(\phi)\subset \{\xi:3/4\leq |\xi|\leq 8/3\}$ and
    $\sum_{j\in\Z}\phi(2^{-j}\xi)=1.$
    We define $m_j(\xi) := m(\xi)\phi(2^{-j}\xi)$, and $K_j(x):= \Fc^{-1}(m_j)(x)$. Then we have $K = \sum_{j\in\Z}K_j$ in the sense of tempered distributions. In the following, by the pointwise estimates of each $K_j$ we will show that this convergence is actually pointwise, which gives us the desired decay estimate of $K$. Through change of variables $2^{-j}\xi = \eta$, we have
    \begin{align}
        K_j(x) = \intt e^{ix\cdot\xi}m_j(\xi)\;d\xi = 2^{2j}\intt e^{ix\cdot 2^j\eta}m_j(2^j\eta)\;d\eta = 2^{j(2+\beta+k)}\widetilde K_j(2^jx),
    \end{align}
    where we define
    \begin{align}
        \widetilde K_j(z):= \intt e^{iz\cdot \eta}a_j(\eta)\;d\eta,\quad a_j(\eta):= (i\eta)^\gamma |\eta|^\beta e^{-2^{2j\alpha}|\eta|^{2\alpha}}\phi(\eta).
    \end{align}
    We note that $a_j(\eta)$ is a smooth function which is supported where $3/4\leq |\eta|\leq 8/3$. Moreover, for any multi-index $\sigma$, we have
    \[
       \sup_{j\in\Z}\|\pa^\sigma a_j\|_{L^1_\eta}\leq C(\sigma,\alpha,\beta,k),
    \]
    once we notice that $\pa^\sigma a_j$ is nothing but some polynomial in $2^{2j\alpha}$ multiplied by an exponential component $e^{-2^{2j\alpha}}$, which goes to zero as $j\to +\infty$ and is uniformly bounded when $j\to -\infty$. Through integration by parts, for any index $N$ it holds that 
    \begin{align}
        |\widetilde K_j(z)|\leq C(N)\sum_{|\sigma|=N}\|\pa^\sigma a_j\|_{L^1_\eta}(1+|z|)^{-N} \leq C(N,\alpha,\beta,k)(1+|z|)^{-N}.
    \end{align}
    Equivalently, we obtain the pointwise estimate for $K_j$:
    \begin{align}
        |K_j(x)| \leq C(N,\alpha,\beta,k)2^{j(2+\beta+k)}(1+2^j|x|)^{-N}.
    \end{align}
    Now, we fix $j_0$ to be the integer such that $2^{j_0}\leq |x|< 2^{j_0+1}$ and fix $N>2+\beta+k$ (without loss of generality, we assume $|x|\geq 1$). 
    Then, since $2+\beta+k>0$, we have
    \begin{align}
        \sum_{j\in\Z}|K_j(x)| &= \sum_{j\leq -j_0}|K_{j}(x)| + \sum_{j>-j_0}|K_j(x)|\\
        &\leq C(N,\alpha,\beta,k)\dr{\sum_{j\leq -j_0}2^{j(2+\beta+k)}+|x|^{-N}\sum_{j>-j_0}2^{j(2+\beta+k-N)}}\\
        &\leq C(N,\alpha,\beta,k)|x|^{-(2+\beta+k)}.
    \end{align}
    Finally, we obtain
    \begin{align}
        |\pa^\gamma(-\Delta)^{\frac{\beta}{2}}\Oc_{ab}(x,1)|=|K(x)|\leq\sum_{j\in\Z}|K_j(x)|\leq  C(\alpha,\beta,k)(1+|x|)^{-(2+\beta+k)},\quad |\gamma|=k\in\Z_{\geq 0},\quad\beta+k>-2.
    \end{align}
\end{proof}
\begin{lemma}[Fractional Laplacian of homogeneous data]\label{appendix: a technical holder estimate on S1}
    In $\reall^2$, suppose we have $u_0(x)= |x|^{-\beta_0}\bar u_0(\frac{x}{|x|})$ for some $\bar u_0\in C^{m,\beta}(\Sp^1)$, where $\beta_0\in (0,1)$, $m\in \Z_{\geq 0}$ and $\beta\in (0,1]$. Then, for any $\gamma\in (0,\beta)$, there exists some $\tilde u_{0,\gamma}\in C^{m,\beta-\gamma}(\Sp^1)$, such that $(-\Delta)^{\frac{\gamma}{2}}u_0(x) = |x|^{-\beta_0-\gamma}\tilde u_{0,\gamma}(\frac{x}{|x|}) $, and
       \begin{equation}
       \|\tilde u_{0,\gamma}\|_{C^{m,\beta-\gamma}(\Sp^1)}\leq C(m,\beta_0,\beta,\gamma)\|\bar u_0\|_{C^{m,\beta}(\Sp^1)}.
   \end{equation}
\end{lemma}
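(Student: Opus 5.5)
The plan is to compute $(-\Delta)^{\gamma/2}u_0$ directly from the singular-integral representation and read off its homogeneity and angular regularity. First, homogeneity: $u_0$ is homogeneous of degree $-\beta_0$, and $(-\Delta)^{\gamma/2}$ commutes with dilations up to the factor $\lambda^{\gamma}$. Hence $(-\Delta)^{\gamma/2}u_0(\lambda x)=\lambda^{-\beta_0-\gamma}(-\Delta)^{\gamma/2}u_0(x)$ for $x\neq0$. Rotation invariance of the kernel shows that the profile is $\tilde u_{0,\gamma}(\theta):=(-\Delta)^{\gamma/2}u_0(\theta)$ for $\theta\in\Sp^1$.

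The operator is well defined because $u_0\in L^1_{loc}$ (as $\beta_0<1<2$), $u_0$ decays like $|x|^{-\beta_0}$, which is integrable against $|x-y|^{-2-\gamma}$ at infinity, and $u_0$ is locally $C^{m,\beta}$ away from the origin with $\gamma<\beta$. So we may use
\[
\tilde u_{0,\gamma}(\theta)=c_\gamma\,\PV\intt\frac{u_0(\theta)-u_0(y)}{|\theta-y|^{2+\gamma}}\,dy .
\]

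To estimate the $C^{m,\beta-\gamma}(\Sp^1)$ norm, fix a smooth cutoff $\chi$ with $\chi=1$ on the annulus $\{1/4\le|y|\le4\}$ and support in $\{1/8\le|y|\le 8\}$. Split $u_0=\chi u_0+(1-\chi)u_0$.

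\textbf{Near part.} $\chi u_0\in C^{m,\beta}_c(\reall^2)$ with norm $\lesssim\|\bar u_0\|_{C^{m,\beta}(\Sp^1)}$, since the angular regularity combines with the smooth radial factor $|x|^{-\beta_0}$ on the annulus. The standard Hölder mapping property $(-\Delta)^{\gamma/2}:C^{m,\beta}\to C^{m,\beta-\gamma}$ for $0<\gamma<\beta$ then gives a $C^{m,\beta-\gamma}$ bound on $\reall^2$. I would prove this mapping property by differentiating $m$ times under the translation-invariant operator. This reduces to $m=0$, which is the classical estimate obtained by writing $f(x)-f(y)$ over $|x-y|<r$ and $|x-y|>r$ with $r=|x-x'|$. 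Restricting to $\Sp^1$, a smooth curve, preserves the Hölder norm.

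\textbf{Far part.} For $\theta$ in a neighborhood $\{1/2\le|x|\le2\}$ of $\Sp^1$, the function $(1-\chi)u_0$ vanishes near $x$. So $(-\Delta)^{\gamma/2}[(1-\chi)u_0](x)=-c_\gamma\int\frac{(1-\chi)u_0(y)}{|x-y|^{2+\gamma}}dy$ has no singularity. All $x$-derivatives fall on a kernel that is smooth and bounded by $C|y|^{-2-\gamma-j}$ at infinity, while the singularity $|y|^{-\beta_0}$ near $0$ is integrable. This part is therefore $C^\infty$ on the annulus, with every $C^k$ norm bounded by $C\|\bar u_0\|_{L^\infty}$.

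Combining the two parts, and noting that the $C^{m,\beta-\gamma}$ norm on $\Sp^1$ is controlled by that on a neighborhood, gives the bound. The main technical point is the near part, namely checking that $\chi u_0$ lies in $C^{m,\beta}(\reall^2)$ with the right norm and invoking the Hölder mapping estimate for $m\ge1$. Both are routine, since derivatives commute with $(-\Delta)^{\gamma/2}$ and $\partial^l(\chi u_0)\in C^{0,\beta}_c$ for $|l|\le m$.
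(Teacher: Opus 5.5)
Your proof is correct. It follows a different route from the paper's.

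The paper works in polar coordinates $y=\rho\omega$ and uses homogeneity to write $\tilde u_{0,\gamma}$ as an explicit integral over $(\rho,\omega)\in(0,\infty)\times\Sp^1$. The range $\rho\notin[1/2,2]$ gives a smooth piece, the analogue of your far part. The remaining piece is split as $\mathcal T\bar u_0+\mathcal R\bar u_0$:
\begin{itemize}
\item $\mathcal R$ comes from the radial factor $1-\rho^{-\beta_0}$ and has an integrable kernel.
\item $\mathcal T$ is an intrinsic operator on the circle with kernel $J_\gamma(\theta,\omega)\sim|\theta-\omega|^{-1-\gamma}$, essentially a one-dimensional fractional Laplacian of order $\gamma$ on $\Sp^1$. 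Its $C^{0,\beta}\to C^{0,\beta-\gamma}$ bound is proved by hand, splitting at scale $|\theta-\theta'|$ and using bounds on $J_\gamma$ and $\nabla_\theta J_\gamma$.
\end{itemize}
Higher $m$ comes from applying $\partial_\theta$. This commutes with $(-\Delta)^{\gamma/2}$ by rotation invariance and maps $(-\beta_0)$-homogeneous fields to $(-\beta_0)$-homogeneous fields, so the $m=0$ bound applies to $\partial_\theta^m\bar u_0$.

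You instead cut off in physical space. You reduce the singular piece to the classical Euclidean estimate $(-\Delta)^{\gamma/2}:C^{m,\beta}_c(\reall^2)\to C^{m,\beta-\gamma}(\reall^2)$, with Cartesian derivatives commuting by translation invariance. The only extra bookkeeping is checking $\chi u_0\in C^{m,\beta}_c$ with norm $\lesssim\|\bar u_0\|_{C^{m,\beta}(\Sp^1)}$. That holds because $x\mapsto x/|x|$ and $|x|^{-\beta_0}\chi$ are smooth on the annulus.

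What each route buys:
\begin{itemize}
\item Yours is shorter, needs no asymptotics for $J_\gamma$, and absorbs the radial variation (the paper's $\mathcal R$) automatically.
\item The paper's is self-contained and gives an intrinsic formula for $\tilde u_{0,\gamma}$ on $\Sp^1$. It also shows that the loss of $\gamma$ derivatives comes only from the singularity along the circle.
\end{itemize}

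One small correction: identifying the profile as $(-\Delta)^{\gamma/2}u_0$ restricted to $\Sp^1$ uses only the dilation identity at $\lambda=|x|$. Rotation invariance plays no role there; in the paper it is used only for the angular derivatives.
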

\begin{proof}
    First of all, we have by the equivalent definition of the fractional Laplacian,
    \begin{align}
        (-\Delta)^{\frac{\gamma}{2}}u_0(x) = c_{\gamma}\intt\frac{u_0(x)-u_0(y)}{|x-y|^{2+\gamma}}\;dy.
    \end{align}
    We remark that the above integral is absolutely convergent (so principal value is not needed), thanks to the fact that $\beta>\gamma$. Consider the polar coordinates $r,\rho\in (0,+\infty)$ and $\theta,\om\in\Sp^1$. By the homogeneity of $u_0$ we can write
    \begin{align}
        (-\Delta)^{\frac{\gamma}{2}}u_0(r\theta) &= c_{\gamma}\int_{0}^{+\infty}\int_{\Sp^1}\frac{u_0(r\theta)-u_0(\rho\om)}{|r\theta - \rho \om|^{2+\gamma}}\rho\;d\om d \rho\\
        &=c_\gamma r^{-\beta_0-\gamma}\int_0^{+\infty}\int_{\Sp^1}\frac{\bar u_0(\theta) - \rho^{-\beta_0}\bar u_0(\om)}{|\theta - \rho\om|^{2+\gamma}}\;d\rho d\om.
    \end{align}
    Thus, $\tilde u_{0,\gamma}$ can be expressed explicitly as
    \begin{align}\label{appendix: S1 holder_explicit u tilde}
        \tilde u_{0,\gamma}(\theta) = c_\gamma\int_0^{+\infty}\int_{\Sp^1}\frac{\bar u_0(\theta) - \rho^{-\beta_0}\bar u_0(\om)}{|\theta - \rho\om|^{2+\gamma}}\;d\rho d\om.
    \end{align}
    Fix $\theta\in\Sp^1$ and split the $\rho$-integral in \eqref{appendix: S1 holder_explicit u tilde}:
    \[
    \int_0^\infty=\int_{(0,1/2)\cup(2,\infty)}+\int_{1/2}^2.
    \]
    Define accordingly $\tilde u_{0,\gamma}=\mathcal F(\theta)+\mathcal N(\theta)$, where
    \[
    \mathcal F(\theta):=c_{\gamma}\int_{(0,1/2)\cup(2,\infty)}\int_{\Sp^1}
    \frac{\bar u_0(\theta)-\rho^{-\beta_0}\bar u_0(\omega)}{|\theta-\rho\omega|^{2+\gamma}}\,
    \rho\,d\omega\,d\rho,
    \]
    \[
    \mathcal N(\theta):=c_{\gamma}\int_{1/2}^2\int_{\Sp^1}
    \frac{\bar u_0(\theta)-\rho^{-\beta_0}\bar u_0(\omega)}{|\theta-\rho\omega|^{2+\gamma}}\,
    \rho\,d\omega\,d\rho.
    \]
    On $(0,1/2)\cup(2,\infty)$ we have $|\theta-\rho\omega|\ge c>0$ uniformly in $(\theta,\omega)$, hence the kernel is
    $C^\infty$ in $(\theta,\omega)$ and integrable in $\rho$. Thus we obtain $\mathcal F\in C^\infty(\Sp^1)$ and
    \begin{equation}\label{eq:F-smooth}
    \|\mathcal F\|_{C^{m,\beta}(\Sp^1)}\le C\,\|\bar u_0\|_{C^{m,\beta}(\Sp^1)}.
    \end{equation}
    So the loss of $\gamma$ regularity comes only from $\mathcal N$. For $\rho\in[1/2,2]$ we write
    \[
    \bar u_0(\theta)-\rho^{-\beta_0}\bar u_0(\omega)
    =\big(\bar u_0(\theta)-\bar u_0(\omega)\big)+\big(1-\rho^{-\beta_0}\big)\bar u_0(\omega),
    \]
    and split $\mathcal N=\mathcal T\bar u_0+\mathcal R\bar u_0$ with
    \[
    (\mathcal T\phi)(\theta):=c_{\gamma}\int_{1/2}^2\int_{\Sp^1}
    \frac{\phi(\theta)-\phi(\omega)}{|\theta-\rho\omega|^{2+\gamma}}\,\rho\,d\omega\,d\rho,
    \]
    \[
    (\mathcal R\phi)(\theta):=c_{\gamma}\int_{1/2}^2\int_{\Sp^1}
    \frac{(1-\rho^{-\beta_0})\phi(\omega)}{|\theta-\rho\omega|^{2+\gamma}}\,\rho\,d\omega\,d\rho.
    \]
    For $\mathcal R$, since $|1-\rho^{-\beta_0}|\lesssim |\rho-1|$ on $[1/2,2]$ and $|\theta-\rho\omega|^2\geq (\rho-1)^2$, the kernel satisfies $\frac{|1-\rho^{-\beta_0}|}{|\theta-\rho\om|^{2+\gamma}}\lesssim \frac{1}{|\theta-\rho\om|^{1+\gamma}}$, which is integrable with respect to $d\rho d\om$. Then, one can easily show that
    \begin{equation}
    \|\mathcal R\phi\|_{C^{m,\beta}(\Sp^1)}\le C\,\|\phi\|_{C^{m,\beta}(\Sp^1)}.
    \end{equation}
    Now, we show the H\"older estimate
    \begin{equation}\label{appendix: S1 Holder_T esti}
    \|\mathcal T\phi\|_{C^{0,\beta-\gamma}(\Sp^1)}\le C\,\|\phi\|_{C^{0,\beta}(\Sp^1)},
    \qquad 0<\gamma<\beta\le 1.
    \end{equation}
    Define
    \[
    J_\gamma(\theta,\omega):=\int_{1/2}^2 \frac{\rho}{|\theta-\rho\omega|^{2+\gamma}}\,d\rho,
    \qquad \theta,\omega\in\Sp^1,
    \]
    so that
    \[
    (\mathcal T\phi)(\theta)=c_{\gamma}\int_{\Sp^1}(\phi(\theta)-\phi(\omega))\,J_\gamma(\theta,\omega)\,d\omega.
    \]
    Let $d:=|\theta-\omega|$ (chord distance). Using $|\theta|=|\omega|=1$,
    \[
    |\theta-\rho\omega|^2 = 1+\rho^2-2\rho\,\theta\cdot\omega
    = (\rho-1)^2+\rho\,|\theta-\omega|^2
    = (\rho-1)^2+\rho d^2.
    \]
    Hence for $\rho\in[1/2,2]$, we get
    \[
    c\big((\rho-1)^2+d^2\big)\le |\theta-\rho\omega|^2\le C\big((\rho-1)^2+d^2\big).
    \]
    Therefore we have
    \[
    J_\gamma(\theta,\omega)\sim \int_{1/2}^2 \big((\rho-1)^2+d^2\big)^{-(2+\gamma)/2}\,d\rho.
    \]
    With the change of variables $s=(\rho-1)/d$ (for $d\in(0,2]$) one obtains
    \begin{equation}\label{appendix: S1 holder_esti of J}
    c\,d^{-1-\gamma}\le J_\gamma(\theta,\omega)\le C\,d^{-1-\gamma}.
    \end{equation}
    It follows that the $L^\infty$-norm of $\Tc\phi$ is bounded:
    \begin{align}
        |(\Tc\phi)(\theta)|\leq C\|\phi\|_{C^{0,\beta}(\Sp^1)}\int_{\Sp^1}\frac{|\theta-\om|^\beta}{|\theta-\om|^{1+\gamma}}\;d\om\leq C\|\phi\|_{C^{0,\beta}(\Sp^1)}.
    \end{align}
    Next, we fix $0<\delta_0\ll 1$ and take any $\theta,\theta'\in\Sp^1$ such that $|\theta - \theta'|:=\delta\leq \delta_0$. We now estimate $(\Tc\phi)(\theta) - (\Tc\phi)(\theta')$. Split $\Sp^1=E_{\rm near}\cup E_{\rm far}$ with
    \[
    E_{\rm near}:=\{\omega:\ |\theta-\omega|\le 2\delta\},\qquad E_{\rm far}:=\{\omega:\ |\theta-\omega|>2\delta\}.
    \]
    On $E_{\rm near}$, by \eqref{appendix: S1 holder_esti of J} we have
    \begin{align}
        \int_{E_{\rm near}}|\phi(\theta)-\phi(\om)|J_{\gamma}(\theta,\om)\;d\om \leq C\|\phi\|_{C^{0,\beta}(\Sp^1)}\int_{|\om-\theta|<2\delta}|\theta-\om|^{\beta-1-\gamma}\;d\om \leq C\|\phi\|_{C^{0,\beta}(\Sp^1)}\delta^{\beta-\gamma},
    \end{align}
    and
    \begin{align}
        \int_{E_{\rm near}}|\phi(\theta')-\phi(\om)|J_{\gamma}(\theta',\om)\;d\om \leq C\|\phi\|_{C^{0,\beta}(\Sp^1)}\int_{|\om-\theta'|<3\delta}|\theta'-\om|^{\beta-1-\gamma}\;d\om \leq C\|\phi\|_{C^{0,\beta}(\Sp^1)}\delta^{\beta-\gamma}.
    \end{align}
    On $E_{\rm far}$, we split
    \[
    (\phi(\theta)-\phi(\om))J_\gamma(\theta,\om)-(\phi(\theta')-\phi(\om))J_\gamma(\theta',\om)
    := A(\om)+B(\om),
    \]
    where
    \[
    A(\omega):=(\phi(\theta)-\phi(\omega))\big(J_\gamma(\theta,\omega)-J_\gamma(\theta',\omega)\big),
    \quad
    B(\omega):=(\phi(\theta)-\phi(\theta'))\,J_\gamma(\theta',\omega).
    \]
    For $A(\om)$, we have
    \[
    \int_{E_{\rm far}} |A(\omega)|\,d\omega
    \leq C \delta\,\|\phi\|_{C^{0,\beta}(\Sp^1)}\int_{|\theta-\omega|>2\delta} |\theta-\omega|^{\beta-2-\gamma}\,d\omega
    \leq C\delta\,\|\phi\|_{C^{0,\beta}(\Sp^1)}\int_{2\delta}^{1} r^{\beta-2-\gamma}\,dr
    \leq C \|\phi\|_{C^{0,\beta}}\,\delta^{\beta-\gamma},
    \]
    where we have used the estimate for $|\nabla_\theta J_\gamma|$ (the derivation of which is similar to that of \eqref{appendix: S1 holder_esti of J}) 
    \begin{align}
        |J_\gamma(\theta,\om) - J_\gamma(\theta',\om)|\leq C|\theta-\theta'| |\theta-\om|^{-2-\gamma},\quad |\om-\theta|>2|\theta-\theta'|.
    \end{align}
    For $B(\om)$, we have
    \[
    \int_{E_{\rm far}} |B(\omega)|\,d\omega
    \leq C |\phi(\theta)-\phi(\theta')|\int_{|\theta-\omega|>2\delta} |\theta-\omega|^{-1-\gamma}\,d\omega
    \leq C \|\phi\|_{C^{0,\beta}(\Sp^1)}\delta^\beta\int_{2\delta}^{1} r^{-1-\gamma}\,dr
    \leq C \|\phi\|_{C^{0,\beta}}\,\delta^{\beta-\gamma}.
    \]
    To summarize, we have obtained \eqref{appendix: S1 Holder_T esti}, and it follows that
    \begin{align}
        \|\tilde u_{0,\gamma}\|_{C^{0,\beta-\gamma}(\Sp^1)}\leq C\|\bar u_0\|_{C^{0,\beta}(\Sp^1)}.
    \end{align}
    The final result (the $C^{m,\beta-\gamma}$-bound) follows once we take derivatives in the angular direction to $u_0$ and note the fact that $(-\Delta)^{\frac{\gamma}{2}}$ is rotation-invariant. 
\end{proof}

\section{Appendix: Numerical implementation details}\label{Appendix B}
Let $B_R(0):= \{y\in\reall^2\mid|y|< R\}$ be our computational domain. By the far field asymptotics of $\Omega$, it is natural to impose that (adopting $(r,\theta)$ as the polar coordinate)
\[
    \Omega(y)\mid_{\pa B_R(0)} = R^{-2\alpha}\bar\om_0(\theta).
\]
As for the stream function profile $\Psi$, we consider the equation it satisfies in the far field:
\[
    \dr{\pa^2_r+\frac{1}{r}\pa_r+\frac{1}{r^2}\pa^2_\theta}\Psi = r^{-2\alpha}\bar\om_0(\theta).
\]
The decaying solution is given by
\[
    \Psi(y) = r^{2-2\alpha}\bar\Psi(\theta),
\]
where $\bar\Psi(\theta)$ is determined by
\[
    \dr{2-2\alpha}^2\bar\Psi(\theta)+\pa^2_\theta\bar\Psi(\theta) = \bar\om_0(\theta).
\]
It can be solved with Fourier expansion:
\begin{equation}
    \hat{\bar\Psi}_m = \frac{1}{\dr{2-2\alpha}^2-m^2}\hat{\bar\om}_{0,m},\quad \bar\Psi = \sum_m \hat{\bar\Psi}_me^{im\theta},\quad \bar\Omega = \sum_m \hat{\bar\om}_{0,m}e^{im\theta},
\end{equation}
assuming that $2-2\alpha$ is not an integer for convenience (otherwise, we need to take resonances into account). Then, we enforce the Neumann boundary conditions:
\[
    \frac{\pa\Psi}{\pa_{\vec n}}\Big|_{\pa B_R(0)} = \dr{2-2\alpha}R^{1-2\alpha}\bar\Psi(\theta).
\]
To summarize, numerically we are to solve the following system
\begin{equation}
\begin{cases}
    (\nabla^\perp\Psi-\frac{1}{2\alpha} y)\cdot\nabla\Omega = \Omega+\Delta\Omega,\quad y\in B_R(0),\quad \Omega\mid_{\pa B_{R}(0)} = R^{-2\alpha}\bar\om_0(\theta)\\[4pt]
    \Delta\Psi = \Omega,\quad y\in B_R(0),\quad \frac{\pa\Psi}{\pa_{\vec n}}\Big|_{\pa B_R(0)} = \dr{2-2\alpha}R^{1-2\alpha}\bar\Psi(\theta).
\end{cases}
\end{equation}
In practice, it is convenient to fix the size of the computational domain by rescaling the spatial variable:
\[
    \Omega(y) = \Omega_*(z),\quad \Psi(y) = \frac{R^2}{R^2_*}\Psi_*(z),\quad z:=\frac{R_* y}{R}.
\]
where $R_*>0$ is fixed and relatively small (e.g. $R_*=10$). Then, the numerical problem becomes
\begin{equation}\label{numerical formulation of the profile solve}
\begin{cases}
    (\nabla^\perp\Psi_*-\frac{1}{2\alpha} z)\cdot\nabla\Omega_* = \Omega_*+\frac{R^2_*}{R^2}\Delta\Omega_*,\quad z\in B_{R_*}(0),\quad \Omega_*\mid_{\pa B_{R_*}(0)} = R^{-2\alpha}\bar\om_0(\theta)\\[4pt]
    \Delta\Psi_* = \Omega_*,\quad z\in B_{R_*}(0),\quad \frac{\pa\Psi_*}{\pa_{\vec n}}\Big|_{\pa B_{R_*}(0)} = \dr{2-2\alpha}R_*R^{-2\alpha}\bar\Psi(\theta).
\end{cases}
\end{equation}
Once we obtained the profile $(\Omega_*,\Psi_*)$, the numerical eigen-problem for the linearized operator is
\begin{equation}\label{numerical formulation of the eigen problem}
    \begin{cases}
         \frac{R^2_*}{R^2}\Delta v+ v - (\nabla^\perp\Psi_*-\frac{1}{2\alpha} z)\cdot\nabla v - \nabla^\perp \phi\cdot\nabla\Omega_* = \lambda v,\quad z\in B_{R_*}(0), \quad v\mid_{\pa B_{R_*}(0)} = 0,\\
        \Delta\phi = v,\quad \frac{\pa\phi}{\pa \vec{n}}\Big |_{\pa B_{R_*}(0)} = 0.
    \end{cases}
\end{equation}
The computations are implemented using the FEniCSx/dolfinx finite element library \cite{Baratta2023DOLFINx}, with triangular meshes generated by Gmsh and linear algebra/eigensolvers provided through PETSc/SLEPc \cite{Hernandez2005SLEPc}. Both $\Omega_*$ and $\Psi_*$ are discretized by continuous piecewise linear Lagrange elements. In the main computation for boundary data $140\cos(3\theta)$, we use $R_*=10$, $R=150$, and a quasi-uniform mesh with $h_{\min}=h_{\max}=0.06$, giving about $10^5$ scalar degrees of freedom. The nonlinear profile problem \eqref{numerical formulation of the profile solve} is solved by continuation in the boundary amplitude from zero to the target value, using 21 equally spaced continuation steps; at each step, Newton iteration is run with tolerance $10^{-8}$, using the previous continuation state as the initial guess. After the profile has converged, the linearized eigenvalue problem \eqref{numerical formulation of the eigen problem} is assembled on the same finite element space and the first ten eigenvalues near the left edge of the spectrum are computed by a shift-invert Krylov--Schur method with shift $-1$, tolerance $10^{-8}$, and maximum iteration count 300. The auxiliary Neumann Poisson solves for $\Psi_*$ and for the eigenfunction stream function $\phi$ are treated by fixing the additive constant with one pinned degree of freedom. For the eigenvalue problem, the homogeneous Neumann equation for $\phi$ is solved in this gauge-fixed discrete Neumann space; the continuous compatibility condition is the zero-mean condition on the vorticity perturbation $v$, and the pinned formulation provides the corresponding discrete convention for inverting the Neumann operator.

\vspace{0.2in}
\noindent
{\bf Acknowledgments}. This research was in part supported by NSF Grants DMS-2205590 and DMS-2512878, the Choi Family Gift fund and Dr. Mike Yan Gift fund.
\bibliographystyle{plain} 
\bibliography{refs} 

\end{document}